\newtheorem{theo}{Theorem}[section]
\newtheorem{lem}[theo]{Lemma}
\newtheorem{prop}[theo]{Proposition}
\newtheorem{cor}[theo]{Corollary}
\def\mathrmdef#1{\expandafter\def\csname#1\endcsname{{\rm#1}}}
\def\mathsfdef#1{\expandafter\def\csname#1\endcsname{{\rm\mathsf#1}}}
\def\mathcaldef#1{\expandafter\def\csname#1\endcsname{{\mathcal#1}}}
\def\aaa{\mathfrak{A}}
\def\bbb{\mathfrak{B}}
\def\ccc{\mathfrak{C}}
\def\xxxx{\mathfrak{x}}
\def\yyyy{\mathfrak{y}}
\def\aaaa{\mathfrak{a}}
\def\bbbb{\mathfrak{b}}
\def\uuuu{\mathfrak{u}}
\def\llll{\mathfrak{l}}
\def\tttt{\mathfrak{t}}
\def\gggg{\mathfrak{g}}
\def\CCC{\mathbbmss{C}}
\def\DDD{\mathbbmss{D}}
\def\EEEE{\mathbbmss{E}}
\def\BB{\mathbbmss{B}}
\def\WW{\mathbbmss{W}}
\def\AAA{\mathcal{A}}
\def\BBB{\mathcal{B}}
\def\DDDD{\mathcal{D}}
\def\CCCC{\mathcal{C}}
\def\TTT{\mathcal{T}}
\def\YYY{\mathcal{Y}}
\def\WWW{\mathcal{W}}
\def\UUU{\mathcal{U}}
\def\LLL{\mathcal{L}}
\def\III{\mathcal{I}}
\def\KKK{\mathcal{K}}
\def\RRRR{\mathcal{R}}
\def\RRR{\mathtt{R}}
\def\EEE{\mathtt{E}}
\def\hhh {\mathfrak{H}}
\def\S{\mathcal{S}}
\begin{document}  
\title{Pseudo-Kan Extensions and Descent Theory}  
\author{Fernando Lucatelli Nunes}
\address{CMUC, Department of Mathematics, University of Coimbra, 3001-501 Coimbra, Portugal}
\eaddress{fernandolucatelli@gmail.com}
\amsclass{18A30, 18A40, 18C15, 18C20, 18D05}

\thanks{This work was supported by CNPq, National Council for Scientific and Technological Development -- Brazil (245328/2012-2),  and by the Centre for Mathematics of the
University of Coimbra -- UID/MAT/00324/2013, funded by the Portuguese
Government through FCT/MCTES and co-funded by the European Regional Development Fund through the Partnership Agreement PT2020.}

\keywords{descent objects, descent category, Kan extensions, pseudomonads, biadjunctions, (effective) descent morphism, weighted bilimits, B\'{e}nabou-Roubaud Theorem, Galois Theory, commutativity of bilimits}
\maketitle
\begin{abstract}
There are two main constructions in classical descent theory: the category of algebras and the descent category, which are known to be examples of  weighted bilimits. We give a formal approach to descent theory, employing formal consequences of commuting properties of bilimits to prove classical and new theorems in the context of Janelidze-Tholen ``Facets of Descent II'', such as B\'{e}nabou-Roubaud Theorems, a Galois Theorem, embedding results and formal ways of getting effective descent morphisms. In order to do this, we develop the formal part of the theory on commuting bilimits via pseudomonad theory, studying idempotent pseudomonads and proving a $2$-dimensional version of the adjoint triangle theorem. Also, we work out the concept of pointwise pseudo-Kan extension, used as a framework to talk about bilimits, commutativity and the descent object. As a subproduct, this formal approach can be an alternative perspective/guiding template for the development of higher descent theory.
\end{abstract}

\section*{Introduction}
Descent theory is a generalization of a solution given by Grothendieck to a problem related to modules over rings \cite{Grothendieck, Facets3, Facets}. There is a pseudofunctor 
$\Mod :\Ring\to \CAT $ which associates each ring $ \RRRR $ with the category $\Mod ( \RRRR )$ of right $\RRRR $-modules. The original problem of descent is the following: given a morphism $ f: \RRRR\to \S $ of rings, we wish to understand what is the image of 
$\Mod ( f ): \Mod( \RRRR )\to \Mod( \S )$.
The usual approach to this problem in descent theory is somewhat indirect: firstly, we characterize the morphisms $f$ in $ \Ring$ such that $\Mod ( f ) $ is a functor that forgets some ``extra structure''. Then, we would get an easier problem: verifying which objects of $\Mod( \S )$ could be endowed with such extra structure (see, for instance, \cite{Facets3}).

Given a category $\CCC $ with pullbacks and a pseudofunctor $ \AAA : \CCC ^{\op} \to \CAT $, for each morphism $p : E\to B$ of $ \CCC $, the \textsl{descent data} plays the role of such ``extra structure'' in the basic problem (see \cite{Facets, Facets2, Street2}). More precisely, in this context, there is a natural construction of a category $\Desc_ \AAA (p)$, called descent category, such that the objects of $\Desc _ \AAA (p) $ are objects of $\AAA (E)$ endowed with descent data, which encompasses the $2$-dimensional analogue for equality/$1$-dimensional descent: one invertible $2$-cell plus coherence. This construction comes with a comparison functor and a factorization; that is to say, we have the commutative diagram below, in which $\Desc _ \AAA (p)\to \AAA (E) $ is the functor which forgets the descent data (see \cite{Facets, Facets2}).  
\begin{equation*}\tag{Descent Factorization}\label{JTDESCENT}
\xymatrix{ \AAA  (B) \ar[r]^-{\phi _ p}\ar[rd]_{ \AAA ( p )}& {\Desc }_\AAA ( p ) \ar[d]\\
&\AAA (E)}
\end{equation*}
Therefore the problem is reduced to investigating whether the \textit{comparison} functor $ \phi _ p $ is an equivalence. If it is so, $p$ is is said to be of \textit{effective $\AAA$-descent} and the image of $\AAA (p) $ are the objects of $\AAA (E) $ that can be endowed with descent data. Pursuing this strategy, it is also usual to study cases in which $ \phi _ p $ is fully faithful or faithful: in these cases, $p$ is said to be, respectively, of \textit{$\AAA $-descent} or of almost \textit{$\AAA$-descent}.

Furthermore, we may consider that the descent problem (in dimension $2$) is, in a broad context, the characterization of the image (up to isomorphism) of any given functor $ F:\mathtt{b}\to \mathtt{e} $. In this case, using the strategy described above, we investigate if $\mathtt{b} $ can be viewed as a category of objects in $\mathtt{e} $ with some extra structure (plus coherence). Thereby, taking into account the original basic problem, we can ask, hence, if $F$ is (co)monadic. Again, we would get a factorization, the Eilenberg-Moore factorization: 
$$\xymatrix{\mathtt{b} \ar[r]^-{\phi }\ar[rd]_{F}& (Co)\Alg \ar[d]\\
&\mathtt{e} }$$
This approach leads to what is called ``monadic descent theory''. B\'{e}nabou and Roubaud proved that, if $F = \AAA (p) $ in which $\AAA : \CCC ^{\op} \to \CAT $ is a pseudofunctor
satisfying the Beck-Chevalley condition, then ``monadic $\AAA$-descent theory'' coincides with ``Grothendieck $\AAA$-descent theory''. More precisely, in this case, $p$ is of effective $\AAA$-descent if and only if $\AAA (p )$ is monadic \cite{Equi, Facets, Sobral, Ivan}.

Thereby, in the core of classical descent theory, there are two constructions: the category of algebras and the descent category. These constructions are known to be examples of
$2$-categorical limits (see \cite{Street2, Street3}). Also, in a $2$-categorical perspective, we can say that the general idea of category of objects with ``extra structure (plus coherence)'' is, indeed, captured by the notion of $2$-dimensional limits.   

Not contradicting such point of view, Street considered that (higher) descent theory is about the 
higher categorical notion of limit \cite{Street2}. Following this posture, we investigate whether 
pure formal methods and commuting properties of bilimits are useful to prove classical and new theorems in the 
classical context of descent theory of \cite{Facets, Facets2, Facets3, Gd}. 

Willing to give such formal approach, we employ the following perspective: 
\textit{the problems of descent theory are usually reduced  to the study of the 
image of a (pseudo)monadic (pseudo)functor}. 
We restrict our attention
to idempotent pseudomonads and prove formal results on pseudoalgebra structures, 
such as a biadjoint triangle theorem and lifting theorems.

In order to apply such formal approach to get theorems on commutativity of bilimits, we employ
a bicategorical analogue of the concept of (pointwise) Kan extension: (pointwise)
 pseudo-Kan extension, introduced in \cite{FLN}. 

By successive applications of these formal results, we get results 
within the context of  \cite{Facets, Facets2},
such as the B\'{e}nabou-Roubaud Theorem, embedding results and theorems on 
effective descent morphisms
of bilimits of categories. We also apply this approach to get results
on effective descent morphisms of categories of small enriched categories $V\textrm{-}\Cat $
provided that $V$ satisfies suitable hypotheses.

In this direction, the fundamental standpoint on ``classical descent theory'' of this paper is the following: 
the ``descent object'' of a (pseudo)cosimplicial object in a given context is the image of the initial object 
of the 
appropriate notion of Kan extension of such cosimplicial object. More precisely, in our context of dimension 
$2$ (which is the same context of \cite{Facets, Facets2}), 
we get the following result (Theorem \ref{indeeddescent}): 
\textit{The descent category of a pseudocosimplicial object $\AAA : \Delta \to \CAT $ is equivalent to 
$\Ps\Ran _ \j \AAA (\mathsf{0}) $}, in which $\j : \Delta\to \dot{\Delta } $ is the full inclusion of the 
category of finite nonempty ordinals into the category of finite ordinals and order preserving functions, 
and $\Ps\Ran _ \j \AAA $ denotes the right pseudo-Kan extension of $\AAA $ along $\j $. In particular, 
we show abstract features of the ``classical theory of descent'' 
as a theory (of pseudo-Kan extensions) of pseudocosimplicial objects or 
pseudofunctors $\dot{\Delta } \to\CAT $.

This work was motivated by three main aims. Firstly, to get formal proofs of classical results of 
descent theory. Secondly, to prove new results in the classical context -- 
for instance, formal ways of getting sufficient conditions for a morphism to be of effective descent. 
Thirdly, to get proofs of descent theorems that could be recovered in other contexts, 
such as in the development of higher descent theory (see, for instance, the work of Hermida \cite{Her} and 
Street \cite{Street2} in this direction).

In Section \ref{Basic Problem}, we give an idea of our scope within the context of  \cite{Facets, Facets2}: 
we show the main results classically used to deal 
with the problem of characterization of effective descent morphisms and we 
present classical results, which are proved using results on commutativity in Sections 
\ref{proofsclassical} and \ref{Further}. Namely, the embedding results (Theorems \ref{facil1} and 
\ref{mergulho}) and 
the B\'{e}nabou-Roubaud Theorem (Theorem \ref{Roubaud}). 
At the end of Section \ref{Basic Problem}, we establish a theorem on pseudopullbacks of categories 
(Theorem \ref{pseudopullback}) which is proved in Section \ref{Further}.

Section \ref{Formal} contains most of the abstract results of our formal approach to descent
via pseudomonad theory. We start by establishing our main setting: 
the tricategory of $2$-categories, pseudofunctors and pseudonatural transformations.
In  \ref{idempotent}, we define and study basic aspects of idempotent pseudomonads. 
Then, in  \ref{btt}, we study pseudoalgebra structures w.r.t. idempotent pseudomonads, 
proving a Biadjoint Triangle Theorem (Theorem \ref{biadjointtriangle}) and giving a result related to the 
study of pseudoalgebra structures in commutative squares (Corollary \ref{square}).

We deal with the technical situation of considering objects that 
cannot be endowed with pseudoalgebra structures but have comparison morphisms belonging to a special class 
of morphisms in  \ref{tecnicoalmost}.

Section \ref{KAN} explains why we do not use the usual 
enriched Kan extensions to study commutativity of the $2$-dimensional limits related to descent theory: 
the main point is that we like to have results which work for bilimits in general (not only flexible ones). 
In \ref{pseudoKAN}, we define pseudo-Kan extensions and, then, we give the associated factorizations in 
\ref{factorization}. Particular cases of these factorizations are  the Eilenberg-Moore factorization of
an adjunction and the descent factorization described above.

We give further background material in \ref{Spointwise}, studying 
weighted bilimits and proving the first result that relates pseudo-Kan extensions and weighted bilimits.
Then, in \ref{introducedpseudoend}, we introduce pseudoends  and prove basic results such as a version of 
Fubini's theorem for pseudoends (Theorem \ref{Fubini})  and the fundamental equivalence for pseudoends (Proposition \ref{Q}).
In \ref{pointwisepseudokanextension}, we finally introduce the appropriate notion of pointwise pseudo-Kan extension (Definition \ref{pointwise3})  and,
using the results on pseudoends, we
prove that a pointwise pseudo-Kan extension exists if and only if suitable weighted bilimits exist (Theorem \ref{pointwise} and Corollary \ref{pointwise2}).

In \ref{Psidem}, \ref{commu} and \ref{tecalmost}, we fit the study of pseudo-Kan extensions into the 
perspective of  Section \ref{Formal}. We apply the results of \ref{Formal} to the special case of 
weighted bilimits and pseudo-Kan extensions: we get, then, results on 
commutativity of weighted bilimits/pseudo-Kan extensions and exactness/\textit{(almost/effective) 
descent diagrams}.

Section \ref{descent} studies \textit{descent objects}. We prove that the classical descent object (category) 
 is given by the pseudo-Kan extension of a pseudocosimplicial object (as explained above).
 In particular, this means that \textit{descent objects are conical bilimits of pseudocosimplicial objects}. We
 adopt this description as our definition of descent object of a pseudocosimplicial object. 
We finish Section \ref{descent} presenting also the strict version of a descent object, which is given by a
Kan extension of a special type of $2$-diagram. We get, then, the strict factorization of descent theory.

Section \ref{Elementary Examples} gives elementary examples of our context of 
\textit{effective descent diagrams}. Every weighted bilimit can be seen as an example, 
but we focus in examples that we use in applications. As mentioned above, the most important examples of 
bilimits in descent theory are descent objects and Eilenberg-Moore objects: thereby, Section \ref{EilenBERG} 
is dedicated to explain how Eilenberg-Moore objects fit in our context, via the free adjunction $2$-category of 
\cite{Street}.

In Section \ref{BECK}, we study the Beck-Chevalley condition: 
by doctrinal adjunction~\cite{DoctrinalAdjunction}, this is the necessary and sufficient condition to guarantee that a pointwise 
adjunction between pseudoalgebras can be, actually, extended to an adjunction between such pseudoalgebras. 
We show how it is related to commutativity of weighted bilimits, giving our first version of a  
B\'{e}nabou-Roubaud Theorem (Theorem \ref{BFirst}).

We apply our results to the usual context~\cite{Facets, Facets2} of descent theory in 
Section \ref{proofsclassical}: we prove a general version (Theorem \ref{facil1provado}) of the 
embedding results (Theorem \ref{facil1}), we prove another B\'{e}nabou-Roubaud Theorem 
(Theorem \ref{formalbenabou}) and, finally, we give a weak version of Theorem \ref{pseudopullback}.

We finish the paper in Section \ref{Further}: there, we give a stronger result on commutativity 
(Theorem \ref{GALOISI}) and we apply our results to descent theory, proving
Theorem \ref{pseudopullback} and the Galois result of \cite{StreetJane} (Theorem \ref{GALOISII}). 
Finally, we prove that $V$-$\Cat$ can be nicely embedded in the category of internal categories $\Cat (V) $
provided that $V$ satisfies suitable hypotheses. In this situation, we apply
Theorem \ref{pseudopullback} to get effective descent morphisms of the category of 
enriched categories $V$-$\Cat $. We give instances of this result, getting effective descent morphisms of $\Top $-$\Cat $ and $\Cat $-$\Cat$.

This work was realized during my PhD program at University of Coimbra. 
I am grateful to my supervisor Maria Manuel Clementino for her precious help, support and attention. 
I also thank all the speakers of our informal seminar on descent theory for their insightful talks: 
Maria Manuel Clementino, George Janelidze, Andrea Montoli, Dimitri Chikhladze, Pier Basile and Manuela Sobral. 
Finally, I wish to thank Stephen Lack for our brief conversations which helped me to understand 
aspects related to this work about $2$-dimensional category theory, Kan extensions and coherence.

\section{Basic Problem}\label{Basic Problem}

In the context of \cite{Sobral, Facets, Facets2, Facets3, Ivan, THR, Clementino1}, the very 
basic problem of descent is the characterization of effective descent morphisms w.r.t. the basic fibration. 
As a consequence of B\'{e}nabou-Roubaud Theorem~\cite{Equi}, this problem is trivial for  
suitable categories (for instance, for locally cartesian closed categories). 

However there are remarkable examples of nontrivial characterizations. 
The topological case, solved by  Tholen and Reiterman~\cite{THR} and reformulated by Clementino and 
Hofmann~\cite{reforCleHof, CleJane}, is an important example.

Below,  we present some theorems classically used as a framework to deal with this basic problem. 
In this paper, we show that most of these theorems are consequences of a formal theorem presented in 
Section \ref{Formal}, while others are consequences of theorems about bilimits.

Firstly, the most fundamental features of descent theory are the descent category and its related factorization. 
Assuming that $\CCC $ is a category with pullbacks, 
if $\AAA : \CCC ^{\op }\to \CAT $ is a pseudofunctor, the \ref{JTDESCENT} 
 is described by Janelidze and Tholen in \cite{Facets2}.

We show in Section \ref{descent} that the concept of pseudo-Kan extension encompasses these features. 
In fact, the comparison functor and the \ref{JTDESCENT} (up to isomorphism) come from the unit and the 
triangular invertible modification of the (bi)adjunction 
$\left[\t , \CAT\right]_{({PS})}\dashv (\Ps)\Ran _ \t $.

Secondly, for the nontrivial problems, the usual approach to study (basic/universal) 
effective/almost descent morphisms is the embedding in well behaved categories, 
in which ``well behaved category'' means just that we know which are the effective descent morphisms 
of this category. 
For this matter, there are some theorems in \cite{Facets, Sobral}. 
We state below examples of these results: 
\begin{theo} \label{facil1}
Let $U: \CCC\to\DDD $ be a pullback preserving functor between categories with pullbacks.
\begin{enumerate}

\item If $U$ is faithful, then $U$ reflects almost descent morphisms;
\item If $U$ is fully faithful, then $U$ reflects descent morphisms.
\end{enumerate}
\end{theo}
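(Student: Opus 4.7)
The plan is to exploit the fact that $U$ preserves pullbacks to manufacture a commutative square of comparison functors relating $p$ and $U(p)$, and then to deduce the conclusions from elementary facts about (fully) faithful functors.

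First, since $U$ preserves pullbacks, for each object $X$ of $\CCC$ the assignment on arrows gives a slice functor $U/X: \CCC/X \to \DDD/U(X)$, and for each $p: E\to B$ in $\CCC$ the basic descent data (a pullback-coherent isomorphism on $p_1^*(-)$ and $p_2^*(-)$ satisfying the cocycle condition) is carried to basic descent data for $U(p)$. This yields a functor $U_p: \Desc(p) \to \Desc(U(p))$ and a commutative square
\[
\xymatrix{
\CCC/B \ar[r]^-{\phi_p}\ar[d]_-{U/B} & \Desc(p) \ar[d]^-{U_p} \\
\DDD/U(B) \ar[r]^-{\phi_{U(p)}} & \Desc(U(p)),
}
\]
that is, $\phi_{U(p)}\circ(U/B)=U_p\circ\phi_p$ (up to the canonical pullback-preservation isomorphisms of $U$).

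Next, I would check the transfer of faithfulness along slicing and along passage to descent categories. Because morphisms in $\CCC/B$ and in $\Desc(p)$ are just morphisms in $\CCC$ satisfying certain equational conditions, faithfulness of $U$ immediately implies that both $U/B$ and $U_p$ are faithful; and full faithfulness of $U$ likewise gives full faithfulness of $U/B$, the lift of a morphism being automatically compatible with the slice structure by faithfulness.

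For part (1), assume $U(p)$ is of almost descent, so $\phi_{U(p)}$ is faithful. Then $\phi_{U(p)}\circ(U/B)$ is faithful (a composite of faithful functors), whence $U_p\circ\phi_p$ is faithful, whence $\phi_p$ is faithful; so $p$ is of almost descent. For part (2), assume $U$ is fully faithful and $U(p)$ is of descent; then $\phi_{U(p)}$ and $U/B$ are both fully faithful, hence their composite $U_p\circ\phi_p$ is fully faithful. Since $U_p$ is at least faithful, the standard observation that $G\circ F$ fully faithful with $G$ faithful forces $F$ fully faithful gives that $\phi_p$ is fully faithful, so $p$ is of descent.

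The only mild subtlety is setting up $U_p$ so that the square genuinely commutes (strictly, or up to the canonical coherence isomorphisms that accompany a pullback-preserving functor); once this is in place the argument is purely formal. I would expect the author's Section~\ref{proofsclassical} to package exactly this square as a consequence of the general commutativity/pseudo-Kan-extension machinery of Section~\ref{Formal}, so that parts (1) and (2) become instances of a single abstract statement about faithful/fully-faithful legs in an exact square.
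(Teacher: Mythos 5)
Your argument is correct, and its skeleton coincides with the paper's: both proofs reduce to the square $\phi_{U(p)}\circ(U/B)\cong U_p\circ\phi_p$, closure of (fully) faithful functors under composition, and the cancellation facts ($GF$ faithful $\Rightarrow F$ faithful; $GF$ fully faithful and $G$ faithful $\Rightarrow F$ fully faithful). What differs is how the square and its legs are obtained, and here your route is genuinely more elementary. You construct $U_p:\Desc(p)\to\Desc(U(p))$ by hand, transporting descent data along the pullback-preservation isomorphisms of $U$, and check its faithfulness directly from the fact that morphisms of descent data are morphisms of $\CCC$ subject to equations. The paper never touches descent data: it encodes $U$ as an objectwise (fully) faithful pseudonatural transformation between the associated pseudocosimplicial objects $\dot{\Delta}\to\CAT$ (Remark \ref{inducedbasicproblem}), identifies $\Desc(p)$ with $\Ps\Ran_{\j}$ applied to that diagram and $\phi_p$ with the unit of the idempotent pseudomonad $\left\langle \j \right\rangle$ (Theorem \ref{indeeddescent}), so that your $U_p$ and its faithfulness come for free from functoriality of $\Ps\Ran_{\j}$ and the observation that pointwise pseudo-Kan extensions preserve pointwise (fully) faithful transformations; the composition-plus-cancellation step is packaged once for all in the axioms for $\mathfrak{F}$-comparisons (Theorem \ref{descenTsquare}, Corollary \ref{comutatividadealmostdescent}, Theorem \ref{facil1provado}, Corollary \ref{embdddd}). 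Your version is self-contained and needs no $2$-dimensional machinery, but is tied to the basic fibration and the explicit presentation of $\Desc(p)$; the paper's version applies verbatim to arbitrary (fully) faithful morphisms of pseudofunctors $(U,\alpha)$ and to other bilimits. The one subtlety you flag, that the square only commutes up to canonical isomorphism, is harmless for exactly the reason you give: (full) faithfulness is invariant under natural isomorphism, which is also one of the stated closure axioms for the classes $\mathfrak{F}_{{}_{\hhh}}$ in the paper.
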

\begin{rem}
The result on descent morphisms above can be seen as a consequence of Proposition of 2.6 of \cite{Facets}.
\end{rem}

\begin{theo}[\cite{Facets, Sobral}]\label{mergulho}
Let $\CCC $ and $ \DDD $ be categories with pullbacks. If $U: \CCC\to\DDD $ is a fully faithful pullback preserving functor and $U(p)$ is of effective descent in $ \DDD $, then $p$ is of effective descent if and only if it satisfies the following property: whenever the diagram below is a pullback in $\DDD$, there is an object $C$ in $\CCC $ such that $U(C)\cong A$.

$$\xymatrix{ {U(P)}
    \ar[r]^-{}\ar[d]_{}
    &
    {A}
    \ar[d]^{}
    \\
    {U(E)}
    \ar[r]_-{U(p)}
    &
    {U(B)}}$$

\end{theo}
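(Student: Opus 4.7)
The plan is to exploit the commutative square of comparison functors
\[
\xymatrix{
\CCC/B \ar[r]^-{\phi_p} \ar[d]_{\bar U_B} & \Desc(p) \ar[d]^{\tilde U} \\
\DDD/U(B) \ar[r]_-{\phi_{U(p)}} & \Desc(U(p))
}
\]
in which $\bar U_B$ is the functor induced by $U$ on slices and $\tilde U$ is the corresponding functor between descent categories, both well-defined because $U$ preserves pullbacks. Since $U$ is fully faithful, I would first check that $\bar U_B$ and the slice-functor $\bar U_E\colon \CCC/E\to \DDD/U(E)$ are fully faithful; then, as morphisms and axioms of descent data are built from morphisms between iterated pullbacks $E\times_B E$ and $E\times_B E\times_B E$, which $U$ preserves, the full faithfulness of $U$ yields the full faithfulness of $\tilde U$, and its essential image consists precisely of those descent data whose underlying $\DDD/U(E)$-object lies in the essential image of $\bar U_E$.

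Next, Theorem \ref{facil1}(2) gives that $U$ reflects descent morphisms, so $p$ is itself of descent (since $U(p)$ is of effective, hence of, descent), meaning $\phi_p$ is already fully faithful. Combined with the hypothesis that $\phi_{U(p)}$ is an equivalence, the problem reduces to characterizing essential surjectivity of $\phi_p$.

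For the ``only if'' direction, suppose $p$ is of effective descent; given a pullback square with vertex $U(P)$, top right object $A$, and right edge $q\colon A\to U(B)$, the descent datum $\phi_{U(p)}(A,q)$ has underlying object $U(P)=\bar U_E(P)$, so lies in the essential image of $\tilde U$. Writing $\phi_{U(p)}(A,q)\cong \tilde U(Y,\zeta)$ and using that $\phi_p$ is essentially surjective to write $(Y,\zeta)\cong \phi_p(C,q')$, commutativity of the square gives $\phi_{U(p)}(A,q)\cong \phi_{U(p)}(U(C),U(q'))$; the fully faithful $\phi_{U(p)}$ then yields $A\cong U(C)$. Conversely, assuming the property, pick $(X,\xi)\in \Desc(p)$ with underlying $P\in \CCC/E$; applying a quasi-inverse of $\phi_{U(p)}$ to $\tilde U(X,\xi)$ produces $(A,q)\in \DDD/U(B)$ whose associated pullback has vertex isomorphic to $U(P)$. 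The hypothesis yields $C$ with $U(C)\cong A$; full faithfulness of $U$ lifts $q$ to $q'\colon C\to B$; a final diagram chase using that $U$ preserves pullbacks shows $\phi_p(C,q')\cong (X,\xi)$, giving essential surjectivity.

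The main obstacle is the first paragraph: the verification that $\tilde U$ is fully faithful with the stated essential image. Once this bookkeeping on descent data is in place, everything else is a routine chase through the square, so most of the real work is in checking that the cocycle and normalization conditions transfer cleanly between $\CCC$ and $\DDD$ under the preservation and reflection properties of $U$.
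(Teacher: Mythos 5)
Your proof is correct, but it takes a genuinely different route from the paper's. You argue directly at the level of descent data: the commuting square $\tilde U\circ\phi_p\cong\phi_{U(p)}\circ\bar U_B$, the verification that $\tilde U$ is fully faithful with essential image the descent data whose underlying object lifts along $\bar U_E$ (which is the real content, and your sketch of it is sound: morphisms and the cocycle/unit equations live over the iterated pullbacks $E\times_B E$ and $E\times_B E\times_B E$, which $U$ preserves, so full faithfulness of $U$ transfers and reflects them), and then a two-way chase through the square using that $\phi_{U(p)}$ is an equivalence. The paper instead never touches explicit descent data here: it proves a general commutativity result (Theorem \ref{GALOISI}) stating that for an objectwise fully faithful $\alpha:\AAA\to\BBB$ with $\BBB$ of effective $\t$-descent, effective $\t$-descent of $\AAA$ is equivalent to effective descent of a restricted diagram indexed by the non-``pure-structure'' objects; specialized to $\j:\Delta\to\dot{\Delta}$ (where only $\mathsf{1}$ is not pure structure) this collapses to the pseudopullback criterion of Theorem \ref{GALOISII}, whose instantiation at the basic fibrations is exactly the lifting property in the statement. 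The paper's route buys uniformity --- the same lemma yields the Galois theorem of \cite{StreetJane} and works for arbitrary $\mathsf{a}$-inclusions and bicategorically complete codomains --- while yours buys a self-contained, elementary argument whose only imported ingredient is Theorem \ref{facil1}(2) for full faithfulness of $\phi_p$ (which you could even avoid, since $\tilde U\phi_p\cong\phi_{U(p)}\bar U_B$ is already fully faithful and $\tilde U$ is fully faithful). Morally the two proofs are doing the same thing --- your characterization of the essential image of $\tilde U$ is the concrete shadow of the paper's reduction to the objects outside $\mathfrak{S}_{{}_{\j}}$ --- but the decomposition and the level of generality are different.
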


We show in Section \ref{proofsclassical} that Theorem \ref{facil1} is a 
very easy consequence of formal and commuting properties of pseudo-Kan extensions 
(Corollary \ref{comutatividade} and Corollary \ref{comutatividadealmostdescent}) that
follow directly from results of Section \ref{Formal}, 
while we show in Section \ref{Further} 
that Theorem \ref{mergulho} is a consequence of a theorem on bilimits (Theorem \ref{strong commutativity}) 
which also implies the generalized Galois Theorem of \cite{StreetJane}. 
It is interesting to note that, since Theorems \ref{facil1} and \ref{mergulho} are just formal properties, 
they can be applied in other contexts -- for instance, for morphisms between 
pseudofunctors $\AAA : \CCC ^{\op }\to \CAT $ and $\BBB : \DDD ^{\op }\to \CAT $, 
as it is explained in Section \ref{proofsclassical}.

Finally, B\'{e}nabou-Roubaud Theorem~\cite{Equi, Facets} is a celebrated result of Descent Theory 
which allows us to understand some problems via monadicity: it says that monadic $\AAA $-descent theory 
is equivalent to Grothendieck $\AAA$-descent theory in suitable cases, such as the basic fibration. 
We demonstrate in Section \ref{proofsclassical} that it  is also a corollary of formal results of 
Section \ref{Formal}. 

\begin{theo}[B\'{e}nabou-Roubaud~\cite{Equi, Facets}]\label{Roubaud}
Let $\CCC $ be a category with pullbacks. If $\AAA : \CCC ^{\op }\to \CAT $ is a pseudofunctor such that, 
for every morphism $p: E\to B$ of $\CCC $, $A(p)$ has left adjoint $A(p)!$ and the invertible 
$2$-cell induced by $\AAA $ below satisfies the Beck-Chevalley condition, 
then the \ref{JTDESCENT} is pseudonaturally equivalent to the Eilenberg-Moore factorization. 
In other words, assuming the hypotheses above, \textit{Grothendieck $\AAA $-descent theory} 
is equivalent to \textit{monadic descent theory}.

$$\xymatrix{ {\AAA (B) }\ar[d]_-{\AAA (p) }\ar[r]^-{\AAA (p) }&
    \AAA (E) \ar[d]^-{}\ar@{}|{\cong}[dl]\\
    \AAA (E) \ar[r]_-{} & \AAA ( E\times _ p E)}$$

\end{theo}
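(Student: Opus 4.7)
The plan is to realise both sides of the equivalence as factorisations induced by a common pseudo-Kan extension. On the Grothendieck side, Theorem \ref{indeeddescent} identifies the descent factorisation (\ref{JTDESCENT}) for $p$ with the factorisation furnished by the right pseudo-Kan extension $\Ps\Ran_{\j}\AAA^{p}$, where $\AAA^{p}:\Delta\to\CAT$ is the pseudocosimplicial object obtained by applying $\AAA$ to the truncated \v{C}ech nerve of $p$; in particular $\Desc_{\AAA}(p)$ is the conical bilimit of $\AAA^{p}$ and $\phi_{p}$ is the induced comparison from $\AAA(B)$. On the monadic side, the adjunction $\AAA(p)_{!}\dashv\AAA(p)$ determines a pseudomonad $T=\AAA(p)\AAA(p)_{!}$ on $\AAA(E)$, and Section \ref{EilenBERG} presents the Eilenberg-Moore object $T\textrm{-}\Alg$ as the bilimit of the canonical cosimplicial resolution $\mathrm{Res}_{T}:\Delta\to\CAT$ with $\mathrm{Res}_{T}(\mathsf{n})=T^{n}\AAA(E)$, whose associated factorisation through $\AAA(B)$ is the Eilenberg-Moore factorisation.

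The second step is to use the Beck-Chevalley hypothesis to exhibit a pseudonatural equivalence $\AAA^{p}\simeq \mathrm{Res}_{T}$. The hypothesis gives, for the kernel pair of $p$, an invertible mate
\[
\AAA(\pi_{2})_{!}\,\AAA(\pi_{1})\;\Longrightarrow\;\AAA(p)\AAA(p)_{!}\;=\;T;
\]
iterated application to the higher fibre powers of $p$, combined with the pseudofunctoriality of $\AAA$ and the mate calculus, yields pointwise equivalences $\AAA^{p}(\mathsf{n})\simeq T^{n}\AAA(E)$ which I would show assemble into a pseudonatural equivalence of the whole pseudocosimplicial diagrams. Because $\Ps\Ran_{\j}$ is a right biadjoint (Section \ref{pseudoKAN}), it preserves pseudonatural equivalences, so this equivalence of diagrams transports to a pseudonatural equivalence of the corresponding factorisations, identifying $\phi_{p}$ with the Eilenberg-Moore comparison. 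Pseudonaturality in $p$ is then automatic, since every construction used -- kernel pair, mate, pseudo-Kan extension -- is $2$-functorial in $p$.

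The main obstacle lies in the second step: promoting the pointwise Beck-Chevalley invertibility into a pseudonatural equivalence of the \emph{entire} pseudocosimplicial diagrams requires a coherence verification showing that the iterated mates respect the face and degeneracy pseudomaps up to coherent isomorphism. This is precisely the content of the doctrinal-adjunction machinery of Section \ref{BECK}, via which one extends pointwise adjunctions to genuine adjunctions between pseudoalgebras. Once this coherence is in hand, the remainder is a formal application of the universal property of pseudo-Kan extensions established in Section \ref{pseudoKAN}.
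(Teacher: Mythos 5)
Your first step agrees with the paper: Theorem \ref{indeeddescent} does identify the \ref{JTDESCENT} with the factorization coming from $\Ps\Ran_{\j}(\AAA_p\circ\j)$. The second step, however, contains an error that the rest of the argument cannot survive. The Beck--Chevalley hypothesis produces an invertible \emph{$2$-cell} --- an isomorphism of composite functors $\AAA(\pi_2)_!\,\AAA(\pi_1)\cong\AAA(p)\AAA(p)_!$ as endofunctors of $\AAA(E)$ --- not an equivalence of \emph{categories} between $\AAA(E\times_BE)$ and anything built from $T$; for the basic fibration on $\Set$ your claimed pointwise equivalence would require $\Set/(E\times_BE)\simeq\Set/E$. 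Moreover ``$T^n\AAA(E)$'' is not a category ($T$ is an endofunctor of $\AAA(E)$), and Section \ref{EilenBERG} does not present the Eilenberg--Moore object as a conical bilimit of a $\Delta$-indexed diagram of iterates of $T$: it presents it as the pointwise pseudo-Kan extension along $\m:\mathsf{Mnd}\to\mathsf{Adj}$, a weighted bilimit over a one-object $2$-category whose single vertex is $\AAA(E)$, and whose universal cone carries a non-invertible algebra $2$-cell --- so it cannot be a conical bilimit of a cosimplicial object in the way you need. The two sides of B\'{e}nabou--Roubaud are bilimits of diagrams of different shapes with different vertices; the theorem is precisely that the two bilimits agree, and that cannot be obtained by transporting an equivalence of the diagrams themselves through $\Ps\Ran_{\j}$, because no such equivalence exists.

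The paper's mechanism is instead a commutation-of-bilimits argument. One forms the d\'{e}calage pseudonatural transformation $\Suc^{\AAA_p}:\AAA_p\longrightarrow\AAA_p\circ\Suc$, whose component at $\mathsf{0}$ is $\AAA(p)$ and whose components over $\Delta$ are $\AAA$ applied to split epimorphisms between fibre powers of $p$, hence monadic once they have left adjoints (Lemma \ref{Monadicitykey}). Doctrinal adjunction (Lemma \ref{BECKzero}) turns the Beck--Chevalley condition into an extension of the corresponding $\mathsf{2}$-indexed diagram in $\left[\dot{\Delta},\CAT\right]_{PS}$ to an $\mathsf{Adj}$-indexed one; since $\AAA_p\circ\Suc\simeq\AAA_{E\times_BE\to E}$ is of effective $\j$-descent (the shift property), Corollary \ref{comutatividadeequi} --- the descent object of the Eilenberg--Moore objects coincides with the Eilenberg--Moore object of the descent objects --- gives that $\AAA_p$ is of effective $\j$-descent if and only if $\AAA(p)$ is monadic, and Theorem \ref{formalbenabou} identifies the two factorizations. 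If you want to keep the spirit of your outline, the diagram to compare $\AAA_p$ with is not a bar resolution of $T$ but its own d\'{e}calage $\AAA_p\circ\Suc$, and the comparison is a morphism of diagrams satisfying Beck--Chevalley, not an equivalence of diagrams.
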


\subsection{Open problems}\label{p}

Clementino and Hofmann~\cite{Clementino1} studied the problem of characterization
of effective descent morphisms for $(T,V)$-categories provided that $V$ is a lattice. 
To deal with this problem, 
they used the embedding  $(T,V)\textrm{-}\Cat\to (T,V)\textrm{-}\Grph$ and 
Theorems \ref{facil1} and \ref{mergulho}. However, 
for more general monoidal categories $V$, 
such inclusion is not fully faithful and the characterization of effective descent morphisms still is 
an open problem even for the simpler case of the category of enriched categories $V\textrm{-}\Cat $.
 
As an application, 
we give some results about effective descent morphisms of $V$-$\Cat $. 
They are consequences of formal results given in this paper on effective descent morphisms of 
categories constructed from other categories:
more precisely, $2$-dimensional limits of categories. 

More precisely, firstly we prove Theorem \ref{pseudopullback} in Section \ref{Further}.
Then, we prove that, if $V$ is a cartesian closed category satisfying suitable hypotheses and
$\Cat ( V ) $ is the category of internal categories, 
there is a full inclusion 
$V$-$\Cat\to \Cat( V ) $
which is the pseudopullback of a suitable fully faithful functor $\Set\to V $ along the projection of the underlying 
object of objects 
$\Cat( V )\to V $. 
In this case, 
we conclude that the inclusion reflects effective descent morphisms by Theorem \ref{pseudopullback}. 
Since the characterization of effective descent morphisms for the category of 
internal categories in our setting was already done 
by Le Creurer~\cite{Ivan}, we actually get effective descent morphisms for the category of
 $V$-enriched 
categories.

\begin{theo}\label{pseudopullback}
Assume that the diagram of categories with pullbacks  
\[\xymatrix{ \BB \ar[r]^S \ar[d]_ Z & \CCC \ar[d]^F\\
\ar@{}|{\cong}[ru]\DDD \ar[r]_G &\EEEE }\]
is a pseudopullback such that all the functors are pullback preserving functors. 
If $p$ is a morphism in $\BB $ such that $ S(p), Z(p) $ are of effective descent and $FS(p)$ is a descent morphism, 
then $p$ is of effective descent.
\end{theo}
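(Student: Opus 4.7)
The plan is to realise the descent factorization of $p$ (with respect to the basic fibration) as a morphism between pseudopullback cones in $\CAT$ and to conclude by an equivalence criterion for such morphisms. Write $\phi_q : \BB/b \to \Desc(q)$, and analogously in $\CCC, \DDD, \EEEE$, for the comparison functor associated to a morphism $q : e\to b$ by the basic fibration; the hypotheses amount to saying that $\phi_{S(p)}$ and $\phi_{Z(p)}$ are equivalences and that $\phi_{FS(p)}$ is fully faithful, and the goal is to prove that $\phi_p$ is an equivalence.

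First, since $F, G, S, Z$ preserve pullbacks, they induce (pseudo)natural morphisms between the basic fibrations of the four categories. Combined with the pseudopullback assumption on the original square, this yields, for each object $b$ of $\BB$, a slice-wise pseudopullback presentation
\[
\BB/b \; \simeq \; \CCC/S(b) \times_{\EEEE/FS(b)} \DDD/Z(b).
\]
The next step is to transport this presentation across the descent construction. Since descent objects are conical bilimits of pseudocosimplicial objects (cf.\ Section \ref{descent}) and pseudopullbacks are bilimits, the commutativity results of \ref{commu} give an equivalence
\[
\Desc(p) \; \simeq \; \Desc(S(p)) \times _{\Desc(FS(p))} \Desc(Z(p)),
\]
and identify $\phi_p$, up to this equivalence, with the functor between pseudopullbacks induced by $\phi_{S(p)}, \phi_{Z(p)}, \phi_{FS(p)}$.

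I would then invoke an equivalence lemma for pseudopullbacks in $\CAT$ — a particular case of the strong commutativity Theorem \ref{strong commutativity} announced in the introduction: a morphism between pseudopullback cones is an equivalence whenever its two lateral components are equivalences and its base component is fully faithful. Essential surjectivity uses the fully faithfulness of the base component to reconcile pairs of objects in $\CCC/S(b)$ and $\DDD/Z(b)$ with compatible images in $\EEEE/FS(b)$, while fully faithfulness is immediate from the hom-set description of a pseudopullback.

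The main obstacle, in my view, lies not in this final equivalence lemma (which is essentially formal) but in the compatibility step: identifying $\phi_p$ with the morphism of pseudopullbacks induced by $\phi_{S(p)}, \phi_{Z(p)}, \phi_{FS(p)}$ requires careful bookkeeping with the pseudonatural structures on the basic fibrations, and is exactly where the pullback-preservation of $F, G, S, Z$ is used, through the formal commutativity of descent with pseudopullback developed in Section \ref{commu}.
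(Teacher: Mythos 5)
Your proposal is correct and follows essentially the same route as the paper: the paper likewise presents the basic fibration $(\mbox{ })^\ast _p$ of $\BB$ as a pseudopullback of the basic fibrations of $\CCC$, $\DDD$, $\EEEE$ (Remark \ref{inducedbasicproblem} together with the observation that pseudopullbacks of functors induce pseudopullbacks of overcategories), commutes the descent construction with this pseudopullback, and concludes from the corollary of Theorem \ref{GALOISI} (via Corollary \ref{comutatividadeforte}) asserting that for a pseudopullback diagram it suffices that the base vertex be of descent (fully faithful comparison) while the two side vertices are of effective descent. The only cosmetic difference is that you verify the final equivalence lemma for maps of pseudopullbacks directly, whereas the paper obtains it by specializing Theorem \ref{GALOISI} to the pseudopullback shape, in which $o$ is the unique $\h$-pure structure object.
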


\section{Formal Results}\label{Formal}

Our perspective herein is that, instead of considering the problem
of understanding the image of a generic (pseudo)functor,  
the main theorems of descent theory usually 
deal with the problem  of 
understanding the pseudoalgebras of  (fully) property-like 
(pseudo)monads~\cite{KELLYLACK1997}. 
It is easier to study these pseudoalgebras: they are just the objects that can 
be endowed with a unique pseudoalgebra structure 
(up to isomorphism), or, more appropriately, the effective descent points/objects.

Thereby results on pseudoalgebra structures are in the core of our formal approach. In this section, 
we give the main results of this paper in this direction, restricting the scope to idempotent
pseudomonads. This setting is  sufficient to deal with the classical descent problem of \cite{Facets, Facets2}. 

We start by recalling basic  
results of bicategory theory~\cite{BE, Street4, Street5}. 
To fix notation, we give the definition of the \textit{tricategory of $2$-categories, pseudofunctors, 
pseudonatural transformations and modifications}, denoted by $2$-$\CAT $. We refer to \cite{FLN} for the omitted 
coherence axioms of \ref{startsettingPseudofunctor} to \ref{adjunction} and for the proof of Lemma \ref{counitadjunction}.

Henceforth, in a given $2$-category, we always denote by $\cdot $ the vertical composition of 
$2$-cells and by $\ast $ their horizontal composition.

\begin{defi}[Pseudofunctor]\label{startsettingPseudofunctor}
Let $\aaa , \bbb $ be $2$-categories. A \textit{pseudofunctor} $\AAA:\aaa\to \bbb $ is a pair $(\AAA , \aaaa ) $ with the following data:
\begin{itemize}\renewcommand\labelitemi{--}
\item Function $\AAA : \obj (\aaa )\to \obj (\bbb ) $;
\item Functors $\AAA _{{}_{XY}}: \aaa (X,Y)\to \bbb (\AAA (X), \AAA (Y)) $;
\item For each pair $g: X\to Y , h: Y\to Z $ of $1$-cells in $\aaa $, an invertible $2$-cell in $\bbb $: $\aaaa _ {{}_{hg}}: \AAA (h) \AAA (g)\Rightarrow \AAA (hg) $;
\item For each object $X$ of $\aaa $, an invertible $2$-cell $\aaaa _ {{}_{X}}: \Id _{{}_{\AAA X}}\Rightarrow \AAA (\Id _ {{}_X} )$ in $\bbb $;
\end{itemize} 
subject to \textit{associativity}, \textit{identity} and \textit{naturality} coherence axioms.
\end{defi}

If $\AAA = (\AAA , \aaaa): \aaa\to\bbb $ and $(\BBB , \bbbb ): \bbb\to\ccc $
are pseudofunctors, we define the composition as follows: $ \BBB\circ \AAA := \left( \BBB\AAA, \left(\bbbb\aaaa\right)\right)$,
in which $(\bbbb\aaaa )  _ {{}_{hg}}:= \BBB(\aaaa _ {{}_{hg}})\cdot \bbbb _ {{}_{\AAA(h)\AAA(g)}} $ and $(\bbbb\aaaa )  _ {{}_{X}}:=\BBB(\aaaa _ {{}_{X}})\cdot \bbbb _ {{}_{\AAA(X)}} $. This composition is associative and it has trivial identities. A pseudonatural transformation between pseudofunctors $\AAA\longrightarrow\BBB $ is a natural transformation in which the usual (natural) commutative squares are replaced by invertible $2$-cells plus coherence. 

\begin{defi}[Pseudonatural transformation]\label{pseudonaturaltransformation}
If $\AAA , \BBB:\aaa\to \bbb $ are pseudofunctors, a \textit{pseudonatural transformation} $\alpha : \AAA\longrightarrow\BBB $ is defined by:
\begin{itemize}\renewcommand\labelitemi{--}
\item For each object $X$ of $\aaa $, a $1$-cell $\alpha _{{}_X}: \AAA (X)\to\BBB (X) $ of $\bbb $;
\item For each $1$-cell $g:X\to Y $ of $\aaa $, an invertible $2$-cell $\alpha _{{}_g}: \BBB (g) \alpha _{{}_X}\Rightarrow \alpha _{{}_Y}\AAA (g)  $ of $\bbb $;
\end{itemize} 
such that coherence axioms of associativity, identity and naturality hold.
\end{defi}

Firstly, the vertical composition, denoted by $\beta\alpha $, of two pseudonatural transformations $\alpha : \AAA\Rightarrow \BBB$, $\beta : \BBB\Rightarrow \CCCC $ is defined by
$$(\beta\alpha) _ {{}_W} :=\beta _{{}_W}\alpha _ {{}_W} $$

$$\xymatrix{  
\AAA (W)\ar[r]^{\beta _{{}_W}\alpha _ {{}_W}}\ar[d]_{\AAA (f) }\ar@{}[dr]|{\xLeftarrow{(\beta\alpha ) _{{}_f}} } 
&\CCCC (W)\ar@{}[drr]|{:=}
\ar[d]^{\CCCC (f)}
&& 
\AAA (W)\ar[rr]^{\alpha _{{}_W}}\ar[d]^{\AAA (f)}
\ar@{}[drr]|{\xLeftarrow{\alpha  _{{}_f}} } 
&& \BBB (W)\ar[d]_{\BBB (f) } \ar[r]^{\beta _ {{}_W}} 
\ar@{}[dr]|{\xLeftarrow{\beta  _{{}_f}} }
& \CCCC (W) \ar[d]^{\CCCC (f)} 
\\
\AAA (X)\ar[r]_{\beta _ {{}_X}\alpha _ {{}_X}} & \CCCC (X)
&& \AAA (X)\ar[rr]_ {\alpha _ {{}_X}} && \BBB (X)\ar[r]_{\beta _ {{}_X}} &\CCCC (X)
}$$
\normalsize
Secondly, let $(\UUU ,\uuuu ), (\LLL , \llll ) : \bbb\to \ccc$ and $ \AAA , \BBB : \aaa\to\bbb $ be pseudofunctors. If $\alpha : \AAA\longrightarrow\BBB $, $\lambda :\UUU\longrightarrow \LLL $ are pseudonatural transformations, then the horizontal composition of $\UUU $ with $\alpha $, denoted by $\UUU\alpha $, is defined by: $(\UUU\alpha ) _{{}_W} := \UUU(\alpha _{{}_W})$ and $(\UUU\alpha )_ {{}_f} := \left(\uuuu _{{}_{\alpha _{{}_X}\AAA (f)}}\right) ^{-1} \cdot \UUU(\alpha _{{}_f})\cdot \uuuu _{{}_{\BBB (f)\alpha _{{}_W}}}$, while the composition $\lambda \AAA $ is defined trivially. Thereby, we get the definition of the horizontal composition 
$$\left( \lambda\ast\alpha \right) := (\lambda \BBB )(\UUU\alpha  )\cong  (\LLL\alpha )(\lambda \AAA ).$$
Similarly, we get the three types of compositions of modifications.

\begin{defi}[Modification]
Let $\AAA , \BBB:\aaa\to \bbb $ be pseudofunctors. If $\alpha , \beta : \AAA\Rightarrow\BBB $ are pseudonatural transformations, a \textit{modification} 
$ \Gamma : \alpha\Longrightarrow \beta $
is defined by the following data:
\begin{itemize}\renewcommand\labelitemi{--}
\item For each object $X$ of $\aaa $, a $2$-cell $\Gamma _{{}_X}: \alpha _ {{}_X}\Rightarrow\beta _{{}_X} $ of $\bbb $
subject to one coherence axiom of naturality.		
\end{itemize} 								
\end{defi}

It is straightforward to verify that $2\textrm{-}\CAT $ is a tricategory which is locally a $2$-category. In particular, we denote by $[\aaa , \bbb ]_{PS} $ the $2$-category of pseudofunctors $\aaa\to\bbb $, pseudonatural transformations and modifications. Also, we have the \textit{bicategorical Yoneda lemma}~\cite{Street4} and, hence, the bicategorical Yoneda embedding $\YYY :\aaa\to [\aaa ^{\op }, \CAT ]_ {PS}$ is locally an equivalence (\textit{i.e.} it induces equivalences between the hom-categories).

\begin{lem}[Bicategorical Yoneda Lemma~\cite{Street4}]
The Yoneda embedding 
$$\YYY :\aaa\to [\aaa ^{\op }, \CAT ]_ {PS} : \qquad X\mapsto \aaa (- , X) $$
is locally an equivalence.
\end{lem}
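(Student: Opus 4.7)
The plan is to construct, for each pair $X, Y$ of objects of $\aaa$, a functor
$$\Phi_{XY} : [\aaa^{\op}, \CAT]_{PS}(\aaa(-,X), \aaa(-,Y)) \to \aaa(X, Y)$$
and verify that it is a pseudoinverse of $\YYY_{{}_{XY}}$. On objects, $\Phi_{XY}$ sends a pseudonatural transformation $\alpha : \aaa(-,X)\Rightarrow\aaa(-,Y)$ to the $1$-cell $\alpha_{{}_X}(\id_{{}_X}) : X\to Y$, and on morphisms it sends a modification $\Gamma : \alpha\Rrightarrow\beta$ to the $2$-cell $\Gamma_{{}_X}(\id_{{}_X})$. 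Functoriality is immediate, since vertical composition of modifications is evaluated componentwise.

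The first composite $\Phi_{XY}\circ\YYY_{{}_{XY}}$ is the identity on the nose: for $f : X\to Y$, the representable pseudonatural transformation $\YYY(f)$ is given by post-composition, so $\YYY(f)_{{}_X}(\id_{{}_X}) = f\cdot \id_{{}_X} = f$, using the strict identity law of the $2$-category $\aaa$. Similarly, $\YYY$ and $\Phi$ commute strictly with $2$-cells, so this composite is in fact equal to the identity functor.

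For the other composite, given a pseudonatural transformation $\alpha:\aaa(-,X)\Rightarrow\aaa(-,Y)$, I would construct a natural isomorphism $\Theta^{\alpha} : \YYY(\alpha_{{}_X}(\id_{{}_X})) \Rightarrow \alpha$. For each object $W$ of $\aaa$ and each $g:W\to X$, the pseudonaturality invertible $2$-cell $\alpha_{{}_g}$ of $\alpha$ is a natural isomorphism fitting in the square whose legs are $g^{\ast} = (-)\cdot g$ and whose horizontal arrows are $\alpha_{{}_X}$ and $\alpha_{{}_W}$. Evaluating the component $(\alpha_{{}_g})_{{}_{\id_{{}_X}}}$ and using the strict unit $g^\ast(\id_{{}_X}) = g$ yields an invertible $2$-cell
$$\Theta^{\alpha}_{{}_{W, g}} : \alpha_{{}_X}(\id_{{}_X})\cdot g \xrightarrow{\sim} \alpha_{{}_W}(g),$$
which is the component at $g$ of a natural isomorphism of functors $\aaa(W, X)\to\aaa(W, Y)$, as required to define a modification.

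The main obstacle is verifying coherence: I must check that $\Theta^{\alpha}$ is genuinely a pseudonatural transformation of pseudonatural transformations (\emph{i.e.}\ a modification), and that the assignment $\alpha\mapsto\Theta^{\alpha}$ is itself natural in $\alpha$, so that $\Theta$ defines a natural isomorphism $\YYY_{{}_{XY}}\circ\Phi_{XY}\cong\Id$. Both verifications come down to diagram chases with the pseudonaturality $2$-cells $\alpha_{{}_h}$ and the pseudofunctoriality constraints of the representable $\aaa(-,X)$: for a $1$-cell $h: W'\to W$, the required compatibility of $\Theta^{\alpha}$ with the pseudonaturality squares of $\YYY(\alpha_{{}_X}(\id_{{}_X}))$ and $\alpha$ reduces, after cancelling the representable constraints, to the associativity coherence axiom of $\alpha$ applied to the composite $\id_{{}_X}\cdot h\cdot g$ (together with the identity coherence to eliminate $\id_{{}_X}$). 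Naturality in $\alpha$ and compatibility with modifications are then routine. This is the classical argument of Street; the coherence calculations are of the same flavour as those referenced from \cite{FLN}.
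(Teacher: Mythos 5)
Your argument is correct and is precisely the classical proof of Street that the paper invokes without reproducing: the paper states this lemma with a citation to \cite{Street4} and gives no proof of its own. Your construction of the pseudoinverse by evaluation at $\Id_{{}_X}$, the strict identity $\Phi_{XY}\circ\YYY_{{}_{XY}}=\Id$, and the invertible modification built from the components $(\alpha_{{}_g})_{{}_{\Id_{{}_X}}}$ (whose modification axiom and naturality in $\alpha$ reduce to the coherence axioms of pseudonatural transformations and modifications) is exactly the standard route, so nothing further is needed.
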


\begin{defi}[Bicategorically representable]
A pseudofunctor $\AAA : \aaa\to\CAT $ is called \textit{bicategorically representable} if there is an object $W$ of $\aaa $ such that $\AAA $ is pseudonaturally equivalent to $ \aaa (W,-): \aaa\to\CAT $. In this case, $W$ endowed with a pseudonatural equivalence $\AAA\simeq\aaa (W,-) $ is called the \textit{bicategorical representation} of $\AAA $. 
\end{defi}

By the bicategorical Yoneda lemma, if it exists, a bicategorical representation of a pseudofunctor is unique up to equivalence. 

\begin{defi}[Bicategorical reflection]\label{bicategoricalreflection}
Let $\LLL : \aaa\to\bbb $ be a pseudofunctor and $X$ an object of $\bbb $ such that $\bbb (\LLL-, X): \aaa ^{\op }\to\CAT $ has a bicategorical representation 
$\UUU (X) $.
If $\varepsilon _ {{}_ {X}}:\LLL\UUU (X)\to X $ denotes the image
  of the identity  on $\UUU (X) $ by the equivalence 
$$\aaa (\UUU (X), \UUU (X)) \simeq \bbb (\LLL\UUU (X), X), $$
the pair $(\UUU (X), \varepsilon _ {{}_ {X}} ) $ is called the \textit{right bicategorical reflection} of $X$ along $\LLL $.
In this case, we often omit the morphism and say that $\UUU (X) $ is the right bicategorical reflection and 
$\varepsilon _ {{}_{X}}$ is the \textit{universal arrow} or \textit{counit} of the right bicategorical reflection.
\end{defi}

Since bicategorical representations are unique up to equivalence, right bicategorical reflections are unique up to equivalence as well. 
This means that, whenever $(\UUU (X)', \varepsilon _ {{}_ {X}}' ) $ and $(\UUU (X), \varepsilon _ {{}_ {X}} ) $ are right bicategorical reflections
of $X$ along $\LLL $, there exists an equivalence $\underline{v}: \UUU (X)'\simeq \UUU (X) $ such that there is an invertible 
$2$-cell $\varepsilon _ {{}_ {X}} \LLL (\underline{v} )\cong \varepsilon _ {{}_ {X}}' $.

\begin{rem}\label{rightbicategoricalreflectioncounit}
In the context of the definition above, it is easy to verify that $(\UUU (X), \varepsilon _ {{}_ {X}} ) $ is a right bicategorical reflection of
$X$ along $\LLL $ if and only if 
$$\aaa (-, \UUU (X)) \to \bbb (\LLL -, X), \quad f\mapsto  \varepsilon _ {{}_ {X}}\, \LLL (f) $$
defines a pseudonatural equivalence.   
\end{rem}

\begin{rem}
The dual notion is that of \textit{left bicategorical reflection}. Namely, if it exists, the left bicategorical reflection of $X$ along $\LLL $ is
the right bicategorical reflection of $$\LLL ^\op : \aaa ^\op\to\bbb ^\op .$$ Hence, if it exists, it consists of 
a pair $(X _ \LLL , \rho _ {{}_{X}} ) $ in which $X _ \LLL $ is an object of $\aaa $ and  $\rho _ {{}_X}: X\to \LLL (X_ \LLL ) $ is a morphism
in $\bbb $ such that 
$$\aaa (X_\LLL , -) \to \bbb (X, \LLL - ), \quad g\mapsto  \LLL (g)\, \rho _ {{}_ {X}}  $$
is a pseudonatural equivalence.
\end{rem}

We say that $\LLL $ is \textit{left biadjoint} to $\UUU : \bbb\to\aaa$ if, for every object $X$ of 
$\bbb $, $\UUU(X) $  is the  right bicategorical reflection of $X$ along $\LLL$. In this case, we say that 
$\UUU $ is \textit{right biadjoint} to $\LLL $. This definition of biadjunction is equivalent to 
Definition \ref{adjunction}.

\begin{defi}[Biadjunction]\label{adjunction}
A pseudofunctor $\LLL : \aaa\to \bbb $ is
\textit{left biadjoint} to $\UUU $ if there exist
\begin{enumerate}
\item
pseudonatural transformations $\eta :\Id _ {\aaa } \longrightarrow \UUU\LLL$ and
$\varepsilon :\LLL\UUU \longrightarrow \Id _ { \bbb }$
\item
invertible modifications
$s : \Id _{\LLL } \Longrightarrow (\varepsilon \LLL) \cdot (\LLL\eta)$
and
$t : (\UUU\varepsilon) \cdot (\eta \UUU) \Longrightarrow \Id _{\UUU }$
\end{enumerate}
satisfying coherence equations. In this case, $(\LLL\dashv \UUU , \eta , \varepsilon , s, t ): \aaa\to\bbb $ is a \textit{biadjunction}. Sometimes we omit the invertible modifications, denoting a biadjunction by $(\LLL\dashv \UUU , \eta , \varepsilon ) $.
\end{defi}

By the bicategorical Yoneda lemma, if $\LLL : \aaa\to\bbb $ is left biadjoint, its right biadjoint $\UUU : \bbb\to\aaa $ is unique up to pseudonatural equivalence. 
Furthermore, if $\LLL$ is left $2$-adjoint, it is left biadjoint.

\begin{defi}\label{lcaolequivalence}
A pseudofunctor $\UUU $ is a \textit{local equivalence} if
it induces equivalences between the hom-categories. 
\end{defi}

\begin{lem}\label{counitadjunction}
A right biadjoint $\UUU $ is a local equivalence if and only if the counit of the biadjunction 
is a pseudonatural equivalence.
\end{lem}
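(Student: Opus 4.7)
The plan is to use the bicategorical Yoneda lemma to reduce the condition that each component $\varepsilon_{{}_X}$ is an equivalence in $\bbb$ to a condition on hom functors, and then to identify those hom functors with the local-equivalence hom functors of $\UUU$ via the biadjunction.

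First, for each object $X$ of $\bbb$, the biadjunction restricted to the first argument $\UUU(X)$ yields a pseudonatural equivalence
$$\Phi _{{}_{X,Y}}:\bbb (\LLL\UUU (X), Y)\xrightarrow{\;\simeq\;} \aaa (\UUU (X), \UUU (Y)),\qquad g\mapsto \UUU (g)\cdot \eta _{{}_{\UUU (X)}},$$
with $\Phi _{{}_{X,Y}}$ an equivalence of categories for every $Y$; indeed, this is simply the biadjunction-induced pseudonatural equivalence at the object $\UUU (X)$, which by Remark \ref{rightbicategoricalreflectioncounit} characterizes $\UUU (X)$ as the right bicategorical reflection of $X$ along $\LLL $.

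Second, I would verify that the hom functor $\UUU _ {{}_{XY}}: \bbb (X,Y)\to \aaa (\UUU (X),\UUU (Y))$ is naturally isomorphic to the composite
$$\bbb (X,Y)\xrightarrow{\bbb (\varepsilon _{{}_X}, Y)} \bbb (\LLL\UUU (X), Y)\xrightarrow{\Phi _{{}_{X,Y}}} \aaa (\UUU (X), \UUU (Y)).$$
For $f\in \bbb (X, Y)$, the composite sends $f$ to $\UUU (f\varepsilon _{{}_X})\cdot \eta _{{}_{\UUU (X)}}$. Using the invertible structure $2$-cell $\uuuu _{{}_{f,\varepsilon _{{}_X}}}$ of the pseudofunctor $\UUU $ and then the triangular invertible modification $t:(\UUU\varepsilon )\cdot (\eta \UUU )\Longrightarrow \Id _{\UUU }$, one obtains
$$\UUU (f\varepsilon _{{}_X})\cdot \eta _{{}_{\UUU (X)}} \;\cong\; \UUU (f)\cdot \UUU (\varepsilon _{{}_X})\cdot \eta _{{}_{\UUU (X)}} \;\cong\; \UUU (f),$$
and naturality in $f$ follows from the naturality axioms for $\uuuu $ and $t$. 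Consequently, since $\Phi _{{}_{X,Y}}$ is always an equivalence, the functor $\UUU _{{}_{XY}}$ is an equivalence (for every $Y$, with $X$ fixed) if and only if $\bbb (\varepsilon _{{}_X}, Y)$ is an equivalence for every $Y$.

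Third, by the bicategorical Yoneda lemma applied in $\bbb $, the representable pseudofunctor $\bbb (-, -)$ reflects equivalences in the first variable: a $1$-cell $\varepsilon _{{}_X}:\LLL\UUU (X)\to X$ of $\bbb $ is an equivalence if and only if the pseudonatural transformation $\bbb (\varepsilon _{{}_X}, -): \bbb (X, -)\longrightarrow \bbb (\LLL\UUU (X), -)$ is a pseudonatural equivalence, that is, if and only if $\bbb (\varepsilon _{{}_X}, Y)$ is an equivalence of categories for every $Y$. Combining this with the previous step, one gets that $\UUU _{{}_{XY}}$ is an equivalence for all $X, Y$ if and only if $\varepsilon _{{}_X}$ is an equivalence in $\bbb $ for every $X$, which is the statement that $\varepsilon $ is a pseudonatural equivalence. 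The main obstacle is the coherence check in the second step, namely tracking the $2$-cells $\uuuu $ and $t$ to make the identification $\UUU _{{}_{XY}}\cong \Phi _{{}_{X,Y}}\circ \bbb (\varepsilon _{{}_X}, Y)$ a genuine natural isomorphism of functors; once this identification is in hand, the rest is a routine application of the Yoneda-type reflection of equivalences.
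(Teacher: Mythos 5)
Your argument is correct. Note that the paper itself does not prove this lemma — it explicitly defers the proof to the reference \cite{FLN} — so there is no in-text proof to compare against; but your reduction of $\UUU _{{}_{XY}}$ to $\Phi _{{}_{X,Y}}\circ \bbb (\varepsilon _{{}_X}, Y)$ via the triangle modification $t$, followed by the two-out-of-three property for equivalences of categories and the Yoneda-type reflection of equivalences by the representables $\bbb (-, Y)$, is exactly the standard argument for this statement, and each step (including the coherence check you flag in the second step) is routine and goes through.
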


\subsection{Idempotent Pseudomonads}\label{idempotent}

Since we deal only with idempotent pseudomonads, 
we give an elementary approach focusing on them. 
The main benefit of this approach is that idempotent pseudomonads 
have only free pseudoalgebras. For this reason, assuming that $\eta $ is the unit of an 
idempotent pseudomonad $\TTT $,
an object $X$ can be endowed with a $\TTT $-pseudoalgebra structure if and only 
if $\eta _ {{}_X}: X\to \TTT(X) $ is an equivalence.

Recall that a \textit{pseudomonad} $\TTT$ on a $2$-category $\hhh $ consists of a sextuple 
$(\TTT, \mu , \eta ,  \Lambda, \rho, \Gamma)$, in which $\TTT :\hhh\to\hhh $ is a 
pseudofunctor, $\mu : \TTT ^2\longrightarrow \TTT,  \eta : \Id _ {{}_\hhh }\longrightarrow \TTT $ 
are pseudonatural transformations and
$$\xymatrix{  \TTT\ar@/_4ex/@{=}[dr]\ar[r]^-{\eta _ {{}_\TTT }}\ar@{}[dr]|-{\xLeftarrow{\hskip .3em \Lambda\hskip .3em }} 
& \TTT ^2\ar[d]|-{\mu}
& \TTT\ar[l]_-{\TTT\eta }\ar@/^4ex/@{=}[dl]\ar@{}[dl]|-{\xLeftarrow{\hskip .3em \rho\hskip .3em }} &&
\TTT ^3\ar[r]^{\TTT\mu}\ar[d]_{\mu _ \TTT}\ar@{}[dr]|{\xLeftarrow{\hskip .4em \Gamma\hskip .4em }}&
\TTT^2\ar[d]^{\mu }
\\
&\TTT&  && 
\TTT^2 \ar[r]_ {\mu } &
\TTT                    }$$ 
are invertible modifications satisfying the following coherence equations~\cite{Marmolejo1, FLN}:
\begin{itemize}\renewcommand\labelitemi{--}
\item Identity:
$$\xymatrix{ &
\TTT ^2\ar[dl]_-{\TTT \eta \TTT }\ar[dr]^-{\TTT \eta\TTT }\ar[dd]|-{\Id _ {{}_{\TTT ^2}}}
&
&&
&
\TTT ^2\ar[d]|-{\TTT \eta\TTT }
&
\\
\TTT ^3\ar[dr]_-{\mu\TTT}\ar@{}[r]|-{\xLeftarrow{\rho\TTT} }
&
&
\TTT ^3\ar@{}[l]|-{\xLeftarrow{\widehat{\TTT\Lambda}}}\ar[dl]^-{\TTT  \mu }
&&
&\TTT ^3\ar[dl]|-{\mu\TTT }\ar[dr]|-{\TTT \mu }
&
\\
&
\TTT ^2\ar[d]|-{\mu }                 
&
&=&
\TTT ^2\ar@{}[rr]|-{\xLeftarrow{\hskip 0.2cm \Gamma\hskip 0.2cm } }\ar[dr]|-{\mu }
&&
\TTT^2\ar[dl]|-{\mu }
\\
&
\TTT
&
&&
&
\TTT
&
}$$ 

\item Associativity:
$$\xymatrix{ \TTT ^4\ar[r]^-{\TTT ^2 \mu }\ar[dr]|-{\TTT\mu\TTT}\ar[d]_{\mu\TTT ^2}
&
\TTT ^3\ar[dr]^-{\TTT\mu}\ar@{}[d]|-{\xLeftarrow{\widehat{\TTT\Gamma} }}
&
&&
\TTT ^4\ar[r]^-{\TTT ^2\mu}\ar@{}[dr]|-{\xLeftarrow{\mu _ {{}_{{}_{\mu } }}^{-1}}}\ar[d]_-{\mu  \TTT ^2}
&
\TTT ^3\ar[d]|-{\mu \TTT}\ar[dr]^-{\TTT\mu }
&
\\
\TTT ^3\ar[dr]_-{\mu\TTT}\ar@{}[r]|{\xLeftarrow{\Gamma \TTT }}
&
\TTT ^3\ar[r]|-{\TTT\mu }\ar[d]|-{\mu\TTT }\ar@{}[dr]|-{ \xLeftarrow{\hskip 0.1cm\Gamma \hskip 0.1cm} }
&
\TTT ^2\ar[d]^-{\mu }
&=&
\TTT ^3\ar[r]|-{\TTT\mu }\ar[dr]_-{\mu \TTT}
&
\TTT ^2\ar@{}[r]|-{\xLeftarrow{\hskip 0.1cm \Gamma\hskip 0.1cm }}\ar[dr]\ar@{}[d]|-{\xLeftarrow{\hskip 0.1cm\Gamma\hskip 0.1cm } }
&
\TTT ^2\ar[d]^{\mu }
\\
&
\TTT ^2\ar[r]_{\mu}
&
\TTT 
&&
&
\TTT ^2\ar[r]_ {\mu }
&
\TTT
}$$ 
\end{itemize}
in which
$$\widehat{\TTT \Lambda } := \left( \tttt _ {{}_{\TTT }} \right)^{-1} \left(\TTT\Lambda \right) \left(\tttt _ {{}_{(\mu)(\eta\TTT) }}\right),\qquad\qquad
\widehat{\TTT \Gamma }  :=  \left( \tttt _{{}_{(\mu) (\mu\TTT) }}\right)^{-1}\left(\TTT \Gamma\right) \left( \tttt _{{}_{(\mu) (\TTT\mu ) }}\right) .$$

\begin{defi}[Idempotent pseudomonad]\label{idempotentdefinition}
A pseudomonad $(\TTT, \mu , \eta ,  \Lambda, \rho, \Gamma)$ is \textit{idempotent} if there is an invertible modification
$\eta\TTT \cong \TTT \eta$.
\end{defi} 

Similarly to $1$-dimensional monad theory, the name \textit{idempotent pseudomonad} is justified by Lemma \ref{multidem}, which says that
the multiplication of an idempotent pseudomonad is a pseudonatural equivalence.

\begin{lem}\label{multidem}
A pseudomonad $(\TTT, \mu , \eta ,  \Lambda, \rho, \Gamma)$ is idempotent if and only if the multiplication $\mu $ is a pseudonatural equivalence. 
In this case, $\eta\TTT  $ is an inverse equivalence of $\mu $.
\end{lem}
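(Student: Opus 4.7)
The plan is to mimic the classical one-dimensional argument: a monad is idempotent exactly when its multiplication is invertible, with both $\eta\TTT$ and $\TTT\eta$ serving as two-sided inverses. In the pseudo-setting all equalities are replaced by invertible modifications, so I will work up to coherent isomorphism throughout, using the data $(\rho,\Lambda,\Gamma)$ together with pseudonaturality of $\eta$ and pseudofunctoriality of $\TTT$.

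For the ``if'' direction, suppose $\mu$ is a pseudonatural equivalence with chosen quasi-inverse $\mu^{-1}$. The unit coherence modifications $\rho$ and $\Lambda$ already supply invertible modifications $\mu\cdot(\eta\TTT)\cong \Id_{\TTT}$ and $\mu\cdot(\TTT\eta)\cong \Id_{\TTT}$, so both $\eta\TTT$ and $\TTT\eta$ are right quasi-inverses of $\mu$. Pasting each with $\mu^{-1}$ on the left and using the triangle data of the equivalence $\mu$ yields $\eta\TTT\cong\mu^{-1}\cong\TTT\eta$, whence the required invertible modification exhibiting $\TTT$ as idempotent.

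For the ``only if'' direction, assume an invertible modification $\TTT\eta\cong\eta\TTT$. One composite, namely $\mu\cdot(\eta\TTT)\cong\Id_{\TTT}$, is given directly by $\rho$. For the other composite I use pseudonaturality of $\eta$ applied to the $1$-cell $\mu_{X}\colon \TTT^{2}X\to\TTT X$: this yields the invertible $2$-cell
$$\eta_{\mu_{X}}\colon\TTT(\mu_{X})\,\eta_{\TTT^{2}X}\;\Longrightarrow\;\eta_{\TTT X}\,\mu_{X}.$$
Using the idempotency hypothesis to replace $\eta_{\TTT^{2}X}$ by $\TTT(\eta_{\TTT X})$, then the pseudofunctoriality constraint $\tttt$ of $\TTT$ to collapse $\TTT(\mu_{X})\TTT(\eta_{\TTT X})$ into $\TTT(\mu_{X}\,\eta_{\TTT X})$, and finally $\TTT$ applied to $\Lambda$ to identify this with $\TTT(\Id_{\TTT X})\cong\Id_{\TTT^{2}X}$, produces the desired isomorphism $(\eta\TTT)\cdot\mu\cong\Id_{\TTT^{2}}$. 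Together with the previous direction this exhibits $\eta\TTT$ as an inverse equivalence of $\mu$, giving the final clause of the statement.

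The main subtlety I expect is the bookkeeping step of checking that these componentwise invertible $2$-cells genuinely assemble into a \emph{modification}, i.e.\ satisfy the naturality condition with respect to $1$-cells of $\hhh$. This should follow by a diagram chase combining pseudonaturality of $\eta$ and $\mu$ with the pseudofunctor axioms for $\TTT$, but one has to be careful that the replacement $\eta_{\TTT^{2}X}\cong\TTT(\eta_{\TTT X})$ coming from the idempotency modification is itself modification-natural; this is exactly what the axioms governing invertible modifications ensure, and I expect no further appeal to $\Gamma$ beyond the unit coherence equation relating $\Lambda$, $\rho$ and $\Gamma$ recorded in the identity axiom of the pseudomonad.
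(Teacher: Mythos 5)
Your proof is correct and follows essentially the same route as the paper: the ``if'' direction observes that the unit triangles make both $\eta\TTT$ and $\TTT\eta$ right quasi-inverses of $\mu$ and hence isomorphic, and the ``only if'' direction is exactly the paper's chain $(\eta\TTT)\mu\cong(\TTT\mu)(\eta\TTT^2)\cong(\TTT\mu)(\TTT\eta\TTT)\cong\TTT(\mu(\eta\TTT))\cong\Id_{\TTT^2}$ built from pseudonaturality of $\eta$, the idempotency modification, pseudofunctoriality of $\TTT$ and the unit triangle. The only quibble is a harmless naming slip: in the paper's conventions the triangle $\mu(\eta\TTT)\cong\Id_{\TTT}$ is $\Lambda$ rather than $\rho$, and you use both names for it at different points.
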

\begin{proof}
Since $\mu (\eta\TTT )\cong \Id _ {{}_\TTT } \cong \mu (\TTT\eta ) $, it is obvious that, if $\mu $ is a pseudonatural equivalence, then $\eta\TTT\cong \TTT\eta $. Therefore $\TTT $ is idempotent and $\eta\TTT $ is an equivalence inverse of $\mu $.

Reciprocally, assume that $\TTT $ is idempotent. By the definition of pseudomonads, there is an invertible modification $\mu (\eta\TTT )\cong \Id _ {{}_\TTT } $. And, since $\eta\TTT\cong\TTT\eta $, we get the invertible modifications 
$$(\eta\TTT )\mu \cong  (\TTT\mu) (\eta\TTT ^2)
                     \cong  (\TTT\mu ) (\TTT\eta\TTT )
										\cong \TTT (\mu (\eta\TTT) )
										\cong  \Id _ {{}_{\TTT ^2}} $$
which prove that $\mu $ is a pseudonatural equivalence and $\eta\TTT $ is a pseudonatural equivalence inverse. 
\end{proof}

The reader familiar with lax-idempotent/Kock-Z\"{o}berlein pseudomonads will notice that an idempotent pseudomonad is just a Kock-Z\"{o}berlein pseudomonad whose adjunction $\mu\dashv\eta\TTT $ is actually an adjoint equivalence. Hence, idempotent pseudomonads are fully property-like pseudomonads~\cite{KELLYLACK1997}.

Every biadjunction induces a pseudomonad~\cite{SteveLack2, FLN}. In fact, we get the multiplication $\mu $ from the counit, and the invertible modifications $\Lambda , \rho , \Gamma $ come from the invertible modifications of Definition~\ref{adjunction}. Of course, a biadjunction $\LLL\dashv\UUU $ induces an idempotent pseudomonad if and only if its unit $\eta $ is such that 
$\eta\UUU\LLL\cong \UUU\LLL \eta$.
As a consequence of this characterization, we have Lemma \ref{suportfact} which is necessary to give 
the Eilenberg-Moore factorization for idempotent pseudomonads. 

\begin{lem}\label{suportfact}
A biadjunction $(\LLL \dashv \UUU , \eta , \varepsilon )$ induces an idempotent pseudomonad if and only if 
$\eta\UUU : \UUU\longrightarrow\UUU\LLL\UUU $
is a pseudonatural equivalence.
\end{lem}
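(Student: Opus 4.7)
The plan is to use Lemma \ref{multidem} to reduce ``idempotent'' to the condition that the induced multiplication $\mu = \UUU\varepsilon\LLL$ is a pseudonatural equivalence, and then relate that condition to $\eta\UUU$ being a pseudonatural equivalence via the triangle identity $(\UUU\varepsilon)(\eta\UUU)\cong \Id_{\UUU}$ provided by the invertible modification $t$ of Definition \ref{adjunction}.

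For the backward direction, I would assume $\eta\UUU$ is a pseudonatural equivalence. Since $(\UUU\varepsilon)(\eta\UUU)\cong \Id_{\UUU}$, the transformation $\UUU\varepsilon$ is a one-sided inverse of $\eta\UUU$; because any one-sided inverse of an equivalence is itself an equivalence (isomorphic to the chosen quasi-inverse via $\beta\cong\beta\alpha\alpha^{-1}\cong \alpha^{-1}$), $\UUU\varepsilon$ is a pseudonatural equivalence as well. Whiskering on the right by $\LLL$ preserves pseudonatural equivalences, so $\mu = \UUU\varepsilon\LLL$ is a pseudonatural equivalence. By Lemma \ref{multidem}, the induced pseudomonad is idempotent.

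For the forward direction, assume the pseudomonad is idempotent, so by Definition \ref{idempotentdefinition} there is an invertible modification $\eta\TTT\cong \TTT\eta$, where $\TTT = \UUU\LLL$. The triangle identity again supplies $(\UUU\varepsilon)(\eta\UUU)\cong\Id_{\UUU}$, so it suffices to produce an invertible modification $(\eta\UUU)(\UUU\varepsilon)\cong\Id_{\UUU\LLL\UUU}$. Applying the pseudonaturality $2$-cells of $\eta$ to the components $\UUU\varepsilon_{{}_Y}:\UUU\LLL\UUU(Y)\to\UUU(Y)$ yields an invertible modification
\[
(\eta\UUU)(\UUU\varepsilon)\;\cong\;(\UUU\LLL\UUU\varepsilon)(\eta\UUU\LLL\UUU)\;=\;(\TTT\UUU\varepsilon)(\eta\TTT\UUU).
\]
Using the idempotency isomorphism $\eta\TTT\cong\TTT\eta$, this is isomorphic to $(\TTT\UUU\varepsilon)(\TTT\eta\UUU)\cong \TTT\bigl((\UUU\varepsilon)(\eta\UUU)\bigr)\cong \TTT(\Id_{\UUU})=\Id_{\TTT\UUU}$, as required.

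The only real bookkeeping is to keep straight that the compositions in the triangle identities are the vertical ones of pseudonatural transformations defined earlier, and that the coherent isomorphisms above assemble into genuine modifications; both are routine given the pseudofunctoriality of $\TTT$ and the pseudonaturality of $\eta$. I do not anticipate a substantial obstacle.
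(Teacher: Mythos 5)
Your proposal is correct and follows essentially the same route as the paper: the forward direction uses the triangle identity together with pseudonaturality of $\eta$ and the idempotency isomorphism $\eta\TTT\cong\TTT\eta$ to produce $(\eta\UUU)(\UUU\varepsilon)\cong\Id_{\UUU\LLL\UUU}$, exactly as in the paper's chain of invertible modifications. The backward direction differs only cosmetically (you show $\UUU\varepsilon$ is an equivalence before whiskering by $\LLL$, whereas the paper whiskers $\eta\UUU$ first and then identifies $\mu$ as its inverse equivalence), and both conclude via Lemma \ref{multidem}.
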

\begin{proof}
By the triangle invertible modifications of Definition~\ref{adjunction}, if $\varepsilon $ is the counit of the biadjunction $\LLL\dashv\UUU $,
$(\UUU\varepsilon) (\eta\UUU)\cong \Id _ {{}_{\UUU }}. $
Also, since $\UUU\LLL\eta \cong \eta\UUU\LLL $,  we have the following invertible modifications
$$
(\eta\UUU)\cdot (\UUU\varepsilon ) \cong  (\UUU\LLL\UUU\varepsilon ) (\eta\UUU\LLL\UUU )
                                   \cong  (\UUU\LLL\UUU\varepsilon ) (\UUU\LLL\eta \UUU )
																	 \cong  \UUU\LLL ( \Id _ {{}_{\UUU }} )
																	 \cong  \Id _ {{}_{\UUU\LLL\UUU }} $$
Therefore $\eta\UUU $ is a pseudonatural equivalence. 

Reciprocally, if $\eta\UUU $ is a pseudonatural equivalence, so is $\eta\UUU\LLL $. Therefore the multiplication of the induced pseudomonad is an inverse equivalence of $\eta\UUU\LLL $ and, by Lemma \ref{multidem}, we conclude that the induced pseudomonad is idempotent.
\end{proof}

We can avoid the coherence equations~\cite{Marmolejo1, SteveLack2, FLN} 
used to define the $2$-category of pseudoalgebras of a pseudomonad $\TTT $ 
when assuming that $\TTT $ is idempotent. 

\begin{defi}[Pseudoalgebras]\label{pseudoalgebraidem}
Let $(\TTT, \mu , \eta ,  \Lambda, \rho, \Gamma)$ be an idempotent pseudomonad on a $2$-category $\hhh $. We define the $2$-category of $\TTT $-\textit{pseudoalgebras} $\mathsf{Ps}\textrm{-}\TTT\textrm{-}\Alg $ as follows:
\begin{itemize}\renewcommand\labelitemi{--}
\item Objects: the objects of $\mathsf{Ps}\textrm{-}\TTT\textrm{-}\Alg $ are the objects $X$ of $\hhh $ such that 
$$\eta _ {{}_X}: X\to \TTT (X) $$
is an equivalence;
\item The inclusion $\obj(\mathsf{Ps}\textrm{-}\TTT\textrm{-}\Alg)\to \obj (\hhh ) $ extends to a full inclusion $2$-functor
$$\III :\mathsf{Ps}\textrm{-}\TTT\textrm{-}\Alg\to \hhh $$ 
\end{itemize}
In other words, the inclusion $\III : \mathsf{Ps}\textrm{-}\TTT\textrm{-}\Alg\to\hhh $ is defined to be final among the full inclusions $\widehat{\III }: \aaa\to\hhh $ such that $\eta\widehat{\III } $ is a pseudonatural equivalence.

If $\eta _ {{}_X}: X\to \TTT (X) $ is an equivalence, $X$ can be endowed with a pseudoalgebra structure and the 
left adjoint $a:\TTT (X)\to X $ to $\eta _ {{}_X}: X\to \TTT (X) $ is called a pseudoalgebra structure to $X$.
Because we could describe $\mathsf{Ps}\textrm{-}\TTT\textrm{-}\Alg$ by means of 
pseudoalgebras/pseudoalgebra structures, 
we often denote the objects of $\mathsf{Ps}\textrm{-}\TTT\textrm{-}\Alg$ by small letters $a, b $.
\end{defi}

\begin{theo}[Eilenberg-Moore biadjunction]\label{FACTEILENBERG}
Let $(\TTT, \mu , \eta ,  \Lambda, \rho, \Gamma)$ be an idempotent pseudomonad on a $2$-category $\hhh $. There is a unique pseudofunctor $\LLL ^{{}^\TTT }$ such that
$$\xymatrix{ \hhh\ar[rr]|-{\TTT }\ar[dr]|-{\LLL ^{{}^\TTT }}&&\hhh\\
&\mathsf{Ps}\textrm{-}\TTT\textrm{-}\Alg\ar[ur]|-{\III } &
}$$ 
is a commutative diagram. Furthermore, $\LLL ^{{}^\TTT } $ is left biadjoint to $\III $.
\end{theo}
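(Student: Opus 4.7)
The plan is to first produce the unique factorization and then establish the biadjunction via the characterisation of a left biadjoint as a pointwise left bicategorical reflection (the dual of Remark~\ref{rightbicategoricalreflectioncounit}).

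For the factorization, I would first verify that $\TTT X$ lies in $\mathsf{Ps}\textrm{-}\TTT\textrm{-}\Alg$ for every object $X$ of $\hhh$. By Definition~\ref{pseudoalgebraidem}, this amounts to showing that $\eta _{\TTT X}$ is an equivalence. The unit modification $\rho$ supplies an invertible modification $\mu\cdot (\TTT\eta)\cong \mathrm{Id}_{\TTT}$, so $\TTT\eta$ is an equivalence inverse to $\mu$; the idempotency isomorphism $\TTT\eta\cong \eta\TTT$ of Definition~\ref{idempotentdefinition} then promotes $\eta\TTT$ to a pseudonatural equivalence as well. Because $\III$ is a full inclusion $2$-functor, it induces isomorphisms on hom-categories; hence the object-level lift of $\TTT$ extends in a unique way to a pseudofunctor $\LLL^{{}^\TTT}$ with $\III\circ \LLL^{{}^\TTT}=\TTT$, all structure $2$-cells being forced by the faithfulness of $\III$.

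For the biadjunction I would take $\eta$ itself as the unit and show that, for each $X\in \hhh$ and each pseudoalgebra $a$, the functor
$$\Phi _{X,a}\colon \mathsf{Ps}\textrm{-}\TTT\textrm{-}\Alg(\LLL^{{}^\TTT}X,\, a)\longrightarrow \hhh(X,\, \III a),\qquad f\mapsto \III(f)\circ \eta _X$$
is an equivalence, pseudonatural in both arguments. Since $\III$ is locally an isomorphism, this reduces to proving that precomposition with $\eta _X$ gives an equivalence $\hhh(\TTT X,\, \III a)\to \hhh(X,\, \III a)$. The inverse is explicit: since $a$ is a pseudoalgebra, $\eta _{\III a}$ is an equivalence with some inverse $a^{*}\colon \TTT\III a\to \III a$, and one sets $\Psi _{X,a}(g):= a^{*}\circ \TTT g$. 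The two round-trips are invertibly modified to the identity using only the pseudonaturality of $\eta$, idempotency, and $a^{*}\cdot \eta _{\III a}\cong \mathrm{Id}$:
$$\Phi _{X,a}\Psi _{X,a}(g)\;=\;a^{*}\cdot \TTT g\cdot \eta _X\;\cong\; a^{*}\cdot \eta _{\III a}\cdot g\;\cong\; g,$$
$$\Psi _{X,a}\Phi _{X,a}(f)\;=\;a^{*}\cdot \TTT\bigl(\III(f)\cdot \eta _X\bigr)\;\cong\; a^{*}\cdot \TTT\III(f)\cdot \eta _{\TTT X}\;\cong\; a^{*}\cdot \eta _{\III a}\cdot \III(f)\;\cong\; \III(f),$$
where the second chain uses $\TTT\eta\cong \eta\TTT$ to replace $\TTT\eta _X$ by $\eta _{\TTT X}$ and then the pseudonaturality of $\eta$ applied to $\III(f)$.

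The main technical point is not the pointwise equivalence but the verification that $\Phi _{X,a}$ and the displayed invertible $2$-cells assemble into a genuine pseudonatural equivalence of $\CAT$-valued pseudofunctors in both variables. Pseudonaturality in $a$ follows by applying $\TTT$ to morphisms of pseudoalgebras and invoking the pseudonaturality of $\eta$; pseudonaturality in $X$ is analogous. Here the fully property-like character of idempotent pseudomonads (Lemma~\ref{multidem}) is decisive: every coherence $2$-cell that appears is determined up to an invertible modification, so the verification reduces to unwinding definitions rather than constructing new coherence data.
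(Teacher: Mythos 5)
Your proposal is correct and follows essentially the same route as the paper: define $\LLL^{{}^\TTT}$ on objects by $X\mapsto\TTT(X)$, justify this via $\eta\TTT$ being a pseudonatural equivalence (Lemma \ref{multidem}), deduce uniqueness from $\III$ being a full inclusion, and exhibit the biadjunction through the same pair of mutually inverse hom-category equivalences $f\mapsto f\eta_{{}_X}$ and $g\mapsto a\TTT(g)$. The only difference is that you spell out the round-trip invertible $2$-cells and the pseudonaturality checks that the paper leaves implicit.
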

\begin{proof}
Firstly, we define $\LLL ^{{}^\TTT } (X): = \TTT (X) $. On the one hand, it is well defined, since, by Lemma~\ref{multidem}, 
$$\eta\TTT : \TTT\longrightarrow \TTT ^2 $$
is a pseudonatural equivalence. On the other hand, the uniqueness of $\LLL ^{{} ^\TTT} $ is a consequence of  $\III $ being a monomorphism. 

Now, it remains to show that $\LLL ^{{}^\TTT } $ is left biadjoint to $\III $.  By abuse of language, if $a$ is an object of $\mathsf{Ps}\textrm{-}\TTT\textrm{-}\Alg $, we denote by $a$ its pseudoalgebra structure (of Definition~\ref{pseudoalgebraidem}). Then we define the mutually inverse equivalences below
\begin{equation*}
\begin{aligned}
\mathsf{Ps}\textrm{-}\TTT\textrm{-}\Alg (\TTT (X), b) &\to  \hhh ( X, \III (b))&\\
                                              f&\mapsto  f\eta _ {{}_X}&\\
																							\alpha &\mapsto \alpha \ast\Id _ {{}_{\eta _ {{}_X}}} &
\end{aligned}
\qquad
\begin{aligned}																																										
\hhh ( X, \III (b)) &\to  \mathsf{Ps}\textrm{-}\TTT\textrm{-}\Alg (\TTT (X), b)&\\
 g &\mapsto  bT(g)&\\
 \beta &\mapsto \Id _ {{}_b}\ast T(\beta )&
\end{aligned}     
\end{equation*}
It completes the proof that $\LLL ^{{}^\TTT }\dashv \III $.             																							
\end{proof}

Theorem~\ref{Eilenbergidem} shows that this biadjunction $\LLL ^{{}^\TTT } \dashv \III $ satisfies the expected universal property~\cite{SteveLack2} of the $2$-category of pseudoalgebras, which is the Eilenberg-Moore factorization. In other words, we prove that our definition of $\mathsf{Ps}\textrm{-}\TTT\textrm{-}\Alg$ for idempotent pseudomonads $\TTT $ agrees with the usual definition~\cite{CME, SteveLack2, Marmolejo1, Street4} of pseudoalgebras for a pseudomonad.

\begin{theo}[Eilenberg-Moore]\label{Eilenbergidem}
If $\LLL\dashv \UUU$ is a biadjunction which induces an idempotent pseudomonad $(\TTT, \mu , \eta ,  \Lambda, \rho, \Gamma)$, then we have a unique comparison pseudofunctor $\KKK :\bbb\to \mathsf{Ps}\textrm{-}\TTT\textrm{-}\Alg $ such that 
$$\xymatrix{\bbb \ar[r]^-{\KKK }\ar[rd]_-{\UUU }& \mathsf{Ps} \textrm{-} \TTT\textrm{-}\Alg \ar[d]^-{\III }& \aaa \ar[r]^-{\LLL ^{{}^\TTT } }\ar[rd]_-{\LLL }& \mathsf{Ps} \textrm{-} \TTT\textrm{-}\Alg \\
&\aaa &&\bbb\ar[u]_-{\KKK } }$$
commute. 
\end{theo}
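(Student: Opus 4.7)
The plan is to exploit the fact that, by Definition \ref{pseudoalgebraidem}, the inclusion $\III\colon \Ps\textrm{-}\TTT\textrm{-}\Alg\to\hhh$ is a full inclusion $2$-functor, hence in particular injective on objects and locally an isomorphism of hom-categories. Uniqueness of $\KKK$ is then automatic: any pseudofunctor satisfying $\III\KKK=\UUU$ must coincide with $\UUU$ on objects, and on hom-categories and coherence data it is uniquely lifted from $\UUU$ through the local isomorphism $\III$.

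For existence, I set $\KKK(b):=\UUU(b)$ on objects. The one fact to check is that $\UUU(b)$ really lies in $\Ps\textrm{-}\TTT\textrm{-}\Alg $, i.e.\ that $\eta _ {{}_{\UUU(b)}}\colon \UUU(b)\to\TTT\UUU(b)$ is an equivalence for every object $b$ of $\bbb$. This is precisely Lemma \ref{suportfact}: since by hypothesis $\LLL\dashv\UUU$ induces an idempotent pseudomonad, the whiskered unit $\eta\UUU\colon \UUU\longrightarrow \UUU\LLL\UUU$ is a pseudonatural equivalence, so each component is an equivalence. With this verified, the action of $\KKK$ on $1$-cells, $2$-cells and its pseudofunctor constraint $2$-cells are uniquely determined by the requirement $\III\KKK=\UUU$ through the local isomorphism property of $\III$, and the associativity/identity/naturality axioms for $\KKK$ are inherited verbatim from those of $\UUU$.

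For the second triangle, on objects one has $\KKK\LLL(a) = \UUU\LLL(a) = \TTT(a) = \LLL ^{{}^\TTT }(a)$, where the middle equality is the definition of the induced pseudomonad and the last is Theorem \ref{FACTEILENBERG}. Whiskering by $\III$ then gives the equality of pseudofunctors $\III\KKK\LLL = \UUU\LLL = \TTT = \III\LLL ^{{}^\TTT }$ into $\hhh$, and since $\III$ is a full inclusion (injective on objects and locally an isomorphism on hom-categories), we conclude $\KKK\LLL = \LLL ^{{}^\TTT }$.

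No serious obstacle is expected: the substantive content has already been absorbed into Lemma \ref{suportfact}, which converts the hypothesis of idempotency into the concrete statement that each $\eta _ {{}_{\UUU(b)}}$ is an equivalence, and into Definition \ref{pseudoalgebraidem}, which packages $\Ps\textrm{-}\TTT\textrm{-}\Alg$ as a full sub-$2$-category of $\hhh$. The present proof is essentially a lifting exercise of $\UUU$ (respectively $\LLL ^{{}^\TTT }$) along the full inclusion $\III$.
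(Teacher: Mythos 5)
Your proposal is correct and follows essentially the same route as the paper: the paper's proof also defines $\KKK$ on objects and morphisms by $\KKK(X)=\UUU(X)$, $\KKK(f)=\UUU(f)$, and justifies well-definedness by invoking Lemma \ref{suportfact} to see that $\eta\UUU:\UUU\longrightarrow\TTT\UUU$ is a pseudonatural equivalence, so that each $\UUU(b)$ lies in $\mathsf{Ps}\textrm{-}\TTT\textrm{-}\Alg$. Your additional remarks on uniqueness via $\III$ being a full inclusion and on the second triangle merely make explicit what the paper leaves implicit.
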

\begin{proof}
It is enough to define $\KKK (X) = \UUU (X) $ and $\KKK (f) = \UUU (f) $. This is well defined, since, by Lemma~\ref{suportfact}, 
$\eta\UUU : \UUU \longrightarrow \TTT\UUU  $
is a pseudonatural equivalence.
\end{proof}

Actually, in $2\textrm{-}\CAT$, every biadjunction $\LLL\dashv \UUU$ induces a comparison pseudofunctor 
and an Eilenberg-Moore factorization~\cite{CME} as above, 
in which $\TTT = \UUU\LLL $ denotes the induced pseudomonad.  
When the comparison pseudofunctor $\KKK $ is a biequivalence, we say that $\UUU $ is pseudomonadic. 
Although there is the Beck's theorem for pseudomonads~\cite{CME, Her, FLN}, the setting of idempotent pseudomonads is simpler.

\begin{theo}\label{Idem}
Let  $\LLL\dashv\UUU $ be a biadjunction. The pseudofunctor $\UUU $ is a local equivalence (or, equivalently, the counit is a pseudonatural
equivalence) if and only if $\UUU $ is pseudomonadic and the induced pseudomonad is idempotent.
\end{theo}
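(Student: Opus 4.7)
The plan is to leverage the Eilenberg-Moore factorization of Theorem~\ref{Eilenbergidem} together with Lemmas~\ref{counitadjunction} and~\ref{suportfact}. The parenthetical equivalence between $\UUU$ being a local equivalence and the counit $\varepsilon$ being a pseudonatural equivalence is exactly Lemma~\ref{counitadjunction}, so I focus on the main biconditional.

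For the forward implication, assume $\varepsilon : \LLL\UUU \longrightarrow \Id_{\bbb}$ is a pseudonatural equivalence. Whiskering with $\UUU$ shows that $\UUU\varepsilon$ is a pseudonatural equivalence; combined with the triangular invertible modification $(\UUU\varepsilon)\cdot(\eta\UUU) \cong \Id_{\UUU}$, this forces $\eta\UUU$ to be a pseudonatural equivalence as well (an essential inverse up to invertible modification). By Lemma~\ref{suportfact}, the induced pseudomonad $\TTT$ is idempotent, so Theorem~\ref{Eilenbergidem} yields the comparison pseudofunctor $\KKK : \bbb \to \mathsf{Ps}\textrm{-}\TTT\textrm{-}\Alg$ with $\III\KKK = \UUU$. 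Since $\III$ is a full inclusion $2$-functor (Definition~\ref{pseudoalgebraidem}), it is locally an isomorphism of hom-categories; together with the assumed local equivalence of $\UUU$, this gives that $\KKK$ is a local equivalence. For biessential surjectivity: for any pseudoalgebra $a$, the unit component $\eta_a : a \to \TTT(a) = \UUU\LLL(a) = \KKK\LLL(a)$ is an equivalence in $\hhh$ by Definition~\ref{pseudoalgebraidem}, and fullness of $\III$ lifts it to an equivalence $a \simeq \KKK\LLL(a)$ in $\mathsf{Ps}\textrm{-}\TTT\textrm{-}\Alg$. Hence $\KKK$ is a biequivalence and $\UUU$ is pseudomonadic.

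For the converse, suppose $\UUU$ is pseudomonadic with idempotent induced pseudomonad. Then Theorem~\ref{Eilenbergidem} factors $\UUU$ as $\III\KKK$, with $\KKK$ a biequivalence (by pseudomonadicity, since $\TTT$ is idempotent) and $\III$ a full inclusion by Definition~\ref{pseudoalgebraidem}. Both factors are local equivalences, and local equivalences compose, so $\UUU$ is itself a local equivalence.

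The main obstacle is the essential surjectivity step in the forward direction: one must verify that an equivalence in the ambient $2$-category $\hhh$ between two objects of $\mathsf{Ps}\textrm{-}\TTT\textrm{-}\Alg$ automatically lifts to an equivalence inside $\mathsf{Ps}\textrm{-}\TTT\textrm{-}\Alg$. This is immediate from $\III$ being a full inclusion $2$-functor and is precisely the simplification afforded by restricting to idempotent pseudomonads, where the $2$-category of pseudoalgebras sits as a full sub-$2$-category of the ambient $2$-category rather than as a nontrivial Eilenberg-Moore construction.
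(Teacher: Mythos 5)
Your proof is correct and follows essentially the same route as the paper: both directions go through the Eilenberg--Moore factorization $\UUU = \III\KKK$ with $\III$ a full inclusion, and the converse is verbatim the paper's argument. The only cosmetic differences are that you derive idempotency from $\eta\UUU$ being an equivalence via Lemma~\ref{suportfact} where the paper uses $\mu=\UUU\varepsilon\LLL$ and Lemma~\ref{multidem}, and you witness biessential surjectivity of $\KKK$ by the unit component $\eta_{{}_X}$ rather than by the pseudoalgebra structure $a:\TTT(X)\to X$ (its inverse equivalence); your write-up is in fact slightly more explicit than the paper's about why $\KKK$ is a local equivalence.
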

\begin{proof}
Firstly, if the counit $\varepsilon $ of the biadjunction of $\LLL\dashv\UUU $ is a pseudonatural equivalence, then $\mu : = \UUU\varepsilon\LLL $ is a pseudonatural equivalence as well. And, thereby, the induced pseudomonad is idempotent. Now, if $a:\TTT (X)\to X $ is a pseudoalgebra structure to $X$, we have that 
$$\xymatrix{\KKK (\LLL (X)) = \TTT (X) \ar[r]_-{\simeq }^-{a} & X.}$$
Thereby $\UUU $ is pseudomonadic.

Reciprocally, if $\LLL\dashv \UUU $ induces an idempotent pseudomonad and $\UUU $ is pseudomonadic, then we have that $\III\circ\KKK = \UUU $, $\KKK $ is a biequivalence and $\III $ is a local equivalence. Thereby $\UUU $ is a local equivalence and $\varepsilon $  is a pseudonatural equivalence.   
\end{proof}

In descent theory, one needs conditions to decide if a given object can be endowed with a pseudoalgebra structure. Idempotent pseudomonads provide the following simplification.

\begin{theo}\label{algebra}
Let $\TTT = (\TTT, \mu , \eta ,  \Lambda, \rho, \Gamma)$ be an idempotent pseudomonad on $\hhh $. Given an object $X$ of $\hhh $, the following conditions are equivalent:
\begin{enumerate}
\item The object $X$ can be endowed with a $\TTT$-pseudoalgebra structure;
\item $\eta _ {{}_X}:X\to\TTT (X) $ is a pseudosection, \textit{i.e.} there is $a:\TTT (X)\to X $ such that 
$ a\eta _ {{}_X}\cong \Id _ {{}_X} $;
\item $\eta _ {{}_X}:X\to\TTT (X) $ is an equivalence.
\end{enumerate}
\end{theo}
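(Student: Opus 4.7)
The equivalences $(1) \Leftrightarrow (3)$ and the implication $(3) \Rightarrow (2)$ are essentially built into Definition \ref{pseudoalgebraidem}: the $2$-category $\mathsf{Ps}\textrm{-}\TTT\textrm{-}\Alg$ is defined so that its objects are exactly the objects $X$ of $\hhh$ for which $\eta _ {{}_X}$ is an equivalence, and any equivalence admits a pseudosection given by a quasi-inverse. Hence the only implication with real content is $(2) \Rightarrow (3)$, and my plan is to prove it by showing that any pseudosection of $\eta _ {{}_X}$ must automatically also be a pseudoretraction, so that $\eta _ {{}_X}$ becomes an equivalence.

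Concretely, assume $a : \TTT(X) \to X$ satisfies $a\, \eta _ {{}_X} \cong \Id _ {{}_X}$. Applying the pseudofunctor $\TTT$ to this invertible $2$-cell and using the compositor $\tttt$ yields an invertible $2$-cell $\TTT(a) \cdot \TTT(\eta _ {{}_X}) \cong \Id _ {{}_{\TTT(X)}}$. Next, invoke the idempotence hypothesis in the form given by Definition \ref{idempotentdefinition}, namely the invertible modification $\TTT\eta \cong \eta\TTT$; evaluated at $X$, this rewrites the previous isomorphism as $\TTT(a) \cdot \eta _ {{}_{\TTT(X)}} \cong \Id _ {{}_{\TTT(X)}}$. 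Finally, the pseudonaturality of $\eta$ at the $1$-cell $a : \TTT(X) \to X$ provides an invertible $2$-cell $\eta _ {{}_X} \cdot a \cong \TTT(a) \cdot \eta _ {{}_{\TTT(X)}}$, whence $\eta _ {{}_X} \cdot a \cong \Id _ {{}_{\TTT(X)}}$. Combined with the hypothesis $a \cdot \eta _ {{}_X} \cong \Id _ {{}_X}$, this exhibits $a$ as a quasi-inverse of $\eta _ {{}_X}$ and therefore establishes $(3)$.

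I do not foresee any real obstacle: the argument is essentially formal and uses only the three ingredients listed above (pseudonaturality of the unit, the modification $\TTT\eta \cong \eta\TTT$ from idempotence, and pseudofunctoriality of $\TTT$), without appealing to the multiplication $\mu$ or its coherence modifications $\Lambda, \rho, \Gamma$. The only care required is bookkeeping of invertible $2$-cells through the compositor $\tttt$, and since the statement only asserts the existence of invertible $2$-cells (not a specified coherent choice), no further coherence verification is needed.
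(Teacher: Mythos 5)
Your proposal is correct and follows essentially the same route as the paper: the paper's proof also reduces everything to $(2)\Rightarrow(3)$ and establishes $\eta_{{}_X}a\cong\Id_{{}_{\TTT(X)}}$ via the very same chain $\eta_{{}_X}a\cong\TTT(a)\eta_{{}_{\TTT(X)}}\cong\TTT(a)\TTT(\eta_{{}_X})\cong\TTT(a\eta_{{}_X})\cong\Id_{{}_{\TTT(X)}}$, using pseudonaturality of $\eta$, the modification $\TTT\eta\cong\eta\TTT$, and pseudofunctoriality of $\TTT$. You have merely written the same isomorphisms in the opposite order, so there is nothing to correct.
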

\begin{proof}
Assume that $\eta _ {{}_X}:X\to\TTT (X) $ is a pseudosection. By hypothesis, there is $a:\TTT (X)\to X $ such that 
$ a\eta _ {{}_X}\cong \Id _ {{}_X}$.
Thereby
$$\eta _ {{}_X}a \cong  \TTT (a)\eta _{{}_{\TTT (X)}}
               \cong  \TTT (a)\TTT (\eta _ {{}_X})
							 \cong  \TTT (a\eta _ {{}_X})
							\cong  \Id _ {{}_{\TTT (X)}}	.$$	
Hence $\eta _ {{}_X} $ is an equivalence.
\end{proof}

\subsection{Biadjoint Triangle Theorem}\label{btt}

The main result of this formal approach is somehow related to distributive laws of 
pseudomonads~\cite{Marmolejo1, Marmolejo2}.  
However, we choose a more direct approach, 
avoiding some technicalities of distributive laws unnecessary to our setting. 
To give such direct approach, we use the Biadjoint Triangle Theorem \ref{biadjointtriangle}.

Precisely, we give a bicategorical analogue (for idempotent pseudomonads) 
of an adjoint triangle theorem~\cite{Dubuc, Barr, Power88}. It is important to note that this bicategorical version holds for more general 
biadjoint triangles~\cite{FLN, FLN3, FLN2}, so that our restriction to the idempotent version is due to our scope.

\begin{lem}\label{impor}
Let $(\LLL \dashv \UUU , \eta , \varepsilon )$ and $(\widehat{\LLL } \dashv \widehat{\UUU }, \widehat{\eta } , \widehat{\varepsilon } ) $ be biadjunctions. Assume that $\widehat{\LLL }\dashv \widehat{\UUU }$ induces an idempotent pseudomonad and that there is a pseudonatural equivalence
$$\xymatrix{  \aaa\ar@{}[drr]|{\simeq } &&\bbb\ar[ll]|-{\EEE }\\
&\ccc\ar[ul]|-{\LLL }\ar[ur]|-{\widehat{\LLL }}&                                                                               
}$$ 
If $\eta _ {{}_X} $ is a pseudosection, then $\widehat{\eta } _ {{}_X} $ is an equivalence.
\end{lem}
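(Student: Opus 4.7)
The strategy is to reduce the claim to constructing a left pseudoinverse of $\widehat{\eta}_X$. By Theorem \ref{algebra}, applied to the idempotent pseudomonad induced by $\widehat{\LLL}\dashv\widehat{\UUU}$, it suffices to show that $\widehat{\eta}_X$ is a pseudosection. Let $a : \UUU\LLL(X)\to X$ be such that $a\,\eta_X \cong \Id_X$, and denote by $\tau : \EEE\widehat{\LLL}\simeq\LLL$ the given pseudonatural equivalence, with inverse equivalence $\tau^{-1} : \LLL\simeq\EEE\widehat{\LLL}$. The plan is to transport $a$ across $\tau$, via standard mate calculus, to produce the desired left pseudoinverse of $\widehat{\eta}_X$.

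The construction proceeds in two stages. First, define a morphism $\phi : \LLL\widehat{\UUU}\widehat{\LLL}(X)\to \LLL(X)$ in $\aaa$ as the composite
\[
\LLL\widehat{\UUU}\widehat{\LLL}(X)\xrightarrow{\tau^{-1}_{\widehat{\UUU}\widehat{\LLL}(X)}}\EEE\widehat{\LLL}\widehat{\UUU}\widehat{\LLL}(X)\xrightarrow{\EEE(\widehat{\varepsilon}_{\widehat{\LLL}(X)})}\EEE\widehat{\LLL}(X)\xrightarrow{\tau_X}\LLL(X);
\]
transpose $\phi$ under $\LLL\dashv\UUU$ to obtain $\phi^{\flat}:=\UUU(\phi)\,\eta_{\widehat{\UUU}\widehat{\LLL}(X)} : \widehat{\UUU}\widehat{\LLL}(X)\to \UUU\LLL(X)$, and set $b := a\,\phi^{\flat} : \widehat{\UUU}\widehat{\LLL}(X)\to X$.

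Second, verify $b\,\widehat{\eta}_X \cong \Id_X$. Applying in order the pseudonaturality of $\eta$ at $\widehat{\eta}_X$ (to move $\eta$ past $\widehat{\eta}_X$), the pseudonaturality of $\tau^{-1}$ at $\widehat{\eta}_X$ (to move $\tau^{-1}$ past $\LLL(\widehat{\eta}_X)$), the pseudofunctoriality of $\EEE$, and the triangle identity $\widehat{\varepsilon}_{\widehat{\LLL}(X)}\,\widehat{\LLL}(\widehat{\eta}_X)\cong\Id_{\widehat{\LLL}(X)}$ of $\widehat{\LLL}\dashv\widehat{\UUU}$, the middle of the composite collapses to the identity on $\widehat{\LLL}(X)$; then $\tau_X\,\tau^{-1}_X\cong\Id_{\LLL(X)}$ together with $a\,\eta_X\cong\Id_X$ yields the desired isomorphism $b\,\widehat{\eta}_X\cong\Id_X$. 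Theorem \ref{algebra} then upgrades the pseudosection $\widehat{\eta}_X$ to an equivalence.

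The main obstacle is purely book-keeping: each rewrite above is justified by a specific invertible modification -- pseudonaturality of $\eta$ or $\tau$, pseudofunctoriality of $\UUU$ and $\EEE$, a triangle modification, or the inverse-equivalence modification for $\tau$ -- and the conclusion amounts to pasting all of them into a single invertible $2$-cell. This is routine mate calculus in the tricategory $2$-$\CAT$ and requires nothing beyond the coherence axioms of the two biadjunctions and of the pseudonatural equivalence $\tau$ recalled at the start of Section \ref{Formal}.
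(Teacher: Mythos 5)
Your proof is correct and follows essentially the same route as the paper: both reduce to showing $\widehat{\eta}_X$ is a pseudosection via Theorem \ref{algebra}, and both build the left pseudoinverse from the composite of $\eta$, the counit $\widehat{\varepsilon}$ transported along the equivalence $\EEE\widehat{\LLL}\simeq\LLL$, collapsing it with pseudonaturality of $\eta$ and the triangle identity of $\widehat{\LLL}\dashv\widehat{\UUU}$. The only cosmetic difference is that the paper assumes $\EEE\widehat{\LLL}=\LLL$ strictly and packages your $\phi^{\flat}$ as the component at $X$ of a global pseudonatural transformation $\alpha:\widehat{\UUU}\widehat{\LLL}\longrightarrow\UUU\LLL$ (later reused in Theorem \ref{descenTsquare}), whereas you carry the equivalence $\tau$ explicitly and work pointwise.
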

\begin{proof}
Let $X$ be an object of $\ccc $ such that $ \eta_ {{}_X} : X\to \UUU\LLL (X) $ is  pseudosection. By Theorem \ref{algebra}, it is enough to prove that $\widehat{\eta } _ {{}_X} $ is a pseudosection, because the pseudomonad induced by $\widehat{\LLL}\dashv \widehat{\UUU } $ is idempotent.

To prove that $\widehat{\eta } _ {{}_X} $ is a pseudosection, we construct a pseudonatural transformation $\alpha : \widehat{\UUU}\widehat{\LLL }\longrightarrow \UUU\LLL $ such that there is an invertible modification
$$\xymatrix{  &\Id _ {{}_{\ccc}}\ar@{}[d]|{\cong }\ar[ld]|{\widehat{\eta } }\ar[rd]|{\eta }&\\
\widehat{\UUU}\widehat{\LLL }\ar[rr]|{\alpha } && \UUU\LLL
}$$
Without losing generality, we assume that $\EEE\circ\widehat{\LLL}=\LLL $. Then we define
$\alpha := (\UUU\EEE\widehat{\varepsilon }\widehat{\LLL }) (\eta \widehat{\UUU }\widehat{\LLL }) $. 
Indeed,
$$
\alpha \widehat{\eta } =  (\UUU \EEE \widehat{\varepsilon }\widehat{\LLL }) \left( \eta \widehat{\UUU }\widehat{\LLL }\right) (\widehat{\eta })
                            \cong  (\UUU \EEE \widehat{\varepsilon }\widehat{\LLL }) \left( \UUU\LLL\widehat{\eta }\right) (\eta )
														\cong  (\UUU \EEE \widehat{\varepsilon }\widehat{\LLL }) \left( \UUU\EEE\widehat{\LLL }\widehat{\eta }\right) (\eta )
														\cong \eta $$
Therefore, if $\eta _ {{}_X} $ is a pseudosection, so is $\widehat{\eta} _ {{}_X} $. And, as mentioned, by Theorem \ref{algebra}, if $\widehat{\eta} _ {{}_X} $ is a pseudosection, it is an equivalence.
\end{proof}

Let $\widehat{\TTT }$ be the idempotent pseudomonad induced by 
$\widehat{\LLL }\dashv \widehat{\UUU } $ and $\TTT $ the pseudomonad induced by 
$\LLL\dashv\UUU $. Then Lemma~\ref{impor} could be written as follows:

\textit{If $X$ is an object of $\ccc $ that can be endowed with a $\TTT $-pseudoalgebra structure, 
then $X$ can be endowed with a $\widehat{\TTT } $-pseudoalgebra structure, 
provided that there is a pseudonatural equivalence} $\EEE\widehat{\LLL }\simeq \LLL $.

\begin{theo}\label{biadjointtriangle}
Let $(\LLL \dashv \UUU , \eta , \varepsilon )$ and $(\widehat{\LLL }  \dashv \widehat{\UUU }, \widehat{\eta } , \widehat{\varepsilon } ) $ be biadjunctions such that their right biadjoints are local equivalences. If there is a pseudonatural equivalence
$$\xymatrix{  \aaa\ar@{}[drr]|{\simeq } &&\bbb\ar[ll]|-{\EEE }\\
&\ccc\ar[ul]|-{\LLL }\ar[ur]|-{\widehat{\LLL }}&
}$$ 
then $\EEE $ is left biadjoint to a pseudofunctor $\RRR $ which is a local equivalence.
\end{theo}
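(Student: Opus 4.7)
The plan is to define the right biadjoint directly by $\RRR := \widehat{\LLL}\UUU : \aaa \to \bbb$ and verify its universal property through a chain of hom-equivalences in $\ccc$. By Theorem \ref{Idem} together with Lemma \ref{counitadjunction}, the hypothesis that both right biadjoints are local equivalences is equivalent to the statement that the counits $\varepsilon$ and $\widehat{\varepsilon}$ are pseudonatural equivalences and that the induced pseudomonads $\TTT = \UUU\LLL$ and $\widehat{\TTT} = \widehat{\UUU}\widehat{\LLL}$ are idempotent. These facts, combined with Lemma \ref{impor}, are the main technical ingredients.

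To exhibit $\EEE \dashv \RRR$, I would produce a pseudonatural equivalence $\aaa(\EEE(b), a) \simeq \bbb(b, \RRR(a))$ and appeal to Remark \ref{rightbicategoricalreflectioncounit}. Concretely, one assembles it from the chain
\begin{equation*}
\aaa(\EEE b, a) \simeq \aaa(\EEE\widehat{\LLL}\widehat{\UUU}b, a) \simeq \aaa(\LLL\widehat{\UUU}b, a) \simeq \ccc(\widehat{\UUU}b, \UUU a) \simeq \ccc(\widehat{\UUU}b, \widehat{\UUU}\widehat{\LLL}\UUU a) \simeq \bbb(\widehat{\LLL}\widehat{\UUU}b, \widehat{\LLL}\UUU a) \simeq \bbb(b, \RRR a),
\end{equation*}
produced (in order) by the equivalence $\widehat{\varepsilon}_b$, the hypothesis $\EEE\widehat{\LLL}\simeq\LLL$, the biadjunction $\LLL\dashv\UUU$, Lemma \ref{impor} applied to $\UUU(a)$, the biadjunction $\widehat{\LLL}\dashv\widehat{\UUU}$, and again $\widehat{\varepsilon}_b$. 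The appeal to Lemma \ref{impor} is justified because $\UUU(a)$ carries a $\TTT$-pseudoalgebra structure: by Theorem \ref{algebra} it suffices that $\eta_{\UUU a}$ be a pseudosection, which follows from the triangle modification $(\UUU\varepsilon_a)\cdot\eta_{\UUU a}\cong \Id$ together with $\UUU\varepsilon_a$ being an equivalence.

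To see that $\RRR$ is a local equivalence, one computes directly
\begin{equation*}
\bbb(\RRR a, \RRR a') = \bbb(\widehat{\LLL}\UUU a, \widehat{\LLL}\UUU a') \simeq \ccc(\UUU a, \widehat{\UUU}\widehat{\LLL}\UUU a') \simeq \ccc(\UUU a, \UUU a') \simeq \aaa(a, a'),
\end{equation*}
the successive equivalences coming from $\widehat{\LLL}\dashv\widehat{\UUU}$, from Lemma \ref{impor} applied to $\UUU(a')$ (so that $\widehat{\eta}_{\UUU a'}$ is an equivalence), and from the local equivalence of $\UUU$. Alternatively, tracing $\Id_{\RRR a}$ through the chain above identifies the counit of $\EEE\dashv\RRR$ at $a$ with a representative of $\varepsilon_a$, which is an equivalence, so Lemma \ref{counitadjunction} gives the conclusion at once.

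The main obstacle is coherence bookkeeping: one must check that every step of the chain is pseudonatural in both $a$ and $b$ and that the composite data assemble into a pseudofunctor structure on $\RRR$ compatible with the composite $\widehat{\LLL}\UUU$. Each equivalence used is natively pseudonatural, however, and the bicategorical Yoneda lemma guarantees that any two choices of right biadjoint are pseudonaturally equivalent, so the obstacle is more one of organization than of substance.
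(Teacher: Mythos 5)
Your proposal is correct and follows essentially the same route as the paper: the same definition $\RRR:=\widehat{\LLL}\UUU$, the same chain of hom-equivalences (yours merely inserts one extra intermediate step via $\widehat{\LLL}\dashv\widehat{\UUU}$ where the paper uses directly that $\widehat{\UUU}$ is a local equivalence), and the same appeal to Lemma \ref{impor} to see that $\widehat{\eta}\UUU$ is a pseudonatural equivalence. Your explicit verification that $\RRR$ is a local equivalence is a welcome addition, since the paper leaves that part of the claim implicit.
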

\begin{proof}
It is enough to define $\RRR := \widehat{\LLL } \UUU$. By Lemma \ref{impor}, 
$\left(\widehat{\eta } {\UUU }\right): \UUU\longrightarrow \widehat{\UUU }\widehat{\LLL }\UUU = \widehat{ \UUU } \RRR $ is a pseudonatural equivalence. 
Thereby we get
$$
\aaa (\EEE (b), a) \simeq  \aaa (\EEE \widehat{\LLL }\widehat{\UUU } (b) , a)\simeq  \aaa (\LLL\widehat{\UUU } (b), a)
								 \simeq  \ccc (\widehat { \UUU } (b) , \UUU (a) )\simeq  \ccc ( \widehat { \UUU } (b) , \widehat{\UUU}\RRR (a) )
								 \simeq  \bbb (b, \RRR (a) ).
$$
This completes the proof that $\RRR $ is right biadjoint to $\EEE $.
\end{proof}

Assume that $\AAA : \aaa\to\bbb $ and $\BBB :\bbb\to\ccc $ are pseudomonadic pseudofunctors, and their induced pseudomonads are idempotent. Then it is obvious that $\BBB\circ \AAA : \aaa\to\ccc $ is also pseudomonadic and induces an idempotent pseudomonad. Indeed, by Theorem~\ref{Idem}, this statement is equivalent to: \textit{compositions of right biadjoint local equivalences are right biadjoint local equivalences as well}.

\begin{cor}\label{square}
Assume that there is a pseudonatural equivalence
$$\xymatrix{  \aaa\ar@{}[dr]|-{\simeq } &\hhh\ar[l]|-{\EEE }\\
\bbb\ar[u]|-{\LLL _ {{}_{{}_{{}_\AAA}}} } &\ccc\ar[l]|-{\LLL _ {{}_{{}_{{}_\BBB}}}}\ar[u]|-{\LLL _ {{}_{{}_{{}_\CCCC}}}}
}$$ 
such that $\LLL _ {{}_{{}_{{}_\AAA}}}\dashv \AAA $, $\LLL _ {{}_{{}_{{}_\BBB}}}\dashv \BBB $ and $\LLL _ {{}_{{}_{{}_\CCCC}}}\dashv \CCCC $ are pseudomonadic biadjunctions inducing idempotent pseudomonads $\TTT _ {{}_ \AAA}, \TTT _ {{}_ \BBB}, \TTT _ {{}_ \CCCC }$. 
Then $\EEE\dashv\RRR $ and $\RRR $ is a local equivalence. 

In particular, if $(X, a) $ is a $\TTT _ {{}_ \BBB}$-pseudoalgebra that can be endowed with a $\TTT _ {{}_ \AAA}$-pseudoalgebra structure, then $X$ can be endowed with a $\TTT _ {{}_ \CCCC }$-pseudoalgebra structure as well.
\end{cor}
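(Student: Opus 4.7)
The plan is to reduce the given square to a biadjoint triangle and then invoke Theorem \ref{biadjointtriangle}. By hypothesis, the three biadjunctions $\LLL _ {{}_ \AAA}\dashv \AAA $, $\LLL _ {{}_ \BBB}\dashv \BBB $ and $\LLL _ {{}_ \CCCC}\dashv \CCCC $ are pseudomonadic with idempotent induced pseudomonads, so Theorem \ref{Idem} tells us that $\AAA $, $\BBB $ and $\CCCC $ are local equivalences. Since local equivalences compose, $\BBB\AAA $ is then a local equivalence, and it is right biadjoint to $\LLL _ {{}_ \AAA}\LLL _ {{}_ \BBB}$; applying Theorem \ref{Idem} in the other direction shows that this composite biadjunction is pseudomonadic with idempotent induced pseudomonad $\TTT _ {{}_{\BBB\AAA }} := \BBB\AAA\LLL _ {{}_ \AAA}\LLL _ {{}_ \BBB} $ on $\ccc $. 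Together with $\LLL _ {{}_ \CCCC}\dashv \CCCC $ and the given pseudonatural equivalence $\EEE\LLL _ {{}_ \CCCC }\simeq \LLL _ {{}_ \AAA}\LLL _ {{}_ \BBB } $, this yields a biadjoint triangle satisfying the hypotheses of Theorem \ref{biadjointtriangle}. That theorem then produces a right biadjoint $\RRR $ to $\EEE $ (identified in the proof with $\LLL _ {{}_ \CCCC}\BBB\AAA $), and $\RRR $ is a local equivalence.

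For the pseudoalgebra claim, I would first show that being a $\TTT _ {{}_ \BBB}$-pseudoalgebra whose image in $\bbb $ under $\LLL _ {{}_ \BBB} $ admits a $\TTT _ {{}_ \AAA}$-pseudoalgebra structure is equivalent to being a $\TTT _ {{}_{\BBB\AAA }} $-pseudoalgebra. By the characterization of Theorem \ref{algebra}, the first condition amounts to the units $X\to \BBB\LLL _ {{}_ \BBB}(X) $ and $\LLL _ {{}_ \BBB}(X)\to \AAA\LLL _ {{}_ \AAA}\LLL _ {{}_ \BBB}(X) $ being equivalences, while the second amounts to the composite unit $X\to \BBB\AAA\LLL _ {{}_ \AAA}\LLL _ {{}_ \BBB}(X) $ being an equivalence; these are equivalent by two-out-of-three. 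Having made this identification, Lemma \ref{impor} applied with $\LLL := \LLL _ {{}_ \AAA}\LLL _ {{}_ \BBB} $, $\UUU := \BBB\AAA $, $\widehat{\LLL} := \LLL _ {{}_ \CCCC}$, $\widehat{\UUU } := \CCCC $ (both biadjunctions inducing idempotent pseudomonads, as established above) yields that the unit of $\LLL _ {{}_ \CCCC}\dashv \CCCC $ at $X $ is an equivalence, i.e., $X $ admits a $\TTT _ {{}_ \CCCC}$-pseudoalgebra structure.

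The only step that requires genuine care is the identification of the unit of the composite biadjunction $\LLL _ {{}_ \AAA}\LLL _ {{}_ \BBB}\dashv \BBB\AAA $ at $X $ with the pasting of the two component units $\eta ^{(\BBB )} _ {{}_X} $ and $\BBB \bigl(\eta ^{(\AAA )} _ {{}_{\LLL _ {{}_ \BBB}(X)}}\bigr) $, which is exactly what makes the two-out-of-three argument applicable. This is a routine coherence computation based on the standard construction of composite biadjunctions and is essentially the only piece of bookkeeping needed beyond a direct appeal to Theorems \ref{Idem}, \ref{algebra}, and \ref{biadjointtriangle} together with Lemma \ref{impor}.
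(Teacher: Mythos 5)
Your proposal is correct and follows essentially the route the paper intends: the paper leaves this corollary without an explicit proof, relying on the immediately preceding observation that composites of pseudomonadic right biadjoint local equivalences are again such (via Theorem \ref{Idem}), then Theorem \ref{biadjointtriangle} for the biadjunction $\EEE\dashv\RRR$ and Lemma \ref{impor} for the pseudoalgebra transfer, which is exactly your argument. The only quibble is that your claimed \emph{equivalence} of the two pseudoalgebra conditions is not fully justified by two-out-of-three in the backward direction, but only the forward direction (both units equivalences implies the composite unit is an equivalence) is needed, so the proof stands.
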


Lemma~\ref{impor} and Corollary~\ref{square} are results on our formal approach to descent theory, 
\textit{i.e.} they give conditions to decide whether a given object can be endowed 
with a pseudoalgebra structure. 
In fact, most of the theorems proved in this paper are consequences of successive applications of these results, 
including B\'{e}nabou-Roubaud Theorem and other theorems within the context of \cite{Facets, Facets2}. 
However it does not deal with the technical ``almost descent'' aspects, which follow from the results on
$\mathfrak{F}$-comparisons below.

\subsection{$\mathfrak{F}$-comparisons}\label{tecnicoalmost}
Instead of restricting attention to objects that can be endowed with a pseudoalgebra structure, 
we often are interested in almost descent and descent objects as well. 
In the context of idempotent pseudomonads, these are objects that possibly do not have 
pseudoalgebra structure but have comparison $1$-cells belonging to special classes of morphisms.

In this subsection, every $2$-category $\hhh $ is assumed to be endowed with a 
special subclass of morphisms $\mathfrak{F}_ {{}_{\hhh}}$ satisfying the following properties:
\begin{itemize}\renewcommand\labelitemi{--}
\item Every equivalence of $\hhh $ belongs to $\mathfrak{F}_ {{}_{\hhh}}$;
\item $\mathfrak{F}_ {{}_{\hhh}}$ is closed under composition;
\item If there is an invertible $2$-cell $f\Rightarrow h$ in $\hhh $ such that $ f\in \mathfrak{F}_ {{}_{\hhh}}$, then $h\in \mathfrak{F}_ {{}_{\hhh}}$;
\item (Left) cancellation property: if $fg$ and $f$ belong to $\mathfrak{F}_ {{}_{\hhh}}$, $g$ belongs to $\mathfrak{F}_ {{}_{\hhh}}$ as well.
\end{itemize}
If $f$ is a morphism of $\hhh $ that belongs to $\mathfrak{F}_ {{}_{\hhh}}$, we say that $f$ is an \textit{$\mathfrak{F}_ {{}_{\hhh}}$-morphism}.

\begin{rem}
Recall that a morphism in a $2$-category is faithful/fully faithful if its images by the representable $2$-functors are faithful/fully faithful.
Given any $2$-category $\hhh $, there are at least three important examples of subclasses of morphisms satisfying the properties above.
The first class is the class of equivalences of $\hhh $.  The others are respectively the classes of faithful and fully faithful morphisms of $\hhh $.
\end{rem}

\begin{defi}\label{almostdescent}
Let $(\TTT, \mu , \eta ,  \Lambda, \rho, \Gamma)$ be an idempotent pseudomonad on a $2$-category $\hhh $. An object $X$ is an \textit{$(\mathfrak{F}_ {{}_{\hhh}}, \TTT )$-object} if the comparison
$\eta _ {{}_X} : X\to\TTT (X) $
is an $\mathfrak{F}_ {{}_{\hhh}}$-morphism.

We say that a pseudofunctor $\mathcal{E}:\hhh\to\hhh $ \textit{preserves $(\mathfrak{F}_ {{}_{\hhh}}, \TTT )$-objects} if it takes $(\mathfrak{F}_ {{}_{\hhh}}, \TTT )$-objects to $(\mathfrak{F}_ {{}_{\hhh}}, \TTT )$-objects.  
\end{defi}

Theorem \ref{descenTsquare} follows from the construction given in the proof of Lemma~\ref{impor}. 
Similarly to Corollary \ref{square}, Theorem \ref{descenTsquare} is  a commutativity result for $(\mathfrak{F}_ {{}_{\hhh}}, \TTT )$-objects.

\begin{theo}\label{descenTsquare}
Let
$$\xymatrix{  \aaa\ar@{}[dr]|-{\simeq } &\hhh\ar[l]|-{\EEE }\\
\bbb\ar[u]|-{\LLL _ {{}_{{}_{{}_\AAA}}} } &\ccc\ar[l]|-{\LLL _ {{}_{{}_{{}_\BBB}}}}\ar[u]|-{\LLL _ {{}_{{}_{{}_\CCCC}}}}
}$$ 
be a pseudonatural equivalence such that $\LLL _ {{}_{{}_{{}_\AAA}}}\dashv \AAA $, $\LLL _ {{}_{{}_{{}_\BBB}}}\dashv \BBB $ and $\LLL _ {{}_{{}_{{}_\CCCC}}}\dashv \CCCC $ are biadjunctions inducing pseudomonads $\TTT _ {{}_ \AAA}, \TTT _ {{}_ \BBB}, \TTT _ {{}_ \CCCC }$. Also, we denote by $\TTT $ the pseudomonad induced by the biadjunction $\LLL _ {{}_{{}_{{}_\AAA}}}\LLL _ {{}_{{}_{{}_\BBB }}}\dashv \BBB\AAA $.  

Assume that all the right biadjoints are local equivalences, $\BBB $ takes $\mathfrak{F}_ {{}_{\bbb}}$-morphisms to $\mathfrak{F}_ {{}_{\ccc}}$-morphisms and $\TTT _ {{}_ \CCCC }$ preserves $(\mathfrak{F}_ {{}_{\ccc }}, \TTT ) $-objects. If $X$ is a $(\mathfrak{F}_ {{}_{\ccc }}, \TTT _ {{}_{\BBB }} ) $-object of $\ccc $
and $\LLL _ {{}_{{}_{{}_\BBB}}}(X) $ is a $(\mathfrak{F} _ {{}_{\bbb }}, \TTT _ {{}_\AAA } )$-object, then $X$ is a $(\mathfrak{F} _ {{}_{\ccc }}, \TTT _ {{}_\CCCC })$-object as well.
\end{theo}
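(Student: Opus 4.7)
The plan is to adapt the construction used in the proof of Lemma \ref{impor}, tracking the $\mathfrak{F}$-membership of the relevant morphisms and concluding via the left cancellation property. Concretely, I want to show that $\eta^{\CCCC}_{X}: X \to \CCCC\LLL_{\CCCC}(X)$ lies in $\mathfrak{F}_{\ccc}$. The idea is to realize this morphism as the ``second factor'' of $\eta^{\TTT}_{X}$ (which we already control) through a pseudonatural transformation $\alpha: \TTT_{\CCCC}\longrightarrow \TTT$ whose $X$-component is itself in $\mathfrak{F}_{\ccc}$.

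First I would verify that $X$ is an $(\mathfrak{F}_{\ccc},\TTT)$-object. The biadjunction $\LLL_{\AAA}\LLL_{\BBB}\dashv \BBB\AAA$ has unit pseudonaturally equivalent to $\eta^{\TTT} \cong (\BBB\,\eta^{\AAA}\LLL_{\BBB})\,\eta^{\BBB}$, so
$$ \eta^{\TTT}_{X} \ \cong\ \BBB(\eta^{\AAA}_{\LLL_{\BBB}(X)})\circ \eta^{\BBB}_{X}. $$
By hypothesis, $\eta^{\BBB}_{X} \in \mathfrak{F}_{\ccc}$ and $\eta^{\AAA}_{\LLL_{\BBB}(X)}\in \mathfrak{F}_{\bbb}$; since $\BBB$ takes $\mathfrak{F}_{\bbb}$-morphisms to $\mathfrak{F}_{\ccc}$-morphisms and $\mathfrak{F}_{\ccc}$ is closed under composition and under invertible $2$-cells, we get $\eta^{\TTT}_{X}\in \mathfrak{F}_{\ccc}$. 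Applying the hypothesis that $\TTT_{\CCCC}$ preserves $(\mathfrak{F}_{\ccc},\TTT)$-objects, the same conclusion holds at $\TTT_{\CCCC}(X)$: namely $\eta^{\TTT}_{\TTT_{\CCCC}(X)}\in \mathfrak{F}_{\ccc}$.

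Next, assuming without loss of generality that $\EEE\LLL_{\CCCC}=\LLL_{\AAA}\LLL_{\BBB}$ (via the given pseudonatural equivalence), I would construct, following the recipe of Lemma \ref{impor},
$$ \alpha\ :=\ \bigl(\BBB\AAA\EEE\,\varepsilon^{\CCCC}\LLL_{\CCCC}\bigr)\bigl(\eta^{\TTT}\CCCC\LLL_{\CCCC}\bigr)\ :\ \TTT_{\CCCC}=\CCCC\LLL_{\CCCC}\ \longrightarrow\ \BBB\AAA\LLL_{\AAA}\LLL_{\BBB}=\TTT. $$
Pseudonaturality of $\eta^{\TTT}$ at the morphism $\eta^{\CCCC}_{X}$ together with the triangle identity $\varepsilon^{\CCCC}_{\LLL_{\CCCC}(X)}\circ \LLL_{\CCCC}(\eta^{\CCCC}_{X})\cong \Id$ yields the invertible $2$-cell $\alpha_{X}\eta^{\CCCC}_{X}\cong \eta^{\TTT}_{X}$. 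Now the key observation is that $\alpha_{X}$ itself is an $\mathfrak{F}_{\ccc}$-morphism: it is the composite of $\eta^{\TTT}_{\TTT_{\CCCC}(X)}\in \mathfrak{F}_{\ccc}$ (just established) and $\BBB\AAA\EEE(\varepsilon^{\CCCC}_{\LLL_{\CCCC}(X)})$; the latter is an equivalence, because $\CCCC$ is a local equivalence and so $\varepsilon^{\CCCC}$ is a pseudonatural equivalence by Lemma \ref{counitadjunction}, and equivalences are preserved by pseudofunctors and belong to $\mathfrak{F}_{\ccc}$.

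Finally, from $\alpha_{X}\eta^{\CCCC}_{X}\cong \eta^{\TTT}_{X}\in \mathfrak{F}_{\ccc}$ we get $\alpha_{X}\eta^{\CCCC}_{X}\in \mathfrak{F}_{\ccc}$, and since $\alpha_{X}\in \mathfrak{F}_{\ccc}$, the left cancellation property yields $\eta^{\CCCC}_{X}\in \mathfrak{F}_{\ccc}$, i.e.\ $X$ is an $(\mathfrak{F}_{\ccc},\TTT_{\CCCC})$-object, as required. The main obstacle I expect is bookkeeping rather than conceptual: one must make sure the coherence $\alpha\eta^{\CCCC}\cong \eta^{\TTT}$ is produced cleanly from the triangle identities and the assumed equivalence $\EEE\LLL_{\CCCC}\simeq \LLL_{\AAA}\LLL_{\BBB}$, so that the final cancellation step can be invoked on bona fide composites rather than merely on a diagram that commutes up to several layers of coherence.
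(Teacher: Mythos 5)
Your proposal is correct and follows essentially the same route as the paper's own proof: decompose $\eta^{\TTT}_{X}$ as $\BBB(\eta^{\AAA}_{\LLL_{\BBB}(X)})\circ\eta^{\BBB}_{X}$ to see it lies in $\mathfrak{F}_{\ccc}$, reuse the pseudonatural transformation $\alpha$ from the proof of Lemma \ref{impor} with $\alpha_{X}\eta^{\CCCC}_{X}\cong\eta^{\TTT}_{X}$, show $\alpha_{X}\in\mathfrak{F}_{\ccc}$ as a composite of an equivalence with $\eta^{\TTT}_{\TTT_{\CCCC}(X)}$ (using the preservation hypothesis on $\TTT_{\CCCC}$), and conclude by left cancellation. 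Your phrasing of the preservation step, applying the hypothesis to the object $\TTT_{\CCCC}(X)$ once $X$ is known to be an $(\mathfrak{F}_{\ccc},\TTT)$-object, is if anything slightly cleaner than the paper's.
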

\begin{proof}
By the composition of biadjunctions, the unit $\eta $ of $\TTT $ is such that, for each object $Y$ of $\ccc $, $$\eta _ {{}_Y} \cong \left(\BBB\eta ^{{}^\AAA }\LLL _ {{}_{{}_{{}_\BBB }}}\right)_ {{}_Y}\, \, \, \eta ^{{}^\BBB } _ {{}_Y}.$$ 

Let $X$ be an object satisfying the hypotheses of the theorem. We have that $\left(\BBB\eta ^{{}^\AAA }\LLL _ {{}_{{}_{{}_\BBB }}}\right)_ {{}_X}$
and  $\eta ^{{}^\BBB } _ {{}_X}$ are  $\mathfrak{F}_{{}_{\ccc}}$-morphisms. Hence, by the closure under composition and by the 
invertible $2$-cell above, we conclude that $\eta _ {{}_X}$ is an $\mathfrak{F}_{{}_{\ccc}}$-morphism.

By the proof of Lemma~\ref{impor}, there is a pseudonatural transformation
$\alpha :\TTT _ {{}_\CCCC }\longrightarrow \TTT  $
such that we have in particular an invertible $2$-cell
\small
$$\xymatrix{  &X\ar@{}[d]|{\cong }\ar[ld]|{\eta ^{{}^\CCCC }_{{}_X} }\ar[rd]|{\eta _ {{}_X} }&\\
\TTT _ {{}_\CCCC }(X) \ar[rr]|-{\alpha _ {{}_X}} && \TTT (X).
}$$ 
\normalsize
 
By the left cancellation property of the subclass $\mathfrak{F} _ {{}_{\ccc }} $ and by the invertible $2$-cell above,
we only need to prove that  $\alpha _ {{}_X} $ is an $\mathfrak{F}_{{}_{\ccc}}$-morphism to complete our proof that
$\eta ^{{}^\CCCC }_{{}_X} $ is an $\mathfrak{F}_{{}_{\ccc }}$-morphism.

Recall that $\alpha_ {{}_X}$ is defined by $$\alpha _ {{}_X}:= (\BBB\AAA\varepsilon ^{{}^\CCCC } \LLL _ {{}_{{}_{{}_\CCCC}}}) _ {{}_X}\,\,\, (\eta \TTT _{{}_\CCCC } ) _ {{}_X}, $$ 
in which $\varepsilon ^{{}^\CCCC }$ is the counit of the biadjunction $\LLL _ {{}_{{}_{{}_\CCCC}}}\dashv \CCCC $. 

Firstly, $(\BBB\AAA\varepsilon ^{{}^\CCCC } \LLL _ {{}_{{}_{{}_\CCCC}}}) _ {{}_ X}$ is an equivalence. Secondly, since $\TTT _{{}_\CCCC }$ preserves $(\mathfrak{F}_ {{}_{\ccc }}, \TTT ) $-objects, $(\eta \TTT _{{}_\CCCC }) _ {{}_X}\cong (\TTT _{{}_\CCCC }\eta ) _ {{}_X}$ is an $\mathfrak{F}_{{}_{\ccc }}$-morphism. Therefore $\alpha _ {{}_X}$ is a composition of  $\mathfrak{F}_{{}_{\ccc }}$-morphisms and, hence, an $\mathfrak{F}_{{}_{\ccc }}$-morphism as well.
\end{proof}

The result above can be seen as a generalization of 
Corollary \ref{square}, since we get that corollary from  Theorem \ref{descenTsquare} by defining the classes $\mathfrak{F}_ {{}_{\aaa }}$, $ \mathfrak{F}_ {{}_{\bbb }}$, $\mathfrak{F}_ {{}_{\ccc }}$ to be the classes of equivalences.

\section{Pseudo-Kan Extensions}\label{KAN}

It is known that the descent category and the category of 
algebras are $2$-categorical limits (see, for instance, \cite{Street3, Street4, Galois}).
Thereby, our standpoint is to deal with the context of \cite{Facets, Facets2} strictly guided by bilimits results. 

For the sake of this aim, we focus our study on the pseudomonads coming from the bicategorical
analogue of the notion of 
right Kan extension. Actually, since the concept of ``right Kan extension'' plays the leading role in this 
work, ``(pseudo-)Kan extension'' means always right (pseudo-)Kan extension, 
while we always make the word ``left'' explicit when we refer to the dual notion. 

We explain below why we need to use a bicategorical notion of Kan extension, 
instead of employing the fully developed theory of enriched Kan extensions. 
The natural setting of (classical) descent theory is $2\textrm{-}\CAT $.
Although we can construct the bilimits related to descent theory as (enriched/strict) 
Kan extensions of $2$-functors in the $3$-category 
of $2$-categories, $2$-functors, $2$-natural transformations and modifications 
(see \cite{Street3, Power89, FLN, FLN3}), 
the necessary replacements~\cite{SteveLack,FLN} do 
not make computations and formal manipulations any easier. 

Furthermore, most of the transformations between $2$-functors that are necessary in the development of the 
theory are pseudonatural. 
Thus, to work within the ``strict world'' 
without employing repeatedly coherence theorems (such as the general coherence result of 
\cite{SteveLack}), we would need to add hypotheses to assure that usual Kan extensions of 
pseudonaturally equivalent diagrams are pseudonaturally equivalent. 
This is not true in most of the cases: it is easy to construct examples of pseudonaturally isomorphic 
diagrams such that their usual Kan extensions are not pseudonaturally equivalent. 
For instance, consider the $2$-category $\aaa $ below.
$$\xymatrix{ \mathsf{1} \ar@<0.3 ex>[r]^{d^0 } \ar@<-0.3 ex>[r]_{d^1 } & \mathsf{2} }$$
The $2$-category $\aaa $ has no nontrivial $2$-cells. Assume that $\dot{\aaa } $ is the $2$-category obtained from $\aaa $ adding an initial object $\mathsf{0}$, 
with full inclusion $\t : \aaa \to \dot{\aaa } $. Now, 
if $\ast $ is the terminal category and $\nabla \mathsf{2} $ is the category with two objects and one 
isomorphism between them (\textit{i.e.} $\nabla \mathsf{2} $ is the localization of the preorder 
$\mathsf{2}$ w.r.t. all morphisms), then there are two $2$-natural isomorphism classes of diagrams 
$\aaa \to \CAT $ of the type below, while all such diagrams are pseudonaturally isomorphic.
$$\xymatrix{ \ast \ar@<0.3 ex>[r] \ar@<-0.3 ex>[r] & \nabla \mathsf{2} }$$
These $2$-natural isomorphism classes give pseudonaturally nonequivalent Kan extensions along $\t $.  
More precisely, if $\mathcal{X},\mathcal{Y}: \aaa \to \CAT $ are such that  
$\mathcal{X}(\mathsf{1} )=\mathcal{Y}(\mathsf{1} )= \ast$, 
$\mathcal{X}(\mathsf{2} )= \mathcal{Y}(\mathsf{2} )= \nabla \mathsf{2} $, 
$\mathcal{X}( d^0 ) \neq \mathcal{X}( d^1 ) $ and 
$\mathcal{Y}( d^0 ) = \mathcal{Y}( d^1 ) $; then 
$\Ran _ \t \mathcal{X}(\mathsf{0} ) = \emptyset $, 
while $\Ran _\t \mathcal{Y}(\mathsf{0} ) = \ast $. 
Therefore $\Ran _\t \mathcal{X}$ and 
$\Ran _ \t \mathcal{Y}$ are not pseudonaturally equivalent, 
while $\mathcal{X}$ is pseudonaturally isomorphic to $\mathcal{Y}$. 
 
The usual Kan extensions behave well if we add extra hypotheses related to flexible diagrams 
(see \cite{Flexible, Power, Power89, SteveLack, FLN}). However, we do not give such 
restrictions and technicalities.  
Thereby we deal with the problems natively in the tricategory $2$-$\CAT$, 
without employing further coherence results. 
The first step is, hence, to understand the appropriate notion of Kan extension in this tricategory.

\subsection{The Definition}\label{pseudoKAN}

In a given tricategory, if $\t : a\to b$, $f: a\to c $ are $1$-cells, 
we might consider that the formal right Kan extension of $f$ along $\t $ is the right $2$-reflection 
of $f$ along the $2$-functor $[\t , c]:[b,c]\to [a,c] $. That is to say, 
if it exists for all $f:a\to c $, the (formal) global Kan extension along $\t : a\to b $ would be a $2$-functor 
$ [a,c]\to [b,c] $ right $2$-adjoint to $ [\t , c] : [b, c]\to [a,c] $. 
But, in important cases, such concept is very restrictive, because 
it does not take into account the bicategorical structure of the hom-$2$-categories of the tricategory. 
Hence, it is possible to consider other notions of Kan extension, 
corresponding to the two other important notions of adjunctions between $2$-categories~\cite{Gray3}, 
that is to say, lax adjunction and biadjunction. 
For instance, Gray~\cite{Gray} studied the notion of lax-Kan extension.

We also consider an alternative notion of Kan extension in our tricategory $2$-$\CAT $, 
that is to say, the notion of pseudo-Kan extension, introduced in \cite{FLN}.
In our case, the need of this concept comes from the fact that, even with many assumptions, 
the (formal) Kan extension of a pseudofunctor may not exist. 
Furthermore, we prove in Section \ref{descent} that the descent object (descent category) and the 
Eilenberg-Moore object (Eilenberg-Moore category) can be easily described using our language.

\textit{Henceforth, $\aaa, \bbb $ always denote small $2$-categories.} 
If $\t : \aaa\to \bbb $ and $\AAA : \aaa\to\hhh $ are pseudofunctors, a right  
\textit{pseudo-Kan extension} of $\AAA $ along $\t $, denoted by $\Ps\Ran _\t \AAA $, is, if it exists, 
a right bicategorical reflection of $\AAA : \aaa\to\hhh $ along the pseudofunctor 
$\left[ \t , \hhh \right] _{PS} : \left[ \bbb ,\hhh\right] _ {PS} \to  \left[ \aaa ,\hhh\right] _ {PS} $.
Although it is omitted in our notation, every right pseudo-Kan extension comes with a universal arrow $$ \varepsilon _ {{}_{\AAA }} : \left(\Ps\Ran _\t \AAA\right) \circ \t\longrightarrow \AAA $$
 by Definition \ref{bicategoricalreflection} of right bicategorical reflection. Furthermore, by Remark \ref{rightbicategoricalreflectioncounit} we could actually give the definition of pseudo-Kan extension via the property of 
this universal arrow. 
That is to say, $(\Ps\Ran _\t \AAA , \varepsilon _ {{}_{\AAA }}) $ is the right pseudo-Kan extension of $\AAA $ along $\t $ if and only if
$$ \left[ \bbb ,\hhh\right] _ {PS} (-, \Ps\Ran _\t \AAA )\to \left[ \aaa ,\hhh\right] _ {PS} (- \circ \t,  \AAA ) : \qquad \alpha \mapsto \varepsilon _ {{}_{\AAA }}\, \left( \alpha \t \right)
$$
defines a pseudonatural equivalence. By uniqueness (up to equivalence) of bicategorical reflections, pseudo-Kan extensions are unique up to pseudonatural equivalence.

The \textit{global right pseudo-Kan extension} along $\t : \aaa\to\bbb $ w.r.t. a $2$-category $\hhh $ is the right biadjoint of 
$\left[ \t , \hhh \right] _{PS} $, provided that it exists. That is to say, a 
pseudofunctor $\Ps\Ran _ \t :\left[ \aaa ,\hhh\right] _ {PS} \to  \left[ \bbb ,\hhh\right] _ {PS} $
such that $\left[ \t , \hhh \right] _{PS}\dashv \Ps\Ran _ \t $.

Herein, the expression \textit{Kan extension} refers to the usual notion of Kan extension 
in $\CAT$-enriched category theory. 
That is to say, if $\t : \aaa\to \bbb $ and $\AAA : \aaa\to\hhh $ 
are $2$-functors, the (right) Kan extension of $\AAA $ along $\t $, 
denoted by $\Ran _ \t \AAA : \bbb\to\hhh $, is (if it exists) the right $2$-reflection of $\AAA $ 
along the $2$-functor $\left[ \t , \hhh \right] $.
And the global Kan extension is a right $2$-adjoint of 
$\left[ \t , \hhh \right]: \left[ \bbb ,\hhh\right] \to  \left[ \aaa ,\hhh\right] ,$
in which $\left[ \bbb ,\hhh\right]$ denotes the $2$-category of 
$2$-functors $\bbb\to\hhh $, $\CAT $-natural transformations and modifications.

If $\Ran _ \t \AAA $ exists, it is not generally true that $\Ran _ \t \AAA $ is 
pseudonaturally equivalent to $\Ps\Ran _ \t \AAA $. This is a coherence problem, 
related to  flexible diagrams~\cite{Power, SteveLack, Flexible, FLN} and to the 
construction of bilimits via strict $2$-limits~\cite{Street3, Street4}. For instance, in particular, 
using the results of \cite{FLN}, we can easily prove   
that, for a given pseudofunctor $\AAA : \aaa\to\hhh $ and a $2$-functor $\t : \aaa\to\bbb $, we can replace 
$\AAA $ by a pseudonaturally equivalent $2$-functor $\AAA ':\aaa\to\hhh $ such that 
$\Ran _ \t \AAA ' $ is equivalent to $\Ps\Ran _\t \AAA '\simeq \Ps\Ran _\t \AAA $, 
provided that $\hhh $ satisfies some completeness conditions 
(for instance, if $\hhh $ is $\CAT $-complete).      

In Section \ref{descent} we show that the descent category, as defined and studied in 
\cite{Street5, Facets, Facets2}, of a pseudocosimplicial object 
$D :\Delta\to \CAT $ is equivalent to $\Ps\Ran _\j D (0) $, 
in which $\j : \Delta\to \dot{\Delta } $ is the inclusion of the category of 
nonempty finite ordinals into the category of finite ordinals. 
Observe that the Kan extension of a cosimplicial object does not give the descent object: 
it gives an equalizer (which is the notion of descent for dimension $1$), 
although we might give the descent object via a Kan extension after replacing the 
(pseudo)cosimplicial objects by suitable strict versions of 
pseudocosimplicial objects as it is done in \ref{strictdescent}.

\subsection{Factorization}\label{factorization}
Our setting often reduces to the study of right pseudo-Kan extensions of 
pseudofunctors $\AAA : \aaa \to\hhh $ along $\t $, in which $\t : \aaa\to\dot{\aaa } $ is the full 
inclusion of a small $2$-category $\aaa $ into a small $2$-category $\dot{\aaa } $ which has only one 
extra object $\mathsf{a} $. 

\begin{defi}[$\mathsf{a}$-inclusion]\label{Adefinition}
A $2$-functor $\t : \aaa\to\dot{\aaa } $ is called an \textit{$\mathsf{a}$-inclusion}, 
if $\mathsf{a} $ is an object of $\dot{\aaa } $ and $\t$ is a fully faithful functor between small $2$-categories in which 
$$\obj (\dot{\aaa } ) = \obj (\aaa )\cup \left\{ \mathsf{a}\right\} $$
is a disjoint union.
\end{defi}

\begin{rem}\label{Bdefinition}
The terminology established in the definition above makes reference to the extra object. Hence, using this terminology, 
a full inclusion $\bbb\to\dot{\bbb } $ between small $2$-categories in which 
$\obj (\dot{\bbb } ) = \obj (\bbb )\cup \left\{ \mathsf{b}\right\} $
is called a $\mathsf{b} $-inclusion.
\end{rem}

\begin{rem}
Theorem \ref{pointwise} shows that a right pseudo-Kan extension of a pseudofunctor
 $\AAA : \aaa\to  \hhh $ along an $\mathsf{a}$-inclusion $\t $ is precisely $\AAA $ extended with a weighted
bilimit $\bilim (\dot{\aaa } (\mathsf{a} , \t - ), \AAA ) $ whenever such weighted bilimit exists. Thereby $\mathsf{a} $-inclusions are precisely
what we need to give statements and proofs on (weighted) bilimits via pseudo-Kan extensions.   
\end{rem}

In this setting, we have factorizations 
for pseudo-Kan extensions along 
$\mathsf{a}$-inclusions, which follow formally from the biadjunction 
$\left[ \t , \hhh\right] _ {PS}\dashv \Ps\Ran _ \t $.

\begin{theo}[Factorization]\label{fact}
Assume that 
$(\left[ \t , \hhh\right] _ {PS}\dashv \Ps\Ran _ \t, \eta, \varepsilon) $ is a biadjunction and $\t : \aaa\to\dot{\aaa } $ is an $\mathsf{a}$-inclusion.
If $\AAA : \dot{\aaa } \to\hhh $ is a pseudofunctor, $\mathsf{a}\neq b $ 
and $f:b\to\mathsf{a}$, $g:\mathsf{a}\to b $ are morphisms of $\dot{\aaa } $, 
we get induced ``factorizations'' (actually, invertible $2$-cells): 
$$\xymatrix@C=1em{ \AAA (b)\ar[rr]|{\AAA (f)}\ar[dr]|{f_{{}_{{}_\AAA }} }&& \AAA (\mathsf{a})\ar[ld]|{\eta ^\mathsf{a}_ {{}_{{}_{\AAA } }}} &
\AAA (\mathsf{a})\ar[rr]|{\AAA (g)}\ar[dr]|{\eta ^\mathsf{a}_ {{}_{{}_{\AAA } }} }&& \AAA (b)\\
&\Ps\Ran _\t (\AAA\circ\t ) (\mathsf{a})\ar@{}[u]|{\cong } & & &\Ps\Ran _\t (\AAA\circ\t ) (\mathsf{a})\ar@{}[u]|{\cong }\ar[ru]|{g_{{}_{{}_\AAA }}} &
}$$ 
\normalsize
in which 
$$
f_{{}_{{}_\AAA }} : = \Ps\Ran _ \t (\AAA\circ\t )(f)\circ\eta ^b_ {{}_{{}_{\AAA } }}\qquad\qquad
g_{{}_{{}_\AAA }} :=  \varepsilon _ {{}_{{}_{(\AAA\circ\t ) } }}^b\circ \Ps\Ran _ \t (\AAA\circ\t )(g)
$$
and $\eta ^\mathsf{a}_ {{}_{{}_{\AAA } }}$, $\varepsilon _ {{}_{{}_{(\AAA\circ\t )} }}^{b} $ are the $1$-cells induced by the components of $\eta $ 
and $\varepsilon $.
\end{theo}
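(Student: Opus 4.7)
The plan is to unpack the components of the given biadjunction $[\t,\hhh]_{PS}\dashv \Ps\Ran_\t$ at the pseudofunctor $\AAA$ and read off both factorizations as instances of pseudonaturality of the unit, together with (in the second case) a single triangular identity. Concretely, for any pseudofunctor $\AAA:\dot{\aaa}\to\hhh$, the unit evaluated at $\AAA$ is a pseudonatural transformation
$$\eta_{{}_{{}_{\AAA}}}:\AAA\longrightarrow \Ps\Ran_\t(\AAA\circ\t)$$
between pseudofunctors $\dot{\aaa}\to\hhh$ (Definition \ref{adjunction}), so by Definition \ref{pseudonaturaltransformation} each $1$-cell $h:X\to Y$ of $\dot{\aaa}$ produces an invertible $2$-cell
$$(\eta_{{}_{{}_{\AAA}}})_h:\Ps\Ran_\t(\AAA\circ\t)(h)\circ\eta^X_{{}_{{}_{\AAA}}}\Longrightarrow\eta^Y_{{}_{{}_{\AAA}}}\circ\AAA(h).$$

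For the first triangle, I would simply specialize this to $f:b\to\mathsf{a}$. Since by definition $f_{{}_{{}_\AAA}}=\Ps\Ran_\t(\AAA\circ\t)(f)\circ\eta^b_{{}_{{}_\AAA}}$, the pseudonaturality $2$-cell $(\eta_{{}_{{}_{\AAA}}})_f$ is already the desired invertible $2$-cell $f_{{}_{{}_\AAA}}\Rightarrow \eta^\mathsf{a}_{{}_{{}_\AAA}}\circ\AAA(f)$, and there is nothing else to check.

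For the second triangle, I would specialize the same naturality $2$-cell to $g:\mathsf{a}\to b$, obtaining an invertible $2$-cell $\Ps\Ran_\t(\AAA\circ\t)(g)\circ\eta^{\mathsf{a}}_{{}_{{}_\AAA}}\Rightarrow \eta^{b}_{{}_{{}_\AAA}}\circ\AAA(g)$, and then whisker it on the left by $\varepsilon^b_{{}_{{}_{(\AAA\circ\t)}}}$. Since $b\in\obj(\aaa)$ (because $\mathsf{a}\neq b$ and $\t$ is an $\mathsf{a}$-inclusion in the sense of Definition \ref{Adefinition}), we may identify $\t(b)=b$, and then the triangular invertible modification of the biadjunction $[\t,\hhh]_{PS}\dashv\Ps\Ran_\t$ evaluated at $\AAA$ and at $b$ yields
$$\varepsilon^b_{{}_{{}_{(\AAA\circ\t)}}}\circ\eta^{b}_{{}_{{}_\AAA}}\cong \Id_{{}_{\AAA(b)}}.$$
Pasting this with the whiskered $2$-cell above produces the required invertible $2$-cell
$$g_{{}_{{}_\AAA}}\circ\eta^{\mathsf{a}}_{{}_{{}_\AAA}}=\varepsilon^b_{{}_{{}_{(\AAA\circ\t)}}}\circ\Ps\Ran_\t(\AAA\circ\t)(g)\circ\eta^{\mathsf{a}}_{{}_{{}_\AAA}}\Longrightarrow \AAA(g).$$

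There is no serious obstacle: everything follows formally from the biadjunction data. The only mild technical point to state carefully is that $\t$ being an $\mathsf{a}$-inclusion guarantees $b$ lies in the image of $\t$, so that the counit component $\varepsilon^b_{{}_{{}_{(\AAA\circ\t)}}}$ makes sense and the triangular modification can be invoked componentwise at $b$; this is exactly where the hypothesis $\mathsf{a}\neq b$ is used.
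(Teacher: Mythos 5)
Your proof is correct and follows essentially the same route as the paper: the first triangle is exactly the pseudonaturality $2$-cell of the unit at $f$, and the second is obtained by whiskering the pseudonaturality $2$-cell at $g$ with the counit component and then invoking the triangular invertible modification $\varepsilon^b_{{}_{{}_{(\AAA\circ\t)}}}\circ\eta^b_{{}_{{}_{\AAA}}}\cong\Id_{{}_{\AAA(b)}}$. The remark that $b$ lies in $\aaa$ (so the counit component at $b$ makes sense) is a correct, if implicit in the paper, justification.
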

\begin{proof}
By the triangular invertible modifications of Definition~\ref{adjunction},
$$
g_{{}_{{}_\AAA }}\circ\eta ^\mathsf{a}_ {{}_{{}_{\AAA } }} =  \varepsilon _ {{}_{{}_{(\AAA\circ\t )} }}^{b}\circ \Ps\Ran _\t (\AAA\circ\t )(g)\circ\eta ^\mathsf{a}_ {{}_{{}_{\AAA } }}\cong \varepsilon _ {{}_{{}_{(\AAA\circ\t )} }}^{b}\circ\eta ^{b}_ {{}_{{}_{\AAA } }}\circ \AAA (g)\cong  \AAA (g) $$
The factorization involving $\AAA (f) $ follows from the pseudonaturality of $\eta $.  
\end{proof}

\begin{rem}
Using the results of \ref{pointwisepseudokanextension} below, we find the factorizations above to be properties of 
(weighted) bilimits as we show in Section \ref{Elementary Examples}. For instance, we get the usual \ref{JTDESCENT} and the Eilenberg-Moore factorization, respectively,
in Sections \ref{descent} and \ref{EilenBERG}.
\end{rem}

\subsection{Weighted bilimits}\label{Spointwise}
Similarly to the usual approach for (enriched) Kan extensions, 
we define what should be called \textit{pointwise pseudo-Kan extension}. 
We prove that pseudo-Kan extensions exist and are pointwise whenever the codomain
has suitable weighted bilimits.

Pointwise (left) pseudo-Kan extensions  are constructed with weighted 
bi(co)limits~\cite{FLN, Power89, Street4, Street5}, the bicategorical analogue of (enriched) weighted 
(co)limits~\cite{Kelly, Dubuc2}. Thereby 
we list some needed results on weighted bilimits.

\begin{defi}[Weighted bilimit]
Let $\WWW : \aaa \to \CAT$ and $ \AAA :\aaa\to\hhh $ be pseudofunctors.
If it exists, a \textit{weighted bilimit} of $\AAA $ with weight $\WWW $ is an object of $\hhh $, denoted by 
$\left\{ \WWW, \AAA\right\}_ {\bi} $ or by $\bilim (\WWW, \AAA )$,  endowed with an equivalence (pseudonatural in $X$) 
$$\hhh (X,\bilim (\WWW, \AAA ) )  \simeq  \left[\aaa , \CAT\right]_{PS}(\WWW, \hhh (X, \AAA -) ),$$
which means that  $\bilim (\WWW, \AAA )$ is a bicategorical representation of
$$\hhh ^\op \to \CAT : \qquad X\mapsto \left[\aaa , \CAT\right]_{PS}(\WWW, \hhh (X, \AAA -) ).
$$
In other words, a weighted bilimit is, if it exists, the left bicategorical reflection of $\WWW $ along the  $2$-functor
$$ \hhh ^\op  \to \left[\aaa , \CAT\right]_{PS} : \qquad  X\mapsto\hhh (X, \AAA -), \quad f\mapsto \hhh (f, \AAA -). $$
By the uniqueness of the right bicategorical reflection,  $\bilim (\WWW, \AAA ) $ is 
unique up to equivalence. We refer to it as \textit{the} ($\WWW $-weighted) bilimit (of $\AAA $). 
\end{defi}

By Remark \ref{rightbicategoricalreflectioncounit}, in the context of the definition above, 
the weighted bilimit is a pair  $(\bilim (\WWW, \AAA ), \rho _{{}_{\AAA }})$
in which $\bilim (\WWW, \AAA )$ is an object of $\hhh $ and 
 $$\rho _{{}_{\bilim(\WWW , \AAA) }}: \WWW \longrightarrow \hhh ( \bilim (\WWW, \AAA ) , \AAA -) $$ 
is a pseudonatural transformation such that
$$ \hhh (X,\bilim (\WWW, \AAA ) )\to \left[\aaa , \CAT\right]_{PS}(\WWW, \hhh (X, \AAA -) ), \quad 
f\mapsto \hhh (f, \AAA -)\, \rho _{{}_{\bilim(\WWW , \AAA) }} $$
is an equivalence pseudonatural in $X$.

\begin{rem}[Weighted bicolimit]
The dual notion is called \textit{weighted bicolimit}. If it exists, given a pseudofunctor  $\WWW ' : \aaa ^\op \to \CAT$,
the \textit{$\WWW ' $-weighted bicolimit of $\AAA $}, denoted by  
$\WWW '\Asterisk _ {\bi }  \AAA $ or by $\bicolim (\WWW ',\AAA ) $, is an object of $\hhh $
endowed with an equivalence (pseudonatural in $X$)
$$[\aaa , \CAT]_{PS}(\WWW ', \hhh ( \AAA - , X ) )\simeq \hhh ( \bicolim (\WWW ' , \AAA) , X ) .$$
\end{rem}

\begin{rem}[Conical Bilimit]\label{conical}
Analogously to the enriched case, denoting by $\top$
the appropriate $2$-functor constantly
equal to the terminal category, the $\top $-weighted bi(co)limit of a pseudofunctor
 $\AAA : \aaa\to\hhh $ is 
 the \textit{conical bi(co)limit} of $\AAA $ provided that it exists.
\end{rem}

The $2$-category $\CAT $ is bicategorically complete, that is to say, it has all (small) weighted bilimits. 
Indeed, if $\WWW , \AAA : \aaa \to \CAT$ are pseudofunctors, we have that 
$$\bilim ( \WWW, \AAA )  \simeq  [\aaa , \CAT ]_{PS}(\WWW, \AAA ) .$$  
Moreover, from the bicategorical Yoneda lemma, we get the strong bicategorical Yoneda lemma. 

\begin{lem}[Yoneda Lemma]\label{YonedaBILIMIT}
Let $\AAA : \aaa \to\hhh $ be a pseudofunctor. There is an equivalence
$\bilim ( \aaa (X, -), \AAA )\simeq\AAA (X) $ pseudonatural in $X$.
\end{lem}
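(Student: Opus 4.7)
The plan is to identify $\AAA(X)$ as the desired weighted bilimit by producing the required bicategorical representation directly. By the definition of weighted bilimit, it suffices to exhibit, for each object $Y$ of $\hhh$, an equivalence of categories
$$\hhh(Y, \AAA(X)) \simeq [\aaa, \CAT]_{PS}(\aaa(X, -), \hhh(Y, \AAA -))$$
that is pseudonatural in $Y$, together with the required pseudonaturality in $X$.

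First, I would apply the bicategorical Yoneda lemma (the already stated local-equivalence version) to the pseudofunctor $\hhh(Y, \AAA -) : \aaa \to \CAT$. Evaluating at the object $X$ of $\aaa$, the Yoneda lemma provides an equivalence
$$[\aaa, \CAT]_{PS}(\aaa(X, -), \hhh(Y, \AAA -)) \simeq \hhh(Y, \AAA -)(X) = \hhh(Y, \AAA(X)),$$
with the equivalence realized by evaluation at $\mathrm{id}_X$ in one direction and by sending a $1$-cell $f : Y \to \AAA(X)$ to the pseudonatural transformation whose component at $Z$ is $\aaa(X, Z) \to \hhh(Y, \AAA(Z))$, $g \mapsto \AAA(g) \circ f$ (up to the comparison $2$-cells $\aaaa$ of $\AAA$). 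This is exactly the universal candidate
$$\rho_{{}_{\AAA(X)}} : \aaa(X, -) \longrightarrow \hhh(\AAA(X), \AAA -), \qquad g \mapsto \AAA(g),$$
evaluated on hom-categories; hence $(\AAA(X), \rho_{{}_{\AAA(X)}})$ satisfies the defining universal property of the weighted bilimit $\bilim(\aaa(X, -), \AAA)$, giving the equivalence $\bilim(\aaa(X, -), \AAA) \simeq \AAA(X)$.

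For the pseudonaturality in $X$, I would invoke the pseudonaturality statement already built into the bicategorical Yoneda lemma: the equivalence of hom-categories is pseudonatural both in $Y$ (which is part of the definition of bicategorical representation used above) and in $X$, since a $1$-cell $h : X' \to X$ in $\aaa$ induces, by precomposition, a pseudonatural transformation $\aaa(X, -) \to \aaa(X', -)$, while on the other side it acts as $\AAA(h) \circ (-) : \hhh(Y, \AAA(X')) \to \hhh(Y, \AAA(X))$ (again up to the structural $2$-cells of $\AAA$), and these two actions are identified by the explicit form of the Yoneda equivalence.

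The only nontrivial step is bookkeeping: checking that the equivalence produced by Yoneda respects the comparison $2$-cells of the pseudofunctor $\AAA$ so as to give a genuine pseudonatural equivalence of pseudofunctors in $X$, rather than just a pointwise equivalence. This is a routine but careful diagram chase using the associativity/identity coherence axioms of the pseudofunctor $\AAA$ and the naturality axioms for pseudonatural transformations; no new ideas beyond those already used to state the weak Yoneda lemma are needed.
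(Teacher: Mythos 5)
Your proof is correct and follows essentially the same route as the paper, which states this lemma without an explicit proof as a direct consequence of the bicategorical Yoneda lemma: one exhibits $\AAA(X)$, with unit $g\mapsto\AAA(g)$, as the bicategorical representation of $Y\mapsto\left[\aaa,\CAT\right]_{PS}(\aaa(X,-),\hhh(Y,\AAA-))$ by applying Yoneda to the pseudofunctor $\hhh(Y,\AAA-)$. The only caveat is that this step needs the full (strong) form of the bicategorical Yoneda lemma for an arbitrary pseudofunctor $\aaa\to\CAT$, not just the local-equivalence statement formally recorded in the paper; but that full form is exactly what \cite{Street4} provides and what the paper itself invokes under the same name elsewhere (e.g.\ in the proof of Proposition \ref{Q}), so your argument matches the intended one.
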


Before giving results on pointwise pseudo-Kan extensions, the following result, which is mainly used in Section~\ref{descent}, already gives a glimpse of the relation between weighted bilimits and pseudo-Kan extensions.

\begin{theo}\label{relation}
Let $\t : \aaa\to\bbb $, $\WWW : \aaa\to\CAT $ be pseudofunctors. If the left pseudo-Kan extension $\Ps\Lan _ \t \WWW $ exists and $\AAA : \bbb\to\hhh $ is a pseudofunctor, then there is an equivalence
$$\bilim ( \WWW , \AAA\circ\t ) \simeq \bilim ( \Ps\Lan _ {\t} \WWW , \AAA ) $$ 
either side existing if the other does.
\end{theo}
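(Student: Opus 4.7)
The plan is to string together the universal properties of the two objects involved and read off the equivalence from the bicategorical Yoneda lemma. The key observation is that a weighted bilimit is characterized as a bicategorical representation of a pseudofunctor $\hhh^{\op}\to\CAT$, and the left pseudo-Kan extension provides exactly the natural bridge between pseudonatural transformations out of $\WWW$ and pseudonatural transformations out of $\Ps\Lan_\t \WWW$.

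First, I would fix an arbitrary object $X$ of $\hhh$ and write down the defining pseudonatural equivalence
$$\hhh(X, \bilim(\WWW, \AAA\circ\t)) \simeq [\aaa, \CAT]_{PS}(\WWW, \hhh(X, \AAA\t -))$$
assuming the left-hand bilimit exists. Since $\hhh(X, \AAA\t -) = \hhh(X, \AAA -)\circ \t$, the dual of the universal property of $\Ps\Lan_\t$ (which exists by hypothesis) gives a pseudonatural equivalence
$$[\aaa, \CAT]_{PS}(\WWW, \hhh(X, \AAA -)\circ \t) \simeq [\bbb, \CAT]_{PS}(\Ps\Lan_\t \WWW, \hhh(X, \AAA -)).$$
Composing these yields a pseudonatural (in $X$) equivalence
$$\hhh(X, \bilim(\WWW, \AAA\circ\t)) \simeq [\bbb, \CAT]_{PS}(\Ps\Lan_\t \WWW, \hhh(X, \AAA -)),$$
and the right-hand side is precisely the defining pseudofunctor of $\bilim(\Ps\Lan_\t \WWW, \AAA)$.

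For the ``either side existing if the other does'' clause, I would argue that the composite pseudonatural equivalence above exhibits $\bilim(\WWW, \AAA\circ\t)$ as a bicategorical representation of
$$X\mapsto [\bbb, \CAT]_{PS}(\Ps\Lan_\t \WWW, \hhh(X, \AAA -)),$$
which is by definition the pseudofunctor represented by $\bilim(\Ps\Lan_\t \WWW, \AAA)$. Hence, by the uniqueness of bicategorical representations up to equivalence (a consequence of Lemma stated earlier as the bicategorical Yoneda lemma), if one of the two bilimits exists, so does the other, and they are equivalent. The converse direction runs identically, starting from $\bilim(\Ps\Lan_\t \WWW, \AAA)$ and going backwards through the same three equivalences.

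The step that requires a little care is the middle one, where I invoke the universal property of $\Ps\Lan_\t \WWW$: one must check that the equivalence is pseudonatural in $X$ (not merely an equivalence of categories for each $X$). This, however, follows from the fact that $\Ps\Lan_\t \WWW$ is defined as a left bicategorical reflection in $[\bbb, \CAT]_{PS}$ and the assignment $X\mapsto \hhh(X, \AAA -)$ is itself a pseudofunctor $\hhh^{\op}\to [\bbb, \CAT]_{PS}$, so the equivalence of hom-categories furnished by the reflection is automatically pseudonatural in the parameter. No other obstacles appear; the argument is entirely formal, depending only on the universal properties and the bicategorical Yoneda lemma.
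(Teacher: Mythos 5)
Your argument is correct and is essentially the proof the paper gives: both exhibit a pseudonatural (in $X$) equivalence between the two defining pseudofunctors $X\mapsto \left[\aaa , \CAT\right]_{PS}(\WWW, \hhh (X, \AAA\circ\t -))$ and $X\mapsto \left[\bbb , \CAT\right]_{PS}(\Ps\Lan _ \t \WWW, \hhh (X, \AAA -))$ via the universal property of the left pseudo-Kan extension, and then conclude that a bicategorical representation of either is a representation of both. Your additional remark on pseudonaturality in $X$ is a point the paper leaves implicit, but it is handled correctly.
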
 
\begin{proof}
Assuming that $\Ps\Lan _ \t \WWW $ exists, we have a pseudonatural equivalence between the $2$-functors 
$$X\mapsto \left[ \bbb , \CAT \right] _ {PS} (\WWW , \hhh (X, \AAA\circ\t -)) \quad\mbox{and}\quad X\mapsto \left[ \aaa , \CAT \right] _ {PS} (\Ps\Lan _ \t \WWW , \hhh (X , \AAA -)). $$
Therefore, assuming that any of the $2$-functors above has a bicategorical representation $Y$, we get that $Y$ is indeed a bicategorical representation of both $2$-functors. 
\end{proof}

\subsection{Pseudoends}\label{introducedpseudoend}
There is one important notion remaining to study weighted bilimits: the bicategorical analogue of the \textit{end}~\cite{Kelly, Dubuc2}. 
Below and in \cite{FLN}, we give a direct definition of pseudoend, avoiding the unnecessary work on
bicategorical analogues of the extranatural transformations
of the classical enriched case. In order to do this, we consider the usual characterization of the  
end of a $2$-functor $T: \aaa ^\op \times \aaa  \to \CAT$ in the strict/enriched case given by:    
$$\left[ \aaa ^\op \times \aaa , \CAT\right] (\aaa (-,-), T)  .$$
Herein, we do not work with ends and, hence, we reserve the integral sign to denote pseudoends. 
\begin{defi}[Pseudoend]
The \textit{pseudoend} of a pseudofunctor $T: \aaa ^\op \times \aaa  \to \CAT $ is  
$$ \int _ \aaa T := [\aaa ^\op\times \aaa , \CAT] _ {PS} ( \aaa (-,-), T)\simeq \bilim (\aaa (-,-), T) .$$
\end{defi}

In order to avoid possible confusions when stating results that involve iterated pseudoends, we often 
adopt a terminology in which the pseudoend is indexed by the variable as well. 
That is to say, we use the following terminology:
$$ \int _ {a\in\aaa } T(a,a) : = \int _ \aaa T . $$

This definition allows us to get analogues of the usual fundamental results on ends of the strict/enriched case~\cite{Kelly, Dubuc2}.
We start by:

\begin{prop}[Fundamental equivalence for pseudoends]\label{Q}
Let
$\AAA , \BBB : \aaa\to\hhh $ be pseudofunctors. There is a pseudonatural equivalence
$$ \int _ {a\in\aaa } \hhh (\AAA (a), \BBB (a) ) \simeq  [\aaa , \hhh ] _ {PS}( \AAA , \BBB ) .$$
\end{prop}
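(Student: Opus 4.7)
The plan is to construct an explicit pseudonatural equivalence and verify it using the pseudonaturality/modification coherence axioms recorded in \ref{startsettingPseudofunctor} and \ref{pseudonaturaltransformation}. Unfolding the definition of pseudoend, the statement amounts to
\[ [\aaa^\op \times \aaa, \CAT]_{PS}(\aaa(-,-), \hhh(\AAA-, \BBB-)) \simeq [\aaa, \hhh]_{PS}(\AAA, \BBB),\]
pseudonatural in $\AAA$ and $\BBB$. Since both sides are defined functorially in $(\AAA,\BBB)$ through $\hhh(\AAA-,\BBB-)$, pseudonaturality in $\AAA,\BBB$ will be automatic once the equivalence is established at each $\AAA,\BBB$.

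First, I define $\Theta : [\aaa,\hhh]_{PS}(\AAA,\BBB) \to \int_{a\in\aaa}\hhh(\AAA a,\BBB a)$ by sending a pseudonatural transformation $\alpha : \AAA \longrightarrow \BBB$ to the pseudonatural transformation $\Theta(\alpha)$ whose $(a,b)$-component is the functor $\aaa(a,b) \to \hhh(\AAA a, \BBB b)$, $f \mapsto \BBB(f)\alpha_a$. The invertible $2$-cells witnessing pseudonaturality of $\Theta(\alpha)$ on a morphism $(g,h):(a,b)\to(a',b')$ of $\aaa^\op\times\aaa$ are assembled by pasting $\alpha_g$ with the compositor $2$-cells $\bbbb$ and $\aaaa$, and coherence follows from the coherence of $\alpha$. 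A modification $\Gamma:\alpha\Longrightarrow\beta$ is sent to the modification with component $\BBB(f)\ast\Gamma_a$ at $f\in\aaa(a,b)$.

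Conversely, I define $\Xi$ by \emph{evaluation at identities}: given $\phi:\aaa(-,-)\longrightarrow \hhh(\AAA-,\BBB-)$, put $\Xi(\phi)_a := \phi_{(a,a)}(\mathrm{id}_a)$. For each $h:a\to b$ in $\aaa$, the two morphisms $(\mathrm{id}_a,h):(a,a)\to(a,b)$ and $(h,\mathrm{id}_b):(b,b)\to(a,b)$ of $\aaa^\op\times\aaa$ yield, via the pseudonaturality $2$-cells of $\phi$ evaluated at identities, invertible $2$-cells $\BBB(h)\Xi(\phi)_a \Rightarrow \phi_{(a,b)}(h)$ and $\Xi(\phi)_b\AAA(h) \Rightarrow \phi_{(a,b)}(h)$; the pasting of the first with the inverse of the second is the structural $2$-cell $\Xi(\phi)_h$. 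The coherence axioms required to make $\Xi(\phi)$ into a pseudonatural transformation reduce to the pseudonaturality axioms of $\phi$ applied to composites of morphisms of the form $(\mathrm{id},-)$ and $(-,\mathrm{id})$ in $\aaa^\op\times\aaa$; modifications are again sent to their components at identities.

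The composites $\Xi\Theta$ and $\Theta\Xi$ are isomorphic to the identities by canonical invertible modifications: $\Xi\Theta(\alpha)_a = \BBB(\mathrm{id}_a)\alpha_a \cong \alpha_a$ via $\bbbb_a^{-1}$, and $\Theta\Xi(\phi)_{(a,b)}(f) = \BBB(f)\phi_{(a,a)}(\mathrm{id}_a) \cong \phi_{(a,b)}(f)$ via the pseudonaturality $2$-cell $\phi_{(\mathrm{id}_a,f)}$ at $\mathrm{id}_a$ together with the identity coherence of the functor $\phi_{(a,b)}$. The main obstacle is purely bookkeeping: verifying that the isomorphisms $\Xi\Theta\cong\mathrm{Id}$ and $\Theta\Xi\cong\mathrm{Id}$ really are modifications, and that $\Xi(\phi)_h$ genuinely satisfies the pseudonatural coherence axioms, requires pasting a handful of invertible $2$-cells coming from $\AAA$, $\BBB$, and the pseudonaturality data of $\alpha$ or $\phi$; each check is dictated by the axioms of \ref{startsettingPseudofunctor}--\ref{pseudonaturaltransformation} and is the bicategorical counterpart of the classical enriched identification of ends with natural transformations.
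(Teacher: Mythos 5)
Your proposal is correct and follows essentially the same route as the paper: the paper also identifies pseudonatural transformations $\aaa(-,-)\longrightarrow\hhh(\AAA-,\BBB-)$ with transformations pseudonatural in each variable and then extracts the correspondence $\alpha\mapsto\bigl(\gamma_{{}_W}:=\alpha_{{}_{W,W}}(\Id_{{}_W})\bigr)$, i.e.\ evaluation at identities. The only difference is presentational: the paper compresses the verification by citing the bicategorical Yoneda lemma, whereas you unwind that argument into the explicit mutually inverse functors $\Theta$ and $\Xi$ and check the coherence by hand.
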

\begin{proof}
Firstly, observe that a pseudonatural transformation $$\alpha : \aaa (-,-)\longrightarrow \hhh (\AAA -, \BBB - ) $$ corresponds to a collection of $1$-cells
$\alpha _{{}_{(W,X)}}: \aaa (W,X)\to  \hhh (\AAA (W), \BBB (X) ) $ and collections of invertible $2$-cells 
\begin{eqnarray*}
\alpha _ {{}_{(Y,f)}} : \hhh (\AAA (Y) , \BBB (f))\alpha _ {{} _ {(Y,W)}}&\cong &  \alpha _ {{} _ {(Y,X)}}\aaa ( Y ,  f)\\  
\alpha _ {{}_{(f,Y)}} : \hhh (\AAA (f) , \BBB (Y))\alpha _ {{} _ {(X,Y)}}&\cong &  \alpha _ {{} _ {(W,Y)}}\aaa ( f , Y )
\end{eqnarray*}
such that, for each object $Y$ of $\aaa $,  $\alpha _{{}_{(Y,-)}} $ and $\alpha _{{}_{(-,Y)}} $ (with the invertible $2$-cells above) are pseudonatural transformations. In other words, pseudonatural transformations are transformations which are pseudonatural in each variable.

By the bicategorical Yoneda lemma, we get what we want:  such a pseudonatural transformation corresponds (up to isomorphism) to a collection of $1$-cells $$\gamma _ {{}_W}:= \alpha _ {{}_{W,W}}(\Id _ {{}_W}) : \AAA (W)\to\BBB (W) $$ with (coherent) invertible $2$-cells $\BBB (f)\circ\gamma _ {{}_W}\cong \gamma _ {{}_W}\circ \AAA (f) $.  
\end{proof}

Hence, the original bicategorical Yoneda lemma may be reinterpreted. Given a pseudofunctor $\AAA : \aaa \to\CAT $, we have an equivalence
$$ \int _ {a\in\aaa } \CAT (\aaa (X, a), \AAA (a) ) \simeq \AAA (X) $$
pseudonatural in $X$. 
Theorem \ref{Fubini} is the bicategorical analogue of the Fubini's Theorem in the enriched context.

\begin{theo}\label{Fubini}
Given a pseudofunctor $T: \aaa ^{\op }\times \bbb ^{\op }\times \aaa \times \bbb ^\op \to \CAT $,
there are pseudofunctors
$T^{{}^\bbb }: \aaa ^{\op }\times \aaa \to \CAT $ and $ T ^{{}^\aaa }: \bbb ^{\op }\times \bbb \to \CAT $  
such that
$$ T^{{}^\bbb }(X,Y) \cong \int _ {b\in\bbb } T(X, b, Y, b)\quad\mbox{and}\quad T^{{}^\aaa }(W, Z)\cong \int _ {a\in \aaa } T(a, W, a, Z). $$
Furthermore, $ \displaystyle\int _ {\aaa \times\bbb } T \simeq \int _ {\aaa }T^{{}^\bbb }  \simeq \int _ {\bbb }T^{{}^\aaa } $.
\end{theo}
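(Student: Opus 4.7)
The plan is to reduce the statement to an iterated application of the fundamental equivalence (Proposition \ref{Q}), using the observation made in its proof that a pseudonatural transformation of several variables is precisely a family that is pseudonatural in each variable, with coherent invertible $2$-cells.

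First, I would build the auxiliary pseudofunctors. For each object $(X,Y)$ of $\aaa^{\op}\times\aaa$, set
$$T^{\bbb}(X,Y)\;:=\;\int_{b\in\bbb} T(X,b,Y,b)\;=\;[\bbb^{\op}\times\bbb,\CAT]_{PS}(\bbb(-,-),\,T(X,-,Y,-)),$$
which exists because $\CAT$ is bicategorically complete. The assignment $(X,Y)\mapsto T(X,-,Y,-)$ is a pseudofunctor $\aaa^{\op}\times\aaa\to [\bbb^{\op}\times\bbb,\CAT]_{PS}$ (by currying $T$), and post-composing with the pseudofunctor $[\bbb^{\op}\times\bbb,\CAT]_{PS}(\bbb(-,-),-)$ yields the desired $T^{\bbb}$. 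The construction of $T^{\aaa}$ is entirely symmetric.

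Next, I would apply Proposition \ref{Q} to get
$$\int_{\aaa}T^{\bbb}\;\simeq\;[\aaa^{\op}\times\aaa,\CAT]_{PS}\bigl(\aaa(-,-),\,T^{\bbb}\bigr),$$
and then establish an ``exponential law'': a pseudonatural transformation $\aaa(-,-)\to T^{\bbb}$ is the same data (up to coherent equivalence) as a pseudonatural transformation from $\aaa(-,-)\times\bbb(-,-)$ to $T$, viewed as a pseudofunctor on $(\aaa\times\bbb)^{\op}\times(\aaa\times\bbb)$. The key move is exactly the dictionary from the proof of Proposition \ref{Q}: a pseudonatural transformation out of a representable two-variable weight corresponds to a family of $1$-cells parametrised by pairs of morphisms, together with invertible structure $2$-cells that are pseudonatural in each coordinate separately and satisfy a compatibility square between the $\aaa$- and $\bbb$-coordinates. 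Since $(\aaa\times\bbb)((W,V),(X,Y))=\aaa(W,X)\times\bbb(V,Y)$, this yields
$$[\aaa^{\op}\times\aaa,\CAT]_{PS}\bigl(\aaa(-,-),\,T^{\bbb}\bigr)\;\simeq\;[(\aaa\times\bbb)^{\op}\times(\aaa\times\bbb),\CAT]_{PS}\bigl(\aaa(-,-)\times\bbb(-,-),\,T\bigr)\;=\;\int_{\aaa\times\bbb}T.$$
The equivalence $\int_{\bbb}T^{\aaa}\simeq\int_{\aaa\times\bbb}T$ is obtained by exchanging the roles of $\aaa$ and $\bbb$ throughout.

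The main obstacle is precisely this exponential law at the pseudo-level. In the enriched/strict setting it is a routine identification of hom-objects, but here one must check that the invertible $2$-cells witnessing pseudonaturality in $(X,Y)\in\aaa^{\op}\times\aaa$ of a transformation landing in the pseudoend $T^{\bbb}(X,Y)$ correspond, under the universal property of $T^{\bbb}(X,Y)$ as a bicategorical representation, to families of invertible $2$-cells in the $\aaa$-direction that are compatible with the pseudonatural structure in the $\bbb$-direction. In other words, one has to unwind two layers of coherence and check that the combined data on each side match (including the modification level, which is needed to get an equivalence of categories rather than just a bijection on objects). Once this bookkeeping is carried out — it is essentially an iteration of the argument already used in the proof of Proposition \ref{Q}, now applied with $T^{\bbb}$ in place of $T$ — the Fubini equivalences follow immediately.
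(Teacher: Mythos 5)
The paper states Theorem \ref{Fubini} without proof (only the notational remark on iterated pseudoends follows it), so there is no argument of the author's to compare yours against; judged on its own, your outline is correct and is the natural one given the paper's definition of the pseudoend as $[\aaa^{\op}\times\aaa,\CAT]_{PS}(\aaa(-,-),T)$. Two small remarks. First, the step you attribute to Proposition \ref{Q} is really just the definition of $\int_\aaa T^{\bbb}$ unwound; Proposition \ref{Q} concerns pseudoends of hom-pseudofunctors $\hhh(\AAA-,\BBB-)$ and is not what you are using there. Second, you correctly locate all the actual content in the ``exponential law'' identifying $[\aaa^{\op}\times\aaa,\CAT]_{PS}(\aaa(-,-),T^{\bbb})$ with $[(\aaa\times\bbb)^{\op}\times(\aaa\times\bbb),\CAT]_{PS}(\aaa(-,-)\times\bbb(-,-),T)$; this rests on the currying biequivalence $[\aaa\times\bbb,\hhh]_{PS}\approx[\aaa,[\bbb,\hhh]_{PS}]_{PS}$ that the paper invokes in \ref{commu}, together with the observation from the proof of Proposition \ref{Q} that pseudonaturality in a product variable is pseudonaturality in each variable separately with compatible $2$-cells. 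Since the theorem only claims equivalences of categories, carrying the identification through at the level of objects (transformations) and morphisms (modifications) suffices, and your sketch does acknowledge both layers; the remaining work is genuinely just the coherence bookkeeping you describe, so I see no gap in the strategy.
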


We usually denote the iterated pseudoends of the result above by:
$$ \int _{a\in\aaa} \int _ {b\in \bbb } T(a,b,a,b) := \int _ {\aaa }T^{{}^\bbb }  \quad\mbox{and}\quad \int _ {b\in\bbb }\int _ {a\in\aaa } T(a,b,a,b) := \int _ {\bbb }T^{{}^\aaa }. $$

\subsection{Pointwise pseudo-Kan extensions}\label{pointwisepseudokanextension}
If we consider the full $2$-subcategory $ \hhh _ \YYY $ of $[\bbb ^{\op }, \CAT ]_ {PS} $ such that the objects of $ \hhh _ \YYY $ are the bicategorically representable pseudofunctors
of a $2$-category $\hhh $, the Yoneda embedding $\YYY : \hhh \to \hhh _ \YYY $ is a biequivalence: that is to say, we can choose a pseudofunctor $ I: \hhh _ \YYY\to \hhh $ and pseudonatural equivalences $\YYY I\simeq \Id $ and $I\YYY\simeq \Id $.

Therefore if $\hhh $ has all the weighted bilimits of a pseudofunctor $\AAA : \aaa \to\hhh $, 
there is a pseudofunctor 
$\bilim (-, \AAA ):\left[\aaa , \CAT\right] _ {PS}^{ \op }\to \hhh $ which is unique up 
to pseudonatural equivalence and which gives the bilimits of $\AAA $~\cite{Street3, FLN}. 
More precisely, in this case, 
we are actually assuming that the pseudofunctor 
$F: \left[ \aaa , \CAT\right] _ {PS}^{\op } \to \left[ \hhh ^{\op }, \CAT \right] _ {PS}$,
in which
$$
F(\WWW): \bbb ^{\op }\to \CAT : \qquad  X \mapsto \left[\aaa , \CAT \right] _ {PS}(\WWW, \hhh (X , \AAA - ))
$$
is such that $F(\WWW )$ has a bicategorical representation for every weight $\WWW : \aaa\to\CAT $.
Therefore $F$ can be lifted to a pseudofunctor $ F: \left[ \aaa , \CAT\right] _ {PS}^{\op } \to \hhh _ \YYY$. Hence we can take $\bilim ( -,\AAA ) := IF $.

\begin{theo}\label{pointwise}
Assume that $\AAA: \aaa\to\hhh, \t : \aaa\to \bbb $ are pseudofunctors. If $\hhh $ has the weighted bilimit 
$\bilim ( \bbb (b, \t -), \AAA ) $ for every object $b$ of $\bbb $, then $\Ps\Ran _ \t \AAA $ exists. 
Furthermore, there is an equivalence
$$\Ps\Ran _ \t \AAA (b)\simeq \bilim ( \bbb (b, \t -), \AAA )
$$
pseudonatural in $b$.
\end{theo}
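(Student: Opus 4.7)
The plan is to construct $\Ps\Ran_\t\AAA:\bbb\to\hhh$ pointwise by the formula
$\Ps\Ran_\t\AAA(b):=\bilim(\bbb(b,\t-),\AAA)$, and then to verify the universal property of
Definition \ref{bicategoricalreflection} via a chain of equivalences built from the
pseudoend machinery of \ref{introducedpseudoend}. The existence hypothesis together with the
discussion preceding the theorem lifts $\WWW\mapsto\bilim(\WWW,\AAA)$ to a pseudofunctor
$[\aaa,\CAT]_{PS}^{\op}\to\hhh$ via a chosen pseudo-inverse $I$ to the Yoneda biequivalence
$\YYY:\hhh\to\hhh_{\YYY}$; precomposing with the pseudofunctor $b\mapsto\bbb(b,\t-)$ yields the
desired pseudofunctor $\Ps\Ran_\t\AAA$, with the stated pointwise equivalence built in by definition.

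To verify the universal property, for an arbitrary pseudofunctor $\BBB:\bbb\to\hhh$, I would chain
\begin{align*}
[\bbb,\hhh]_{PS}(\BBB,\Ps\Ran_\t\AAA)
&\simeq \int_{b\in\bbb}\hhh(\BBB(b),\bilim(\bbb(b,\t-),\AAA))\\
&\simeq \int_{b\in\bbb}[\aaa,\CAT]_{PS}(\bbb(b,\t-),\hhh(\BBB(b),\AAA-))\\
&\simeq \int_{b\in\bbb}\int_{a\in\aaa}\CAT(\bbb(b,\t a),\hhh(\BBB(b),\AAA(a)))\\
&\simeq \int_{a\in\aaa}\int_{b\in\bbb}\CAT(\bbb(b,\t a),\hhh(\BBB(b),\AAA(a)))\\
&\simeq \int_{a\in\aaa}\hhh(\BBB(\t a),\AAA(a))\\
&\simeq [\aaa,\hhh]_{PS}(\BBB\circ\t,\AAA).
\end{align*}
The first and last equivalences are Proposition \ref{Q}; the second uses the defining equivalence
of the weighted bilimit $\bilim(\bbb(b,\t-),\AAA)$ applied inside the pseudoend; the third applies
Proposition \ref{Q} once more to unfold $[\aaa,\CAT]_{PS}(-,-)$ as a pseudoend; the fourth is
Fubini (Theorem \ref{Fubini}); and the fifth is the pseudoend form of the bicategorical Yoneda
lemma stated just after Proposition \ref{Q}, applied to $\hhh(\BBB(-),\AAA(a)):\bbb^{\op}\to\CAT$
at the object $\t a$.

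Every step of the chain is pseudonatural in $\BBB$, so the composite is a pseudonatural equivalence
of hom-categories; by Remark \ref{rightbicategoricalreflectioncounit}, tracking the identity on
$\Ps\Ran_\t\AAA$ then produces the required counit
$\varepsilon_{\AAA}:(\Ps\Ran_\t\AAA)\circ\t\longrightarrow\AAA$ and exhibits
$(\Ps\Ran_\t\AAA,\varepsilon_{\AAA})$ as the right bicategorical reflection of $\AAA$ along
$[\t,\hhh]_{PS}$. I expect the main obstacle to be bookkeeping in the first step: one must ensure
that $b\mapsto\bilim(\bbb(b,\t-),\AAA)$ is genuinely pseudofunctorial in $b$ and that the second
line of the chain is pseudonatural in \emph{both} $\BBB$ and $b$. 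Both issues are coherence
questions that reduce to the coherent choice of representing data provided by the
pseudo-inverse $I:\hhh_\YYY\to\hhh$ used in the construction, so no new ingredient beyond the
results of \ref{introducedpseudoend} and the Yoneda biequivalence should be needed.
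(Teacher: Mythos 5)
Your proposal is correct and follows essentially the same route as the paper: the paper defines $\RAN_\t\AAA(b):=\bilim(\bbb(b,\t-),\AAA)$ and verifies the universal property by exactly the same chain of pseudoend equivalences (Proposition \ref{Q} at both ends, the defining equivalence of the weighted bilimit, Proposition \ref{Q} again to unfold the hom of weights, Fubini, and the pseudoend form of the bicategorical Yoneda lemma). Your extra remarks on pseudofunctoriality in $b$ via the Yoneda biequivalence correspond to the discussion the paper places immediately before the theorem, so nothing is missing.
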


\begin{proof}
In this proof, we denote by $\RAN _ \t \AAA $ the pseudofunctor defined by
$$\RAN _ \t \AAA (b) := \bilim (\bbb (b, \t -), \AAA ) .$$
By the propositions presented in this section, we have the following pseudonatural equivalences (in $S$):

\begin{eqnarray*}
\left[\bbb , \hhh \right] _ {PS}(S, \RAN _ \t \AAA ) 
&\simeq & \int _ {b\in\bbb } \hhh (S(b), \RAN _ \t \AAA (b))\\
&\simeq & \int _{b\in\bbb } \hhh (S(b), \bilim (\bbb (b, \t -), \AAA ))  \\
& \simeq & \int _ {b\in\bbb } \left[ \aaa , \CAT \right] _ {PS}(\bbb (b, \t -), \hhh (S(b), \AAA -))\\
& \simeq & \int _ {b\in\bbb } \int _ {a\in\aaa } \CAT (\bbb (b, \t (a)), \hhh (S(b), \AAA (a)))\\
& \simeq & \int _ {a\in\aaa } \int _ {b\in\bbb } \CAT (\bbb (b, \t (a)), \hhh (S(b), \AAA (a)))\\
&\simeq & \int _ {a\in\AAA } \hhh (S\circ\t (a), \AAA (a))\\
&\simeq & \left[ \AAA , \hhh\right]_ {PS} (S\circ\t , \AAA ). 
\end{eqnarray*}
More precisely, the first, fourth, sixth and seventh pseudonatural equivalences come from the fundamental equivalence for pseudoends (Proposition \ref{Q}), while the second and third are, respectively, the definitions of $\RAN _ \t \AAA$ and the definition of weighted bilimit. The remaining pseudonatural equivalence follows from Theorem \ref{Fubini}. 
 These pseudonatural equivalences show that $\Ps\Ran _ \t \AAA $ exists and $\Ps\Ran _ \t \AAA \simeq \RAN _ \t \AAA $.
\end{proof}

\begin{cor}\label{coroCATpoint}
Assume that $\AAA: \aaa\to\CAT, \t : \aaa\to \bbb $ are pseudofunctors. Then $\Ps\Ran _ \t \AAA$ exists and 
$$\Ps\Ran _ \t \AAA (b) \simeq \left[\aaa , \CAT \right] _ {PS} (\bbb (b, \t -) , \AAA   ).
$$
\end{cor}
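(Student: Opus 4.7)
The plan is to derive this as an immediate consequence of Theorem~\ref{pointwise} together with the bicategorical completeness of $\CAT$. First, I would invoke the fact, already recorded in~\ref{Spointwise} just after Remark~\ref{conical}, that $\CAT$ has all small weighted bilimits and that for any pseudofunctors $\WWW, \AAA : \aaa \to \CAT$ one has
\[
\bilim(\WWW, \AAA) \simeq [\aaa, \CAT]_{PS}(\WWW, \AAA).
\]
In particular, taking $\WWW := \bbb(b, \t -) : \aaa \to \CAT$ for each object $b$ of $\bbb$, the weighted bilimit $\bilim(\bbb(b, \t -), \AAA)$ exists in $\CAT$ and is equivalent to $[\aaa, \CAT]_{PS}(\bbb(b, \t -), \AAA)$.

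Next, I would apply Theorem~\ref{pointwise} with $\hhh := \CAT$. Since the required weighted bilimits all exist, the theorem guarantees that $\Ps\Ran_\t \AAA$ exists and that there is a pseudonatural equivalence
\[
\Ps\Ran_\t \AAA (b) \simeq \bilim(\bbb(b, \t -), \AAA).
\]
Composing this with the displayed equivalence above yields the desired formula
\[
\Ps\Ran_\t \AAA (b) \simeq [\aaa, \CAT]_{PS}(\bbb(b, \t -), \AAA),
\]
pseudonatural in $b$.

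There is essentially no obstacle here; the content of the corollary is simply the specialization of Theorem~\ref{pointwise} to the case in which the codomain $\hhh$ is $\CAT$ itself, together with the explicit description of weighted bilimits in $\CAT$ as pseudonatural transformation $2$-categories. The only mild point worth noting in the write-up is ensuring that the equivalence is pseudonatural in $b$, which follows from the pseudonaturality already provided by Theorem~\ref{pointwise} combined with the pseudofunctoriality of $\bilim(-, \AAA)$ in its first argument (discussed at the start of~\ref{pointwisepseudokanextension} via the Yoneda biequivalence $\YYY : \hhh \to \hhh_\YYY$).
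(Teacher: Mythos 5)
Your proposal is correct and follows exactly the paper's own argument: the corollary is deduced from Theorem~\ref{pointwise} together with the identification $\bilim(\WWW,\AAA)\simeq[\aaa,\CAT]_{PS}(\WWW,\AAA)$ for weights valued in $\CAT$. Nothing further is needed.
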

\begin{proof}
This result follows from Theorem \ref{pointwise} and from the fact that $$\bilim (\bbb (b, \t -) , \AAA   ) \simeq \left[\aaa , \CAT \right] _ {PS} (\bbb (b, \t -) , \AAA   ).$$
\end{proof}

\begin{rem}\label{leftpseudo}
It is clear that Theorem \ref{pointwise} has a dual. That is to say, we have an equivalence 
$\Ps\Lan _ \t \AAA (b) \simeq \bicolim (\bbb (\t -,b), \AAA) $
pseudonatural in $b$ whenever $\bicolim (\bbb (\t -,b), \AAA )$ exists for each $b$ in $\bbb $.
\end{rem}

\begin{rem}\label{conicalpseudo}
By Remark \ref{conical} and Theorem \ref{pointwise}, if $\AAA : \aaa\to \hhh $ is a pseudofunctor, if it exists, 
the conical bilimit of $\AAA $ is equivalent to 
$\Ps\Ran _ \t (\AAA ) (\mathsf{a})\simeq \bilim ( \dot{\aaa }(\mathsf{a}, \t - ), \AAA  )$ 
in which 
$\t :\aaa\to\dot{\aaa } $ is the $\mathsf{a}$-inclusion such that 
$\mathsf{a}$ is the initial object added to $\aaa $.   
\end{rem}

\begin{defi}[Preservation of pseudo-Kan extensions]
Let $U : \hhh\to\hhh ' $, $\AAA : \aaa\to \hhh $ and $\t : \aaa \to \bbb $ be pseudofunctors. Assume that $\Ps\Ran _\t \AAA $ 
exists and has the universal arrow $\varepsilon _ {{}_{\AAA }}$. We say that $U$ \textit{preserves} this pseudo-Kan extension if 
$$\Ps\Ran _ \t (U\circ\AAA ) \simeq U\circ\Ps\Ran _ \t \AAA\mbox{ and }
U\varepsilon _ {{}_{\AAA }} : (U\circ\Ps\Ran _ \t \AAA )\circ\t \longrightarrow U\circ \AAA $$
is the universal arrow of $\Ps\Ran _ \t (U\circ\AAA )$.
\end{defi}

\begin{lem}
If $U$ has a left biadjoint, then it preserves all the existing right pseudo-Kan extensions.
\end{lem}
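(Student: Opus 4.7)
The plan is to verify the defining universal property of a right pseudo-Kan extension for the pair $(U \circ \Ps\Ran_\t \AAA, U\varepsilon_{{}_{\AAA}})$ directly, reducing it via the biadjunction to the already-established universal property of $\Ps\Ran_\t \AAA$. Write $F \dashv U$ for the biadjunction, with unit $\eta'$ and counit $\xi$. The first step is to observe that postcomposition lifts this biadjunction: for any small $2$-category, applying $F \circ (-)$ and $U \circ (-)$ yields a biadjunction between the corresponding pseudofunctor $2$-categories, with unit and counit at a given pseudofunctor obtained by whiskering $\eta'$ and $\xi$ with its identity, and with triangle invertible modifications inherited from those of $F \dashv U$. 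This is standard and routine, and it is the engine of the argument.

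Applying this lifting with source $2$-categories $\bbb$ and $\aaa$, and combining with the defining pseudonatural equivalence of $\Ps\Ran_\t \AAA$, I obtain, for each $S \in [\bbb, \hhh']_{PS}$, a chain of pseudonatural equivalences in $S$:
\[
[\bbb, \hhh']_{PS}(S, U \circ \Ps\Ran_\t \AAA) \simeq [\bbb, \hhh]_{PS}(F \circ S, \Ps\Ran_\t \AAA) \simeq [\aaa, \hhh]_{PS}(F \circ S \circ \t, \AAA) \simeq [\aaa, \hhh']_{PS}(S \circ \t, U \circ \AAA),
\]
where the outer two equivalences come from the lifted biadjunctions and the middle one from the universal property of the pseudo-Kan extension, namely $\beta \mapsto \varepsilon_{{}_{\AAA}} \cdot (\beta \t)$.

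The main obstacle, and the only genuinely non-formal step, is to verify that the composite of these three equivalences is, up to invertible modification, the functor $\alpha \mapsto (U\varepsilon_{{}_{\AAA}}) \cdot (\alpha \t)$. Tracing a pseudonatural transformation $\alpha : S \to U \circ \Ps\Ran_\t \AAA$ through the chain produces successively $\hat{\alpha} := (\xi \Ps\Ran_\t \AAA) \cdot (F\alpha)$, then $\varepsilon_{{}_{\AAA}} \cdot (\hat{\alpha} \t)$, and finally $U(\varepsilon_{{}_{\AAA}} \cdot (\hat{\alpha} \t)) \cdot (\eta' (S \circ \t))$. Using pseudofunctoriality of $U$, compatibility of whiskering with vertical composition, pseudonaturality of $\eta'$, and the triangle invertible modifications of $F \dashv U$, this expression simplifies to $(U\varepsilon_{{}_{\AAA}}) \cdot (\alpha \t)$. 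Once this identification is in place, Remark~\ref{rightbicategoricalreflectioncounit} identifies $(U \circ \Ps\Ran_\t \AAA, U\varepsilon_{{}_{\AAA}})$ as the sought right pseudo-Kan extension, completing the proof.
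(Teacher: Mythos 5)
The paper states this lemma without proof, so there is nothing of the author's to compare against; your argument is correct and is the expected one. Lifting the biadjunction $F\dashv U$ along $[\ccc , -]_{PS}$, composing the three hom-equivalences, and then using pseudofunctoriality of $U\circ(-)$, pseudonaturality of $\eta'$, and a triangle modification to identify the composite with $\alpha\mapsto (U\varepsilon _ {{}_{\AAA }})\cdot(\alpha\t)$ is precisely the standard proof, and by Remark~\ref{rightbicategoricalreflectioncounit} it delivers both clauses of the paper's definition of preservation at once (the equivalence $\Ps\Ran _ \t (U\circ\AAA )\simeq U\circ\Ps\Ran _ \t \AAA$ and the identification of the universal arrow). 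The only point worth making explicit in a written-out version is that $U\circ (-)$ is merely a pseudofunctor between the $2$-categories $[\ccc ,\hhh ]_{PS}$ and $[\ccc ,\hhh ']_{PS}$ --- its compatibility with vertical composition holds only up to invertible modifications built from the compositors of $U$ --- but this is all that the bicategorical notions of biadjunction and bicategorical reflection used here require.
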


\begin{defi}[Pointwise pseudo-Kan extension]\label{pointwise3}
Let $\t : \aaa \to \bbb , \AAA : \aaa\to\hhh $ be pseudofunctors such that $\Ps\Ran _ \t \AAA $ exists. We say that $\Ps\Ran _ \t \AAA $
is a \textit{pointwise} (right) pseudo-Kan extension of $\AAA $ along $\t $ if $\Ps\Ran _ \t \AAA $ is preserved by all representable pseudofunctors. 
\end{defi}

\begin{rem}
The definition of pointwise pseudo-Kan extension is motivated by the usual definition of pointwise Kan extension in the strict/enriched case. 
See page 240 of \cite{MACLANE} or page 52 and 54 of \cite{Dubuc2}.
\end{rem}

Analogously to the enriched case (see \cite{Dubuc2}), pointwise pseudo-Kan extensions are pointwise constructed  as in Theorem
\ref{pointwise} by weighted bilimits. Corollary \ref{pointwise2}
makes this statement precise.

\begin{cor}\label{pointwise2}
Assume that $\AAA: \aaa\to\hhh, \t : \aaa\to \bbb $ are pseudofunctors. The pseudofunctor
$\AAA $ has a pointwise pseudo-Kan extension $\Ps\Ran _ \t \AAA $ if and only if $\hhh $ has the weighted bilimit 
$\bilim ( \bbb (b, \t -), \AAA ) $ for every object $b$ of $\bbb $. In this case, there is an equivalence
$$\Ps\Ran _ \t \AAA (b)\simeq \bilim ( \bbb (b, \t -), \AAA ) 
$$
pseudonatural in $b$.
\end{cor}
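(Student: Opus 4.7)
The proof splits naturally into two implications, and in both directions the key tool is Corollary \ref{coroCATpoint}, which identifies pseudo-Kan extensions into $\CAT $ with the weighted bilimits $[\aaa,\CAT]_{PS}(\bbb (b,\t -),-)$.

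First I would handle the ``if'' direction. Assuming $\hhh $ admits the weighted bilimits $\bilim (\bbb (b,\t -),\AAA )$ for every object $b$ of $\bbb $, Theorem \ref{pointwise} already furnishes the existence of $\Ps\Ran _ \t \AAA $ together with the pseudonatural equivalence $\Ps\Ran _ \t \AAA (b)\simeq \bilim (\bbb (b,\t -),\AAA )$. What is left is to verify pointwise preservation: for each $X$ in $\hhh $, the representable $\hhh (X,-)$ must preserve this pseudo-Kan extension. By the defining property of weighted bilimit one has
$$\hhh (X,\Ps\Ran _ \t \AAA (b))\simeq \hhh (X,\bilim (\bbb (b,\t -),\AAA ))\simeq [\aaa ,\CAT]_{PS}(\bbb (b,\t -),\hhh (X,\AAA -))$$
pseudonaturally in $b$ and in $X$, while Corollary \ref{coroCATpoint} identifies the rightmost term with $\Ps\Ran _ \t (\hhh (X,\AAA -))(b)$. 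Tracking the universal arrow $\varepsilon _ {{}_{\AAA}}$ produced in the proof of Theorem \ref{pointwise} through the pseudonatural equivalence coming from $\rho _ {{}_{\bilim (\bbb (b,\t -),\AAA )}}$, one checks that $\hhh (X,-)\varepsilon _ {{}_{\AAA}}$ is the universal arrow for $\Ps\Ran _ \t (\hhh (X,\AAA -))$, which is precisely the preservation condition.

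For the ``only if'' direction, assume $\Ps\Ran _ \t \AAA $ exists and is pointwise. Then for every object $X$ of $\hhh $, the composite $\hhh (X,-)\circ \Ps\Ran _ \t \AAA $, equipped with $\hhh (X,-)\varepsilon _ {{}_{\AAA}}$, is a right pseudo-Kan extension of $\hhh (X,\AAA -):\aaa \to \CAT $ along $\t $. Applying Corollary \ref{coroCATpoint} together with the uniqueness up to pseudonatural equivalence of pseudo-Kan extensions, we obtain, pseudonaturally in $b$ and in $X$,
$$\hhh (X,\Ps\Ran _ \t \AAA (b))\simeq [\aaa ,\CAT]_{PS}(\bbb (b,\t -),\hhh (X,\AAA -)).$$
This is exactly the universal property exhibiting $\Ps\Ran _ \t \AAA (b)$ as the weighted bilimit $\bilim (\bbb (b,\t -),\AAA )$; hence all these bilimits exist in $\hhh $ and the claimed pseudonatural equivalence in $b$ holds.

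The main obstacle, as is typical when transporting universal properties through chains of equivalences, is the bookkeeping needed to match the universal arrow of the weighted bilimit (coming from $\rho _ {{}_{\bilim }}$) with the universal arrow $\varepsilon _ {{}_{\AAA}}$ of the pseudo-Kan extension under the equivalence provided by Corollary \ref{coroCATpoint}. Once this matching is in place, both implications reduce to direct transfers of universal properties, and pseudonaturality in $b$ is automatic from the pseudofunctoriality of the constructions involved.
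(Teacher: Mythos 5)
Your proposal is correct and follows essentially the same route as the paper: both directions reduce to the chain of equivalences $\hhh (X,\Ps\Ran _ \t \AAA (b))\simeq \left[\aaa ,\CAT\right]_{PS}(\bbb (b,\t -),\hhh (X,\AAA -))\simeq \Ps\Ran _ \t \hhh (X,\AAA -)(b)$, using Theorem \ref{pointwise} for existence and Corollary \ref{coroCATpoint} (which the paper re-derives via the bicategorical Yoneda lemma in one direction) to identify the $\CAT$-valued pseudo-Kan extension with the weighted-bilimit formula. The only cosmetic difference is that you make the matching of universal arrows explicit where the paper leaves it implicit.
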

\begin{proof}
Firstly, assume that $\Ps\Ran _ \t \AAA $ is pointwise. Since it is pointwise, we have an equivalence
$$\hhh (X, \Ps\Ran _ \t \AAA (b) ) \simeq   \Ps\Ran _ \t \hhh (X, \AAA - ) (b) $$
pseudonatural in $b$, while, by the bicategorical Yoneda lemma and the universal property of $\Ps\Ran _ \t \hhh (X, \AAA - )$,
we have an equivalence
\begin{eqnarray*}
\Ps\Ran _ \t \hhh (X, \AAA - ) (b)&\simeq & \left[\bbb , \CAT \right] _ {PS}(\bbb (b, -) ,  \Ps\Ran _ \t \hhh (X, \AAA - ) )\\
&\simeq &  \left[\aaa , \CAT \right] _ {PS}(\bbb (b, \t -) ,  \hhh (X, \AAA - ) )
\end{eqnarray*}
pseudonatural in $b$. This completes the proof that $\Ps\Ran _ \t \AAA (b)$ is the left bicategorical reflection of $\bbb (b, \t -)$ along 
$$\hhh ^\op  \to \left[\aaa , \CAT\right]_{PS} :  \qquad X\mapsto   \hhh (X, \AAA - ) ,$$ 
which means that $\bilim (\bbb (b, \t -), \AAA ) $ exists and $\Ps\Ran _ \t \AAA (b)\simeq \bilim (\bbb (b, \t -), \AAA ) $
pseudonatural in $b $.

Reciprocally, assume that $\bilim ( \bbb (b, \t -), \AAA ) $ exists for every $b $ in $\bbb $. In this case, by Theorem \ref{pointwise}
$\Ps\Ran _ \t \AAA$ exists and
there is an equivalence
$$\bilim ( \bbb (b, \t -), \AAA )\simeq \Ps\Ran _ \t \AAA (b)
$$
pseudonatural in $b $. 
Given any $X $ in $\hhh $, we have equivalences 
$$\hhh (X, \Ps\Ran _ \t \AAA (b))\simeq \hhh (X, \bilim ( \bbb (b, \t -), \AAA ))\simeq  \left[\aaa , \CAT \right] _ {PS} (\bbb (b, \t -) , \hhh (X, \AAA - )   ) $$
pseudonatural in $b$. Since by Corollary \ref{coroCATpoint} we have an equivalence 
$$\left[\aaa , \CAT \right] _ {PS} (\bbb (b, \t -) , \hhh (X, \AAA - )   ) \simeq \Ps\Ran _ \t \hhh (X, \AAA- ) (b) $$
pseudonatural in $b $, the proof is complete.
\end{proof}

In the enriched case~\cite{Dubuc2, Kelly}, in the presence of weighted limits, pointwise Kan extensions are constructed pointwise by equalizers of
products of cotensor products, since every weighted limit can be seen as such. In the bicategorical case, weighted bilimits are descent objects
of bicategorical products and cotensor products whenever they exist and, therefore, the result above shows that, in suitable cases, pointwise pseudo-Kan extensions can be
constructed pointwise by them (see \cite{FLN}).

In this paper, for simplicity, 
we always assume that \textit{$\hhh $ is a bicategorically complete $2$-category}, 
or at least $\hhh $ has enough bilimits to construct the considered (right) pseudo-Kan extensions as 
pointwise pseudo-Kan extensions.

\begin{rem}
The pointwise pseudo-Kan extension was studied originally in \cite{FLN} using 
the Biadjoint Triangle Theorem proved therein. The construction presented above is similar to the 
usual approach of the enriched case~\cite{Kelly, Dubuc2}, 
while the argument via biadjoint triangles  of \cite{FLN} is not.
\end{rem}

\subsection{The pseudomonad $\left\langle \t \right\rangle : = \Ps\Ran _\t (-\circ\t )$}\label{Psidem}
By the (bicategorical) Yoneda lemma, whenever $\t $ is a local equivalence, if the pseudo-Kan extension 
$\Ps\Ran _\t\AAA $ exists, it is actually a 
pseudoextension. 
More precisely:

\begin{theo}\label{Psisidem}
If $\t : \aaa\to\dot{\aaa } $ is a local equivalence
and there is a biadjunction $$\left[ \t ,  \hhh\right] _{PS}\dashv \Ps\Ran _\t ,$$ 
its counit is a pseudonatural equivalence.  Thereby
$\Ps\Ran _ \t: \left[ \aaa , \hhh \right] _ {PS}\to \left[\dot{\aaa }, \hhh\right] _ {PS} $
is a local equivalence and, hence, pseudomonadic and
the induced pseudomonad is idempotent.
\end{theo}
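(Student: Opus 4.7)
The plan is to show that the counit \(\varepsilon : \left[\t,\hhh\right]_{PS}\circ\Ps\Ran_\t\Longrightarrow \Id_{\left[\aaa,\hhh\right]_{PS}}\) of the biadjunction is a pseudonatural equivalence; once this is established, Lemma \ref{counitadjunction} forces \(\Ps\Ran_\t\) to be a local equivalence, and then Theorem \ref{Idem} immediately delivers pseudomonadicity together with idempotence of the induced pseudomonad. Thus only the first assertion needs genuine work.

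For any \(\AAA:\aaa\to\hhh\) and any \(b\in\dot{\aaa}\), Corollary \ref{pointwise2} provides a pseudonatural equivalence \((\Ps\Ran_\t\AAA)(b)\simeq\bilim(\dot{\aaa}(b,\t-),\AAA)\). Specialising \(b=\t(a)\) and using that \(\t\) is a local equivalence yields a pseudonatural equivalence of weights \(\dot{\aaa}(\t a,\t-)\simeq\aaa(a,-)\); combining this with the Yoneda Lemma for bilimits (Lemma \ref{YonedaBILIMIT}), we obtain
\[
(\Ps\Ran_\t\AAA)(\t a)\;\simeq\;\bilim(\aaa(a,-),\AAA)\;\simeq\;\AAA(a)
\]
pseudonaturally in \(a\in\aaa\).

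The remaining task is to identify this composite equivalence with the counit component \((\varepsilon_\AAA)_a\). By Remark \ref{rightbicategoricalreflectioncounit}, \(\varepsilon_\AAA\) is the image of the identity on \(\Ps\Ran_\t\AAA\) under the pseudonatural equivalence defining the right bicategorical reflection; tracing this through the pseudoend formula used to construct \(\Ps\Ran_\t\AAA(\t a)\) in the proof of Theorem \ref{pointwise} reveals that this image is precisely Yoneda evaluation at \(\id_a\), i.e.\ the equivalence just displayed.

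The main obstacle is this last identification: it is a coherence-style bookkeeping step threading the universal property of the pseudo-Kan extension through the pointwise bilimit formula. The necessary data is already implicit in the proofs of Theorem \ref{pointwise} and Corollary \ref{pointwise2}, so no essentially new ingredient is required; the remainder of the theorem then follows by the formal route indicated in the opening paragraph.
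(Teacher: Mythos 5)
Your proof is correct and takes essentially the same route as the paper's: the paper's entire argument is the chain $\bilim ( \dot{\aaa } ( \t (X), \t -), \AAA )\simeq \bilim ( \aaa (X , -), \AAA )\simeq \AAA (X) $ obtained from the local-equivalence hypothesis together with Lemma \ref{YonedaBILIMIT} and the pointwise formula, followed by Lemma \ref{counitadjunction} and Theorem \ref{Idem}. The only difference is that you spell out the (routine) identification of this composite equivalence with the counit component via Remark \ref{rightbicategoricalreflectioncounit}, a bookkeeping step the paper leaves implicit.
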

\begin{proof}
It follows from the bicategorical Yoneda lemma. By Lemma \ref{YonedaBILIMIT}, 
if $X$ is an object of $\aaa $, 
$\bilim ( \dot{\aaa } ( \t (X), \t -), \AAA )\simeq \bilim ( \aaa (X , -), \AAA )\simeq \AAA (X) $.
\end{proof}

In the context of the result above, we denote the idempotent pseudomonad induced by the biadjunction $\left[ \t ,  \hhh\right] _{PS}\dashv \Ps\Ran _\t $ by $\Ps\Ran _ \t (-\circ\t )$ or, for short, $\left\langle \t \right\rangle $. 

Our interest is to study the objects of $\left[ \dot{\aaa } , \hhh\right] _ {PS} $ that can be endowed 
with $\left\langle \t \right\rangle $-pseudoalgebra structure. Assuming that $\t $ is a local equivalence, this means that our interest is to study the image of the forgetful Eilenberg-Moore $2$-functor 
$$\left\langle \t \right\rangle \textrm{-}\Alg\to\left[ \dot{\aaa } , \hhh\right] _ {PS}.$$

\begin{defi}[Effective Diagrams]
Let $\t : \aaa\to\dot{\aaa } , \AAA : \dot{\aaa }\to\hhh $ be pseudofunctors. 
$\AAA : \dot{\aaa }\to\hhh $ is  
of \textit{effective $\t $-descent} if $\AAA $ can be endowed with a 
$\left\langle \t \right\rangle  $-pseudoalgebra structure. 
\end{defi}

We now can apply the results of Section \ref{Formal} on idempotent pseudomonads. 
Firstly, by Theorem \ref{algebra}, we can easily study the $\left\langle \t \right\rangle  $-pseudoalgebra 
structures on diagrams, 
using the unit of the pseudomonad. 

\begin{theo}\label{pseudoalgebrakan}
Let $\t : \aaa\to\dot{\aaa } $ be a local equivalence and $\AAA :\dot{\aaa }\to\hhh $ a pseudofunctor. 
The following conditions are equivalent:
\begin{itemize}
\renewcommand\labelitemi{--}
  \item $\AAA $ is of effective $\t $-descent;
	\item The \textsl{component of the unit on $\AAA $/comparison} $\eta _ {{}_\AAA } : \AAA\to \Ps\Ran _ \t  (\AAA\circ\t ) $ is a pseudonatural equivalence;
	\item The \textsl{comparison} $\eta _ {{}_\AAA } : \AAA\to \Ps\Ran _ \t (\AAA\circ\t ) $ is a pseudonatural pseudosection.
\end{itemize}
\end{theo}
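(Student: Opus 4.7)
The plan is to recognize that this theorem is essentially a direct instance of Theorem \ref{algebra} applied in the $2$-category $\left[\dot{\aaa}, \hhh\right]_{PS}$, once we have identified the relevant idempotent pseudomonad and its unit.

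First, I would invoke Theorem \ref{Psisidem}: since $\t$ is a local equivalence, the biadjunction $\left[\t, \hhh\right]_{PS} \dashv \Ps\Ran_\t$ induces an idempotent pseudomonad on $\left[\dot{\aaa}, \hhh\right]_{PS}$, namely $\left\langle \t \right\rangle = \Ps\Ran_\t(-\circ \t)$. By the definition of effective $\t$-descent given just above the statement, $\AAA$ is of effective $\t$-descent precisely when $\AAA$, viewed as an object of $\left[\dot{\aaa}, \hhh\right]_{PS}$, can be endowed with a $\left\langle\t\right\rangle$-pseudoalgebra structure.

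Next, I would identify the unit of $\left\langle\t\right\rangle$ evaluated at $\AAA$ with the comparison $\eta_{{}_\AAA}: \AAA \to \Ps\Ran_\t(\AAA\circ \t)$. This is immediate from the standard construction of the pseudomonad induced by a biadjunction (as recalled in \ref{idempotent}): the unit of the pseudomonad associated with $\left[\t, \hhh\right]_{PS} \dashv \Ps\Ran_\t$ at a pseudofunctor $\AAA$ is the component of the biadjunction's unit at $\AAA$, and $\Ps\Ran_\t$ is applied after restriction along $\t$, giving exactly $\eta_{{}_\AAA}$.

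Having made these identifications, the equivalence of the three conditions follows by applying Theorem \ref{algebra} to the idempotent pseudomonad $\left\langle\t\right\rangle$ on the $2$-category $\left[\dot{\aaa}, \hhh\right]_{PS}$ and to the object $\AAA$. That theorem yields: $\AAA$ admits a $\left\langle\t\right\rangle$-pseudoalgebra structure if and only if $\eta_{{}_\AAA}$ is a pseudosection if and only if $\eta_{{}_\AAA}$ is an equivalence, which is precisely the three-way equivalence we seek.

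There is essentially no obstacle here; the only thing requiring a moment of care is the bookkeeping ensuring that the unit of the induced pseudomonad $\left\langle\t\right\rangle$ agrees with the $1$-cell labeled ``comparison'' in the statement, and that ``equivalence'' in $\left[\dot{\aaa}, \hhh\right]_{PS}$ means \emph{pseudonatural equivalence}, which matches the formulation in the theorem. Both are straightforward once the setup of \ref{Psisidem} is in place, so the proof reduces to a citation of Theorem \ref{algebra}.
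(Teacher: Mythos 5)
Your proposal is correct and matches the paper's intended argument exactly: the paper states this theorem without proof immediately after remarking that, by Theorem \ref{algebra}, one can study $\left\langle \t \right\rangle$-pseudoalgebra structures via the unit of the idempotent pseudomonad furnished by Theorem \ref{Psisidem}. Your identification of the comparison with the unit component and the reduction to Theorem \ref{algebra} is precisely the reasoning the paper relies on.
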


The component of the unit $\eta _ {{}_\AAA } : \AAA\longrightarrow \Ps\Ran _ \t  (\AAA\circ\t ) $ is a 
pseudonatural equivalence if and only if all components of $\eta _ {{}_\AAA } $ are equivalences. 
By Theorem \ref{Psisidem}, assuming that $\t : \aaa\to\dot{\aaa } $ is an $\mathsf{a}$-inclusion, 
$\eta _ {{}_\AAA }^{b} $ is an equivalence for all ${b}$ in $\aaa $. Thereby we get: 

\begin{lem}\label{pseudoalgebrakanres}
Let $\t : \aaa\to\dot{\aaa } $ be an $\mathsf{a}$-inclusion. A pseudofunctor $\AAA : \dot{\aaa } \to\hhh $ is of effective $\t $-descent if and only if
$\eta _ {{}_\AAA } ^\mathsf{a} : \AAA (\mathsf{a} )\to \Ps\Ran _ \t (\AAA\circ\t ) (\mathsf{a}) $
is an equivalence.
\end{lem}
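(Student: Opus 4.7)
The plan is to invoke Theorem~\ref{pseudoalgebrakan} to reduce the claim to checking that the comparison $\eta _ {{}_\AAA }$ is a pseudonatural equivalence, and then to exploit the fact that an $\mathsf{a}$-inclusion differs from $\aaa$ by a single object so that this pointwise condition collapses to a check at $\mathsf{a}$.

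First, by Theorem~\ref{pseudoalgebrakan}, $\AAA$ is of effective $\t$-descent if and only if the component of the unit
$\eta _ {{}_\AAA } : \AAA \longrightarrow \Ps\Ran_\t (\AAA\circ\t)$ is a pseudonatural equivalence, which, unwinding the definition, holds if and only if every one of its components at objects of $\dot{\aaa }$ is an equivalence in $\hhh$. Second, I would argue that the components indexed by objects in the image of $\t$ are automatically equivalences. Since an $\mathsf{a}$-inclusion is by definition fully faithful, $\t$ is a local equivalence, so by Theorem~\ref{Psisidem} the counit $\varepsilon$ of the biadjunction $[\t,\hhh]_{PS} \dashv \Ps\Ran_\t$ is a pseudonatural equivalence. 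In particular, evaluated at $\AAA\circ\t$, the morphism
$$\varepsilon _ {{}_{\AAA\circ\t }} : \Ps\Ran_\t (\AAA\circ\t)\circ\t \longrightarrow \AAA\circ\t$$
is a pseudonatural equivalence. Applying the triangular invertible modification of the biadjunction at $\AAA$, one has
$$ \varepsilon _ {{}_{\AAA\circ\t }} \cdot \left( \eta _ {{}_\AAA }\ast \Id _ \t \right) \;\cong\; \Id _ {{}_{\AAA\circ\t }}, $$
so that $\eta _ {{}_\AAA }\ast \Id _ \t$ is also a pseudonatural equivalence. Componentwise, this means that $\eta _ {{}_\AAA }^{\t(b)}$ is an equivalence in $\hhh$ for every object $b$ of $\aaa$.

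Finally, the definition of $\mathsf{a}$-inclusion provides the disjoint decomposition $\obj(\dot{\aaa }) = \obj(\aaa)\cup \{\mathsf{a}\}$, so the only component of $\eta _ {{}_\AAA }$ not already covered by the previous paragraph is $\eta _ {{}_\AAA }^\mathsf{a}$. Combining this observation with the initial reduction, $\eta _ {{}_\AAA }$ is a pseudonatural equivalence if and only if $\eta _ {{}_\AAA }^\mathsf{a}$ is an equivalence, completing the proof. The argument is essentially bookkeeping around the triangle identity; the only delicate point is making sure that the triangular invertible modification indeed yields two-out-of-three for pseudonatural equivalences, but this is immediate from the characterization of pseudonatural equivalences as componentwise equivalences.
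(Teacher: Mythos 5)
Your proof is correct and follows essentially the same route as the paper: reduce via Theorem~\ref{pseudoalgebrakan} to the comparison being a pointwise equivalence, observe via Theorem~\ref{Psisidem} that the components over $\aaa$ are automatically equivalences, and conclude that only the component at $\mathsf{a}$ remains. Your explicit use of the triangle identity to deduce that $\eta_{{}_\AAA}\ast\Id_\t$ is a pseudonatural equivalence from the counit being one is a welcome filling-in of a step the paper leaves implicit.
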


\subsection{Commutativity}\label{commu}

Let $\t : \aaa\to\dot{\aaa }$ and $\h : \bbb\to\dot{\bbb } $ be, respectively, an 
$\mathsf{a}$-inclusion and a $\mathsf{b}$-inclusion (see Definition \ref{Adefinition} and Remark \ref{Bdefinition}).
Unless we explicitly state otherwise, 
henceforth we always consider right pseudo-Kan extensions along such type of inclusions. 

In general, we have that (see \cite{Street5}):
$\left[\dot{\aaa }\times\dot{\bbb }, \hhh\right] _ {PS} 
\approx\left[\dot{\aaa }, \left[ \dot{\bbb }, \hhh\right] _ {PS}\right] _ {PS}
\cong \left[\dot{\bbb }, \left[ \dot{\aaa}, \hhh\right] _ {PS}\right] _ {PS}$.
Thereby every pseudofunctor $\AAA : \dot{\aaa }\times\dot{\bbb }\to\hhh $ can be seen 
(up to pseudonatural equivalence) as a pseudofunctor 
$\AAA : \dot{\aaa } \to \left[ \dot{\bbb }, \hhh\right] _ {PS} $. Also, 
$\AAA : \dot{\aaa } \to \left[ \dot{\bbb }, \hhh\right] _ {PS} $ can be seen as a pseudofunctor 
$\AAA : \dot{\bbb } \to \left[ \dot{\aaa }, \hhh\right] _ {PS} $. 	

Applying our formal approach of Section \ref{Formal}
to our context of pseudo-Kan extensions, we get theorems on commutativity as we show below.

\begin{theo}\label{okbeck}
If $\AAA : \dot{\aaa } \to \hhh $ is an effective $\t $-descent pseudofunctor and $\TTT $ is an idempotent 
pseudomonad on $\hhh $ such that $\AAA\circ\t $ can be factorized through 
$\mathsf{Ps}\textrm{-}\TTT\textrm{-}\Alg\to\hhh $, then $\AAA (\mathsf{a}) $ can be endowed with a $\TTT $-pseudoalgebra 
structure.
\end{theo}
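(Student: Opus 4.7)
The plan is to apply Corollary \ref{square} to a commutative square of biadjunctions on pseudofunctor $2$-categories, combining the descent biadjunction $(-)\circ\t\dashv\Ps\Ran_\t$ with pointwise postcomposition by the Eilenberg--Moore biadjunction $\LLL^\TTT\dashv\III$ of Theorem \ref{FACTEILENBERG}. First, I would recast the two hypotheses as pseudoalgebra conditions: effective $\t$-descent says $\AAA$ is a $\langle\t\rangle$-pseudoalgebra on $[\dot{\aaa},\hhh]_{PS}$, while the assumption that $\AAA\circ\t$ factors through $\III$ says precisely that $\AAA\circ\t$ is a pseudoalgebra for the idempotent pseudomonad $\TTT\circ(-)$ on $[\aaa,\hhh]_{PS}$ induced pointwise by $\LLL^\TTT\dashv\III$. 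The target conclusion will be extracted from the stronger statement that $\AAA$ itself is a $(\TTT\circ-)$-pseudoalgebra on $[\dot{\aaa},\hhh]_{PS}$, for then every $\AAA(x)$, and in particular $\AAA(\mathsf{a})$, is a $\TTT$-pseudoalgebra.

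Next, I would form the square whose corners are $[\dot{\aaa},\mathsf{Ps}\textrm{-}\TTT\textrm{-}\Alg]_{PS}$, $[\aaa,\mathsf{Ps}\textrm{-}\TTT\textrm{-}\Alg]_{PS}$, $[\dot{\aaa},\hhh]_{PS}$ and $[\aaa,\hhh]_{PS}$, with horizontal arrows given by restriction $(-)\circ\t$ and vertical arrows by the pointwise Eilenberg--Moore left biadjoints $\LLL^\TTT\circ(-)$. The square commutes strictly, since both composites send $F$ to $\LLL^\TTT\circ F\circ\t$, which certainly provides the pseudonatural equivalence required by Corollary \ref{square}. Each of the four biadjunctions filling in the square is pseudomonadic with idempotent induced pseudomonad: for the descent biadjunction this is Theorem \ref{Psisidem} (using that $\t$ is a local equivalence), and for the postcomposition biadjunctions it follows from $\III$ being a local equivalence -- so $\III\circ(-)$ is too, and Theorem \ref{Idem} applies -- together with the fact that $\TTT\circ(-)$ inherits idempotence from $\TTT$ pointwise.

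With this setup, Corollary \ref{square} applies directly, and its conclusion reads exactly: any $\AAA\in[\dot{\aaa},\hhh]_{PS}$ that is a $\langle\t\rangle$-pseudoalgebra and whose restriction $\AAA\circ\t\in[\aaa,\hhh]_{PS}$ is a $(\TTT\circ-)$-pseudoalgebra is itself a $(\TTT\circ-)$-pseudoalgebra on $[\dot{\aaa},\hhh]_{PS}$, that is, $\AAA$ factors (up to pseudonatural equivalence) through $\III$. Evaluating at $\mathsf{a}$ then yields the claim. The whole argument is therefore a matter of assembling the right square so that Corollary \ref{square} can be invoked; the only non-routine check is verifying that the vertical postcomposition biadjunctions are pseudomonadic and induce idempotent pseudomonads, which I expect to be straightforward bookkeeping from the pointwise structure rather than a substantive obstacle.
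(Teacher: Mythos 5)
Your proposal is correct and follows essentially the same route as the paper: the paper's proof forms exactly this (strictly commuting) square of restriction-along-$\t$ and postcomposition-with-$\LLL^{{}^\TTT}$ biadjunctions on the pseudofunctor $2$-categories, checks the hypotheses of Corollary \ref{square}, and concludes that $\AAA$ carries a $\left[\dot{\aaa},\TTT\right]_{PS}$-pseudoalgebra structure, whence $\AAA(\mathsf{a})$ is a $\TTT$-pseudoalgebra. The only cosmetic difference is that you spell out the bookkeeping for the vertical biadjunctions (via Theorems \ref{Idem} and \ref{Psisidem}) that the paper leaves implicit.
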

\begin{proof}
Let $\LLL\dashv\UUU $ be the biadjunction induced by $\TTT $ and $\widehat{\hhh }:=\mathsf{Ps}\textrm{-}\TTT\textrm{-}\Alg $ (see Definition \ref{pseudoalgebraidem} and Theorem \ref{FACTEILENBERG}). 
Observe that the pseudonatural equivalence
$$\xymatrix{  \left[ \aaa ,\widehat{\hhh }\right] _ {PS}\ar@{}[ddrr]|-{ = } 
&&\left[ \dot{\aaa } ,\widehat{\hhh }\right] _ {PS}\ar[ll]|-{\left[ \t ,\widehat{\hhh }\right] _ {PS} }\\
&&\\
\left[ \aaa ,\hhh \right] _ {PS}\ar[uu]|-{\left[ \aaa ,\LLL \right] _ {PS} } &&\left[ \dot{\aaa } ,\hhh \right] _ {PS}\ar[ll]|-{\left[ \t ,\hhh \right] _ {PS}}\ar[uu]|-{\left[ \dot{\aaa } ,\LLL \right] _ {PS}}
}$$
satisfies the hypotheses of Corollary~\ref{square}. 

If $\AAA :\dot{\aaa } \to \hhh $ is an effective $\t $-descent pseudofunctor such that all the objects 
of the image of $\AAA\circ\t $ have $\TTT $-pseudoalgebra structure, 
it means that $\AAA $ satisfies the hypotheses of Corollary~\ref{square}. That is to say,  
$\AAA $ is a $\left\langle \t\right\rangle$-pseudoalgebra that can be endowed with a 
$\left[ \aaa , \TTT\right]_ {PS}$-pseudoalgebra structure. 
Thereby, by Corollary \ref{square}, $\AAA $ can be endowed with a $\left[ \dot{\aaa }, \TTT\right] _ {PS} $-pseudoalgebra structure.  
\end{proof}

\begin{cor}\label{comutatividade}
Let $\AAA : \dot{\aaa }\to \left[ \dot{\bbb } , \hhh\right] _ {PS}$ be an 
effective $\t $-descent pseudofunctor such that the diagrams in the image of 
$\AAA\circ\t $ are of effective $\h $-descent, then $\AAA (\mathsf{a} ) $ is of effective $\h $-descent 
as well.
\end{cor}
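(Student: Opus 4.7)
The plan is to derive this corollary as a direct specialization of Theorem \ref{okbeck}, taking the ambient $2$-category in that theorem to be $[\dot{\bbb}, \hhh]_{PS}$ rather than $\hhh$, and taking the idempotent pseudomonad on it to be $\left\langle \h\right\rangle = \Ps\Ran_\h(-\circ\h)$.

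First I would check that the setup of Theorem \ref{okbeck} applies in this replaced setting. By Theorem \ref{Psisidem}, since $\h : \bbb \to \dot{\bbb}$ is an $\mathsf{b}$-inclusion and in particular a local equivalence, the biadjunction $\left[\h, \hhh\right]_{PS} \dashv \Ps\Ran_\h$ induces an idempotent pseudomonad $\left\langle \h\right\rangle$ on $[\dot{\bbb}, \hhh]_{PS}$. By Definition (the one preceding Theorem \ref{pseudoalgebrakan}), the pseudofunctors of effective $\h$-descent are exactly the objects of $[\dot{\bbb}, \hhh]_{PS}$ that can be endowed with a $\left\langle \h\right\rangle$-pseudoalgebra structure, i.e.\ the objects of $\mathsf{Ps}\textrm{-}\left\langle \h\right\rangle\textrm{-}\Alg$.

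Next I would translate the two hypotheses of the corollary into the language of Theorem \ref{okbeck} applied in this new setting. The first hypothesis, namely that $\AAA : \dot{\aaa}\to [\dot{\bbb}, \hhh]_{PS}$ is of effective $\t$-descent, is precisely the hypothesis of Theorem \ref{okbeck} on $\AAA$. The second hypothesis, namely that every pseudofunctor $(\AAA\circ\t)(X) : \dot{\bbb}\to\hhh$ (for $X$ an object of $\aaa$) is of effective $\h$-descent, says exactly that every object in the image of $\AAA\circ\t : \aaa \to [\dot{\bbb}, \hhh]_{PS}$ carries a $\left\langle \h\right\rangle$-pseudoalgebra structure; equivalently, $\AAA\circ\t$ factors through the full inclusion $\mathsf{Ps}\textrm{-}\left\langle \h\right\rangle\textrm{-}\Alg \to [\dot{\bbb}, \hhh]_{PS}$. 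This is the second hypothesis of Theorem \ref{okbeck}.

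Applying Theorem \ref{okbeck} then yields that $\AAA(\mathsf{a})$ can be endowed with a $\left\langle \h\right\rangle$-pseudoalgebra structure, which by definition means that $\AAA(\mathsf{a}) : \dot{\bbb}\to\hhh$ is of effective $\h$-descent. Since the only real content of the corollary is the recognition that commutativity of effective descent with respect to two different inclusions reduces to a single application of Corollary \ref{square} (which is what powers Theorem \ref{okbeck}), there is no essential obstacle; the only care required is the bookkeeping that identifies the pseudomonad $\left\langle \h\right\rangle$ on $[\dot{\bbb}, \hhh]_{PS}$ with the pseudomonad $\TTT$ in the hypotheses of Theorem \ref{okbeck}, which is immediate from Theorem \ref{Psisidem}.
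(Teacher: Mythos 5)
Your proposal is correct and coincides with the paper's own proof: the paper likewise obtains Corollary \ref{comutatividade} as the instance $\TTT = \left\langle \h \right\rangle$ of Theorem \ref{okbeck}, using that $\left\langle \h \right\rangle$ is idempotent by Theorem \ref{Psisidem}. Your additional bookkeeping (identifying effective $\h$-descent diagrams with $\left\langle \h\right\rangle$-pseudoalgebras and checking the factorization hypothesis) is exactly the translation the paper leaves implicit.
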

\begin{proof}
Since $\left\langle \h\right\rangle$ is idempotent, this result is Theorem \ref{okbeck} applied to the case $\TTT = \left\langle \h\right\rangle $.
\end{proof}

\begin{cor}\label{comutatividadeequi}
Assume that the pseudofunctors
$
\widehat{\AAA }: \dot{\aaa }\to \left[ \dot{\bbb } , \hhh\right] _ {PS}$ and
$
\bar{\AAA }: \dot{\bbb }\to \left[ \dot{\aaa } , \hhh\right] _ {PS}            
$
are mates such that the diagrams in the image of $\widehat{\AAA }\circ\t$ and $ \bar{\AAA }\circ\h $  
are respectively of effective $\h $- and $\t$-descent. 
We have that $\widehat{\AAA }(\mathsf{a} ) $ is of effective $\h $-descent if and only if $\bar{\AAA } (\mathsf{b} ) $ is of effective $\t $-descent.
\end{cor}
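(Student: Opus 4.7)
My plan is to reduce both directions to Corollary \ref{comutatividade} applied to $\widehat{\AAA}$ and $\bar{\AAA}$ respectively; by the symmetry of the setup it is enough to prove one direction, say that if $\bar{\AAA}(\mathsf{b})$ is of effective $\t$-descent, then $\widehat{\AAA}(\mathsf{a})$ is of effective $\h$-descent. Since the hypothesis already supplies the second premise of Corollary \ref{comutatividade} (every pseudofunctor in the image of $\widehat{\AAA}\circ\t$ is of effective $\h$-descent), the missing ingredient is to show that \emph{$\widehat{\AAA}$ itself is of effective $\t$-descent}.

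By Lemma \ref{pseudoalgebrakanres} this amounts to proving that the comparison
$$\eta^{\mathsf{a}}_{{}_{\widehat{\AAA}}} : \widehat{\AAA}(\mathsf{a}) \longrightarrow \Ps\Ran_{\t}(\widehat{\AAA}\circ\t)(\mathsf{a})$$
is an equivalence in $[\dot{\bbb},\hhh]_{PS}$, i.e.\ a pseudonatural equivalence, so it suffices to check this componentwise at every $b\in\dot{\bbb}$. Since $\hhh$ is bicategorically complete, pointwise pseudo-Kan extensions in $[\dot{\bbb},\hhh]_{PS}$ are given by weighted bilimits (Corollary \ref{pointwise2}), and such bilimits in a functor $2$-category are computed pointwise in $\dot{\bbb}$. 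Hence evaluation at $b$ preserves the extension, and the $b$-component of $\eta^{\mathsf{a}}_{{}_{\widehat{\AAA}}}$ is identified with
$$\widehat{\AAA}(\mathsf{a})(b) \longrightarrow \Ps\Ran_{\t}\bigl(\widehat{\AAA}(\t-)(b)\bigr)(\mathsf{a}).$$
Under the mate correspondence one has $\widehat{\AAA}(\t-)(b)\simeq \bar{\AAA}(b)\circ\t$, so this is the $\mathsf{a}$-component of the $\t$-descent comparison for $\bar{\AAA}(b)$. For $b=\h(b')$ with $b'\in\bbb$, the hypothesis that every $\bar{\AAA}(\h(b'))$ is of effective $\t$-descent together with Lemma \ref{pseudoalgebrakanres} makes it an equivalence; for $b=\mathsf{b}$ it is exactly the assumption that $\bar{\AAA}(\mathsf{b})$ is of effective $\t$-descent.

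Thus every component of $\eta^{\mathsf{a}}_{{}_{\widehat{\AAA}}}$ is an equivalence, so $\widehat{\AAA}$ is of effective $\t$-descent by Theorem \ref{pseudoalgebrakan}, and Corollary \ref{comutatividade} yields that $\widehat{\AAA}(\mathsf{a})$ is of effective $\h$-descent. The converse is formally identical after swapping $(\t,\mathsf{a},\widehat{\AAA})\leftrightarrow(\h,\mathsf{b},\bar{\AAA})$. The main obstacle I anticipate is the bookkeeping around the mate correspondence and the pointwise description of the pseudo-Kan extension: one must verify coherently that evaluation at $b\in\dot{\bbb}$ commutes up to pseudonatural equivalence with $\Ps\Ran_{\t}$, so that the two comparisons above can genuinely be identified rather than merely declared to match by symmetry.
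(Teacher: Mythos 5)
Your proof is correct and follows what is evidently the intended route: the paper states this corollary without an explicit proof, immediately after Corollary \ref{comutatividade}, and your reduction --- showing that $\widehat{\AAA}$ is itself of effective $\t$-descent by identifying the $b$-components of its comparison $\eta^{\mathsf{a}}_{{}_{\widehat{\AAA}}}$ with the $\t$-descent comparisons of the $\bar{\AAA}(b)$ (using that weighted bilimits, hence pointwise pseudo-Kan extensions, in $\left[\dot{\bbb},\hhh\right]_{PS}$ are computed objectwise and that pseudonatural equivalences are detected componentwise), and then invoking Corollary \ref{comutatividade} --- is exactly the argument the surrounding machinery is set up to deliver, with the converse obtained by the symmetric swap. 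The coherence point you flag, namely that evaluation at $b$ preserves $\Ps\Ran_{\t}$ together with its universal arrow so that the two comparisons are genuinely identified up to invertible $2$-cell, is the only nontrivial verification and is routine from the pointwise construction of Corollary \ref{pointwise2}.
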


\subsection{Almost descent pseudofunctors}\label{tecalmost}
Recall that a $1$-cell in a $2$-category $\hhh $ is called faithful/fully faithful if its 
images by the (covariant) representable $2$-functors are faithful/fully faithful.

\begin{defi}\label{almost descent}
Let $\t : \aaa\to\dot{\aaa } $ be an $\mathsf{a}$-inclusion. A pseudofunctor 
$\AAA : \dot{\aaa } \to\hhh $ is of \textit{almost  $\t $-descent}/\textit{$\t $-descent} if 
$\eta _ {{}_\AAA } ^\mathsf{a} : \AAA (\mathsf{a} )\to \Ps\Ran _ \t (\AAA\circ\t ) (\mathsf{a}) $
is faithful/fully faithful. 
\end{defi}

Consider the class $\mathfrak{F} _ {{}_{\left[ \dot{\aaa } , \hhh\right] _ {PS}}}$ of 
pseudonatural transformations in $\left[ \dot{\aaa } , \hhh\right] _ {PS}$ whose components are 
faithful. This class satisfies the properties described in \ref{tecnicoalmost}. Also, 
a pseudofunctor $\AAA : \dot{\aaa }\to\hhh $
is of almost descent if and only if $\AAA $ is a 
$(\mathfrak{F} _ {{}_{\left[ \dot{\aaa } , \hhh\right] _ {PS}}}, \left\langle \t \right\rangle )$-object.

Analogously, if we take the class $\mathfrak{F} _ {{}_{\left[ \dot{\aaa } , \hhh\right] _ {PS}}}'$ of 
objectwise fully faithful pseudonatural transformations, $\AAA : \dot{\aaa }\to\hhh $ is of descent if and 
only if $\AAA $ is a 
$(\mathfrak{F} _ {{}_{\left[ \dot{\aaa } , \hhh\right] _ {PS}}}', \left\langle \t \right\rangle )$-object.

Since in our context of right pseudo-Kan extensions along local equivalences the hypotheses of 
Theorem \ref{descenTsquare} hold, we get the corollaries below. Again, we are considering full inclusions 
$\t : \aaa\to\dot{\aaa }$, $\h : \bbb\to\dot{\bbb } $ as in \ref{commu}.

\begin{cor}\label{comutatividadealmostdescent}
Let $\AAA : \dot{\aaa }\to \left[ \dot{\bbb } , \hhh\right] _ {PS}$ be an almost $\t$-descent pseudofunctor such that the pseudofunctors in the image of $\AAA\circ\t $ are of almost $\h$-descent. In this case, $\AAA (\mathsf{a} ) $ is also of almost $\h $-descent.

Similarly, if $\AAA $ is of $\t$-descent and the pseudofunctors of the image of $\AAA\circ\t $ are of $\h$-descent, then $\AAA (\mathsf{a} ) $ is of $\h $-descent as well. 
\end{cor}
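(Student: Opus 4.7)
The plan is to apply Theorem~\ref{descenTsquare} in direct analogy with the proof of Theorem~\ref{okbeck}, replacing the idempotent pseudomonad $\TTT$ on $\hhh$ by $\langle\h\rangle$ on $[\dot{\bbb}, \hhh]_{PS}$, and setting $\widehat{\hhh} := \mathsf{Ps}\textrm{-}\langle\h\rangle\textrm{-}\Alg$ (the full sub-$2$-category of $[\dot{\bbb}, \hhh]_{PS}$ consisting of the effective $\h$-descent pseudofunctors, with inclusion $\III^{\langle\h\rangle}$). The pseudonaturally equivalent square then has corners $[\aaa, \widehat{\hhh}]_{PS}$, $[\dot{\aaa}, \widehat{\hhh}]_{PS}$, $[\aaa, [\dot{\bbb}, \hhh]_{PS}]_{PS}$ and $[\dot{\aaa}, [\dot{\bbb}, \hhh]_{PS}]_{PS}$, with horizontal left biadjoints given by precomposition with $\t$ and vertical left biadjoints given by postcomposition with $\LLL^{\langle\h\rangle}$. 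I would equip each of these $2$-categories with the class $\mathfrak{F}$ of pseudonatural transformations that are objectwise faithful (respectively, objectwise fully faithful for the descent version), as in Section~\ref{tecalmost}; these classes satisfy the axioms of Section~\ref{tecnicoalmost}. By Theorem~\ref{Psisidem}, the hypothesis that $\AAA$ is of almost $\t$-descent is exactly the statement that $\AAA$ is an $(\mathfrak{F}, \langle\t\rangle)$-object, and the hypothesis that each $\AAA(\t x)$ is of almost $\h$-descent is the statement that $\AAA \circ \t = \LLL_\BBB(\AAA)$ is an $(\mathfrak{F}, [\aaa, \langle\h\rangle]_{PS})$-object.

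The conditions of Theorem~\ref{descenTsquare} to verify are: (a) all three right biadjoints are local equivalences, which follows from Theorem~\ref{Psisidem} for $\Ps\Ran_\t$ and from idempotency of $\langle\h\rangle$ (whiskered) for the vertical inclusions $[\aaa, \III^{\langle\h\rangle}]_{PS}$ and $[\dot{\aaa}, \III^{\langle\h\rangle}]_{PS}$; (b) $\BBB = \Ps\Ran_\t$ sends $\mathfrak{F}$-morphisms to $\mathfrak{F}$-morphisms, which holds because $\Ps\Ran_\t\AAA(b) \simeq \bilim(\dot{\aaa}(b, \t -), \AAA)$ is computed pointwise in $[\dot{\bbb}, \hhh]_{PS}$ and weighted bilimits preserve pointwise faithful and fully faithful morphisms; and (c) $\TTT_\CCCC = [\dot{\aaa}, \langle\h\rangle]_{PS}$ preserves $(\mathfrak{F}, \TTT)$-objects, which follows by exploiting $\langle\h\rangle\langle\h\rangle \simeq \langle\h\rangle$ together with the naturality of the units in the composite pseudomonad $\TTT = \BBB\AAA\LLL_\AAA\LLL_\BBB$.

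Once (a)--(c) are in place, Theorem~\ref{descenTsquare} yields that $\AAA$ is a $(\mathfrak{F}, [\dot{\aaa}, \langle\h\rangle]_{PS})$-object, meaning the component of the unit $\eta^{\langle\h\rangle}_{\AAA(y)}$ is faithful (respectively, fully faithful) for every $y \in \dot{\aaa}$; evaluating at $y = \mathsf{a}$ gives precisely that $\AAA(\mathsf{a})$ is of almost $\h$-descent (respectively, of $\h$-descent), by Definition~\ref{almost descent}. I expect the main obstacle to be verification (c): although morally clear from idempotency of $\langle\h\rangle$ and pseudonaturality of units, it requires unwinding $\TTT$ explicitly, noting that $\TTT(\langle\h\rangle\AAA) \simeq \TTT(\AAA)$ up to an objectwise equivalence coming from $\eta^{\langle\h\rangle}$, and chasing $\eta^\TTT_{\TTT_\CCCC \AAA}$ through the triangular invertible modifications of the two biadjunctions entering the composite. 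All remaining steps are essentially formal transports of the arguments of Theorems~\ref{okbeck} and~\ref{Psisidem} into the $\mathfrak{F}$-comparisons framework of Section~\ref{tecnicoalmost}.
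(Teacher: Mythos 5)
Your proposal matches the paper's proof: both instantiate Theorem~\ref{descenTsquare} on the square of Theorem~\ref{okbeck}, with $\mathfrak{F}$ taken to be the objectwise faithful (resp.\ fully faithful) pseudonatural transformations, the essential verification being that pointwise pseudo-Kan extension pseudofunctors preserve such transformations. You are in fact considerably more explicit than the paper, whose entire argument consists of that single observation (that $\left[ \ccc , \CAT \right]_{PS} (\BBB , \alpha )$ is pointwise (fully) faithful whenever $\alpha$ is), leaving the remaining conditions --- including your point (c) --- implicit.
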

\begin{proof}
In order to show that both cases fit in the technical conditions of the hypothesis of \ref{descenTsquare}, we only need to 
observe that any poitwise (right) pseudo-Kan extension pseudofunctor preserves pointwise faithful and pointwise fully faithful pseudonatural
transformations. In order to verify that, it is enough to see that, given any small $2$-category $\ccc $, we have that
$$\left[ \ccc , \CAT \right]_{PS} (\BBB , \alpha ), $$
is pointwise (fully) faithful whenever $\alpha $ is so.
\end{proof}

\begin{cor}\label{comutatividadealmostcara}
Assume that the mates
$
\widehat{\AAA }: \dot{\aaa }\to \left[ \dot{\bbb } , \hhh\right] _ {PS}$ and
$
\bar{\AAA }: \dot{\bbb }\to \left[ \dot{\aaa } , \hhh\right] _ {PS}            
$
are such that the diagrams in the image of $\widehat{\AAA }\circ\t $ and $\bar{\AAA }\circ\h $ are respectively of almost $\h$- and $\t $-descent. In this case, 
\begin{center}
$\widehat{\AAA }(\mathsf{a} ) $ is of almost $\h$-descent if and only if $\bar{\AAA } (\mathsf{b} ) $ is of almost  $\t $-descent.
\end{center}
If, furthermore, the pseudofunctors in the image of $\widehat{\AAA }\circ\t $ and $\bar{\AAA }\circ\h $ are respectively of $\h$- and $\t $-descent, then: 
\begin{center}
$\widehat{\AAA }(\mathsf{a} ) $ is of $\h$-descent if and only if $\bar{\AAA } (\mathsf{b} ) $ is of $\t $-descent.
\end{center}
\end{cor}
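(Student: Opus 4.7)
The plan is to derive this corollary by a double application of Corollary \ref{comutatividadealmostdescent}, exactly analogously to how Corollary \ref{comutatividadeequi} is a symmetric consequence of Corollary \ref{comutatividade}.

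The key preliminary step is a reformulation linking the ``almost $\t$-descent'' condition on $\widehat{\AAA}$ (as an object of $[\dot{\aaa}, [\dot{\bbb}, \hhh]_{PS}]_{PS}$) to the condition on $\bar{\AAA}(\mathsf{b})$. Since $\hhh$ is assumed bicategorically complete, Corollary \ref{pointwise2} allows us to compute $\Ps\Ran_{\t}(\widehat{\AAA}\circ\t)$ as a pointwise weighted bilimit in $[\dot{\bbb}, \hhh]_{PS}$; standard pointwise computation of bilimits in a functor $2$-category shows that for every $\mathsf{y}\in\dot{\bbb}$, evaluation at $\mathsf{y}$ preserves this pseudo-Kan extension. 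Consequently the comparison $\eta^{\mathsf{a}}_{\widehat{\AAA}}$ is pointwise faithful (the meaning of faithfulness for the class $\mathfrak{F}_{{}_{[\dot{\bbb},\hhh]_{PS}}}$ used to define almost descent in \ref{tecalmost}) if and only if each evaluated component is faithful in $\hhh$. For $\mathsf{y} = \h(y)$ with $y\in\bbb$, the evaluation is the comparison for the pseudofunctor $\bar{\AAA}(\h(y))$, which is of almost $\t$-descent by hypothesis. Hence only the component at $\mathsf{y} = \mathsf{b}$ is nontrivial, and it is precisely the comparison for $\bar{\AAA}(\mathsf{b})$. This gives the equivalence
\begin{center}
$\widehat{\AAA}$ is of almost $\t$-descent $\iff$ $\bar{\AAA}(\mathsf{b})$ is of almost $\t$-descent,
\end{center}
and symmetrically
\begin{center}
$\bar{\AAA}$ is of almost $\h$-descent $\iff$ $\widehat{\AAA}(\mathsf{a})$ is of almost $\h$-descent.
\end{center}

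Now the corollary follows by chaining these equivalences with Corollary \ref{comutatividadealmostdescent}. If $\widehat{\AAA}(\mathsf{a})$ is of almost $\h$-descent, then $\bar{\AAA}$ is of almost $\h$-descent and, since the image of $\bar{\AAA}\circ\h$ consists of almost $\t$-descent pseudofunctors by hypothesis, Corollary \ref{comutatividadealmostdescent} applied to $\bar{\AAA}$ yields that $\bar{\AAA}(\mathsf{b})$ is of almost $\t$-descent. The converse follows analogously from Corollary \ref{comutatividadealmostdescent} applied to $\widehat{\AAA}$.

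The $\t$/$\h$-descent part of the statement is obtained by the same argument after replacing the class of pointwise faithful pseudonatural transformations by the class $\mathfrak{F}'_{{}_{[\dot{\aaa},\hhh]_{PS}}}$ of pointwise fully faithful ones; both classes satisfy the closure properties required in \ref{tecnicoalmost}, and Corollary \ref{comutatividadealmostdescent} already handles both cases uniformly. The only potentially delicate point in the whole argument is the pointwise computation step, that is, verifying that evaluation $[\dot{\bbb},\hhh]_{PS}\to\hhh$ at each $\mathsf{y}$ preserves the pseudo-Kan extension $\Ps\Ran_{\t}(\widehat{\AAA}\circ\t)$; this is ensured by combining Corollary \ref{pointwise2} with the fact that evaluation functors between $2$-categories of the form $[\ccc,\hhh]_{PS}$ preserve the weighted bilimits that exist in $\hhh$.
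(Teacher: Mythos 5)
Your argument is correct and follows the route the paper intends: Corollary \ref{comutatividadealmostcara} is stated without proof as the symmetric consequence of Corollary \ref{comutatividadealmostdescent}, and your preliminary ``evaluation'' step --- identifying the components of the comparison for $\widehat{\AAA }$ over $\dot{\bbb }$ with the comparisons for the $\bar{\AAA }(\mathsf{y})$, so that the hypotheses make every component except the one at $\mathsf{b}$ automatically faithful (resp.\ fully faithful) --- is precisely the bridge needed to chain the two applications in both directions. The one ingredient you flag, that evaluation at each object of $\dot{\bbb }$ preserves the pointwise pseudo-Kan extension, is the same pointwise-computation fact the paper relies on throughout (e.g.\ in the proof of Theorem \ref{okbeck} and in Corollary \ref{pointwise2}), so nothing essential is missing.
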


\section{Descent Objects}\label{descent}
In this section, we give a description of the descent category, as defined in classical descent theory,
via pseudo-Kan extensions. The results of the first part of this section is 
hence important to fit the context of \cite{Facets, Facets2}
within our framework.

Let $\j : \Delta\to \dot{\Delta } $ be the full inclusion of the category of finite nonempty ordinals 
into the category of finite ordinals and order preserving functions. 
Recall that $\dot{\Delta  } $ is generated by its degeneracy and face maps. That is to say, $\dot{\Delta } $ is generated by the diagram
$$\xymatrix{  \mathsf{0} \ar[rr]^-{d=d^0} && \mathsf{1}\ar@<2ex>[rr]|-{d^0}\ar@<-2ex>[rr]|-{d^1} && \mathsf{2}\ar[ll]|-{s^0}
\ar@<2 ex>[rr]|-{d ^0}\ar[rr]|-{d ^1}\ar@<-2ex>[rr]|-{d ^2} && \mathsf{3}\ar@/_4ex/@<-2 ex>[ll]|-{s^0}\ar@/^4ex/@<2ex>[ll]|-{s^1}\ar@<1.5ex>[rr]\ar@<0.5ex>[rr]\ar@<-0.5ex>[rr]\ar@<-1.5ex>[rr]&& \cdots\ar@/^2ex/@<1.5ex>[ll]\ar@/^5ex/@<1.5ex>[ll]\ar@/_3ex/@<-1.5 ex>[ll] } $$
with the following relations: 
\begin{equation*}
\begin{aligned}
d ^k d ^i&=& d^{i}d^{k-1}, &\hspace{1mm}\mbox{if  }\hspace{1mm} i<k ;& \\
s^ks^i &=& s^is^{k+1}, &\hspace{1mm}\mbox{if  }\hspace{1mm} i\leq k ;&  \\
s^k d^i &=& d^i s^{k-1}, &\hspace{1mm}\mbox{if  }\hspace{1mm} i< k ;&
\end{aligned}
\qquad\qquad\qquad
\begin{aligned}
d^0 d & = d^1 d ;\\
s^k d^i &= \id ,\hspace{1mm}\mbox{if  }\hspace{1mm} i=k\mbox{ and } i=k+1 ; \\
s^k d^i &= d^{i-1}s^k, \hspace{1mm}\mbox{if  }\hspace{1mm} i>k+1 . 
\end{aligned}
\end{equation*}

\begin{rem}\label{chosenmonoid}
The category $\dot{\Delta } $ has an obvious strict monoidal structure $(+,\mathsf{0} )$ that turns
$(\dot{\Delta }, +, \mathsf{0}, \mathsf{1}) $ into the initial object of the category of monoidal categories with
a chosen monoid~\cite{Lawvere}. 
\end{rem}

\begin{rem}\label{notationofmorphisms}
There is a full inclusion $\dot{\Delta }\to\CAT $ 
such that the image of each $\mathsf{n} $ is the corresponding ordinal. 
This is the reason why we may consider that $\dot{\Delta } $ 
is precisely the full subcategory of $\CAT $ of the 
finite ordinals (considered as partially ordered sets). 
In this context, the object $\mathsf{n}$ is often confused with its image which is the category
$$\mathsf{0}\to \mathsf{1} \to \mathsf{2}\to \cdots \mathsf{n-1}. $$  
It is important to keep in mind that $\dot{\Delta } $ 
is a category, but we often consider it inside the tricategory  $2\textrm{-}\CAT$. 
More precisely, by abuse of language, $\dot{\Delta } $ and $\Delta $ denote respectively the images of 
the categories $\dot{\Delta } $ and 
$\Delta $  by the inclusion $\CAT\to 2\textrm{-}\CAT$. Hence 
$\dot{\Delta } $ is locally discrete and is not a full 
sub-$2$-category of $\CAT $. 
In fact, it is clear that $\Delta (\mathsf{1}, \mathsf{n})$ is the image of $\mathsf{n} $ by the comonad
induced by the right adjoint forgetful functor between the category 
of small categories and the category of sets, 
the counit of which is denoted by $\varepsilon^\textrm{d}$. 
\end{rem}

\begin{defi}\label{DescentDefinition}
A pseudofunctor $\AAA : \Delta \to\hhh $ is called a \textit{pseudocosimplicial object} of $\hhh $. 
The \textit{descent object} of such a pseudocosimplicial object $\AAA $ is $\Ps\Ran _ \j \AAA (\mathsf{0}) $.  
\end{defi}

\begin{rem}\label{conical1}
Since $\mathsf{0}$ is the initial object of $\dot{\Delta } $, the weight $\dot{\Delta }(\mathsf{0}, \j - )$ is terminal. 
By Remark \ref{conical} and Theorem \ref{pointwise}, 
we get that the descent object of $\AAA : \Delta \to\hhh $ is by Definition \ref{DescentDefinition} the conical bilimit of $\AAA $. 
\end{rem}

Theorem \ref{indeeddescent} shows that our definition of descent object agrees with Definition~\ref{Streetdescent}, 
which is the usual definition of the descent object~\cite{Street5, Facets, Galois}.

\begin{defi}
The category $\dot{\Delta } _ {{}_{3}}$ is generated by the diagram: 
$$\xymatrix{  \mathsf{0} \ar[rr]^-d && \mathsf{1}\ar@<1.7 ex>[rrr]^-{d^0}\ar@<-1.7ex>[rrr]_-{d^1} &&& \mathsf{2}\ar[lll]|-{s^0}
\ar@<1.7 ex>[rrr]^{\partial ^0}\ar[rrr]|-{\partial ^1}\ar@<-1.7ex>[rrr]_{\partial ^2} &&& \mathsf{3} }$$
such that:
$$d^1d = d^0d ;\qquad\qquad\partial ^k d ^i= \partial ^{i}d^{k-1}\hspace{1mm}\mbox{if  }\hspace{1mm} i<k; \qquad\qquad s^0 d^0 = s^0d ^1 = \id .$$ 
We denote by
$\j _ {{}_3} : \Delta _{{}_3}\to \dot{\Delta } _ {{}_3} $
the full inclusion of the subcategory $\Delta _ {{}_{3}}$ in which $\obj (\Delta _ {{}_{3}}) = \left\{ \mathsf{1}, \mathsf{2}, \mathsf{3}\right\} $.
Still, there are obvious inclusions:
$\dot{\t } _ {{}_{3}}:\dot{\Delta } _ {{}_3} \to \dot{\Delta }$ and $\t  _ {{}_{3}}:\Delta  _ {{}_3} \to \Delta .$
Again, $\dot{\Delta } _ {{}_{3}}$ herein usually denotes the respective locally discrete $2$-category.

\end{defi}

\begin{defi}\label{Streetdescent}
We denote by $\WW : \Delta _ {{}_{3}}\to \CAT $ the weight below (defined in \cite{Street5}), in which $\nabla \mathsf{n} $ denotes the localization of the category/finite ordinal $\mathsf{n} $ w.r.t all the morphisms. 
\[\xymatrix{  \nabla \mathsf{1}\ar@<2ex>[rr]\ar@<-2ex>[rr] && \nabla\mathsf{2}\ar[ll]
\ar@<2 ex>[rr]\ar[rr]\ar@<-2ex>[rr] && \nabla\mathsf{3} }\] 
Following \cite{Street5}, if $\AAA : \Delta\to\hhh $ is a pseudofunctor, we define 
$$\Desc (\AAA ):= \bilim ( \WW , \AAA\circ \t _ {{}_3} )  .$$
\end{defi}

\begin{rem}\label{conical2}
The weight $\WW$ is pseudonaturally equivalent to the terminal weight. 
Therefore, $\Desc (\AAA )$ is by definition (equivalent to) the conical bilimit of $\AAA\circ \t _ {{}_3} $. 
\end{rem}

In order to prove Theorem \ref{essentialleft}, we need:

\begin{prop}\label{technicalll}
Let $Y$ be any category and $\underline{Y}: \Delta _ {{}_{3}}^\op\to \CAT $ the constant $2$-functor $\mathsf{n}\mapsto Y $. Given any (strict) 
$2$-functor  $\BBB : \Delta _ {{}_{3}} ^\op \to\hhh $ and a pseudonatural transformation $\alpha :  \BBB\longrightarrow\underline{Y}$, 
the following equations hold: 
\begin{equation*}\tag{associativity codescent equation}\label{AssociatividadeFunctorial}
\begin{split}
\xymatrix{
&
Y
\ar@{<->}[rr]^{\Id_{{}_Y} } 
&
&
Y 
&
&
&
Y
&
\\
\BBB (\mathsf{1})
\ar@{}[r]|-{\xRightarrow{\alpha _{{}_{d ^1} }^{-1} } }
\ar[ru]^{\alpha _{{}_\mathsf{1} } }
&
&
\BBB (\mathsf{1})
\ar@{}[l]|-{\xRightarrow{\alpha _{{}_{d ^0} } } }
\ar@{}[r]|-{\xRightarrow{\alpha _{{}_{d ^1} }^{-1} } }
\ar[lu]|-{\alpha _{{}_\mathsf{1} } }
\ar[ru]|-{\alpha _{{}_\mathsf{1} } }
\ar@{}[u]|-{=}
\ar@{}[d]|-{=}
&
&
\BBB (\mathsf{1})
\ar@{}[r]|-{=}
\ar@{}[l]|-{\xRightarrow{\alpha _{{}_{d ^0} } } }
\ar[lu]_-{\alpha _{{}_\mathsf{1} } }
&
\BBB (\mathsf{1})
\ar@{}[r]|-{\xRightarrow{\alpha _{{}_{d ^1} }^{-1} } }
\ar[ru]^{\alpha _{{}_\mathsf{1} } }
&
&
\BBB (\mathsf{1})
\ar@{}[l]|-{\xRightarrow{\alpha _{{}_{d ^0} } } }
\ar[lu]_-{\alpha _{{}_\mathsf{1} } }
\\
&
\BBB (\mathsf{2})
\ar[lu]^{\BBB (d ^1 ) }
\ar[ru]|-{\BBB (d^0 ) }
\ar[uu]|-{\alpha _{{}_\mathsf{2} } }
&
\BBB (\mathsf{3} )
\ar[l]^-{\BBB(\partial ^2) }
\ar[r]_-{\BBB(\partial ^0) }
&
\BBB (\mathsf{2})
\ar[lu]|-{\BBB (d ^1 ) }
\ar[ru]_-{\BBB (d^0 ) }
\ar[uu]|-{\alpha _{{}_\mathsf{2} } }
&
&
&
\BBB (\mathsf{2})
\ar[lu]^-{\BBB (d ^1 ) }
\ar[ru]|-{\BBB (d^0 ) }
\ar[uu]|-{\alpha _{{}_\mathsf{2} } }
&
\BBB (\mathsf{3})
\ar[l]^{\BBB (\partial ^ 1 ) }
}
\end{split}
\end{equation*}
\begin{equation*}\tag{identity of codescent}\label{IdentidadeFunctorial}
\begin{split}
\xymatrix{
&
Y
&
&
&
Y
\\
\BBB (\mathsf{1})
\ar[ru]^{\alpha _{{}_\mathsf{1} }}
\ar@{}[r]|-{\xRightarrow{\alpha _{{}_{d^1} }^{-1} } }
&
&
\BBB (\mathsf{1} )
\ar@{}[l]|-{\xRightarrow{\alpha _{{}_{d^0} } } }
\ar[lu]_{\alpha _{{}_\mathsf{1} }}
&
&
\\
&
\BBB (\mathsf{2})
\ar[lu]^{\BBB (d ^1 )}
\ar[ru]_{\BBB (d ^0 ) }
\ar[uu]|-{\alpha _{{}_{\mathsf{2} } } }
&
\\
&
\BBB (\mathsf{1} )
\ar[u]|{\BBB (s ^0 ) }
&
&
&
\BBB (\mathsf{1})
\ar@{}[uuull]^-{=}
\ar@{}[uuu]|-{=}
\ar@/_3ex/[uuu]_-{\alpha _{{}_{\mathsf{1} } } }
\ar@/^3ex/[uuu]^-{\alpha _{{}_{\mathsf{1} } } }
}
\end{split}
\end{equation*}

\end{prop}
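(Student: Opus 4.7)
The proposition is essentially the assertion that a pseudonatural transformation into a constant $2$-functor $\underline{Y}$ packs its face and degeneracy components into a ``codescent datum''; both equations are formal consequences of the pseudonaturality coherences once the trivialities of $\underline{Y}$ being constant and $\BBB$ being strict are exploited.

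My plan is as follows. First, I would rewrite the pseudonaturality data of $\alpha:\BBB\longrightarrow \underline{Y}$ in simplified form. Because $\underline{Y}(f)=\Id_Y$ for every morphism $f$ of $\Delta_{{}_3}^{\op}$ and both $\underline{Y}$ and $\BBB$ are strict (so all pseudofunctor compositors are identities), the data of $\alpha$ amount to a family of $1$-cells $\alpha_{\mathsf{n}}:\BBB(\mathsf{n})\to Y$ together with, for every morphism $f:\mathsf{p}\to\mathsf{q}$ of $\Delta_{{}_3}^{\op}$, an invertible $2$-cell $\alpha_f:\alpha_{\mathsf{p}}\Rightarrow \alpha_{\mathsf{q}}\BBB(f)$, subject to $\alpha_{\id_{\mathsf{n}}}=\id$ and $\alpha_{gf}=(\alpha_g\ast \BBB(f))\cdot \alpha_f$. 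These two axioms are exactly the identity and composition coherences of a pseudonatural transformation with the trivial $\underline{Y}$-whiskers stripped off.

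Second, I would derive the associativity codescent equation from the three cosimplicial identities in $\Delta_{{}_3}^{\op}$,
\[
d^0\partial^1=d^0\partial^0,\qquad d^0\partial^2=d^1\partial^0,\qquad d^1\partial^2=d^1\partial^1.
\]
Applying the composition coherence to each side of each identity produces three equalities of pastings of the form $(\alpha_{d^i}\ast \BBB(\partial^k))\cdot \alpha_{\partial^k}=(\alpha_{d^j}\ast \BBB(\partial^\ell))\cdot \alpha_{\partial^\ell}$. Pasting these three equalities together along the common edges of the two configurations in the statement yields exactly the two sides of the associativity codescent equation; the $\alpha_{\partial^k}$ components cancel because they occur identically on both pastings, leaving only the pasting of the face $2$-cells $\alpha_{d^0}$ and $\alpha_{d^1}^{-1}$.

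Third, the identity of codescent follows from the identity coherence together with the simplicial identities $d^0 s^0=d^1 s^0=\id_{\mathsf{1}}$ in $\Delta_{{}_3}^{\op}$. Indeed, the composition coherence applied to these two decompositions of $\id_{\mathsf{1}}$ yields
\[
\id_{\alpha_{\mathsf{1}}}=\alpha_{\id_{\mathsf{1}}}=(\alpha_{d^i}\ast \BBB(s^0))\cdot \alpha_{s^0}\qquad(i=0,1),
\]
and whiskering the two resulting equations on the right by $\alpha_{s^0}^{-1}$ gives $\alpha_{d^0}\ast \BBB(s^0)=\alpha_{d^1}\ast \BBB(s^0)$, which is exactly the assertion that the pasting of $\alpha_{d^1}^{-1}$ and $\alpha_{d^0}$ becomes the identity on $\alpha_{\mathsf{1}}$ when precomposed with $\BBB(s^0)$. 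The main obstacle in implementing this plan is purely bookkeeping: keeping careful track of the $\op$-conventions and the direction of the $2$-cells $\alpha_f$ and $\alpha_f^{-1}$ in the large pastings. Once the conventions are fixed, both equations reduce to mechanical applications of the pseudonaturality axioms.
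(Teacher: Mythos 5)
Your proposal is correct and follows essentially the same route as the paper's proof: both arguments reduce the two pasting equations to the composition and identity coherence axioms of a pseudonatural transformation (simplified because $\BBB$ is strict and $\underline{Y}$ is constant), applied to the relations $d^0\partial^1=d^0\partial^0$, $d^0\partial^2=d^1\partial^0$, $d^1\partial^2=d^1\partial^1$ and $d^0s^0=d^1s^0=\id_{\mathsf{1}}$ of $\Delta_{{}_3}^{\op}$, with the $\alpha_{\partial^k}$ and $\alpha_{s^0}$ components cancelling exactly as you describe. The only differences are cosmetic (the paper treats the identity equation first and writes out the pastings explicitly).
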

\begin{proof}
We start by proving the \ref{IdentidadeFunctorial}. Indeed, by Definition 
\ref{pseudonaturaltransformation} of pseudonatural transformation (see \cite{FLN}), since $d ^0s ^0= d ^1 s^0 = 
\id _ {{}_{\mathsf{1} }}$, $\BBB $ is a $2$-functor and $\underline{Y}$ is constant equal to $Y$, 
we have that $\alpha _{{}_{d^0 s^0 } }=\Id_{{}_{  \alpha _{{}_{\mathsf{1}  } } } }  = \alpha _{{}_{d^1 s^0 } } $
which implies in particular that 
$$\xymatrix{
\BBB (\mathsf{1})
\ar@{}[rd]|-{\xRightarrow{\alpha _{{}_{s^0 } } } }
\ar[r]^-{\BBB (s ^0)}
\ar[rdd]_-{\alpha _{{}_{\mathsf{1} } }  }
&
\BBB (\mathsf{2})
\ar[dd]|-{\alpha _{{}_{\mathsf{2} } }  }
\ar[r]^{\BBB (d ^0 )}
&
\BBB (\mathsf{1})
\ar@{}[ld]|-{\xRightarrow{\alpha _{{}_{d^0 } } } }
\ar[ldd]^-{\alpha _{{}_{\mathsf{1} } } }
\ar@/^2ex/@{{ }{ }}[dd]|-{=}
&
\BBB (\mathsf{1})
\ar@{}[dd]|-{=}
\ar@/^2ex/[dd]^{\alpha _{{}_{\mathsf{1} } }}
\ar@/_2ex/[dd]_{\alpha _{{}_{\mathsf{1} } } }
&
\BBB (\mathsf{1})
\ar@{}[rd]|-{\xRightarrow{\alpha _{{}_{s^0 } } } }
\ar[r]^-{\BBB (s ^0)}
\ar[rdd]_-{\alpha _{{}_{\mathsf{1} } }  }
\ar@/_2ex/@{{ }{ }}[dd]|-{=}
&
\BBB (\mathsf{2})
\ar[dd]|-{\alpha _{{}_{\mathsf{2} } }  }
\ar[r]^{\BBB (d ^1 )}
&
\BBB (\mathsf{1})
\ar@{}[ld]|-{\xRightarrow{\alpha _{{}_{d^1 } } } }
\ar[ldd]^-{\alpha _{{}_{\mathsf{1} } } }
\\
&
&
&
&
&
&
\\
&
Y
&
&
Y
&
&
Y
&
}$$
and therefore:
$$\xymatrix{
\BBB (\mathsf{1})
\ar@{}[rd]|-{\xRightarrow{\alpha _{{}_{d^1 } }^{-1} } }
\ar[rdd]_-{\alpha _{{}_{\mathsf{1} } }  }
&
\BBB (\mathsf{2})
\ar[l]_-{\BBB (d ^1 )}
\ar[dd]|-{\alpha _{{}_{\mathsf{2} } }  }
&
\BBB (\mathsf{1})
\ar[l]_-{\BBB (s ^0 )}
\ar@{}[ld]|-{\xRightarrow{\alpha _{{}_{s^0 } }^{-1} } }
\ar[ldd]^-{\alpha _{{}_{\mathsf{1} } } }
\ar@{}[rd]|-{\xRightarrow{\alpha _{{}_{s^0 } } } }
\ar[r]^-{\BBB (s ^0)}
\ar[rdd]_-{\alpha _{{}_{\mathsf{1} } }  }
\ar@{{ }{ }}[dd]|-{=}
&
\BBB (\mathsf{2})
\ar[dd]|-{\alpha _{{}_{\mathsf{2} } }  }
\ar[r]^{\BBB (d ^0 )}
&
\BBB (\mathsf{1})
\ar@{}[ld]|-{\xRightarrow{\alpha _{{}_{d^0 } } } }
\ar[ldd]^-{\alpha _{{}_{\mathsf{1} } } }
\ar@/^4ex/@{{ }{ }}[dd]|-{=}
&
&
\BBB (\mathsf{2} )
\ar[dd]|-{\alpha _{{}_{\mathsf{2} } } }
\ar[rd]|-{\BBB (d^0) }
\ar[ld]|-{\BBB (d ^1) }
&
\BBB (\mathsf{1})
\ar[l]_-{\BBB(s ^0) }
\\
&
&
&
&
&
\BBB (\mathsf{1})
\ar@{{ }{ }}[r]|-{\xRightarrow{\alpha _{{}_{d^1 } }^{-1} }}
\ar[rd]|-{\alpha _{{}_{\mathsf{1} } } }
&
&
\BBB (\mathsf{1})
\ar[ld]|-{\alpha _{{}_{\mathsf{1} } } }
\ar@{}[l]|-{\xRightarrow{\alpha _{{}_{d^0 } } } }
&
\\
&
Y\ar@{<->}[rr]_{\Id_{{}_{Y}} }
&
&
Y
&
&
&
Y
&
&
}$$
is equal to the identity on $\alpha _{{}_{\mathsf{1} } } $. This proves that the  \ref{IdentidadeFunctorial} holds.

It remains to prove that the \ref{AssociatividadeFunctorial} holds. Since by the definition of pseudonatural transformation
we have that 
$$ \left(\alpha _{{}_{d^0 } }\ast \Id _ {{}_{\BBB (\partial  ^2 ) }}\right)\cdot \alpha _{{}_{\partial ^2 } }
=
\alpha _{{}_{d^0\partial ^2 }} = \alpha _{{}_{d^1\partial ^0 }} = 
\left(\alpha _{{}_{d^1 } }\ast \Id _ {{}_{\BBB (\partial  ^0 ) }}\right)\cdot \alpha _{{}_{\partial ^0 } },
$$
we conclude that
$$
\xymatrix{
Y\ar@{<->}[rr]^{\Id _ {{}_{Y}} } 
&
&
Y\ar@{{ }{ }}[rdd]|-{=}
&
Y\ar[rr]^{\Id _ {{}_{Y}} }\ar@/^4ex/@{{ }{ }}[dd]|-{\xRightarrow{\alpha _{{}_{\partial^2 }  }^{-1} } }
&
&
Y\ar@/_3ex/@{{ }{ }}[dd]|-{\xRightarrow{\alpha _{{}_{\partial^0 } }} }
\\
&
\BBB (\mathsf{1})
\ar@{}[l]|-{\xRightarrow{\alpha _{{}_{d^0 } } } }
\ar@{}[u]|-{=}
\ar@{}[d]|-{=}
\ar@{}[r]|-{\xRightarrow{\alpha _{{}_{d^1 } }^{-1} } }
\ar[ru]|-{\alpha _ {{}_{\mathsf{1} }} }
\ar[lu]|-{\alpha _ {{}_{\mathsf{1} }} }
&
&
&
&
\\
\BBB (\mathsf{2})
\ar[uu]^{\alpha _ {{}_{\mathsf{2} }} }
\ar[ru]|-{\BBB (d ^0)}
&
\BBB (\mathsf{3})
\ar[l]^{\BBB (\partial ^2) }
\ar[r]_{\BBB (\partial ^0) }
&
\BBB (\mathsf{2})
\ar[uu]_-{\alpha _ {{}_{\mathsf{2} }} }
\ar[lu]|-{\BBB (d ^1)}
&
\BBB (\mathsf{2})
\ar[uu]^{\alpha _ {{}_{\mathsf{2} }} }
&
\BBB(\mathsf{3})
\ar[l]^-{\BBB (\partial ^2) }
\ar@/_2ex/[luu]|-{\alpha _ {{}_{\mathsf{3} }} }
\ar[r]_-{\BBB (\partial ^0)}
\ar@/^2ex/[ruu]|-{\alpha _ {{}_{\mathsf{3} }} }
\ar@{}[uu]|-{=}
&
\BBB(\mathsf{2})\ar[uu]_{\alpha _ {{}_{\mathsf{2} }} } 
&
}
$$
holds. Since $\alpha _{{}_{d^0\partial ^0 }} = \alpha _{{}_{d^1\partial ^0 }}$, 
$\alpha _{{}_{d^1\partial ^2 }} = \alpha _{{}_{d^1\partial ^1 }}$, by the equality above,
the left side of the \ref{AssociatividadeFunctorial} is equal to
$$
\xymatrix@C=1em{
&
Y
\ar[rr]^{\Id _ {{}_{Y}} }
\ar@/^3ex/@{{ }{ }}[dd]|-{\xRightarrow{\alpha _{{}_{\partial^2 }  }^{-1} } }
&
&
Y
\ar@/_3ex/@{{ }{ }}[dd]|-{\xRightarrow{\alpha _{{}_{\partial^0 } }} }
&
&
&
Y\ar[rr]^{\Id _ {{}_{Y}} }
\ar@/^3ex/@{{ }{ }}[dd]|-{\xRightarrow{\alpha _{{}_{\partial^1 }  }^{-1} } }
&
&
Y\ar@/_3ex/@{{ }{ }}[dd]|-{\xRightarrow{\alpha _{{}_{\partial^1 } }} }
&
\\
\BBB (\mathsf{1})
\ar[ru]^{\alpha _ {{}_{\mathsf{1} }}}
\ar@{{ }{ }}[r]|-{\xRightarrow{\alpha _{{}_{d^1 } }^{-1} } }
&
&
&
&
\BBB (\mathsf{1})
\ar[lu]_-{\alpha _ {{}_{\mathsf{1} }} }
\ar@{}[l]|-{\xRightarrow{\alpha _{{}_{d^0 } }}}
\ar@{{ }{ }}[r]|-{=}
&
\BBB (\mathsf{1})
\ar[ru]^{\alpha _ {{}_{\mathsf{1} }}}
\ar@{{ }{ }}[r]|-{\xRightarrow{\alpha _{{}_{d^1 } }^{-1} } }
&
&
&
&
\BBB (\mathsf{1})
\ar[lu]_-{\alpha _ {{}_{\mathsf{1} }} }
\ar@{}[l]|-{\xRightarrow{\alpha _{{}_{d^0 } }}}
\\
&
\BBB (\mathsf{2})
\ar[lu]^{\BBB (d^1) }
\ar[uu]|-{\alpha _ {{}_{\mathsf{2} }} }
&
\BBB(\mathsf{3})
\ar[l]^-{\BBB (\partial ^2) }
\ar@/_2ex/[luu]|-{\alpha _ {{}_{\mathsf{3} }} }
\ar[r]_-{\BBB (\partial ^0)}
\ar@/^2ex/[ruu]|-{\alpha _ {{}_{\mathsf{3} }} }
\ar@{}[uu]|-{=}
&
\BBB(\mathsf{2})\ar[uu]|-{\alpha _ {{}_{\mathsf{2} }} }\ar[ru]_-{\BBB(d ^0 ) } 
&
&
&
\BBB (\mathsf{2})
\ar[lu]^{\BBB (d^1) }
\ar[uu]|-{\alpha _ {{}_{\mathsf{2} }} }
&
\BBB(\mathsf{3})
\ar[l]^-{\BBB (\partial ^1) }
\ar@/_2ex/[luu]|-{\alpha _ {{}_{\mathsf{3} }} }
\ar[r]_-{\BBB (\partial ^1)}
\ar@/^2ex/[ruu]|-{\alpha _ {{}_{\mathsf{3} }} }
\ar@{}[uu]|-{=}
&
\BBB(\mathsf{2})\ar[uu]|-{\alpha _ {{}_{\mathsf{2} }} }\ar[ru]_-{\BBB(d ^0 ) } 
&
}
$$
which is clearly equal to the right side of the \ref{AssociatividadeFunctorial}.
\end{proof}

\begin{rem}\label{redundante}
One 
important difference between (pointwise) pseudo-Kan extensions (weighted bilimits)
and (pointwise) Kan extensions (strict $2$-limits) is the following:  
if we consider the inclusion $\t _ {{}_2}:\Delta _ {{}_{2}}\to \Delta $ of the 
full subcategory with only $\mathsf{1}$ and $\mathsf{2}$
as objects into the category $\Delta $, then $\Lan _{\t _ {{}_2} } \top \cong \top $ while 
$\Ps\Lan _{\t _ {{}_2} } \top \not\simeq \top $, where, by abuse of language, $\top $ always
denotes the appropriate $2$-functor constantly equal to the terminal category. 
Actually, $\Ps\Lan _{\t _ {{}_2} } \top (\mathsf{3}) $
is equivalent to the category with only one object and one nontrivial automorphism.
\end{rem}

\begin{theo}\label{essentialleft}
Let $\top : \Delta _ {{}_{3}}\to \CAT $ and $\top :\Delta\to\CAT $ be the terminal weights. We have that 
$\Ps\Lan _{\t _ {{}_3} } \top \simeq \top $. 
\end{theo}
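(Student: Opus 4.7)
The plan is to apply the pointwise formula for left pseudo-Kan extensions (Remark~\ref{leftpseudo}), which gives, for each $\mathsf{n}\in\Delta$,
$$\Ps\Lan_{\t_{{}_3}}\top(\mathsf{n})\simeq\bicolim(\Delta(\t_{{}_3}-,\mathsf{n}),\top),$$
so the theorem reduces to showing that this weighted bicolimit is equivalent to the terminal category for every $\mathsf{n}$. For $\mathsf{n}\in\{\mathsf{1},\mathsf{2},\mathsf{3}\}$ this is immediate: since $\t_{{}_3}$ is fully faithful one has $\Delta(\t_{{}_3}-,\mathsf{n})\cong\Delta_{{}_3}(-,\mathsf{n})$, and the dual of Lemma~\ref{YonedaBILIMIT} yields $\bicolim(\Delta_{{}_3}(-,\mathsf{n}),\top)\simeq\top(\mathsf{n})=\ast$.

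For $\mathsf{n}\geq\mathsf{4}$ I would verify the universal property of the bicolimit directly. That is, it suffices to prove that for every category $Y$ the category of pseudonatural transformations
$$[\Delta_{{}_3}^{\op},\CAT]_{PS}(\Delta(\t_{{}_3}-,\mathsf{n}),\underline{Y})$$
is equivalent to $Y$, where $\underline{Y}$ is the constant pseudofunctor at $Y$. There is an evident functor from $Y$ to this category sending an object $X$ to the constant pseudonatural transformation whose every component is the constant functor at $X$ and whose structural $2$-cells are identities; the work is to construct a pseudo-inverse.

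Given an arbitrary $\alpha$, I would take $X:=\alpha_\mathsf{1}(0)$ and exhibit $\alpha$ as isomorphic to the constant transformation at $X$. The key device is that for any two elements $i,j\in\mathsf{n}$ the unique order-preserving $e:\mathsf{2}\to\mathsf{n}$ with image $\{i,j\}$ satisfies $\{e\circ d^0,e\circ d^1\}=\{i,j\}$, so that the components of $\alpha$ at $d^0$ and $d^1$ evaluated at $e$ produce a canonical isomorphism $\alpha_\mathsf{1}(i)\cong\alpha_\mathsf{2}(e)\cong\alpha_\mathsf{1}(j)$ in $Y$. These pairwise identifications, together with analogous moves matching the values $\alpha_\mathsf{2}(g)$ and $\alpha_\mathsf{3}(g)$ with the corresponding composites at the $\mathsf{1}$-level, should assemble into the required modification.

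The main obstacle will be coherence: one has to check that the canonical isomorphisms constructed from different $\mathsf{2}$-simplices agree, and that they are compatible with the $\mathsf{3}$-simplex data. But this is exactly the content of the associativity and identity codescent equations of Proposition~\ref{technicalll} applied to $\BBB=\Delta(\t_{{}_3}-,\mathsf{n}):\Delta_{{}_3}^{\op}\to\CAT$, so the verification is essentially mechanical once the canonical isomorphisms are in place. A parallel argument at the level of modifications shows that the functor $Y\to[\Delta_{{}_3}^{\op},\CAT]_{PS}(\Delta(\t_{{}_3}-,\mathsf{n}),\underline{Y})$ is fully faithful, completing the proof.
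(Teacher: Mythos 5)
Your proposal is correct and follows essentially the same route as the paper: reduce via the pointwise bicolimit formula to computing $[\Delta_{{}_3}^{\op},\CAT]_{PS}(\Delta(\t_{{}_3}-,\mathsf{n}),\underline{Y})$, extract the comparison isomorphisms from the components of $\alpha$ at $d^0$ and $d^1$ evaluated on $\mathsf{2}$-simplices, and invoke the codescent equations of Proposition~\ref{technicalll} for coherence. The only (harmless) difference is that you identify this hom-category with $Y$ directly by choosing the basepoint $\alpha_{\mathsf{1}}(0)$, whereas the paper identifies it with $\CAT(\nabla\mathsf{n},Y)\simeq Y$ without choosing a basepoint.
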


\begin{proof}
We prove below that, given a constant $2$-functor 
$ \underline{ Y}: \Delta _ {{}_{3}} \to \CAT$,
$$\left[ \Delta _ {{}_{3}} ^\op , \CAT \right] _ {PS}(\Delta (\t _ {{}_3}-, \mathsf{n}), \underline{ Y} )\simeq  
\CAT (\nabla \mathsf{n}, Y) $$
which, by the dual of Theorem \ref{pointwise} given in Remark \ref{leftpseudo}, 
completes our argument since it proves that 
$\bicolim (\Delta (\t _ {{}_3}-, \mathsf{n}), \top )\simeq \nabla \mathsf{n}\simeq \top (\mathsf{n})$.

Let $\varepsilon^\textrm{d}$ be the counit of the discrete  comonad on the category of small categories 
(see \ref{notationofmorphisms}), 
we define the functor
\small
$$\CAT (\nabla \mathsf{n}, Y)\to
\left[ \Delta _ {{}_{3}} ^\op , \CAT \right] _ {PS}(\Delta (\t _ {{}_3}-, \mathsf{n}), \underline{ Y} ),
\quad A \mapsto \xi ^A, \quad\left( \xxxx : A\rightarrow B\right) \mapsto \left(\xi ^\xxxx : \xi ^A\Longrightarrow \xi ^B\right)
 $$
\normalsize 
in which, given a functor $A: \nabla \mathsf{n} \to Y$ and a natural transformation $\xxxx : A\rightarrow B $,
$\xi ^A$ and $\xi ^\xxxx$ are defined by:
\begin{equation*}
\begin{aligned}
\xi ^A _{{}_{\mathsf{1}}} &: =  A\circ \varepsilon^\textrm{d}_{{}_{\mathsf{n} }},&\\
\xi ^A _{{}_{\mathsf{2}}} &: =   A\circ\varepsilon^\textrm{d}_{{}_{\mathsf{n} }} \circ \Delta (\t _ {{}_3}( d^1 ), \mathsf{n}),& \\
 \xi ^A _{{}_{\mathsf{3}}}& : =  A\circ \varepsilon^\textrm{d}_{{}_{\mathsf{n} }} \circ \Delta (\t _ {{}_3}( d^1\partial ^2), \mathsf{n}),& 										
\end{aligned}
\qquad
\begin{aligned}
\xi ^A _{{}_{d^1}} &: = \Id _ {{}_{\xi  _ {{}_{\mathsf{2} }} }}, &\\
\xi ^A _{{}_{s^0}}& : = \Id _ {{}_{\xi  _ {{}_{\mathsf{2} }} }}, & \\
\xi ^A _{{}_{\partial ^ 1 }}& : = \Id _ {{}_{\xi  _ {{}_{\mathsf{2} }} }}, & 
\end{aligned}
\qquad
\begin{aligned}
\left(\xi ^A _{{}_{d^0}}\right) _{{}_{f:\mathsf{2}\to\mathsf{n} }}&: = A(f(\mathsf{0})\leq f(\mathsf{1}) ),&\\
 \xi ^A _{{}_{\partial ^0}}& : = \Id _ {{}_{\Delta (\t _ {{}_3}(\partial ^2), \mathsf{n}) }}
 \ast \xi ^A _{{}_{d^0}}. &
\end{aligned}
\end{equation*} 
$$
\xi ^\xxxx _{{}_{\mathsf{1}}} : = \xxxx\ast \Id _ {{}_{\varepsilon^\textrm{d}_{{}_{\mathsf{n} }}}},\quad
\xi ^\xxxx_{{}_{\mathsf{2}}} : =  \xxxx\ast \Id _ {{}_{\varepsilon^\textrm{d}_{{}_{\mathsf{n} }} \circ \Delta (\t _ {{}_3}( d^1 ), \mathsf{n})}}, \quad
 \xi ^\xxxx  _{{}_{\mathsf{3}}} : =  \xxxx\ast \Id _ {{}_{\varepsilon^\textrm{d}_{{}_{\mathsf{n} }} \circ \Delta (\t _ {{}_3}( d^1\partial ^2), \mathsf{n})}}. 										
$$
We prove that this functor is actually an equivalence. Firstly, we define the inverse equivalence 
\small
$$\left[ \Delta _ {{}_{3}} ^\op, \CAT \right] _ {PS}(\Delta (\t _ {{}_3}-, \mathsf{n}), \underline{ Y} )
\to \CAT (\nabla \mathsf{n}, Y),
\quad \alpha\mapsto \wp ^\alpha , 
\quad\left( \yyyy : \alpha\Longrightarrow \beta\right) \mapsto \left(\wp ^\yyyy : \wp ^\alpha
\Longrightarrow \wp ^\beta\right)
 $$
\normalsize 
where $\left(\wp ^\yyyy\right)_{{}_{\mathsf{j} }}:= 
\left(\yyyy _{{}_{\mathsf{1} }}\right) _ {{}_{\mathsf{j} }}$ and $\wp ^\alpha (\mathsf{i}\leq\mathsf{j}) $ is the component of the natural 
transformation below on the object $(\mathsf{i}, \mathsf{j} ): \mathsf{2}\to \mathsf{n} $ of 
$\Delta (\t _ {{}_3}(\mathsf{2}), \mathsf{n})$.
$$
\xymatrix{
&
Y
&
\\
\Delta(\t _ {{}_3}(\mathsf{1}), \mathsf{n})
\ar@{}[r]|-{\xRightarrow{\alpha _{{}_{d ^1} }^{-1} } }
\ar[ru]^{\alpha _{{}_\mathsf{1} } }
&
&
\Delta (\t _ {{}_3}(\mathsf{1}), \mathsf{n} )
\ar@{}[l]|-{\xRightarrow{\alpha _{{}_{d ^0} } } }
\ar[lu]_-{\alpha _{{}_\mathsf{1} } }
\\
&
\Delta (\t _ {{}_3}(\mathsf{2}), \mathsf{n} )
\ar[lu]^-{\Delta (d ^1, \mathsf{n} ) }
\ar[ru]_-{\Delta (d^0, \mathsf{n} ) }
\ar[uu]|-{\alpha _{{}_\mathsf{2} } }
&
}$$
It remains to show that $\wp ^\alpha $ defines a functor $ \nabla \mathsf{n}\to Y $. Indeed, 
 this follows from the \ref{AssociatividadeFunctorial} and the \ref{IdentidadeFunctorial} of 
Proposition \ref{technicalll}. More precisely, $\alpha $ satisfies the equations of this proposition, 
since $\Delta (\t _ {{}_3}-, \mathsf{n} )$
is a $2$-functor. Given $\mathsf{i}\leq \mathsf{j}\leq \mathsf{k} $ of 
$\nabla \mathsf{n} $,  by the definition of $\wp ^\alpha$, 
$\wp ^\alpha(\mathsf{j}\leq \mathsf{k} )\wp ^\alpha(\mathsf{i}\leq \mathsf{j} )$ is the component of the 
natural transformation of the
left side of the \ref{AssociatividadeFunctorial} on 
$(\mathsf{i}, \mathsf{j}, \mathsf{k} ): \mathsf{3}\to \mathsf{n} $, while the component of the right 
side on $(\mathsf{i}, \mathsf{j}, \mathsf{k} )$ is equal to
$\wp ^\alpha(\mathsf{i}\leq \mathsf{k} )$.
Analogously,  the \ref{IdentidadeFunctorial} implies that 
$\wp ^\alpha (\id _{{}_{\mathsf{i} }}) = \id _{{}_{\wp ^\alpha (\mathsf{i}) }} $.

Finally, since it is clear that $\wp ^{\xi ^{(-)}} = \Id _{{}_{\CAT (\nabla \mathsf{n}, Y)}}$, 
the proof is completed by showing the natural isomorphism 
$$\varGamma : \xi ^{\wp ^{(-)}}\Longrightarrow
\Id _{{}_{\left[ \Delta _ {{}_{3}} ^\op , \CAT \right] _ {PS}(\Delta (\t _ {{}_3}-, \mathsf{n}), \underline{ Y} )}}  $$
where each component is the invertible modification defined by:
$$\left(\varGamma _{{}_{\alpha  }}\right)_{{}_{\mathsf{1}}}: =\Id _{{}_{\alpha _{{}_{\mathsf{1} } }}},   \quad 
\left(\varGamma _{{}_{ \alpha}}\right) _{{}_{\mathsf{2} } }: = \alpha _{{}_{ d ^1 } }, \quad 
\left(\varGamma_{{}_{ \alpha}}\right) _{{}_{ \mathsf{3} } } : =  \alpha _{{}_{ d ^1 \partial^2  } }.
$$ 
\end{proof}

\begin{theo}[Descent Objects]\label{indeeddescent}
Let $\AAA : \Delta \to\hhh $ be a pseudofunctor. 
We have that $\Desc (\AAA )\simeq \Ps\Ran _ \j \AAA (\mathsf{0}) $.
\end{theo}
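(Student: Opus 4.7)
The plan is to chain together the previously established equivalences so that both sides of the claim are shown to compute the same conical bilimit of $\AAA$. The key observation is that, since $\mathsf{0}$ is initial in $\dot{\Delta}$, the representable $\dot{\Delta}(\mathsf{0}, \j -) : \Delta \to \CAT$ is the terminal weight $\top_\Delta$; so by Theorem \ref{pointwise} (or, equivalently, by Remark \ref{conicalpseudo}/Remark \ref{conical1}) I have
\[
\Ps\Ran_\j \AAA(\mathsf{0}) \;\simeq\; \bilim\bigl(\dot{\Delta}(\mathsf{0}, \j -), \AAA\bigr) \;\simeq\; \bilim(\top_\Delta, \AAA),
\]
i.e.\ $\Ps\Ran_\j \AAA(\mathsf{0})$ is nothing but the conical bilimit of the pseudocosimplicial object $\AAA$.

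Next I would rewrite $\Desc(\AAA)$ similarly as a conical bilimit. By Definition \ref{Streetdescent} one has $\Desc(\AAA) = \bilim(\WW, \AAA\circ \t_{{}_3})$, and by Remark \ref{conical2} the weight $\WW : \Delta_{{}_3} \to \CAT$ is pseudonaturally equivalent to $\top_{\Delta_{{}_3}}$. Weighted bilimits are pseudonatural in the weight (their defining universal property is phrased via $[\Delta_{{}_3}, \CAT]_{PS}$), so
\[
\Desc(\AAA) \;\simeq\; \bilim(\top_{\Delta_{{}_3}}, \AAA\circ \t_{{}_3}).
\]

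Finally, I would invoke Theorem \ref{relation} with $\t := \t_{{}_3} : \Delta_{{}_3} \to \Delta$ and weight $\top_{\Delta_{{}_3}}$ to slide the bilimit along $\t_{{}_3}$:
\[
\bilim(\top_{\Delta_{{}_3}}, \AAA\circ \t_{{}_3}) \;\simeq\; \bilim\bigl(\Ps\Lan_{\t_{{}_3}} \top_{\Delta_{{}_3}}, \AAA\bigr).
\]
Theorem \ref{essentialleft} tells us exactly that $\Ps\Lan_{\t_{{}_3}} \top_{\Delta_{{}_3}} \simeq \top_\Delta$, so the right-hand side is $\bilim(\top_\Delta, \AAA)$, matching the expression for $\Ps\Ran_\j \AAA(\mathsf{0})$ derived in the first paragraph. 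Composing the equivalences gives $\Desc(\AAA) \simeq \Ps\Ran_\j \AAA(\mathsf{0})$.

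There is no real obstacle left at this stage: the technical content lies in Theorem \ref{essentialleft} (which has already been proved via the explicit pseudoend / codescent computation involving the associativity and identity codescent equations of Proposition \ref{technicalll}) and in the Fubini- and Yoneda-type reasoning that goes into Theorem \ref{pointwise} and Theorem \ref{relation}. The only mild care needed is to ensure that the pseudonatural equivalence $\WW \simeq \top_{\Delta_{{}_3}}$ is genuinely transported through the bilimit and through $\Ps\Lan_{\t_{{}_3}}$ — both of which respect pseudonatural equivalences of weights by construction — so the chain of $\simeq$'s is legitimate and the proof is complete.
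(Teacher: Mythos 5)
Your proposal is correct and follows exactly the paper's own argument: identify both sides as conical bilimits via Remarks \ref{conical1} and \ref{conical2}, then chain $\bilim(\top, \AAA\circ\t_{{}_3}) \simeq \bilim(\Ps\Lan_{\t_{{}_3}}\top, \AAA) \simeq \bilim(\top, \AAA)$ using Theorems \ref{relation} and \ref{essentialleft}. No gaps; the extra remark about transporting the equivalence of weights through the bilimit is a point the paper leaves implicit but is indeed justified by the universal property.
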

\begin{proof}
By Remarks \ref{conical1} and \ref{conical2}, 
we need to prove that the conical bilimit of $\AAA $ is equivalent to the 
conical bilimit of $\AAA\circ \t _ {{}_3} $. Indeed, by Theorems \ref{relation} and \ref{essentialleft},
$$\bilim ( \top , \AAA\circ \t _ {{}_3} )\simeq \bilim ( \Ps\Lan _{\t _ {{}_3} }\top, \AAA )
\simeq \bilim ( \top , \AAA ). $$
\end{proof}

Observe that, by Theorem \ref{indeeddescent}, 
if $\AAA : \dot{\Delta} \to\hhh $ is a pseudofunctor, then $\AAA $ is of 
(almost/effective) $\j$-descent if and only if $\AAA\circ\dot{\t } _ {{}_{3}} $ is of 
(almost/effective) $\j _ {{}_3}$-descent.

\subsection{Strict Descent Objects}\label{strictdescent}
To finish this section, we show how we can see descent objects 
via (strict/enriched) Kan extensions of $2$-diagrams. 
Although this construction gives a few strict 
features of descent theory (such as the strict factorization), 
we do not use the results of this part in the rest of the paper.
For this reason, the reader can skip this part and consider it to be technical observations on strict results.

Clearly, since the point of these observations is to consider strict results, unlike the general 
viewpoint of this paper, 
 we have to deal closely with coherence theorems. 
The coherence replacements used here follow from the $2$-monadic approach to general coherence 
results~\cite{SteveLack, Power, FLN}. Also, to formalize some observations of 
free $2$-categories, we use the concept of computad, defined in \cite{Street3}.  
Therefore it is clear that this part assumes knowledge on coherence~\cite{SteveLack, FLN},  
\textit{icons}~\cite{SLACK2007, FLN4},  
computads~\cite{Street3, FLN4} and flexible weighted limits~\cite{Flexible}. Moreover, we omit most of the 
proofs of this last part of this section.

The first step is actually older than the general coherence results: the strict replacement of a bicategory.
Consider the locally full inclusion $\Icon \to \Bicat $ of the $2$-category $\Icon $ of $2$-categories and $2$-functors 
into the $2$-category $\Bicat $  of $2$-categories, pseudofunctors and \textit{icons}.
By the general coherence result~\cite{SteveLack, FLN}, this inclusion has a left $2$-adjoint  
$\mathrm{Str}: \Bicat \to \Icon$,
and the unit of
this $2$-adjunction is a pseudonatural equivalence (which means that it is pointwise an equivalence in 
$\Bicat$).

\begin{lem}\label{strictreplacementexplained}
If there is an equivalence $\mathrm{Str}(\aaa )\to\aaa _{{}_{\mathrm{Str}}}$ 
in $\Icon $, the inclusion
$$\left[ \aaa_{{}_{\mathrm{Str}}}, \hhh \right] \to\left[ \aaa_{{}_{\mathrm{Str}}}, \hhh \right] _ {PS}$$
is essentially surjective (which means that every pseudofunctor  $\aaa_{{}_{\mathrm{Str}}}\to \hhh$
is pseudonaturally isomorphic to a $2$-functor $\aaa_{{}_{\mathrm{Str}}}\to \hhh$).
\end{lem}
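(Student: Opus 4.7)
The plan is to rectify an arbitrary pseudofunctor $\mathcal{F}:\aaa_{{}_{\mathrm{Str}}}\to\hhh$ by transferring it along the hypothesised equivalence to the canonical strict replacement $\mathrm{Str}(\aaa)$, invoking the universal property of the $2$-adjunction $\mathrm{Str}\dashv\mathrm{inc}$ there, and then transferring back. First, I fix the given equivalence $e:\mathrm{Str}(\aaa)\to\aaa_{{}_{\mathrm{Str}}}$ in $\Icon$, together with a pseudo-inverse $\bar{e}:\aaa_{{}_{\mathrm{Str}}}\to\mathrm{Str}(\aaa)$ and invertible icons witnessing $e\bar{e}\cong \Id$ and $\bar{e}e\cong\Id$; crucially, both $e$ and $\bar{e}$ are honest $2$-functors.

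Given a pseudofunctor $\mathcal{F}:\aaa_{{}_{\mathrm{Str}}}\to\hhh$, I form the composite $\mathcal{F}\circ e:\mathrm{Str}(\aaa)\to\hhh$ and its further restriction $\mathcal{F}\circ e\circ\eta_{{}_\aaa}:\aaa\to\hhh$ along the unit $\eta_{{}_\aaa}:\aaa\to \mathrm{Str}(\aaa)$. The hom-$2$-category isomorphism
$$\Icon(\mathrm{Str}(\aaa),\hhh)\cong \Bicat(\aaa,\hhh),\qquad G\mapsto G\circ\eta_{{}_\aaa}$$
arising from the $2$-adjunction then supplies a unique $2$-functor $\widetilde{\mathcal{G}}:\mathrm{Str}(\aaa)\to\hhh$ satisfying the strict equality $\widetilde{\mathcal{G}}\circ\eta_{{}_\aaa}=\mathcal{F}\circ e\circ\eta_{{}_\aaa}$. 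Since $\eta_{{}_\aaa}$ is an equivalence in $\Bicat$, I pick a pseudo-inverse $\eta_{{}_\aaa}':\mathrm{Str}(\aaa)\to\aaa$ with invertible icon $\eta_{{}_\aaa}\eta_{{}_\aaa}'\cong\Id$, and whisker to obtain the chain of pseudonatural isomorphisms
$$\widetilde{\mathcal{G}}\cong \widetilde{\mathcal{G}}\circ\eta_{{}_\aaa}\circ\eta_{{}_\aaa}' = \mathcal{F}\circ e\circ\eta_{{}_\aaa}\circ\eta_{{}_\aaa}'\cong \mathcal{F}\circ e.$$

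Finally, the composite $\widetilde{\mathcal{G}}\circ\bar{e}:\aaa_{{}_{\mathrm{Str}}}\to\hhh$ is a composition of $2$-functors, hence itself a $2$-functor, and whiskering the invertible icon $e\bar{e}\cong\Id$ on the left of $\mathcal{F}$ yields pseudonatural isomorphisms
$$\mathcal{F}\cong \mathcal{F}\circ e\circ\bar{e}\cong \widetilde{\mathcal{G}}\circ\bar{e},$$
which exhibits $\mathcal{F}$ as pseudonaturally isomorphic to the $2$-functor $\widetilde{\mathcal{G}}\circ\bar{e}$, establishing essential surjectivity.

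The only item demanding care, but not a genuine conceptual obstacle, is verifying that whiskering an invertible icon by an arbitrary pseudofunctor really does produce a pseudonatural isomorphism in $[\aaa_{{}_{\mathrm{Str}}},\hhh]_{PS}$. Since icons have identity object-components, after whiskering by a pseudofunctor $H$ the object-components become of the form $H(\Id)$, which is canonically isomorphic to the identity through the unit coherence isomorphism of $H$; invertibility is preserved throughout, so the outcome is indeed a pseudonatural isomorphism rather than merely a pseudonatural transformation. This verification is entirely routine, and once it is in place the argument above goes through as stated.
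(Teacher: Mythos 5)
Your proof is correct and follows essentially the same route as the paper's: both rest on the hom-isomorphism of the $2$-adjunction $\mathrm{Str}\dashv\mathrm{inc}$ together with the fact that the unit is an equivalence in $\Bicat$, and then transfer the conclusion along the given equivalence in $\Icon$. The paper packages this as the observation that the inclusion $\Icon(\mathrm{Str}(\aaa),\hhh)\to\Bicat(\mathrm{Str}(\aaa),\hhh)$ is an equivalence of categories and hence essentially surjective, whereas you unpack the same two ingredients into an explicit rectification of a given pseudofunctor, but the mathematical content is the same.
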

\begin{proof}
This follows from the fact the composition of the equivalences
$$\Icon (\mathrm{Str}(\aaa ) , \hhh )\cong \Bicat (\aaa , \hhh)\simeq 
\Bicat (\mathrm{Str}(\aaa ) , \hhh ), $$
in which $\Bicat (\aaa , \hhh)\simeq 
\Bicat (\mathrm{Str}(\aaa ) , \hhh )$ is the precomposition of the component of the unit on $\aaa $,
gives the inclusion $\Icon (\mathrm{Str}(\aaa ) , \hhh )\to
\Bicat (\mathrm{Str}(\aaa ) , \hhh ) $. Therefore this inclusion is an equivalence of categories.

In particular, for each pseudofunctor  
$\AAA : \mathrm{Str}(\aaa )\to \hhh$, there are a $2$-functor
$\AAA ' : \mathrm{Str}(\aaa )\to \hhh$ and 
an invertible \textit{icon} $\AAA \longrightarrow \AAA '$. This property clearly holds for any $\aaa_{{}_{\mathrm{Str}}}$ such that there is an equivalence
 $\aaa_{{}_{\mathrm{Str}}}\to \mathrm{Str}(\aaa ) $ in $\Icon $.

Since invertible \textit{icons} are pseudonatural isomorphisms
with identity $1$-cell components, this fact proves that 
$\left[ \aaa_{{}_{\mathrm{Str}}}, \hhh \right] \to\left[ \aaa_{{}_{\mathrm{Str}}}, \hhh \right] _ {PS}$ is indeed essentially surjective.
\end{proof}

Herein, given a small $2$-category $\aaa $, a \textit{strict replacement} of $\aaa $ is a $2$-category $\aaa _{{}_{\mathrm{Str}}}$ such that there is an equivalence 
$\aaa _{{}_{\mathrm{Str}}}\to\mathrm{Str}(\aaa ) $ in $\Icon $. Thus strict replacements are clearly unique up to equivalence and choices of strict replacements
define a left biadjoint to
the inclusion $\Icon\to\Bicat$.

\begin{defi}
A $2$-category $\aaa $ is \textit{locally groupoidal} if every hom-category $\aaa (a,b) $ 
is a groupoid. Moreover, $\aaa $ is \textit{locally thin} if
there is at most one $2$-cell $f\Rightarrow g $ for every ordered pair of $1$-cells $(f,g) $ of $\aaa $.
Finally, $\aaa $ is \textit{locally thin groupoidal}, or, for short, \textit{locally t.g.},  if it is locally groupoidal and locally thin.
\end{defi}

\begin{defi}\label{Freelocallypreordered}
We denote by $\dot{\Delta}_{{}_{\mathrm{Str}}}  $ the locally t.g. $2$-category freely generated by the diagram
\[\xymatrix{  \mathsf{0} \ar[rr]^-d && \mathsf{1}\ar@<1.7 ex>[rrr]^-{d^0}\ar@<-1.7ex>[rrr]_-{d^1} &&& \mathsf{2}\ar[lll]|-{s^0}
\ar@<1.7 ex>[rrr]^{\partial ^0}\ar[rrr]|-{\partial ^1}\ar@<-1.7ex>[rrr]_{\partial ^2} &&& \mathsf{3} }\] 
with the $2$-cells:
\begin{equation*}
\begin{aligned}
\sigma_{01} &:&  \partial ^1 d ^0\Rightarrow \partial^{0}d^{0},\\
\sigma_{02} &:&  \partial ^2 d ^0\Rightarrow \partial^{0}d^{1},\\
\sigma_{12} &:&  \partial ^2 d ^1\Rightarrow \partial^{1}d^{1},									
\end{aligned}
\qquad\qquad\qquad
\begin{aligned}										
 n_0        &:&  s^0d^0\Rightarrow \id _{{}_\mathsf{1}},  \\
 n_1        &:&  s^{0}d^{1}\Rightarrow \id _{{}_\mathsf{1}},\\
 \vartheta        &:&  d^1d\Rightarrow d^0d.
\end{aligned}
\end{equation*}
We consider the full inclusion $\j _{{}_{\mathrm{Str}}} : \Delta_{{}_{\mathrm{Str}}} \to\dot{\Delta}_{{}_{\mathrm{Str}}}  $ in which 
$\obj (\Delta_{{}_{\mathrm{Str}}} ) = \left\{ \mathsf{1}, \mathsf{2}, \mathsf{3}\right\} $.
\end{defi}

\begin{rem}
Observe that the diagram and the invertible $2$-cells described above define a computad~\cite{Street3} (or, more appropriately, a groupoidal computad~\cite{FLN4})  
which we denote by $\dot{\mathfrak{\Delta}}$. 
Thereby Definition \ref{Freelocallypreordered} is precise in the following sense: 
there is a forgetful functor between the category of locally  {t.g.} $2$-categories and 
the category of (groupoidal) computads. This forgetful functor has a left adjoint which gives the 
locally {t.g.}
$2$-category freely generated  by each computad. 
The (locally groupoidal) $2$-category $\dot{\Delta}_{{}_{\mathrm{Str}}}  $ is, by definition, 
the image of the computad $\dot{\mathfrak{\Delta}}$ by this left adjoint functor. 
\end{rem}

The $2$-categories $\dot{\Delta}_{{}_{\mathrm{Str}}}  $ and $\Delta_{{}_{\mathrm{Str}}}$ are strict 
replacements of the $2$-categories $\dot{\Delta } _ {{}_3}$ and $\Delta  _ {{}_3}$ respectively. 
Actually, $\j _{{}_{\mathrm{Str}}}$ is a strict replacement of $\j _ {{}_3} $.
By this fact and by the result that descent objects are flexible~\cite{Flexible}, we get:

\begin{prop}\label{coherencestrict}
There are obvious biequivalences $\Delta_{{}_{\mathrm{Str}}}\approx\Delta _ {{}_{3}}$ and $\dot{\Delta}_{{}_{\mathrm{Str}}}\approx\dot{\Delta } _ {{}_{3}}$ which are bijective on objects. Also, if $\hhh $ is any $2$-category, 
$\left[ \Delta_{{}_{\mathrm{Str}}}, \hhh \right] \to\left[ \Delta_{{}_{\mathrm{Str}}}, \hhh \right] _ {PS}$ is essentially surjective.
Moreover, for any $2$-functor $\CCCC : \Delta_{{}_{\mathrm{Str}}}\to \CAT $, we have an equivalence
$$\left[ \Delta_{{}_{\mathrm{Str}}}, \CAT \right]  (\dot{\Delta}_{{}_{\mathrm{Str}}}(\mathsf{0},\j _{{}_{\mathrm{Str}}}(-) ), \CCCC )\simeq\left[ \Delta_{{}_{\mathrm{Str}}}, \CAT \right] _ {PS} (\dot{\Delta}_{{}_{\mathrm{Str}}}(\mathsf{0},\j _{{}_{\mathrm{Str}}}(-) ), \CCCC ).$$ 
\end{prop}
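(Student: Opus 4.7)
The plan is to establish the three assertions in order, using the universal property of $\dot{\Delta}_{{}_{\mathrm{Str}}}$ as a free locally t.g.\ $2$-category for the first, the identification of $\Delta_{{}_{\mathrm{Str}}}$ with the coherence-theoretic strict replacement $\mathrm{Str}(\Delta_{{}_{3}})$ combined with Lemma~\ref{strictreplacementexplained} for the second, and the flexibility of the descent weight for the third.

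For the biequivalences, I would define $F:\dot{\Delta}_{{}_{\mathrm{Str}}}\to\dot{\Delta}_{{}_{3}}$ via the universal property of the free locally t.g.\ $2$-category on the computad $\dot{\mathfrak{\Delta}}$: $F$ is the identity on objects and on generating $1$-cells, and sends each generating $2$-cell ($\sigma_{ij}$, $n_k$, $\vartheta$) to an identity $2$-cell. This is well-defined because the defining equations of $\dot{\Delta}_{{}_{3}}$ listed in Section~\ref{descent} assert precisely that the source and target $1$-cells of each generator of $\dot{\mathfrak{\Delta}}$ coincide there; restricting gives $F|:\Delta_{{}_{\mathrm{Str}}}\to\Delta_{{}_{3}}$. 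Both are bijective on objects, hence biessentially surjective, and local equivalence follows from the observation that every hom-category of $\dot{\Delta}_{{}_{\mathrm{Str}}}$ is a thin groupoid whose set of connected components is exactly the corresponding hom-set of the locally discrete $\dot{\Delta}_{{}_{3}}$, because the imposed generating $2$-cells generate precisely the equivalence relation cut out by those defining equations.

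For the essential surjectivity of $[\Delta_{{}_{\mathrm{Str}}},\hhh]\to[\Delta_{{}_{\mathrm{Str}}},\hhh]_{PS}$, I would apply Lemma~\ref{strictreplacementexplained} with $\aaa_{{}_{\mathrm{Str}}}=\Delta_{{}_{\mathrm{Str}}}$ and $\aaa=\Delta_{{}_{3}}$. This reduces the problem to producing an equivalence $\Delta_{{}_{\mathrm{Str}}}\to\mathrm{Str}(\Delta_{{}_{3}})$ in $\Icon$. By uniqueness (up to equivalence in $\Icon$) of the left biadjoint to the inclusion $\Icon\to\Bicat$, this follows once I verify that $\Delta_{{}_{\mathrm{Str}}}$ represents $\hhh\mapsto\Bicat(\Delta_{{}_{3}},\hhh)$ via equivalences $\Icon(\Delta_{{}_{\mathrm{Str}}},\hhh)\simeq\Bicat(\Delta_{{}_{3}},\hhh)$ that are $2$-natural in $\hhh$. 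The check reduces to matching the data: a pseudofunctor $\Delta_{{}_{3}}\to\hhh$ is (up to invertible icon) the assignment of objects and $1$-cells for the generators, together with an invertible $2$-cell for each defining equation, and coherence reduces to equalities that the local thin-groupoidal quotient enforces automatically; this is precisely the datum of a $2$-functor out of $\Delta_{{}_{\mathrm{Str}}}$.

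For the final equivalence, I would invoke flexibility of the weight $W:=\dot{\Delta}_{{}_{\mathrm{Str}}}(\mathsf{0},\j_{{}_{\mathrm{Str}}}-)$: this is the PIE presentation of the descent weight, which is flexible by~\cite{Flexible}. For a flexible weight the comparison between the strict and pseudo hom-categories $[\Delta_{{}_{\mathrm{Str}}},\CAT](W,\CCCC)$ and $[\Delta_{{}_{\mathrm{Str}}},\CAT]_{PS}(W,\CCCC)$ is an equivalence, yielding the claim. The hard part is the second assertion: a rigorous identification of $\Delta_{{}_{\mathrm{Str}}}$ with $\mathrm{Str}(\Delta_{{}_{3}})$ requires matching the coherence data of pseudofunctors out of the locally discrete $\Delta_{{}_{3}}$ against the generators of the computad $\dot{\mathfrak{\Delta}}$, the key point being that the local thin-groupoidal quotient in Definition~\ref{Freelocallypreordered} collapses all higher coherence equations to identities and so mirrors exactly the codescent coherence encoded in Proposition~\ref{technicalll}.
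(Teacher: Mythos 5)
Your proposal is correct and follows essentially the same route as the paper: identify $\Delta_{{}_{\mathrm{Str}}}$ and $\dot{\Delta}_{{}_{\mathrm{Str}}}$ as strict replacements of $\Delta_{{}_{3}}$ and $\dot{\Delta}_{{}_{3}}$, deduce the biequivalences and the essential surjectivity from Lemma \ref{strictreplacementexplained}, and obtain the final equivalence from flexibility of the descent weight. The only difference is one of detail: the paper simply asserts the strict-replacement identification in the sentence preceding the proposition, whereas you sketch its verification via the universal properties of the free locally t.g.\ $2$-category and of $\mathrm{Str}$.
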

\begin{proof}
The last part of the result follows from the fact that the descent object is a flexible weighted limit (see \cite{Flexible}). The rest follows
from the fact that $\Delta_{{}_{\mathrm{Str}}}$ is the strict replacement of $\Delta  _ {{}_3}$ (see Lemma \ref{strictreplacementexplained}).
\end{proof}

\begin{cor}\label{lastresultflexible}
If $\AAA : \Delta _ {{}_{\mathrm{Str}}}\to\hhh $ is a $2$-functor, 
$$
\Ps\Ran _ {\j _ {{}_3}} \check{\AAA }
\simeq 
\Ps\Ran _ {\j _{{}_{\mathrm{Str}}}} \AAA 
\simeq 
\Ran _ {\j _{{}_{\mathrm{Str}}}} \AAA  
$$
provided that the pointwise Kan extension $\Ran _ {\j _{{}_{\mathrm{Str}}}} \AAA$ exists,
in which $\check{\AAA }$ is the composition of $\AAA $ with the biequivalence $\Delta _ {{}_{3}}\approx\Delta_{{}_{\mathrm{Str}}}$.
\end{cor}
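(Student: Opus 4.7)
The plan is to prove the two equivalences separately, relying on the pointwise formula for pseudo-Kan extensions (Theorem \ref{pointwise}) together with the flexibility statement in Proposition \ref{coherencestrict}.

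First, for $\Ps\Ran _ {\j _{{}_{\mathrm{Str}}}} \AAA \simeq \Ran _ {\j _{{}_{\mathrm{Str}}}} \AAA$, I would compute both extensions pointwise. By Theorem \ref{pointwise}, the pseudo-Kan extension at an object $b$ of $\dot{\Delta}_{{}_{\mathrm{Str}}}$ is $\bilim(\dot{\Delta}_{{}_{\mathrm{Str}}}(b, \j _{{}_{\mathrm{Str}}}-), \AAA)$, while the pointwise strict Kan extension is the weighted limit $\left\{ \dot{\Delta}_{{}_{\mathrm{Str}}}(b, \j _{{}_{\mathrm{Str}}}-), \AAA\right\}$. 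For $b\in \Delta _{{}_{\mathrm{Str}}}$ the weight is representable, hence by (bicategorical and enriched) Yoneda both sides are equivalent to $\AAA (b)$. The only nontrivial case is $b=\mathsf{0}$, with weight $W := \dot{\Delta}_{{}_{\mathrm{Str}}}(\mathsf{0}, \j _{{}_{\mathrm{Str}}}-)$. Here I would apply Proposition \ref{coherencestrict} to the $2$-functor $\CCCC := \hhh (X, \AAA -) : \Delta _{{}_{\mathrm{Str}}}\to \CAT$ (well defined as $\AAA $ is a $2$-functor), yielding the chain of equivalences pseudonatural in $X$:
$$\hhh (X, \left\{ W, \AAA\right\}) \cong \left[ \Delta _{{}_{\mathrm{Str}}}, \CAT \right](W, \hhh (X, \AAA -)) \simeq \left[ \Delta _{{}_{\mathrm{Str}}}, \CAT \right] _ {PS}(W, \hhh (X, \AAA -)) \simeq \hhh (X, \bilim (W, \AAA )).$$
By the bicategorical Yoneda lemma (Lemma \ref{YonedaBILIMIT}), this forces $\left\{ W, \AAA\right\}\simeq \bilim (W, \AAA )$, as required.

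Second, for $\Ps\Ran _ {\j _ {{}_3}} \check{\AAA } \simeq \Ps\Ran _ {\j _{{}_{\mathrm{Str}}}} \AAA$, I would invoke the biequivalences $\Delta _ {{}_{3}} \approx \Delta _{{}_{\mathrm{Str}}}$ and $\dot{\Delta} _ {{}_{3}} \approx \dot{\Delta} _{{}_{\mathrm{Str}}}$ from Proposition \ref{coherencestrict}, which are bijective on objects and hence make $\j _{{}_3}$ and $\j _{{}_{\mathrm{Str}}}$ correspond. Computing pointwise via Theorem \ref{pointwise}, at each object $b$ the weight $\dot{\Delta} _{{}_3}(b, \j _{{}_3}-)$ is pseudonaturally equivalent to $\dot{\Delta} _{{}_{\mathrm{Str}}}(b, \j _{{}_{\mathrm{Str}}}-)$, and $\check{\AAA }$ is pseudonaturally equivalent to $\AAA $ composed with the biequivalence. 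Since weighted bilimits are invariant under pseudonatural equivalence of both weight and diagram (this is immediate from the universal property of left bicategorical reflections, which is stable under pseudonatural equivalence), the corresponding bilimits are equivalent, giving the desired equivalence of pointwise pseudo-Kan extensions.

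The main obstacle will be the second step: verifying carefully that pseudo-Kan extensions are invariant under the biequivalence between $\dot{\Delta} _{{}_{3}}$ (locally discrete) and $\dot{\Delta} _{{}_{\mathrm{Str}}}$ (locally thin groupoidal, freely generated by a computad), since these $2$-categories differ substantially in their $2$-cell structure even though they are biequivalent. The argument is essentially formal book-keeping inside the tricategory $2\textrm{-}\CAT$, using that precomposition with a biequivalence yields a biequivalence of pseudofunctor $2$-categories and hence preserves right bicategorical reflections; but one must be attentive that the comparisons of weights and of diagrams along this biequivalence are coherent enough to transport the bilimits.
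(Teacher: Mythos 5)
Your proposal is correct and follows exactly the route the paper intends: the paper omits the proof of this corollary, but it is clearly meant to follow from Proposition \ref{coherencestrict} by (i) using the strict-versus-pseudo equivalence of transformation categories for the weight $\dot{\Delta}_{{}_{\mathrm{Str}}}(\mathsf{0},\j _{{}_{\mathrm{Str}}}-)$ (flexibility of the descent object) together with the Yoneda lemmas to identify $\Ran _{\j_{{}_{\mathrm{Str}}}}\AAA$ with $\Ps\Ran _{\j_{{}_{\mathrm{Str}}}}\AAA$ pointwise, and (ii) transporting the pointwise weights and diagram along the object-bijective biequivalence $\dot{\Delta}_{{}_{3}}\approx\dot{\Delta}_{{}_{\mathrm{Str}}}$, exactly as you do. The only points you leave implicit (pseudonaturality in $X$ of the composite equivalence before applying bicategorical Yoneda, and invariance of weighted bilimits under restriction along a biequivalence) are standard and you correctly flag them as formal book-keeping.
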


Assuming that the pointwise Kan extension $\Ran _ {\j _{{}_{\mathrm{Str}}}} \AAA$ 
exists, $\Ran _ {\j _{{}_{\mathrm{Str}}}} \AAA (\mathsf{0}) $ is called the 
\textit{strict descent diagram} of $\AAA $. By Corollary \ref{lastresultflexible}, 
the descent object of $\AAA $ is equivalent to its strict descent object provided that $\AAA $ 
has a strict descent object.

\begin{rem}\label{FACTSTRICT}
Using the strict descent object, we can construct the ``strict'' factorization described in Section \ref{Basic Problem}. If $\AAA : \dot{\Delta} _{{}_{\mathrm{Str}}}  \to \hhh $ is a $2$-functor and $\hhh $ has strict descent objects, we get the factorization from the universal property of the right Kan extension of $\AAA \circ\j _{{}_{\mathrm{Str}}}: \Delta _{{}_{\mathrm{Str}}}\to \hhh $ along $\j_{{}_{\mathrm{Str}}} $. More precisely, since $\j _ {{}_{\mathrm{Str}}} $ is fully faithful, we can consider that $\Ran _ {{}_{\j_{{}_{\mathrm{Str}}}}}\AAA \circ \j_{{}_{\mathrm{Str}}} $ is actually a strict extension of $\AAA \circ \j _ {{}_{\mathrm{Str}}}$. Thereby we get the factorization
$$\xymatrix{ & & \Ran _ {{}_{\j_{{}_{\mathrm{Str}}}}} (\AAA \circ \j_{{}_{\mathrm{Str}}}) (\mathsf{0})  \ar[dd]^{\Ran _ {{}_{\j_{{}_{\mathrm{Str}}}}} (\AAA \circ \j_{{}_{\mathrm{Str}}}) (d)}\\
&&\\
\AAA  (\mathsf{0}) \ar[rruu]^{\eta ^\mathsf{0}_ {{}_{\AAA }} }\ar[rr]_{ \AAA ( d )} &&\AAA (\mathsf{1})}$$
in which $\eta ^\mathsf{0}_ {{}_{\AAA }} $ is the comparison induced by the unit/comparison 
$\eta _ {{}_{\AAA }}: \AAA \longrightarrow \Ran _ {{}_{\j_{{}_{\mathrm{Str}}}}} (\AAA \circ \j_{{}_{\mathrm{Str}}}) $.      
\end{rem}

\begin{rem}
As observed in Section \ref{pseudoKAN}, the right Kan extension of a $2$-functor $\AAA : \Delta\to\hhh $ 
along $\j $ gives the equalizer of $\AAA (d^0 ) $ and $\AAA (d^1) $. 
This is a consequence of the isomorphism $\Lan _ {{}_{\t _ {{}_2} }} \top \cong \top $ of 
Remark \ref{redundante}.
\end{rem}

We get a glimpse of the explicit nature of the (strict) descent object at 
Theorem \ref{presentation} which gives a presentation to $\dot{\Delta}_{{}_{\mathrm{Str}}}$. 
We denote by $\mathcal{F}_{g} (\dot{\mathfrak{\Delta}})$ the locally groupoidal $2$-category freely 
generated by the diagram and $2$-cells described in Definition \ref{Freelocallypreordered}. 
It is important to note that $\mathcal{F}_{g} (\dot{\mathfrak{\Delta}})$ is not locally thin.
Moreover, there is an obvious $2$-functor $\mathcal{F}_{g} (\dot{\mathfrak{\Delta}})\to\dot{\Delta}_{{}_{\mathrm{Str}}}$, 
induced by the unit of the adjunction between the category of locally groupoidal $2$-categories and the 
category of locally t.g. $2$-categories.

\begin{theo}[\cite{FLN4}]\label{presentation}
Let $\hhh $ be a $2$-category.
There is a bijection between $2$-functors $\AAA : \dot{\Delta}_{{}_{\mathrm{Str}}}\to\hhh $ and $2$-functors $\mathsf{A}:\mathcal{F}_{g} (\dot{\mathfrak{\Delta}})\to\hhh$ satisfying the following equations:
\begin{itemize}\renewcommand\labelitemi{--}
\item Associativity:
\small
$$\xymatrix@C=3em@R=1.5em{  \mathsf{A}(\mathsf{0})\ar[r]^-{\mathsf{A}(d)}\ar[d]_-{\mathsf{A}(d)}\ar@{}[rd]|-{\xRightarrow{\hskip .1em \mathsf{A}(\vartheta ) \hskip .1em } }&
\mathsf{A}(\mathsf{1})\ar[d]|-{\mathsf{A}(d^0)}\ar[r]^-{\mathsf{A}(d^0)}\ar@{}[rd]|-{\xRightarrow{\hskip .1em \mathsf{A}(\sigma_{01}) \hskip .1em }}&
\mathsf{A}(\mathsf{2})\ar[d]^-{\mathsf{A}(\partial ^0)}\ar@{}[rrdd]|-{=}&
&
\mathsf{A}(\mathsf{3})\ar@{}[rd]|-{\xRightarrow{\hskip .1em \mathsf{A}(\sigma_{02}) \hskip .1em }}&
\mathsf{A}(\mathsf{2})\ar[l]_-{\mathsf{A}(\partial ^0)}\ar@{}[rdd]|-{\xRightarrow{\hskip .1em \mathsf{A}(\vartheta )\hskip .1em }}\ar@{=}[r]&
\mathsf{A}(\mathsf{2})
\\
\mathsf{A}(\mathsf{1})\ar[r]|-{\mathsf{A}(d^1)}\ar[d]_-{\mathsf{A}(d^1)}\ar@{}[rrd]|-{\xRightarrow{\hskip .1em \mathsf{A}(\sigma _ {12}) \hskip .1em }}&
\mathsf{A}(\mathsf{2})\ar[r]|-{\mathsf{A}(\partial ^1)}&
\mathsf{A}(\mathsf{3})\ar[d]^-{\mathsf{A}(\id _ {{}_\mathsf{3}})}&
&
\mathsf{A}(\mathsf{2})\ar@{}[rd]|-{\xRightarrow{\hskip .1em \mathsf{A}(\vartheta ) \hskip .1em }}\ar[u]^-{\mathsf{A}(\partial ^2)}&
\mathsf{A}(\mathsf{1})\ar[l]|-{\mathsf{A} (d^0)}\ar[u]|-{\mathsf{A}(d^1)}&
\\
\mathsf{A}(\mathsf{2})\ar[rr]_-{\mathsf{A}(\partial ^2)}&&
\mathsf{A}(\mathsf{3})&
&
\mathsf{A}(\mathsf{1})\ar[u]^-{\mathsf{A}(d^1)}&
\mathsf{A}(\mathsf{0})\ar[l]^-{\mathsf{A}(d)}\ar[u]|-{\mathsf{A}(d)}\ar[r]_ {\mathsf{A}(d)}&
\mathsf{A}(\mathsf{1})\ar[uu]_-{\mathsf{A}(d^0)}   
}$$
\normalsize
\item Identity:
\small
$$\xymatrix@C=0.7em@R=1.2em{  \mathsf{A}(\mathsf{0} )    
\ar[rr]^{\mathsf{A}(d)} 
                    \ar[dd]_{\mathsf{A}(d)} 
                      && 
              {\mathsf{A}(\mathsf{1})}  
              \ar[dd]|-{\mathsf{A}(d^1)}
                    \ar@{=}@/^4ex/[dddr]
                    \ar@{}[dddr]|{\xLeftarrow{\mathsf{A}(n_1)}} &&&
        \mathsf{A}(\mathsf{0})    
        \ar@/_3ex/[ddd]|-{\mathsf{A}(d)}
                    \ar@{}[ddd]|=
                    \ar@/^3ex/[ddd]|-{\mathsf{A}(d)}
                                        \\ 
          & \xLeftarrow{\hskip .1em \mathsf{A}(\vartheta ) \hskip .1em }  && &                            
                                        \\
              {\mathsf{A}(\mathsf{1})}  \ar[rr]|-{\mathsf{A}(d^0)}
                    \ar@{=}@/_4ex/[drrr]
                    \ar@{}[drrr]|{\xLeftarrow{\mathsf{A}(n_0)}} &&
              {\mathsf{A}(\mathsf{2})}\ar[dr]|{\mathsf{A}(s^0)}  \\                                            
          &&& {\mathsf{A}(\mathsf{1})}\ar@{}[uuur]|-{=} && {\mathsf{A}(\mathsf{1})} }$$
\normalsize          
\end{itemize}				
\end{theo}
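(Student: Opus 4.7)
The plan is to realise both $\mathcal{F}_{g}(\dot{\mathfrak{\Delta}})$ and $\dot{\Delta}_{{}_{\mathrm{Str}}}$ via their universal properties as free constructions on the computad $\dot{\mathfrak{\Delta}}$, and then show that the kernel congruence (among $2$-cells) of the canonical quotient $q : \mathcal{F}_{g}(\dot{\mathfrak{\Delta}})\longrightarrow \dot{\Delta}_{{}_{\mathrm{Str}}}$ is generated precisely by the Associativity and Identity relations displayed in the statement. The theorem then follows formally from the universal property of $\mathcal{F}_{g}(\dot{\mathfrak{\Delta}})$ as the free locally groupoidal $2$-category on the computad: $2$-functors $\AAA :\dot{\Delta}_{{}_{\mathrm{Str}}}\to\hhh$ correspond bijectively to $2$-functors $\mathsf{A}:\mathcal{F}_{g}(\dot{\mathfrak{\Delta}})\to\hhh$ that factor through $q$, and the latter are characterised exactly by sending the appropriate parallel $2$-cells to equal $2$-cells in $\hhh$.

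The easy direction is then the starting point. Since $\dot{\Delta}_{{}_{\mathrm{Str}}}$ is locally thin groupoidal by construction, any pair of parallel $2$-cells in it coincide; in particular, the two sides of each Associativity and Identity diagram become equal after applying $q$. Thus, for any $2$-functor $\AAA :\dot{\Delta}_{{}_{\mathrm{Str}}}\to\hhh$, the composite $\mathsf{A}:=\AAA \circ q$ satisfies both equations. This associates to every $\AAA$ a $2$-functor $\mathsf{A}$ as in the statement, and two distinct $\AAA$ give distinct $\mathsf{A}$ because $q$ is bijective on objects and $1$-cells and surjective on $2$-cells.

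For the converse, let $\mathsf{A}:\mathcal{F}_{g}(\dot{\mathfrak{\Delta}})\to\hhh$ be a $2$-functor satisfying both equations; I would show $\mathsf{A}$ factors uniquely through $q$ by verifying that $\mathsf{A}$ identifies every parallel pair of $2$-cells of $\mathcal{F}_{g}(\dot{\mathfrak{\Delta}})$. Because $q$ is the identity on objects and $1$-cells, and because $2$-cells of $\mathcal{F}_{g}(\dot{\mathfrak{\Delta}})$ are generated from the listed $2$-cells $\sigma_{01},\sigma_{02},\sigma_{12},n_0,n_1,\vartheta$ and their inverses under vertical and horizontal composition, the verification reduces to a finite case-check over the hom-categories $\mathcal{F}_{g}(\dot{\mathfrak{\Delta}})(\mathsf{i},\mathsf{j})$ for $\mathsf{i},\mathsf{j}\in\{\mathsf{0},\mathsf{1},\mathsf{2},\mathsf{3}\}$. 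Most such hom-categories contain no parallel pair of generating pastings and the required equality is forced by $2$-functoriality alone; the substantive cases are (i) $\mathcal{F}_{g}(\dot{\mathfrak{\Delta}})(\mathsf{0},\mathsf{3})$, where the three pastings $\sigma_{01}$, $\sigma_{02}$, $\sigma_{12}$ combined with $\vartheta$ give two parallel composites whose coincidence is exactly the Associativity equation, and (ii) $\mathcal{F}_{g}(\dot{\mathfrak{\Delta}})(\mathsf{0},\mathsf{1})$, where the two normalisation $2$-cells $n_0,n_1$ combined with $\vartheta$ yield two parallel routes $d\Rightarrow d$, whose coincidence under $\mathsf{A}$ is the Identity equation.

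The main obstacle is precisely the last reduction: proving that no further equations are required beyond the two stated. A systematic approach is to put the $2$-cells of $\mathcal{F}_{g}(\dot{\mathfrak{\Delta}})$ into normal form by rewriting, using invertibility of the generators and the absence of $1$-cell loops to guarantee termination, and then to establish confluence modulo Associativity and Identity. Alternatively, one can enumerate by hand the (finitely many) parallel $1$-cells in each hom-set and check coherence term by term. Either route is elementary but bookkeeping-heavy, which is why I would cite \cite{FLN4} for the detailed verification and content myself with the conceptual argument above.
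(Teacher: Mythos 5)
The paper offers no proof of this statement: it is quoted from \cite{FLN4}, and the author explicitly omits the proofs in this part of Section \ref{strictdescent}, so there is nothing internal to compare your argument against. Your overall framing is nonetheless the right one and is surely the shape of the cited proof: $\dot{\Delta}_{{}_{\mathrm{Str}}}$ is the locally t.g.\ reflection of $\mathcal{F}_{g}(\dot{\mathfrak{\Delta}})$, the quotient $q$ is bijective on objects and $1$-cells and identifies precisely the parallel pairs of $2$-cells, so the theorem reduces to showing that the congruence generated by the Associativity and Identity relations already identifies every parallel pair; the easy direction is immediate from local thinness, as you say.

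The gap is in your justification of the hard direction. The verification does not reduce to ``a finite case-check over the hom-categories'': because $s^{0}\colon\mathsf{2}\to\mathsf{1}$ points backwards, the underlying free category contains composite endomorphisms such as $s^{0}d^{0}$ and all of its powers, so the hom-categories $\mathcal{F}_{g}(\dot{\mathfrak{\Delta}})(\mathsf{1},\mathsf{1})$, $\mathcal{F}_{g}(\dot{\mathfrak{\Delta}})(\mathsf{0},\mathsf{1})$, $\mathcal{F}_{g}(\dot{\mathfrak{\Delta}})(\mathsf{1},\mathsf{2})$, etc.\ are infinite, and there are infinitely many parallel pairs of pasted $2$-cells to reconcile --- not only the two you single out in $(\mathsf{0},\mathsf{3})$ and $(\mathsf{0},\mathsf{1})$. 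For the same reason your proposed termination argument ``from the absence of $1$-cell loops'' does not apply: there are no loops among the generating $1$-cells, but there are composite loops, which is exactly what makes such rewriting arguments delicate. Consequently the claim that the two displayed relations generate the whole kernel congruence of $q$ --- which is the entire mathematical content of the theorem --- is not established by your sketch; it needs a genuine coherence or normal-form argument of the kind carried out in \cite{FLN4}. Deferring that step to \cite{FLN4} is consistent with what the paper itself does, but it should not be presented as an elementary finite enumeration.
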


\begin{rem}[\cite{FLN4}]
$\Delta_{{}_{\mathrm{Str}}}$ is the locally groupoidal $2$-category freely generated by the corresponding diagram and  
$2$-cells $\sigma_{01}$, $\sigma_{02}$, $\sigma_{12}$, $n_0$, $n_1$.
\end{rem}

\begin{rem}[\cite{FLN}]\label{strictdescentstrict}
The $2$-category $\CAT $ is $\CAT$-complete. In particular, $\CAT $ 
has strict descent objects. More precisely, if 
$\AAA : \Delta  _ {{}_{\mathrm{Str}}}\to\CAT $
is a $2$-functor, then
$$\mathrm{lim}(\dot{\Delta }_ {{}_{\mathrm{Str}}}(\mathsf{0}, \j _{{}_{\mathrm{Str}}}(-) ), \AAA ) 
\cong \left[ \Delta_ {{}_{\mathrm{Str}}} , \CAT\right] \left( \dot{\Delta }_ {{}_{\mathrm{Str}}}(\mathsf{0}, \j _{{}_{\mathrm{Str}}}(-)), \AAA \right) .$$
Thereby, we can describe the category the strict descent object of $\AAA :\Delta _ {{}_{\mathrm{Str}}}\to\CAT $ 
explicitly as follows:

\begin{enumerate}
\item Objects are $2$-natural transformations $\mathsf{W}: \dot{\Delta }_ {{}_{\mathrm{Str}}}(\mathsf{0}, -)\longrightarrow \AAA $. We have a bijective correspondence between such $2$-natural transformations and pairs $(W, \varrho _ {{}_{\mathsf{W}}})$ in which $W$ is an object of $ \AAA(\mathsf{1}) $ and $\varrho _ {{}_{\mathsf{W}}}: \AAA (d^1)(W)\to\AAA (d^0)(W) $ is an isomorphism in $ \AAA (\mathsf{2}) $ satisfying the following equations:	
\begin{itemize}\renewcommand\labelitemi{--}
\item Associativity:
\small
$$\left(\AAA (\partial ^0)(\varrho _ {{}_{\mathsf{W} }} )\right) \left( \AAA (\sigma _ {{}_{02}}) _ {{}_{W}}\right)\left(\AAA (\partial ^2)(\varrho _ {{}_{\mathsf{W}}} )\right)\left(\AAA (\sigma _ {{}_{12}} ) ^{-1}_ {{}_W}\right) = \left(\AAA (\sigma _ {{}_{01}}) _ {{}_{W}}\right)\left(\AAA(\partial ^1)(\varrho _ {{}_{\mathsf{W}}})\right)   $$
\normalsize
\item Identity:
\small
$$\left(\AAA(n_0) _ {{}_W}\right)\left(\AAA(s^0) (\varrho _ {{}_{\mathsf{W}}}) \right)\left(\AAA(n_1) _ {{}_W}\right) = \id _ {{}_{W}} $$
\normalsize
\end{itemize}
If $\mathsf{W}: \dot{\Delta }(\mathsf{0}, -)\longrightarrow \AAA $ is a $2$-natural transformation, we get such pair by the correspondence 
$\mathsf{W}\mapsto (\mathsf{W} _ {{}_{\mathsf{1} }}(d), \mathsf{W} _ {{}_{\mathsf{2} }}(\vartheta )) $.

\item The morphisms are modifications. In other words, a morphism $\mathsf{m} : \mathsf{W}\to\mathsf{X} $ is determined by a morphism $\mathfrak{m}: W\to X $ such that $\AAA (d^0)(\mathfrak{m} )\varrho _ {{}_\mathsf{W}} =  \varrho _ {{}_X}\AAA (d^1)(\mathfrak{m} ) $.
\end{enumerate}
\end{rem}

\section{Elementary Examples}\label{Elementary Examples}
We use some particular elementary examples of inclusions $\t : \aaa\to \dot{\aaa} $
for which we can study the $\left\langle \t \right\rangle $-pseudoalgebras/effective $\t$-descent 
diagrams 
in the setting of Section \ref{Formal}. These examples are given herein.

Let $\hhh $ be a $2$-category with enough bilimits to construct our 
pseudo-Kan extensions as global pointwise pseudo-Kan extensions.
The most simple example is taking the final category $ \mathsf{1} $ and the 
inclusion $\mathsf{0}\to \mathsf{1} $ of the empty category/empty ordinal. In this case, a 
pseudofunctor $\AAA : \mathsf{1}\to \hhh $ is of  effective descent if and only if this pseudofunctor 
(which corresponds to an object of $\hhh $) is equivalent to the pseudofinal object of $\hhh $.

If, instead, we take the inclusion $ d^0: \mathsf{1}\to \mathsf{2} $ of the ordinal 
$ \mathsf{1} $ into the ordinal  $\mathsf{2}$ such that $d^0$ is the inclusion of the codomain object, 
then a pseudofunctor $\AAA : \mathsf{2}\to \hhh $ corresponds to a $1$-cell of $\hhh$ and $\AAA $ is of 
effective $ d^0$-descent if and only if its image is an equivalence $1$-cell. Moreover, $\AAA $ is almost 
$ d^0$-descent/$d^0$-descent if and only if its image is faithful/fully faithful. Precisely, 
the comparison morphism would be the image $\AAA (\mathsf{0}\stackrel{d}{\rightarrow} \mathsf{1})$ of 
the only nontrivial $1$-cell of $\mathsf{2}$.

Furthermore, we may consider the following $2$-categories $ \dot{\bbb }$. The first one corresponds to the bilimit notion of comma object, while the second corresponds to the notion of pseudopullback.  

$$\xymatrix{ \mathsf{b} \ar[r] \ar[d] & e \ar[d] && \mathsf{b} \ar[r] \ar[d] & e\ar[d] \\\ar@{}|{\Rightarrow}[ru]
c\ar[r] &o && 
c\ar[r] &o }$$

As explained in Remark \ref{conicalpseudo}, all the examples above but the comma object are conical bilimits: it is clear that we 
can get every conical bilimit via a pseudo-Kan extension.  Actually, we can study the exactness of any 
weighted bilimit in our setting. More precisely, if $\WWW : \aaa \to \CAT$ is a weight, we can define $\dot{\aaa } $ adding an extra object $\mathsf{a}$ and defining 
$$\dot{\aaa }(\mathsf{a}, \mathsf{a}):= \ast\qquad\qquad \dot{\aaa }(\mathsf{a}, b):= \WWW (b) \qquad\qquad \dot{\aaa }(b, \mathsf{a}):= \emptyset $$
for each object $b$ of $\aaa $. Hence, it remains just to define the unique nontrivial composition, that is to say, we define the functor composition
$ \circ : \dot{\aaa }(b, c)\times\dot{\aaa }(\mathsf{a}, b)\to \dot{\aaa }(\mathsf{a}, c) $
for each pair of objects $b, c $ of $\aaa $ to be the ``mate'' of 
$$ \WWW _{{}_{bc}} : \dot{\aaa }(b, c)\to \CAT (\WWW (b), \WWW(c)). $$
Thereby, a pseudofunctor $\AAA : \dot{\aaa}\to\hhh $ is of effective $\t $-descent/$\t$-descent/almost  $\t $-descent if the canonical comparison $1$-cell
$\AAA (\mathsf{a})\to \bilim ( \WWW, \AAA\circ\t )$
is an equivalence/fully faithful/faithful.

\section{Eilenberg-Moore Objects}\label{EilenBERG}
Let $\hhh $ be a $2$-category as in \ref{Elementary Examples}. The $2$-category 
$\mathsf{Adj}$ such that an adjunction in a $2$-category corresponds to a 
$2$-functor $\mathsf{Adj}\to\hhh $ is described in \cite{Street}.
There is a full inclusion  
$\m : \mathsf{Mnd}\to \mathsf{Adj}$ such that monads in $\hhh $ correspond to $2$-functors
$\mathsf{Mnd}\to \hhh $. 
We describe this $2$-category below, and we show how it (still) works in our setting. The $2$-category $\mathsf{Adj}$ has two objects: $\mathsf{alg} $ and $\mathtt{b}$. The hom-categories are defined as follows:
$$
\mathsf{Adj}(\mathtt{b}, \mathtt{b}):= \dot{\Delta }\qquad \mathsf{Adj}(\mathsf{alg}, \mathtt{b}):= \Delta _- \qquad
\mathsf{Adj}(\mathsf{alg}, \mathsf{alg}):=  \Delta _ -^+ \qquad
\mathsf{Adj}(\mathtt{b}, \mathsf{alg}):=  \Delta ^+
$$
in which $\Delta _ - $ denotes the subcategory of $\Delta $ with the same objects such that its morphisms preserve initial objects and, analogously,  $\Delta _ + $ is the subcategory of $\Delta $ with the same objects and last-element-preserving arrows. Finally, $\Delta _ -^+$ is just the intersection of both  $\Delta _-$ and $\Delta ^+$.

Then the composition of $\mathsf{Adj} $  is such that
$
\mathsf{Adj}(\mathtt{b}, w)\times \mathsf{Adj}(c, \mathtt{b})\to \mathsf{Adj}(c, w)
$
is given by the usual ``ordinal sum'' $+$ (given by the usual strict monoidal structure of $\Delta $) for all objects $c,w $ of $\mathsf{Adj} $ and   
\begin{eqnarray*}
&\mathsf{Adj}(\mathsf{alg}, w)\times \mathsf{Adj}(c, \mathsf{alg})&\to \mathsf{Adj}(c, w)\\
&(x,y)&\mapsto x+y-\mathsf{1}\\
&(\phi : x\to x' , \upsilon : y\to y' ) & \mapsto \phi\oplus \upsilon
\end{eqnarray*}
in which
$$
    \phi\oplus \upsilon (i) := 
\begin{cases}
    \upsilon (i),& \text{if } i < y\\
    \phi (i-m) - 1+y'              & \text{otherwise}.
\end{cases}
$$

It is straightforward to verify that $\mathsf{Adj}$ is a $2$-category. We denote by $u$ the $1$-cell $\mathsf{1}\in \mathsf{Adj}(\mathsf{alg}, \mathtt{b}) $ and by $l$ the $1$-cell $\mathsf{1}\in \mathsf{Adj}(\mathtt{b}, \mathsf{alg})$. Also, we consider the following $2$-cells
$$
\dot{\Delta}(\mathsf{0}, \mathsf{1})\ni n: \id _ {{}_{\mathtt{b}}}\Rightarrow ul,\qquad\qquad\qquad
\Delta_-^+(\mathsf{1}, \mathsf{2})\ni e: lu\Rightarrow \id _{{}_{\mathsf{alg}}}.
$$
The $2$-category $\mathsf{Mnd} $ is defined to be the full sub-$2$-category of $\mathsf{Adj} $ with the unique object $\mathtt{b} $. As mentioned above, we denote its full inclusion by $\m : \mathsf{Mnd}\to \mathsf{Adj}$.

Firstly, observe that $(l\dashv u, n, e)$ is an adjunction in $\mathsf{Adj}$, therefore the image of $(l\dashv u, n, e)$ by a $2$-functor is an adjunction.
Also, if $(L\dashv U , \eta , \varepsilon ) $ is an adjunction in $\hhh $, then there is a unique $2$-functor
$\AAA : \mathsf{Adj}\to \hhh $
such that $\AAA (u) : = U $, $\AAA (l) : = L $, $\AAA (e):= \varepsilon $ and $\AAA (u) := \eta $. Thereby, it gives a bijection between adjunctions in $\hhh $ and $2$-functors $ \mathsf{Adj}\to\hhh $~\cite{Street}.

Secondly, as observed in \cite{Street}, there is a similar bijection between $2$-functors $\mathsf{Mnd}\to\hhh $ and monads in the $2$-category $\hhh $. Also, if the pointwise (enriched) Kan extension of a $2$-functor $ \mathsf{Mnd}\to\hhh $ along $\m $ exists, it gives the usual Eilenberg-Moore adjunction.
Moreover, given a $2$-functor $\AAA: \mathsf{Adj}\to\hhh $, if the pointwise Kan extension $\Ran _ \m \left( \AAA\circ\m \right) $ exists, the usual comparison $\AAA (\mathsf{alg})\to \Ran _ \m \left( \AAA\circ\m \right)(\mathsf{alg}) $ is the Eilenberg-Moore comparison $1$-cell.

If, instead, $\AAA :\mathsf{Adj}\to \hhh $ is a pseudofunctor, we also get that $\AAA (l)\dashv \AAA(u) $ and 
\begin{equation*}\tag{strict adjunction}\label{adjuncaoinduzida}
\left( \AAA (l)\dashv \AAA(u), \aaaa _{{}_{ul}}^{-1}\AAA(n) \aaaa _ {{}_{\mathtt{b}}},  \aaaa _ {{}_{\mathsf{alg} }}^{-1} \AAA (e) \aaaa _{{}_{lu}}  \right)
\end{equation*}
is an adjunction in $\hhh $. It is straightforward to verify that
the unique $2$-functor $\AAA ': \mathsf{Adj}\to\hhh $ corresponding to this adjunction is pseudonaturally isomorphic to $\AAA $. Furthermore, the Eilenberg-Moore object is a flexible limit as it is shown in \cite{Flexible}.

\begin{prop}
If $\hhh $ is any $2$-category, $\left[ \mathsf{Adj}, \hhh \right] \to\left[ \mathsf{Adj}, \hhh \right] _ {PS}$ is essentially surjective. 
Moreover, for any $2$-functor $\CCCC : \mathsf{Adj}\to \CAT $, we have an equivalence
$$\left[ \mathsf{Mnd}, \CAT \right]  (\mathsf{Adj} (\mathsf{alg},\m (-) ), \CCCC )\simeq\left[ \mathsf{Mnd}, \CAT \right] _ {PS} (\mathsf{Adj} (\mathsf{alg},\m (-) ), \CCCC ).$$ 
\end{prop}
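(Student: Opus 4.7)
The plan is to mirror the strategy used for Proposition \ref{coherencestrict}, since both assertions here have direct analogues there.

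\textbf{Essential surjectivity via strict replacement.} The first assertion will follow by showing that $\mathsf{Adj}$ is its own strict replacement in the sense preceding Lemma \ref{strictreplacementexplained}: there is an equivalence $\mathrm{Str}(\mathsf{Adj})\to \mathsf{Adj}$ in $\Icon$ given by the counit of the $2$-adjunction $\mathrm{Str}\dashv (\Icon\to\Bicat)$. Granted that, Lemma \ref{strictreplacementexplained} yields essential surjectivity of $[\mathsf{Adj},\hhh]\to[\mathsf{Adj},\hhh]_{PS}$ for every $2$-category $\hhh$. The concrete way I would make this replacement explicit is via the observation already recorded just before the statement: for any pseudofunctor $\AAA:\mathsf{Adj}\to\hhh$, the quadruple \ref{adjuncaoinduzida} is an honest adjunction in $\hhh$, so by the bijective correspondence from \cite{Street} between adjunctions in $\hhh$ and $2$-functors $\mathsf{Adj}\to\hhh$, it is represented by a unique $2$-functor $\AAA':\mathsf{Adj}\to\hhh$. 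Since $\AAA$ and $\AAA'$ agree on objects and on the generating $1$-cells $l,u$, the coherence constraints $\aaaa_{hg}, \aaaa_X$ of $\AAA$ can be assembled into an invertible icon $\AAA\cong\AAA'$ (with identity $1$-cell components). This invertible icon is, in particular, a pseudonatural isomorphism, giving the required essential preimage.

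\textbf{The equivalence of hom-categories via flexibility.} For the second assertion the argument is a single citation: the Eilenberg-Moore object is a flexible weighted $2$-limit (Bird--Kelly--Power--Street \cite{Flexible}). Equivalently, the weight $\mathsf{Adj}(\mathsf{alg},\m(-))$ is a flexible $\CAT$-enriched weight on $\mathsf{Mnd}$, and flexibility is exactly the statement that, for every $2$-functor $\CCCC$ (here considered via its restriction $\CCCC\circ\m$ when needed), the inclusion
$$[\mathsf{Mnd},\CAT](\mathsf{Adj}(\mathsf{alg},\m(-)),\CCCC)\hookrightarrow [\mathsf{Mnd},\CAT]_{PS}(\mathsf{Adj}(\mathsf{alg},\m(-)),\CCCC)$$
is an equivalence of categories.

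\textbf{Expected main obstacle.} The conceptual content is lightweight: both halves reduce to results already in the literature. The principal bookkeeping step is the explicit construction of the invertible icon $\AAA\cong\AAA'$ and the verification of its modification condition on composable pairs of morphisms in $\mathsf{Adj}$. This amounts to unpacking the coherence axioms of Definition \ref{startsettingPseudofunctor} against the ordinal-sum composition in $\mathsf{Adj}$ and the triangle identities of the adjunction \ref{adjuncaoinduzida}. Provided we are willing to do this case-by-case on the generators $l,u,n,e$ (and on the identities $\id_{\mathtt b}, \id_{\mathsf{alg}}$), the verification is routine, and there are no further subtleties.
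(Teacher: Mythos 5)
Your proof is correct and takes essentially the same route as the paper's: the first assertion is proved exactly as you describe, by passing to the $2$-functor $\AAA'$ that classifies the induced adjunction \ref{adjuncaoinduzida} and exhibiting an invertible icon (a pseudonatural isomorphism with identity $1$-cell components) $\AAA\cong\AAA'$ whose $2$-cell components are assembled from the constraints of $\AAA$, and the second assertion is, as in the paper, a direct appeal to the flexibility of Eilenberg--Moore objects from \cite{Flexible}. The preliminary detour through $\mathrm{Str}(\mathsf{Adj})$ and Lemma \ref{strictreplacementexplained} is not needed (and is not taken by the paper), since your concrete icon construction already yields essential surjectivity directly.
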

\begin{proof}
In order to prove the first part, as mentioned above, it is enough to show that there is a pseudonatural isomorphism between $\AAA $ and $\AAA ' $. The $1$-cell components of this pseudonatural isomorphism $\alpha $ are identities, while the component $2$-cells are induced by the structure of pseudofunctor of $\AAA $ (the constraints/invertible $2$-cells).
The second part follows from the fact that Eilenberg-Moore objects are flexible weighted limits~\cite{Flexible}. 
\end{proof}

\begin{cor}
If $\AAA : \mathsf{Mnd}\to\hhh $ is a pseudofunctor, 
$$
\Ps\Ran _ {{\j _ {{}_3}} } \AAA 
\simeq 
\Ps\Ran _ {\m } \check{\AAA }
\simeq 
\Ran _ {\m } \check{\AAA }  
$$
provided that the pointwise Kan extension $\Ran _ {\m }  \check{\AAA } $ exists,
in which $\check{\AAA }$ is a $2$-functor pseudonaturally isomorphic to $\AAA$.
\end{cor}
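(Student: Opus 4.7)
The plan is to mimic the proof of Corollary \ref{lastresultflexible}, replacing the roles of $\j_{{}_{\mathrm{Str}}}$ and $\Delta_{{}_{\mathrm{Str}}}$ by those of $\m$ and $\mathsf{Mnd}$. All the substantive content is packaged into the immediately preceding proposition: it supplies, on the one hand, a 2-functor $\check{\AAA}$ pseudonaturally isomorphic to $\AAA$ (via essential surjectivity of $[\mathsf{Adj},\hhh]\hookrightarrow[\mathsf{Adj},\hhh]_{PS}$), and on the other hand, the flexibility fact that the inclusion $[\mathsf{Mnd},\CAT](\mathsf{Adj}(\mathsf{alg},\m-),\CCCC)\to [\mathsf{Mnd},\CAT]_{PS}(\mathsf{Adj}(\mathsf{alg},\m-),\CCCC)$ is an equivalence. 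The first of these handles the first displayed equivalence in the corollary; the second handles the passage from the pointwise pseudo-Kan extension to the strict pointwise Kan extension.

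For the first equivalence $\Ps\Ran_{\m}\AAA\simeq \Ps\Ran_{\m}\check{\AAA}$, I would invoke the pseudofunctoriality of the right biadjoint $\Ps\Ran_{\m}$ to $[\m,\hhh]_{PS}$: since $\AAA\simeq\check{\AAA}$ pseudonaturally by the preceding proposition, applying $\Ps\Ran_{\m}$ to both sides produces a pseudonatural equivalence, and this is really just the uniqueness (up to equivalence) of bicategorical reflections/right pseudo-Kan extensions.

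For the second equivalence $\Ps\Ran_{\m}\check{\AAA}\simeq \Ran_{\m}\check{\AAA}$, I would argue pointwise. By Corollary \ref{pointwise2}, at each object $b$ of $\mathsf{Adj}$ we have $\Ps\Ran_{\m}\check{\AAA}(b)\simeq \bilim(\mathsf{Adj}(b,\m-),\check{\AAA})$, while $\Ran_{\m}\check{\AAA}(b)$ is the corresponding $\CAT$-weighted 2-limit $[\mathsf{Mnd},\hhh](\mathsf{Adj}(b,\m-),\check{\AAA})$. For $b=\mathtt{b}$ this reduces to the Yoneda identification. For $b=\mathsf{alg}$, the weight $\mathsf{Adj}(\mathsf{alg},\m-)$ is precisely the one whose 2-weighted limits are Eilenberg–Moore objects, and the preceding proposition (via the flexibility statement of \cite{Flexible}) ensures that the strict and pseudo universal properties agree, testing against $\hhh(X,\check{\AAA}-)$ through the enriched Yoneda embedding into $\CAT$-valued diagrams. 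Composing the two equivalences yields the claim.

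The potential obstacle is verifying that the flexibility of Eilenberg–Moore objects, stated for $\CAT$-valued diagrams in the preceding proposition, transports correctly to general $\hhh$; this is routine by testing representably, using that representables preserve weighted bilimits and weighted 2-limits of the relevant flexible weight, so no further work is required beyond citing \cite{Flexible}. Everything else is a straightforward transport through the biadjunction $[\m,\hhh]_{PS}\dashv \Ps\Ran_{\m}$ combined with the pointwise theorem, so the proof reduces to two short lines.
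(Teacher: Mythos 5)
Your proposal is correct and follows essentially the same route the paper intends: the corollary is left without an explicit proof precisely because it is meant to be read off from the preceding proposition in the two steps you describe (invariance of $\Ps\Ran_{\m}$ under the pseudonatural isomorphism $\AAA\simeq\check{\AAA}$ supplied by essential surjectivity, and the pointwise identification of the bilimit with the strict weighted limit via flexibility of the Eilenberg--Moore weight, transported to general $\hhh$ by testing against representables). Your only silent emendation --- reading the first term $\Ps\Ran_{\j_{{}_3}}\AAA$ as $\Ps\Ran_{\m}\AAA$ --- is the correct reading of what is evidently a typographical slip in the statement, mirroring Corollary \ref{lastresultflexible}.
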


Therefore, if $\hhh $ has Eilenberg-Moore objects, a pseudofunctor $\AAA : \mathsf{Adj}\to\hhh $ is of effective $\m $-descent/$\m $-descent/almost $\m$-descent if and only if 
$\AAA(u)$ is monadic/premonadic/almost monadic. Also, the ``factorizations''
\small
$$\xymatrix@C=1em{ \AAA (\mathtt{b})\ar[rr]|{\AAA (l)}\ar[dr]|{l_{{}_{{}_\AAA }} }&& \AAA (\mathsf{alg})&
\AAA (\mathsf{alg})\ar[rr]|{\AAA (u)}\ar[dr]|{\eta ^\mathsf{alg}_ {{}_{{}_{\AAA } }} }&& \AAA (\mathtt{b})\\
&\Ps\Ran _ \m (\AAA\circ\m ) (\mathsf{alg})\ar@{}[u]|{\cong }\ar[ru]|{\eta ^\mathsf{alg}_ {{}_{{}_{\AAA } }}} & & &\Ps\Ran _ \m (\AAA\circ\m ) (\mathsf{alg})\ar@{}[u]|{\cong }\ar[ru]|{u_{{}_{{}_\AAA }}} &
}$$  
\normalsize
described in Theorem \ref{fact} are pseudonaturally equivalent to the usual Eilenberg-Moore factorizations. Henceforth, these factorizations are called Eilenberg-Moore factorizations (even if the $2$-category $\hhh $ does not have the strict version of it).

\section{The Beck-Chevalley Condition}\label{BECK}
With these elementary examples, we  
can already give generalizations of Theorems \ref{facil1provado} and \ref{facil1}.
We keep our setting in which $\t : \aaa\to\dot{\aaa } $ is an $\mathsf{a}$-inclusion as in  
\ref{commu}.

Let $\TTT $ be an idempotent pseudomonad over the $2$-category $\hhh $. 
The most obvious consequence of the commutativity results of Section \ref{Formal} is the following: 
if an object $X$ of $\hhh $  can be endowed with a $\TTT $-pseudoalgebra structure and there is an 
equivalence $X\to W$, then $W$ can be endowed with a $\TTT $-pseudoalgebra as well.

In the case of pseudo-Kan extensions, we have the following: 
let $\AAA , \BBB : \dot{\aaa }\to\hhh $ be pseudofunctors. 
A pseudonatural transformation $\alpha: \AAA\longrightarrow \BBB $ can be seen as a pseudofunctor 
$\CCCC _ \alpha : \mathsf{2}\to \left[\dot{\aaa }, \hhh \right] _ {PS}$. By Corollaries \ref{comutatividade} 
and \ref{comutatividadealmostdescent}, we get the following: 
if  $\CCCC_\alpha (\mathsf{1}) $ is of effective $\t $-descent/$\t$-descent/almost  
$\t $-descent and the images of the mate $\dot{\aaa }\to \left[\mathsf{2}, \hhh \right] _ {PS}$ 
of $\CCCC _ \alpha $ are of effective $d ^0 $-descent/$d^0$-descent/almost $d^0$-descent as well, 
then $\CCCC_\alpha (\mathsf{0}) $ is also of effective $\t $-descent/$\t$-descent/almost  $\t $-descent. 
In Section \ref{proofsclassical}, we show that Theorem \ref{facil1} is a particular case of:

\begin{prop}\label{proofemb}
Let $\alpha : \AAA\longrightarrow \BBB $ be a 
pseudonatural transformation. If $\BBB $ is of effective $\t $-descent/$\t $-descent/almost  $\t $-descent 
and $\alpha $ is a pseudonatural equivalence/objectwise fully faithful/objectwise faithful, 
then $\AAA $ is of effective $\t $-descent/$\t $-descent/almost  $\t $-descent as well.
\end{prop}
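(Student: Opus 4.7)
The plan is to reinterpret the pseudonatural transformation $\alpha : \AAA\longrightarrow \BBB$ as a pseudofunctor $\CCCC_\alpha : \mathsf{2} \to \left[\dot{\aaa}, \hhh\right]_{PS}$ sending $\mathsf{0}\mapsto \AAA$, $\mathsf{1}\mapsto \BBB$ and the unique nontrivial morphism of $\mathsf{2}$ to $\alpha$, exactly as indicated at the start of this section. We equip $\mathsf{2}$ with the $\mathsf{0}$-inclusion $d^0 : \mathsf{1}\to\mathsf{2}$ described in Section \ref{Elementary Examples}. The idea is then to apply Corollary \ref{comutatividade} (respectively Corollary \ref{comutatividadealmostdescent} for the $\t$-descent and almost $\t$-descent cases) directly to $\CCCC_\alpha$, with the roles of the two inclusions swapped from those used in the statement of the corollary: the ``outer'' inclusion becomes $d^0 : \mathsf{1}\to\mathsf{2}$ and the ``inner'' one becomes $\t : \aaa\to\dot{\aaa}$.

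Both hypotheses of the corollary then translate directly. First, by the description of the $d^0$-example in Section \ref{Elementary Examples}, $\CCCC_\alpha$ is of effective $d^0$-descent (respectively $d^0$-descent, almost $d^0$-descent) precisely when $\alpha$ is an equivalence (respectively a fully faithful, faithful) $1$-cell in $\left[\dot{\aaa}, \hhh\right]_{PS}$, and this is exactly the hypothesis on $\alpha$. Second, the image of $\CCCC_\alpha\circ d^0 : \mathsf{1}\to\left[\dot{\aaa}, \hhh\right]_{PS}$ consists of the single pseudofunctor $\BBB$, which is assumed to be of effective $\t$-descent (respectively $\t$-descent, almost $\t$-descent).

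Applying Corollary \ref{comutatividade} (respectively Corollary \ref{comutatividadealmostdescent}) then yields that $\CCCC_\alpha(\mathsf{0}) = \AAA$ is of effective $\t$-descent (respectively $\t$-descent, almost $\t$-descent), as required. The only minor point to verify, in the descent and almost-descent variants, is that the objectwise (fully) faithful pseudonatural transformations form the correct $\mathfrak{F}$-class in $\left[\dot{\aaa}, \hhh\right]_{PS}$, in the sense of \ref{tecnicoalmost} and \ref{tecalmost}; this is straightforward, since modifications are determined by their components, so whiskering by $\alpha$ is (fully) faithful precisely when each component $\alpha_a$ is so. I do not anticipate any real obstacle: the content of the proposition is, by design, nothing more than the specialization of the commutativity corollaries of Section \ref{Formal} to the trivial $\mathsf{0}$-inclusion $d^0$.
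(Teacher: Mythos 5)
Your proof is correct and is essentially the paper's own argument: the paper obtains Proposition \ref{proofemb} in exactly this way, viewing $\alpha$ as a pseudofunctor $\CCCC_\alpha : \mathsf{2}\to\left[\dot{\aaa},\hhh\right]_{PS}$ and applying Corollaries \ref{comutatividade} and \ref{comutatividadealmostdescent} with the outer inclusion $d^0:\mathsf{1}\to\mathsf{2}$ and the inner inclusion $\t$. The only remark is that you need just the implication from objectwise (fully) faithful to (fully) faithful as a $1$-cell of $\left[\dot{\aaa},\hhh\right]_{PS}$, not the biconditional you assert, and that implication is indeed straightforward.
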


\begin{defi}[Beck-Chevalley condition]
A pseudonatural transformation 
$\alpha: \AAA\longrightarrow \BBB $ 
 satisfies the \textit{Beck-Chevalley condition} 
if every $1$-cell component of $\alpha $ is left adjoint and, for each $1$-cell $f:w\to c $ of 
the domain of $\AAA $, 
the mate of the invertible $2$-cell 
$\alpha _ {{}_{f}} : \BBB (f)\alpha _{{}_w}\Rightarrow \alpha _ {{}_{c}}\AAA(f) $ w.r.t. 
 the adjunctions $\widehat{\alpha}^{{}^{w}}\dashv\alpha _ {{}_w}$ and 
 $\widehat{\alpha}^{{}^{c}}\dashv\alpha _ {{}_c}$ is invertible.
\end{defi}

By doctrinal adjunction~\cite{DoctrinalAdjunction}, $\alpha: \AAA\longrightarrow \BBB $ 
satisfies the Beck-Chevalley condition if and only if $\alpha $ is itself a right adjoint in the 
$2$-category  $\left[\dot{\aaa }, \hhh \right] _ {PS}$. In other words, we get:

\begin{lem}\label{BECKzero}
Let $\alpha: \AAA\longrightarrow \BBB $ be a pseudonatural transformation and $\CCCC _ \alpha : \mathsf{2}\to \left[\dot{\aaa }, \hhh \right] _ {PS}$ the corresponding pseudofunctor. 
Consider the inclusion $\mathtt{u}: \mathsf{2}\to\mathsf{Adj} $ of the morphism $u$. There is a pseudofunctor  $\widehat{\CCCC } _ \alpha: \mathsf{Adj}\to \left[\dot{\aaa }, \hhh \right] _ {PS}$ such that $ \widehat{\CCCC } _ \alpha\circ \mathtt{u} = \CCCC _ \alpha $
if and only if $\alpha $ satisfies the Beck-Chevalley condition.
\end{lem}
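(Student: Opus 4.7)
The plan is to reduce the statement to Kelly's doctrinal adjunction via the universal property of $\mathsf{Adj}$ recalled in Section~\ref{EilenBERG}. Namely, $2$-functors $\mathsf{Adj}\to\hhh'$ correspond bijectively to adjunctions in $\hhh'$, and moreover every pseudofunctor $\mathsf{Adj}\to\hhh'$ is pseudonaturally isomorphic to a $2$-functor via the strict adjunction formula (\ref{adjuncaoinduzida}); in particular it yields an adjunction in $\hhh'$ whose right adjoint is the image of $u$. I apply this with $\hhh' = \left[\dot{\aaa },\hhh\right]_{PS}$.

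For the ``if'' direction, assume $\alpha $ satisfies the Beck-Chevalley condition. By definition each component $\alpha _ {{}_X}$ has a chosen left adjoint $\widehat{\alpha }^{{}^X}\dashv \alpha _ {{}_X}$, and the mate of each $\alpha _ {{}_f}$ with respect to these pointwise adjunctions is invertible. Kelly's doctrinal adjunction~\cite{DoctrinalAdjunction} assembles the $\widehat{\alpha }^{{}^X}$ into a pseudonatural transformation $\widehat{\alpha }:\BBB \longrightarrow\AAA $ and the pointwise units/counits into invertible modifications exhibiting an adjunction $\widehat{\alpha }\dashv \alpha $ in $\left[\dot{\aaa },\hhh\right]_{PS}$. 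The universal property of $\mathsf{Adj}$ then produces a $2$-functor $\widehat{\CCCC }_\alpha :\mathsf{Adj}\to \left[\dot{\aaa },\hhh\right]_{PS}$ with $\widehat{\CCCC } _ \alpha (u)=\alpha $, $\widehat{\CCCC } _ \alpha (l)=\widehat{\alpha }$, and $n, e$ sent to the unit and counit. Since $\mathsf{2}$ is locally discrete, $\widehat{\CCCC } _ \alpha \circ \mathtt{u}$ is determined by its action on objects and on the unique nontrivial $1$-cell, so $\widehat{\CCCC } _ \alpha \circ \mathtt{u}=\CCCC _ \alpha $ on the nose.

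For the ``only if'' direction, suppose such a pseudofunctor $\widehat{\CCCC } _ \alpha $ exists. Applying the strict-adjunction formula (\ref{adjuncaoinduzida}) to it yields an adjunction $\widehat{\CCCC } _ \alpha (l)\dashv \alpha $ in $\left[\dot{\aaa },\hhh\right]_{PS}$, since $\widehat{\CCCC } _ \alpha (u)=\alpha $. Evaluating this adjunction at each object $X$ of $\dot{\aaa }$ gives a pointwise adjunction with right adjoint $\alpha _ {{}_X}$, so every component of $\alpha $ is a right adjoint. The necessity direction of doctrinal adjunction then forces the mates of the structural $2$-cells $\alpha _ {{}_f}$ with respect to these pointwise adjunctions to be invertible, which is precisely the Beck-Chevalley condition.

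The main (and rather modest) obstacle is simply bookkeeping: ensuring that the correspondence between $2$-functors out of $\mathsf{Adj}$ and adjunctions aligns with the strict requirement $\widehat{\CCCC } _ \alpha \circ \mathtt{u}=\CCCC _ \alpha $ rather than an equivalence. This is handled automatically by the local discreteness of $\mathsf{2}$, which collapses the potential pseudonatural isomorphism coming from the strictification step to an equality on the relevant data. Thus the lemma is in essence a translation of doctrinal adjunction along the universal adjunction living in $\mathsf{Adj}$.
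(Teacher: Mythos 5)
Your proposal is correct and follows essentially the same route as the paper, which derives the lemma immediately from doctrinal adjunction (Beck--Chevalley $\iff$ $\alpha$ is a right adjoint in $\left[\dot{\aaa},\hhh\right]_{PS}$) combined with the universal property of the free adjunction $\mathsf{Adj}$ and the strict-adjunction observation of Section~\ref{EilenBERG}; the paper simply states this as an immediate consequence without a separate proof. Your extra care about the on-the-nose equality $\widehat{\CCCC}_\alpha\circ\mathtt{u}=\CCCC_\alpha$ via local discreteness of $\mathsf{2}$ is a reasonable and correct addition.
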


Thereby, as straightforward  
consequences of Corollaries \ref{comutatividadeequi} and \ref{comutatividadealmostcara}, using the terminology
of Lemma \ref{BECKzero}, 
we get what can be called a generalized version of  B\'{e}nabou-Roubaud Theorem: 

\begin{theo}\label{BFirst}
Assume that $\alpha: \AAA\longrightarrow \BBB $ is a pseudonatural transformation 
satisfying the Beck-Chevalley condition and all components of $\alpha\t = \alpha\ast Id _{{}_{\t }} $ are monadic. 
\begin{itemize}\renewcommand\labelitemi{--}

\item If $\BBB $ is of almost  $\t $-descent, then: 
$\alpha _ {{}_{\mathsf{a} }}$ is of almost $\m $-descent 
if and only if $\AAA $ is of almost  $\t $-descent;

\item If $\BBB $ is of $\t $-descent, then: $\alpha _ {{}_{\mathsf{a} }}$ is premonadic if and only if $\AAA $ is of $\t $-descent;

\item If $\BBB $ is of effective $\t $-descent, then: $\alpha _ {{}_{\mathsf{a} }}$ is monadic if and only if $\AAA $ is of effective $\t $-descent.

\end{itemize}

\end{theo}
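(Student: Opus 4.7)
The plan is to recognize Theorem~\ref{BFirst} as a direct application of the commutativity corollaries \ref{comutatividadeequi} and \ref{comutatividadealmostcara}, with the second inclusion being the Eilenberg-Moore inclusion $\m : \mathsf{Mnd}\to\mathsf{Adj}$ from Section~\ref{EilenBERG} (so $\dot{\bbb}=\mathsf{Adj}$, $\mathsf{b}=\mathsf{alg}$, $\h=\m$).

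First, I would use the Beck-Chevalley hypothesis. By Lemma~\ref{BECKzero}, the condition provides a pseudofunctor $\widehat{\CCCC}_\alpha : \mathsf{Adj}\to\left[\dot{\aaa},\hhh\right]_{PS}$ with $\widehat{\CCCC}_\alpha(u)=\alpha$, $\widehat{\CCCC}_\alpha(\mathsf{alg})=\AAA$ and $\widehat{\CCCC}_\alpha(\mathtt{b})=\BBB$. Via the canonical equivalence $\left[\mathsf{Adj},\left[\dot{\aaa},\hhh\right]_{PS}\right]_{PS}\approx\left[\dot{\aaa},\left[\mathsf{Adj},\hhh\right]_{PS}\right]_{PS}$ (see \ref{commu}), this corresponds to a mate $\widehat{\AAA}:\dot{\aaa}\to\left[\mathsf{Adj},\hhh\right]_{PS}$, so that we are precisely in the symmetric setup required by \ref{comutatividadeequi}/\ref{comutatividadealmostcara} with $\bar{\AAA}:=\widehat{\CCCC}_\alpha$. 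By construction $\widehat{\AAA}(X):\mathsf{Adj}\to\hhh$ is the $2$-functor (up to pseudonatural isomorphism, cf.~\ref{adjuncaoinduzida}) encoding the adjunction $\widehat{\alpha}^{{}^{X}}\dashv\alpha_{{}_X}$ given by Beck-Chevalley.

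Next, I would check the two descent hypotheses of those corollaries. The image of $\bar{\AAA}\circ\m$ is essentially the single pseudofunctor $\BBB$, so the hypothesis on $\bar{\AAA}\circ\m$ translates immediately into ``$\BBB$ is of (almost/descent/effective) $\t$-descent'', which is part of the theorem's hypothesis in each of the three cases. The image of $\widehat{\AAA}\circ\t$ consists of the $2$-functors $\mathsf{Adj}\to\hhh$ associated to the adjunctions $\widehat{\alpha}^{{}^{X}}\dashv\alpha_{{}_X}$ for $X\in\aaa$; since by assumption each such $\alpha_{{}_X}$ is monadic, the discussion in Section~\ref{EilenBERG} tells us these $2$-functors are of effective $\m$-descent, hence in particular of $\m$-descent and of almost $\m$-descent. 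Thus the hypotheses of \ref{comutatividadeequi} and \ref{comutatividadealmostcara} are satisfied in all three cases.

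Finally, I would invoke the appropriate corollary and translate back. Corollary~\ref{comutatividadeequi} yields the equivalence ``$\widehat{\AAA}(\mathsf{a})$ is of effective $\m$-descent $\Longleftrightarrow$ $\AAA=\bar{\AAA}(\mathsf{alg})$ is of effective $\t$-descent'', and Corollary~\ref{comutatividadealmostcara} yields the analogous biconditionals for the $\t$-descent and almost $\t$-descent cases. Using Section~\ref{EilenBERG} once more, $\widehat{\AAA}(\mathsf{a})$ is the pseudofunctor $\mathsf{Adj}\to\hhh$ corresponding to $\widehat{\alpha}^{{}^{\mathsf{a}}}\dashv\alpha_{{}_{\mathsf{a}}}$, hence it is of effective/of/almost $\m$-descent precisely when $\alpha_{{}_{\mathsf{a}}}$ is monadic/premonadic/almost monadic. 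Splicing these two translations together gives the three asserted biconditionals. The main conceptual point (and the only real ``obstacle'') is the opening move: recognising that doctrinal adjunction plus Beck-Chevalley is exactly what is needed to lift $\alpha$ from a $\mathsf{2}$-shaped diagram to an $\mathsf{Adj}$-shaped diagram in $\left[\dot{\aaa},\hhh\right]_{PS}$, at which point the symmetric commutativity corollaries do all the work.
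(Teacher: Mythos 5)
Your proposal is correct and follows essentially the same route as the paper: the paper's proof is precisely the one-line observation that Lemma~\ref{BECKzero} produces $\widehat{\CCCC}_\alpha:\mathsf{Adj}\to[\dot{\aaa},\hhh]_{PS}$ satisfying the hypotheses of Corollaries~\ref{comutatividadeequi} and \ref{comutatividadealmostcara}, which is exactly the argument you spell out (including the correct identification of the two descent hypotheses and the translation of $\m$-descent conditions on $\widehat{\AAA}(\mathsf{a})$ into monadicity conditions on $\alpha_{{}_{\mathsf{a}}}$ via Section~\ref{EilenBERG}).
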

\begin{proof}
Indeed, by the hypotheses, for each item, there is a pseudofunctor $\widehat{\CCCC } _ \alpha: \mathsf{Adj}\to \left[\dot{\aaa }, \hhh \right] _ {PS}$ satisfying the hypotheses of Corollary \ref{comutatividadeequi} or Corollary \ref{comutatividadealmostcara}.
\end{proof}

\begin{rem}
It is important to observe that the hypothesis of the theorem obviously does not include
the monadicity of $\alpha _{{}_{\mathsf{a} }}$, since $\t : \aaa\to \dot{\aaa } $ is an $\mathsf{a}$-inclusion.
\end{rem}

\section{Descent Theory}\label{proofsclassical}

In this section, we establish the setting of \cite{Facets, Facets2} and 
prove all the classical results mentioned in Section \ref{Basic Problem} 
for pseudocosimplicial objects, except Theorem \ref{mergulho} which is postponed to Section \ref{Further}. 

Henceforth, let $\CCC , \DDD $ be categories with pullbacks and $\hhh $ be a $2$-category
with the weighted bilimits whenever needed
as in the previous sections. In the context of \cite{Facets2}, given a pseudofunctor 
$\AAA : \CCC ^{\op }\to \hhh $, the morphism $p: E\to B $ of $\CCC $ is of 
\textit{ effective $\AAA $-descent/$\AAA $-descent/almost $\AAA $-descent} if 
$\AAA _p: \dot{\Delta }\to\hhh $ is of
effective $\j $-descent/$\j $-descent/almost $\j $-descent, where $\AAA _ p $ is the composition of 
the diagram 
$$\DDDD _ p :\dot{\Delta}^{\op }\to\CCC  $$
$$\xymatrix{ \cdots \ar@<0.3ex>[r]\ar@<-0.3ex>[r]\ar@<0.9ex>[r]\ar@<-0.9ex>[r] &
E\times _ p E\times _ p E \ar@<0.9 ex>[r]\ar[r]\ar@<-0.9ex>[r]\ar@/_2ex/[l]\ar@/^2ex/[l]\ar@/_3ex/[l] & E\times _ p E\ar@/_2ex/[l]\ar@/^2ex/[l]\ar@<0.9 ex>[r]\ar@<-0.9ex>[r] & E\ar[l]\ar[r]^-p& B } $$
with the pseudofunctor $\AAA $, in which the diagram above is given by the pullbacks of $p$ along itself, its projections and diagonal morphisms.
By the results of Section \ref{descent}, for $\hhh =\CAT $, this definition of effective $\AAA $-descent morphism
coincides with the classical one in the context of \cite{Facets, Facets2}.

We get the usual factorizations of (Grothendieck) $\AAA $-descent theory~\cite{Facets, Facets2} from Theorem \ref{fact}, although the usual strict factorization comes from Remark \ref{FACTSTRICT}. More precisely, if $p:E\to B $ is a morphism of $\CCC $, we get:

$$\xymatrix{ \AAA _ p(\mathsf{0}) = \AAA (B)\ar[rr]|{\AAA (p)}\ar[dr]|{\eta ^\mathsf{0}_ {{}_{{}_{\AAA\circ\DDDD _ p } }} }&& \AAA _ p(\mathsf{1}) = \AAA (E)\\
  &\Desc _ \AAA (p)\simeq \Ps\Ran _ \j (\AAA _ p\circ\j ) (\mathsf{0})\ar@{}[u]|{\cong }\ar[ru]|{d_{{}_{{}_{\AAA _ p} }}} &
}$$

In descent theory, a \textit{morphism} $(U, \alpha )$ between pseudofunctors $\AAA : \CCC ^{\op }\to \hhh $ and  $\BBB:\DDD ^{\op }\to \hhh $  is a  pullback preserving functor $U: \CCC\to\DDD $ with a pseudonatural transformation $\alpha : \AAA\longrightarrow\BBB\circ U $. Such a morphism is called \textit{faithful/fully faithful} if $\alpha $ is objectwise faithful/fully faithful.

For each morphism $p:E\to B $ of $\CCC $, a morphism $(U, \alpha )$ between pseudofunctors $\AAA : \CCC ^{\op }\to \hhh $ and  $\BBB:\DDD ^{\op }\to \hhh $ induces a pseudonatural transformation $\alpha ^{{}^p} : \AAA _ p \longrightarrow \BBB _{U(p)} $. Of course, $\alpha ^{{}^p} $ is objectwise faithful/fully faithful if $(U, \alpha ) $ is faithful/fully faithful.

We say that such a morphism $(U, \alpha )$ between pseudofunctors $\AAA : \CCC ^{\op }\to \hhh $ and  $\BBB:\DDD ^{\op }\to \hhh $ reflects almost descent/descent/effective descent morphisms if, whenever $U(p) $ is of almost $\BBB $-descent/$\BBB $-descent/effective $\BBB $-descent, $p$ is of almost $\AAA $-descent/$\AAA $-descent/effective $\AAA $-descent.

\begin{rem}\label{inducedbasicproblem}
Consider the pseudofunctor given by the 
basic fibration $(\mbox{ })^\ast : \CCC ^{\op }\to \CAT $ in which 
$$(p)^\ast : \CCC /B\to\CCC /E $$
is the change of base functor, given by the pullback along $p:E\to B $. 
For short, we say that a morphism $p:E\to B $ is of effective descent if $p$ is 
of effective $(\mbox{ })^{\ast }$-descent.

In this case, a pullback preserving functor $U:\CCC\to\DDD $ induces a morphism $(U, \mathtt{u} ) $ 
between the basic fibrations
$(\mbox{ })^\ast : \CCC ^{\op }\to \CAT $  and $(\mbox{ })^\ast : \DDD ^{\op }\to \CAT $ 
in which, 
for each object $B$ of $\CCC $, 
$\mathtt{u} _ {{}_B}:\CCC /B\to \DDD /U(B)  $ is given by the evaluation of $U$.  
If $U$ is faithful/fully faithful, so is the induced morphism $(U, \mathtt{u} ) $ between the basic fibrations.
\end{rem}  

We study pseudocosimplicial objects $\AAA : \dot{\Delta}\to\hhh $ and verify the obvious 
implications within the setting described above. We start with the embedding results 
(which are particular cases of \ref{proofemb}):

\begin{theo}[Embedding Results]\label{facil1provado}
Let  $\alpha : \AAA\longrightarrow \BBB $ be a pseudonatural transformation. 
If $\alpha $ is objectwise faithful and $\BBB $ is of almost $\j $-descent, then so is $\AAA $.
Furthermore, if $\BBB $ is of $\j$-descent and $\alpha $ is objectwise fully faithful, then $\AAA $ is of $\j$-descent as well. 
\end{theo}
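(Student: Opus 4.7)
The plan is to deduce the theorem as a direct instance of Corollary \ref{comutatividadealmostdescent}. First I would translate the statement through the mate equivalence
$$[\mathsf{2}\times\dot{\Delta}, \hhh]_{PS} \approx [\mathsf{2}, [\dot{\Delta}, \hhh]_{PS}]_{PS},$$
so that the pseudonatural transformation $\alpha:\AAA\to\BBB$ is encoded as a pseudofunctor $\bar{\AAA}:\mathsf{2}\to[\dot{\Delta},\hhh]_{PS}$ with $\bar{\AAA}(\mathsf{0})=\AAA$, $\bar{\AAA}(\mathsf{1})=\BBB$, and $\bar{\AAA}$ mapping the unique non-identity morphism of $\mathsf{2}$ to $\alpha$ itself.

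Next I would invoke the observation from Section \ref{Elementary Examples} about the $\mathsf{0}$-inclusion $d^0:\mathsf{1}\to\mathsf{2}$: a pseudofunctor $\mathcal{F}:\mathsf{2}\to\hhh'$ is of almost $d^0$-descent (respectively, $d^0$-descent) if and only if its image $1$-cell is faithful (respectively, fully faithful) in $\hhh'$. Since, as noted in \ref{tecalmost}, a morphism in $[\dot{\Delta},\hhh]_{PS}$ is faithful (respectively, fully faithful) exactly when it is objectwise faithful (respectively, fully faithful), the hypothesis on $\alpha$ translates into saying that $\bar{\AAA}$ is of almost $d^0$-descent (respectively, $d^0$-descent).

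I would then apply Corollary \ref{comutatividadealmostdescent} to $\bar{\AAA}$, with the roles $\dot{\aaa}=\mathsf{2}$, $\t=d^0$, $\mathsf{a}=\mathsf{0}$, $\dot{\bbb}=\dot{\Delta}$, $\h=\j$. The image of $\bar{\AAA}\circ d^0$ consists of the single pseudofunctor $\BBB\in[\dot{\Delta},\hhh]_{PS}$, which by hypothesis is of almost $\j$-descent (respectively, $\j$-descent). Combined with $\bar{\AAA}$ being of almost $d^0$-descent (respectively, $d^0$-descent), the corollary delivers that $\bar{\AAA}(\mathsf{0})=\AAA$ is of almost $\j$-descent (respectively, $\j$-descent), which is the desired conclusion.

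The main obstacle is nothing substantial beyond careful bookkeeping: correctly identifying the extra objects in the two $\mathsf{a}$-inclusions $\j$ and $d^0$, and keeping straight which coordinate of the mate $\bar{\AAA}$ corresponds to which inclusion. Once these identifications are in place, the theorem carries no content beyond the abstract commutativity machinery of Section \ref{Formal} and its specialization to pseudo-Kan extensions in Subsection \ref{tecalmost}.
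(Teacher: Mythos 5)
Your proposal is correct and follows essentially the same route as the paper: the paper obtains Theorem \ref{facil1provado} as a special case of Proposition \ref{proofemb}, which is itself derived in Section \ref{BECK} by encoding $\alpha$ as a pseudofunctor $\mathsf{2}\to\left[\dot{\Delta},\hhh\right]_{PS}$, identifying (almost) $d^0$-descent with objectwise (faithfulness) full faithfulness, and applying Corollary \ref{comutatividadealmostdescent}. Your direct instantiation merely skips the intermediate general statement.
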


Of course, we have that, if $\AAA\simeq \BBB$, then $\AAA $ is of almost 
$\j $-descent/$\j $-descent/effective $\j $-descent if and only if $\BBB $ is of 
almost $\j $-descent/$\j $-descent/effective $\j $-descent as well.

\begin{cor}\label{embdddd}
Let $(U, \alpha )$ be a morphism between the pseudofunctors $\AAA : \CCC ^{\op }\to \hhh $ and  
$\BBB:\DDD ^{\op }\to \hhh $ (as defined above). 
\begin{itemize}\renewcommand\labelitemi{--}
  \item If $(U, \alpha )$ is faithful, it reflects almost descent morphisms;
	\item If $(U, \alpha )$ is fully faithful, it reflects descent morphisms;
	\item If $\alpha $ is a pseudonatural equivalence, $(U, \alpha )$ reflects and preserves effective descent morphisms, descent morphisms and almost descent morphisms. 
\end{itemize}
\end{cor}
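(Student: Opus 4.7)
The plan is to reduce each item to Theorem \ref{facil1provado} by carefully tracking how properties of $\alpha$ propagate to the induced pseudonatural transformation $\alpha^{p}: \AAA_p \longrightarrow \BBB_{U(p)}$ of pseudocosimplicial-type objects $\dot{\Delta}\to\hhh$. By the definitions of (almost/$\j$-/effective) descent for a morphism $p$ of $\CCC$ recalled at the start of the section, the statement ``$p$ is of (almost/effective) $\AAA$-descent'' is literally the statement ``$\AAA_p$ is of (almost/effective) $\j$-descent''. Hence once we have $\alpha^{p}$ with the appropriate property we can invoke Theorem \ref{facil1provado} and we are done.

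First I would unfold how $\alpha^{p}$ arises: since $U$ preserves pullbacks, the diagram $\DDDD_p$ of iterated pullbacks of $p$ is sent by $U$ (up to canonical isomorphism) to the diagram $\DDDD_{U(p)}$; composing with $\AAA$ and $\BBB$ and pasting in the components of $\alpha$ yields a pseudonatural transformation $\alpha^p:\AAA_p \longrightarrow \BBB_{U(p)}$ whose component at an object $\mathsf{n}$ of $\dot{\Delta}$ is (up to isomorphism) the component of $\alpha$ at the pullback object $\DDDD_p(\mathsf{n})$. In particular, objectwise faithfulness (resp.\ objectwise fully faithfulness, resp.\ pseudonatural equivalence) of $\alpha$ is immediately inherited by $\alpha^{p}$.

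Next I would treat the three cases in turn. For (1), assume $(U,\alpha)$ is faithful and that $U(p)$ is of almost $\BBB$-descent, \emph{i.e.}\ $\BBB_{U(p)}$ is of almost $\j$-descent. Since $\alpha^{p}$ is objectwise faithful, the first half of Theorem \ref{facil1provado} applied to $\alpha^{p}:\AAA_p\longrightarrow \BBB_{U(p)}$ gives that $\AAA_p$ is of almost $\j$-descent, hence $p$ is of almost $\AAA$-descent. Case (2) is identical, using fully faithfulness and the second half of Theorem \ref{facil1provado}. For (3), if $\alpha$ is a pseudonatural equivalence then so is $\alpha^{p}$; since a pseudonatural equivalence is in particular objectwise fully faithful and objectwise faithful, reflection of all three classes follows from (1) and (2), and preservation is formally dual: applying the same argument with the roles of $\AAA_p$ and $\BBB_{U(p)}$ swapped (using the inverse equivalence, which is obtained because pseudonatural equivalences in $[\dot{\Delta},\hhh]_{PS}$ are invertible up to invertible modification) gives the converse implications.

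There is no real obstacle; the only point requiring mild care is the construction of $\alpha^{p}$ and the verification that objectwise faithful/fully faithful/equivalence is inherited by composing $\alpha$ with the diagram $\DDDD_p$, which is immediate because these three classes of $1$-cells in $\CAT$ (or in $\hhh$) are closed under precomposition with any $1$-cell and under the pasting operations used to form $\alpha^{p}$. Everything else is a direct citation of Theorem \ref{facil1provado}.
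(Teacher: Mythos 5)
Your proposal is correct and is essentially the paper's own argument: the paper derives this corollary directly from Theorem \ref{facil1provado} applied to the induced transformation $\alpha^{p}:\AAA_p\longrightarrow\BBB_{U(p)}$ (whose objectwise faithfulness/fully faithfulness/equivalence is inherited from $\alpha$, as noted just before the corollary), together with the remark that pseudonaturally equivalent diagrams have the same $\j$-descent behaviour. One tiny imprecision: for the effective-descent clause of item (3), reflection does not literally "follow from (1) and (2)" (Theorem \ref{facil1provado} does not mention effective descent); it follows from the equivalence-invariance argument you give immediately afterwards, which is exactly the paper's "of course" remark (and is the pseudonatural-equivalence case of Proposition \ref{proofemb}).
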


We finish this section by proving B\'{e}nabou-Roubaud Theorems. A functor $F$ is a \textit{pseudosection}
if there is $G$ such that $G\circ F$ is naturally isomorphic to the identity. We use the 
following straightforward result: 

\begin{lem}[Monadicity of pseudosections]\label{Monadicitykey}
If a pseudosection is right adjoint, then it is monadic.  In particular, if $\AAA $ is a 
pseudocosimplicial object,
then $\AAA (d ^i: \mathsf{n}\to\mathsf{n}+\mathsf{1} ) $ is monadic whenever it has a left adjoint.
\end{lem}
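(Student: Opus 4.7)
The plan is to apply Beck's monadicity theorem to $F: \mathtt{b} \to \mathtt{e}$. Write $L \dashv F$ and pick $G: \mathtt{e} \to \mathtt{b}$ together with a natural isomorphism $\theta: GF \stackrel{\cong}{\longrightarrow} \mathrm{Id}_{\mathtt{b}}$ witnessing the pseudosection hypothesis. First I would verify that $F$ reflects isomorphisms: if $Ff$ is invertible in $\mathtt{e}$, then $GFf$ is invertible in $\mathtt{b}$, and by naturality of $\theta$ we have $f = \theta_{Y} \circ GFf \circ \theta_{X}^{-1}$ (up to the obvious conjugation), hence $f$ itself is an isomorphism.

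Next I would check that $F$ creates coequalizers of $F$-split pairs. Given $f, g : X \rightrightarrows Y$ in $\mathtt{b}$ and an $F$-split coequalizer $FX \rightrightarrows FY \stackrel{c}{\to} Z$ in $\mathtt{e}$ with splittings $s : FY \to FX$ and $t : Z \to FY$, apply $G$: since split coequalizers are absolute, this yields a split coequalizer $GFX \rightrightarrows GFY \stackrel{Gc}{\to} GZ$ in $\mathtt{b}$. Conjugating the outcome through $\theta$ (i.e. replacing the apex data by $Gc\cdot \theta_Y^{-1}: Y\to GZ$, and transporting the splittings by $\theta_X$ and $\theta_Y$), one obtains a split coequalizer of $f, g$ in $\mathtt{b}$. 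Being a split (hence absolute) coequalizer, its image under $F$ is a coequalizer in $\mathtt{e}$, which must be isomorphic to the original $c: FY\to Z$. Together with the reflection of isomorphisms, this gives Beck's monadicity hypotheses, so $F$ is monadic.

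For the second assertion, the simplicial identities in $\dot\Delta$ include $s^k d^i = \mathrm{id}$ whenever $i = k$ or $i = k+1$, so every face map $d^i : \mathsf{n}\to\mathsf{n}+\mathsf{1}$ has a degeneracy $s^k$ as a one-sided inverse. Applying a pseudofunctor $\AAA$ and using its invertible compositor yields $\AAA(s^k)\circ \AAA(d^i)\cong \AAA(s^k d^i)\cong \mathrm{Id}$, exhibiting $\AAA(d^i)$ as a pseudosection. Assuming it additionally has a left adjoint, the first part applies and delivers monadicity.

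No serious obstacle is anticipated; the only point requiring care is bookkeeping the natural isomorphism $\theta$ when transporting the split coequalizer data from $GF$ back to $\mathrm{Id}_{\mathtt{b}}$, but this is a routine conjugation using naturality of $\theta$.
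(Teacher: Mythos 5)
Your proof is correct and follows essentially the same route as the paper: the paper's argument likewise transports the absolute (split) coequalizer back along the natural isomorphism $GF\cong\mathrm{Id}$ and invokes Beck's monadicity theorem, and handles the second assertion via the identity $s^k d^i=\mathrm{id}$ together with the pseudofunctor's invertible compositor. You merely spell out the Beck hypotheses (reflection of isomorphisms and creation of coequalizers of split pairs) in more detail than the paper's one-line absolute-colimit argument.
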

\begin{proof}
Assume that $F\circ G$ is isomorphic to the identity. Given an absolute colimit diagram $G\circ D$, 
it follows that $F\circ G\circ D\cong D $ is an absolute colimit diagram. The result follows, then,
from the monadicity theorem~\cite{Beck}.

The second part of the lemma follows from the fact that $d ^i$ is a retraction and, hence, since $\AAA $ is a pseudofunctor,
$\AAA (d ^i: \mathsf{n}\to\mathsf{n}+\mathsf{1} ) $ is a pseudosection for any $i\leq\mathsf{n}$.
\end{proof}

Recall that $\mathsf{1}$  is a monoid in $\dot{\Delta }$, as explained in Remark \ref{chosenmonoid}. On the one hand,
the monad induced by this monoid, considered, for instance, in
\cite{Street4} and \cite{Lawvere}, is denoted by $\suc : = (\mathsf{1}+-)$ on $\dot{\Delta }$. On the other hand,
this monad induces a pseudomonad 
$$\Suc : = \left[\suc , \hhh \right] _{PS} $$
on the $2$-category $\left[\dot{\Delta } , \hhh \right] _{PS}$ of pseudocosimplicial objects of $\hhh $. This is the 
$2$-dimensional (dual) analogue of the notion of d\'{e}calage of simplicial sets as in \cite{Duskin}.

In particular,
for each $\AAA : \dot{\Delta }\to\hhh $ the component of the unit of $\Suc $ on $\AAA $ gives a pseudonatural
transformation $\Suc ^\AAA : \AAA \longrightarrow \AAA \circ \Suc $ whose correspondent pseudofunctor is denoted by 
$\CCCC _ {{}_\AAA }:\mathsf{2}\to \left[ \dot{\Delta }, \hhh\right] _ {PS} $. 

Observe that,  $\CCCC _ {{}_\AAA }:\mathsf{2}\to \left[ \dot{\Delta }, \hhh\right] _ {PS} $ is 
given by the mate of $\AAA\circ\n : \mathsf{2}\times\dot{\Delta } \to\hhh $, where $\n $ is the mate of the unit 
of
$\suc $ viewed as a functor $\mathsf{2}\to \left[\dot {\Delta } , \dot{\Delta }\right]$, defined by 
$$
\n : \mathsf{2}\times\dot{\Delta } \to \dot{\Delta }$$
$$(a,b)\mapsto b + a \qquad\qquad (d,\id _ {{}_{b}}) \mapsto \left( d^0: b\to (b+1) \right) $$
$$(\id _ {{}_{a}}, d^i )  \mapsto 
\begin{cases}
    d^{i}:b\to (b+1),& \text{if } a=\mathsf{0}\\
    d^{i+1}: (b+1)\to (b+2),               & \text{otherwise}
\end{cases}$$
$$(\id _ {{}_{a}}, s^i )  \mapsto 
\begin{cases}
    s^{i}:b\to (b+1),& \text{if } a=\mathsf{0}\\
    s^{i+1}: (b+1)\to (b+2),               & \text{otherwise.}
\end{cases} $$

$$\xymatrix{  \mathsf{0} \ar[d]_{d}\ar[rr]^{d} && \mathsf{1} \ar[d]_ {d^0}\ar@<1.2 ex>[rr]\ar@<-1.2ex>[rr] && \mathsf{2}\ar[d]_{d^0} \ar[ll]
\ar@<1.2 ex>[rr]\ar[rr]\ar@<-1.2ex>[rr] && \mathsf{3} \ar[d]_ {d^0}\ar@/_2ex/@<-1 ex>[ll]|-{s^0}\ar@/_4ex/@<-1ex>[ll]|-{s^1}\ar@<1.2 ex>[rr]\ar@<0.4ex>[rr]\ar@<-0.4ex>[rr]\ar@<-1.2ex>[rr]&& \cdots \ar@/_1ex/@<-1 ex>[ll]|-{s^0}\ar@/_3ex/@<-1ex>[ll]|-{s^1}\ar@/_5ex/@<-1ex>[ll]|-{s^2}\ar[d]_{d^0} \\ 
\mathsf{1} \ar[rr]_-{d^1} && \mathsf{2} \ar@<1.2 ex>[rr]|{d^1}\ar@<-1.2ex>[rr]|{d^2} && \mathsf{3}\ar[ll]|-{s^1}
\ar@<1.2 ex>[rr]|-{d^1}\ar[rr]|-{d^2}\ar@<-1.2ex>[rr]|-{d^3} && \mathsf{4}\ar@/^2ex/@<1 ex>[ll]|-{s^1}\ar@/^4ex/@<1ex>[ll]|-{s^2}\ar@<1.2 ex>[rr]\ar@<0.4ex>[rr]\ar@<-0.4ex>[rr]\ar@<-1.2ex>[rr]&&\cdots\ar@/^1ex/@<1 ex>[ll]|-{s^1}\ar@/^3ex/@<1ex>[ll]|-{s^2}\ar@/^5ex/@<1ex>[ll]|-{s^3} }$$

We say that a pseudofunctor $\AAA : \dot{\Delta }\to\hhh $  satisfies the descent shift property 
(or just \textit{shift property} for short) if $\AAA\circ \Suc$ 
is of effective $\j$-descent. We get, then, a version 
of B\'{e}nabou-Roubaud Theorem for pseudocosimplicial objects: 

\begin{theo}\label{formalbenabou}
Let $\AAA : \dot{\Delta }\to\hhh $ be a pseudofunctor satisfying the shift property.
If the  
pseudonatural transformation $\Suc ^\AAA $  satisfies the Beck-Chevalley condition, 
then the Eilenberg-Moore factorization of $\AAA (d) $ is pseudonaturally equivalent to its usual factorization 
of $\j $-descent theory. In particular,
\begin{itemize}\renewcommand\labelitemi{--}
\item $\AAA$ is of effective $\j $-descent iff $\AAA (d) $ is monadic; 
\item $\AAA$ is of $\j $-descent iff $\AAA (d) $ is premonadic;
\item $\AAA$ is of almost $\j $-descent iff $\AAA (d) $ is almost monadic.
\end{itemize} 
\end{theo}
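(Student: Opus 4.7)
The proof strategy is to apply Theorem \ref{BFirst} directly to the pseudonatural transformation $\alpha := \Suc^\AAA : \AAA \longrightarrow \AAA\circ\suc$, with $\t := \j$ (so $\aaa = \Delta$, $\dot{\aaa} = \dot{\Delta}$, and the extra object is $\mathsf{a} = \mathsf{0}$) and $\BBB := \AAA\circ\suc$. All three iff-statements then follow at once from the three bullets of Theorem \ref{BFirst}, because the component $\alpha_{\mathsf{a}} = (\Suc^\AAA)_{\mathsf{0}}$ is precisely $\AAA(d)$, the morphism appearing in the Eilenberg-Moore factorization.

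The three hypotheses of Theorem \ref{BFirst} to be checked are: (i) $\BBB = \AAA\circ\suc$ has the appropriate descent property for each bullet; (ii) $\Suc^\AAA$ satisfies the Beck-Chevalley condition; (iii) every component of $\Suc^\AAA\ast\Id_{\j}$ is monadic. Hypothesis (i) in the effective case is exactly the shift property, and in the other two cases it is automatic, since effective $\j$-descent implies both $\j$-descent and almost $\j$-descent. Hypothesis (ii) is assumed.

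For (iii), I unpack the components of $\Suc^\AAA$ on $\Delta$ using the explicit description of $\suc$ in the excerpt: the diagram defining $\n$ shows that the $\mathsf{n}$-component of the unit $\Id_{\dot{\Delta}}\Rightarrow\suc$ is the face map $d^0:\mathsf{n}\to\mathsf{n+1}$ for every $\mathsf{n}\geq 1$. Hence each component of $\Suc^\AAA\ast\Id_{\j}$ equals $\AAA(d^0)$, and the simplicial identity $s^0 d^0 = \id$ gives $\AAA(s^0)\,\AAA(d^0)\cong\id$, exhibiting $\AAA(d^0)$ as a pseudosection. By Beck-Chevalley (read via doctrinal adjunction, as in Lemma \ref{BECKzero}) $\AAA(d^0)$ is a right adjoint, so Lemma \ref{Monadicitykey} delivers the monadicity of $\AAA(d^0)$.

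With (i)--(iii) verified, Theorem \ref{BFirst} yields the three asserted equivalences. The pseudonatural equivalence of the Eilenberg-Moore factorization of $\AAA(d)$ with its usual $\j$-descent factorization is packaged into the same argument: Lemma \ref{BECKzero} produces an extension $\widehat{\CCCC}_{\Suc^\AAA}:\mathsf{Adj}\to[\dot{\Delta},\hhh]_{PS}$ of $\CCCC_{\Suc^\AAA}$, and its mate, together with the commutativity of pseudo-Kan extensions (Corollary \ref{comutatividadeequi}), identifies the $\j$-unit factorization of Theorem \ref{fact} at $\AAA(\mathsf{0})\to\AAA(\mathsf{1})$ with the $\m$-unit factorization of Section \ref{EilenBERG} for the adjunction induced by $\AAA(d)$. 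The main obstacle is the combinatorial identification in (iii): correctly reading off from $\suc$ that its unit on $\Delta$ is the face map $d^0$, and then exploiting $s^0 d^0 = \id$ to upgrade the Beck-Chevalley right-adjoint condition to a monadic one through Lemma \ref{Monadicitykey}.
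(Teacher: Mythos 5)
Your proposal is correct and follows the paper's own route: the paper proves this theorem as a direct application of Theorem \ref{BFirst}, with the only substantive verification being that the components of $\Suc^\AAA\ast\Id_{\j}$ are monadic via Lemma \ref{Monadicitykey} (they are pseudosections because $s^0d^0=\id$, and right adjoints by Beck--Chevalley). Your write-up simply makes explicit the steps the paper leaves implicit, including the identification of the unit components of $\suc$ with $d^0$ and the reduction of the descent-property hypotheses to the shift property.
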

\begin{proof}
By Lemma \ref{Monadicitykey}, the components of  $\Suc ^\AAA\j = (\Suc ^\AAA)\ast \Id _ {{}_{\j }} $ are monadic. 
\end{proof}

It is known that in the context of \cite{Facets, Facets2} introduced in this section,  the natural
morphism $ E\times _p E\to E$ 
is always of effective $\AAA $-descent. It follows from this fact that $\AAA _p $ always satisfies 
the shift property. More precisely:

\begin{lem}
Let $\AAA : \CCC ^{\op}\to\CAT $ be a pseudofunctor, in which $\CCC $ is a category with pullbacks. 
If $p$ is a morphism of $\CCC $, $\AAA _ p$ (defined above as $\AAA _ p:=\AAA\circ\DDDD _ p $) 
satisfies the shift property.
\end{lem}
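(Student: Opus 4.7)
The plan is to prove the lemma by exhibiting a quasi-inverse to the canonical comparison functor $\eta^{\mathsf{0}}_{\AAA_p\circ\Suc}\colon \AAA(E)\to \Desc(\AAA_p\circ\Suc\circ\j)$. The key geometric fact is that the diagonal $\delta = \langle \id_E,\id_E\rangle\colon E\to E\times_p E$ is a section of each projection, and analogous degeneracy-like sections exist in every degree; this is the $2$-dimensional analogue of the classical ``extra degeneracy'' that makes an augmented cosimplicial object split, hence of effective descent.

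First I would make the shift explicit. Since $\Suc = \mathsf{1}+(-)$ translates ordinals up by one, $\AAA_p\circ\Suc$ sends $\mathsf{n}$ to $\AAA(E^{\times_p(n+1)})$, and the unique face $d\colon \mathsf{0}\to \mathsf{1}$ is sent to $\AAA_p(d^1) = \AAA(\pi)\colon \AAA(E)\to \AAA(E\times_p E)$, where $\pi\colon E\times_p E\to E$ is the projection $\DDDD_p(d^1)$. The cosimplicial identity $s^0 d^1 = \id_\mathsf{1}$ in $\dot{\Delta}$ yields $\pi\circ\delta = \id_E$, where $\delta = \DDDD_p(s^0)\colon E\to E\times_p E$ is the diagonal. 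Using the explicit description of the descent category in Remark \ref{strictdescentstrict}, I would then define
\begin{equation*}
F\colon \Desc(\AAA_p\circ\Suc\circ\j)\longrightarrow \AAA(E),\qquad (W,\varrho)\longmapsto \AAA(\delta)(W),
\end{equation*}
and extend $F$ to morphisms of descent data in the obvious way.

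To conclude, I would verify that $F$ is quasi-inverse to $\eta^{\mathsf{0}}_{\AAA_p\circ\Suc}$. In one direction, the composite $F\circ\eta^{\mathsf{0}}(X) = \AAA(\delta)\AAA(\pi)(X)$ is isomorphic to $\AAA(\pi\circ\delta)(X) = \AAA(\id_E)(X)\cong X$ via the pseudofunctor constraints of $\AAA$, giving a natural isomorphism $F\circ \eta^{\mathsf{0}}\cong \Id_{\AAA(E)}$. In the other direction, given a descent datum $(W,\varrho)$, the descent datum $\eta^{\mathsf{0}}\circ F(W,\varrho)$ is assembled from $\AAA(\pi)\AAA(\delta)(W)$, and one uses $\varrho$ itself, together with the cocycle and identity conditions of Remark \ref{strictdescentstrict}, to construct a natural isomorphism $(W,\varrho)\cong \eta^{\mathsf{0}}\circ F(W,\varrho)$. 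The main obstacle is precisely this second verification: the coherent comparison must be built from $\varrho$ in a way compatible with both the pseudofunctor constraints of $\AAA$ and the cosimplicial identities underlying $\DDDD_p$. This is the pseudofunctorial counterpart of the classical split-coequalizer argument at the heart of Beck's monadicity theorem (cf.\ Lemma \ref{Monadicitykey}); once translated carefully into the strict description of the descent object, all coherence conditions reduce to instances of the defining identities of a descent datum.
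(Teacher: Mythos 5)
Your proposal is correct in substance, but it takes a different route from the paper: the paper's own proof is a two-line reduction, observing that $\AAA_p\circ\Suc\simeq \AAA_{q}$ where $q\colon E\times_p E\to E$ is the projection, and then invoking the standard fact that a \emph{split epimorphism} is of effective $\AAA$-descent for any pseudofunctor $\AAA$ (the diagonal being the splitting). What you do is open up that black box and prove the cited fact directly in this instance, by exhibiting the quasi-inverse $(W,\varrho)\mapsto \AAA(\delta)(W)$ built from the extra degeneracy $\delta=\DDDD_p(s^0)$. Your identification of the shift ($\AAA_p\circ\Suc(d)=\AAA_p(d^1)$, and $s^0d^1=\id_{\mathsf{1}}$ giving $\pi\circ\delta=\id_E$) is accurate, and the first triangle identity $F\circ\eta^{\mathsf{0}}\cong\Id$ is exactly as you say. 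The trade-off is that your route makes you carry the real content of the argument yourself: the second triangle, namely extracting from the cocycle and unit conditions an isomorphism $W\cong\AAA(\delta\pi)(W)$ compatible with the descent data, is precisely the split-(co)descent computation, and you only sketch it; this does go through (restrict $\varrho$ along the section $(\delta\pi,\id)$ into the degree-two object and use the cocycle condition for coherence), but it is the step where all the work lives. Two smaller caveats: you should note that Remark \ref{strictdescentstrict} describes the descent category of a \emph{strict} $2$-functor out of $\Delta_{{}_{\mathrm{Str}}}$, so using it for the pseudofunctor $\AAA_p\circ\Suc\circ\j$ requires the strictification/flexibility results of Section \ref{descent} (or working directly with the weight $\WW$ of Definition \ref{Streetdescent}); and the appeal to Lemma \ref{Monadicitykey} is only an analogy, since that lemma concerns monadicity of pseudosections rather than effectivity of descent data. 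The paper's reduction buys brevity and reusability of a known result; your argument buys self-containedness at the cost of the coherence verification.
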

\begin{proof}
This follows from the fact that, for any pseudofunctor $\AAA : \CCC ^{\op}\to\CAT $, 
given a morphism $p:E\to B$ of $\CCC $, the natural
morphism $ E\times _p E\to E$ between the pullback of $p$ along $p$ and $E$ (being a split epimorphism)
is of effective $\AAA $-descent. In particular, $\AAA _ p\circ \Suc \simeq \AAA _ {E\times _B E\to E}$ is of
effective $\j $-descent.
\end{proof}

The usual \emph{B\'{e}nabou-Roubaud Theorem} (Theorem \ref{Roubaud}) follows
from Theorem \ref{formalbenabou}, as it is shown below. 

\begin{proof}
Assuming that $\AAA :\CCC ^{\op}\to\hhh  $ satisfies the hypotheses of Theorem \ref{Roubaud}, we have in particular that
$\Suc ^{\AAA _p} $ satisfies the Beck Chevalley condition. Therefore, since $\AAA _ p$ 
satisfies the shift property,
$\AAA _p (d) = \AAA (p) $ 
is monadic/premonadic/almost monadic
iff $\AAA _p $ is of effective $\j $-descent/$\j $-descent/almost $\j $-descent. 
\end{proof}

Finally, the most obvious consequence of the commutativity properties is that bilimits of effective 
$\j$-descent diagrams are effective $\j$-descent diagrams. For instance,
taking into account Remark~\ref{inducedbasicproblem} and realizing that 
pseudopullbacks of functors induce pseudopullback of overcategories we already get a weak version of
Theorem \ref{pseudopullback}.

Next section, we study stronger results on bilimits and apply them to descent theory.

\section{Further on Bilimits and Descent}\label{Further}

Henceforth, let $\t : \aaa\to\dot{\aaa }, \h : \bbb\to\dot{\bbb } $ be inclusions as in \ref{commu} and 
let $\hhh $ be a bicategorically complete $2$-category.

\begin{defi}[Pure Structure]\label{structuregeneral}
A morphism $f:\mathsf{a}\to b$ of $\dot{\aaa } $ is called a \textit{$\t$-irreducible} morphism  if 
$b\neq  \mathsf{a}$ and $f$ is not  in the image of
$$\circ : \dot{\aaa }(c,b)\times \dot{\aaa }(\mathsf{a},c)\to \dot{\aaa }(\mathsf{a}, b), $$
for every $b\neq c $ in $\aaa $.

An object $c$ of $\aaa  $ is called a \textit{$\t$-pure structure object}  if each $1$-cell 
$g$ of $\dot{\aaa }(\mathsf{a}, c) $ can be factorized through some $\t $-irreducible 
morphism $f:\mathsf{a}\to b $ such that $b\neq c $. That is to say, $c$ is a $\t$-pure structure object if,
for all $ g\in\dot{\aaa }(\mathsf{a}, c)$, there are a morphism $g'$ and a $\t $-irreducible morphism $f$ 
such that $g'f = g$. 

The full sub-$2$-category of the $\t $-pure structure objects of $\aaa  $ 
is denoted by $\mathfrak{S}_{{}_{\t }}$, while the full sub-$2$-category of $\dot{\aaa }  $ 
of the objects that are not in $\mathfrak{S}_{{}_{\t }}$  (including $\mathsf{a}$) is denoted by 
$\mathfrak{I} _{{}_{\t }} $. 
 We have the full inclusion  $\i_{{}_{\t}}:\mathfrak{I}_{{}_{\t }}\to \dot{\aaa } $.
\end{defi}

In particular, if $f:\mathsf{a}\to b $ is a $\t$-irreducible morphism of 
$\dot{\aaa } $, then $b$ is an object of $ \mathfrak{I}_{{}_{\t }}$.
We denote by 
$\gggg _ {{}_{\t}} :\overline{\mathfrak{I} _{{}_{\t }}
\times\mathsf{2} }\to \mathfrak{I} _{{}_{\t }}\times\mathsf{2}  $ the full inclusion in which 
$$\obj \left(\overline{\mathfrak{I} _{{}_{\t }}\times\mathsf{2} } \right):= 
\obj \left( \mathfrak{I} _{{}_{\t }}\times\mathsf{2}\right)-\left\{ (\mathsf{a}, \mathsf{0})\right\}. $$

\begin{theo}\label{GALOISI}
Let  $\alpha : \AAA\longrightarrow\BBB $ be an objectwise fully faithful pseudonatural transformation
such that
$\BBB $ is of effective $\t$-descent. We consider the mate of $\alpha $, denoted by 
$\CCCC _ \alpha : \dot{\aaa }\times \mathsf{2}\to \hhh $.
The pseudofunctor $\AAA $ is of effective $\t $-descent if and only if 
$\CCCC _ \alpha\circ \left(\i_{{}_{\t}}\times \Id _ {{}_{\mathsf{2} }}\right) : \mathfrak{I}_{{}_{\t }}\times\mathsf{2}\to \hhh  $ is of  effective $\gggg _ {{}_{\t}}$-descent.
\end{theo}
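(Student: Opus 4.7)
The plan is to reduce both ``$\AAA$ is of effective $\t$-descent'' and ``$\CCCC_\alpha\circ(\i_{{}_{\t}}\times\Id_{{}_{\mathsf{2}}})$ is of effective $\gggg_{{}_{\t}}$-descent'' to the equivalence of a single comparison morphism $\AAA(\mathsf{a})\to L$, where $L$ is a bilimit that we will describe in two equivalent ways. By Lemma~\ref{pseudoalgebrakanres}, effective $\t$-descent of $\AAA$ is equivalent to $\eta^{\mathsf{a}}_{{}_\AAA}:\AAA(\mathsf{a})\to\Ps\Ran_\t(\AAA\circ\t)(\mathsf{a})$ being an equivalence, and by Theorem~\ref{pointwise} its codomain is $\bilim(\dot\aaa(\mathsf{a},\t-),\AAA\circ\t)$. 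Analogously, the comparison for $\CCCC_\alpha\circ(\i_{{}_{\t}}\times\Id)$ targets the bilimit weighted by $\overline{\mathfrak{I}_{{}_{\t}}\times\mathsf{2}}((\mathsf{a},\mathsf{0}),\gggg_{{}_{\t}}-)$. Note that since $(\mathsf{a},\mathsf{0})$ is the extra object of $\gggg_{{}_{\t}}$ and $\CCCC_\alpha(\mathsf{a},\mathsf{0})=\AAA(\mathsf{a})$, the two comparison maps have a common domain, so it suffices to produce a canonical equivalence between the two bilimits compatible with the comparisons.

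To construct this equivalence, I would first observe that the $\gggg_{{}_{\t}}$-bilimit decomposes along the factor $\mathsf{2}$: its $i=\mathsf{0}$ part is indexed by $\mathfrak{I}_{{}_{\t}}-\{\mathsf{a}\}$ and its $i=\mathsf{1}$ part by all of $\mathfrak{I}_{{}_{\t}}$, glued by the morphism $\mathsf{0}\to\mathsf{1}$ whose action is precisely $\alpha$. Using Fubini for pseudoends (Theorem~\ref{Fubini}) and the fundamental equivalence (Proposition~\ref{Q}), this identifies the $\gggg_{{}_{\t}}$-bilimit as a pseudopullback built from bilimits over $\mathfrak{I}_{{}_{\t}}$ with values in $\AAA$ and $\BBB$, connected via $\alpha$.

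Next, I exploit the hypotheses to rewrite the $\t$-descent bilimit $\bilim(\dot\aaa(\mathsf{a},\t-),\AAA\circ\t)$ in the same form. For each $\t$-pure structure object $s$, every $f\in\dot\aaa(\mathsf{a},s)$ factors as $g\circ h$ through some $\t$-irreducible $h:\mathsf{a}\to b$ with $b\in\mathfrak{I}_{{}_\t}$; combined with objectwise fully faithfulness of $\alpha$, this shows that the pseudoend contribution of $s$ to $\bilim(\dot\aaa(\mathsf{a},\t-),\AAA\circ\t)$ is already determined (up to equivalence) by the image of the $\AAA$-data under $\alpha$ in $\BBB(s)$. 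The effective $\t$-descent of $\BBB$ then lets us reassemble the $\BBB$-data at pure structure objects from the $\BBB$-data at $\mathfrak{I}_{{}_{\t}}$, so the $\t$-descent bilimit of $\AAA$ is canonically equivalent to the pseudopullback obtained above for the $\gggg_{{}_{\t}}$-bilimit. Checking that the two comparison arrows from $\AAA(\mathsf{a})$ agree under this equivalence follows from the factorizations of Theorem~\ref{fact} applied to both pseudo-Kan extensions.

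The main obstacle will be the careful bilimit bookkeeping in the third step: formalizing ``the data at pure structure objects is redundant'' as an equivalence of weighted bilimits. Concretely, I expect to encode this as a biadjoint-triangle argument (Theorem~\ref{biadjointtriangle}) for the restriction $2$-functor $[\dot\aaa,\hhh]_{PS}\to[\mathfrak{I}_{{}_{\t}}\times\mathsf{2},\hhh]_{PS}$ sitting above the inclusions, together with the Fubini interchange of pseudoends, so that objectwise fully faithfulness of $\alpha$ and effective $\t$-descent of $\BBB$ become exactly the hypotheses needed to identify the two pseudomonads induced on the relevant diagram categories with each other along $\AAA$. The \emph{if} and \emph{only if} then follow symmetrically from the resulting canonical equivalence of bilimits compatible with the unit.
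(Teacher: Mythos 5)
Your central reduction is exactly the one the paper uses, and it is sound: both sides are identified (after passing to $\hhh=\CAT$ via representables) with a full subcategory of $\BBB(\mathsf{a})\simeq\bilim(\dot{\aaa}(\mathsf{a},\t-),\BBB\circ\t)$, namely the objects whose associated cone lifts along the objectwise fully faithful $\alpha\t$, and the heart of the matter is your observation that the lifting condition need only be tested at objects of $\mathfrak{I}_{{}_{\t}}$: for a pure structure object $c$ and $g\in\dot{\aaa}(\mathsf{a},c)$, writing $g=g'f$ with $f$ $\t$-irreducible and using $\varrho_{{}_{c}}(g)\cong\BBB(g')\varrho_{{}_{b}}(f)\cong\BBB(g')(\alpha\t)_{{}_{b}}(u)\cong(\alpha\t)_{{}_{c}}(\AAA(g')(u))$ transports the lift from $b\in\mathfrak{I}_{{}_{\t}}$ to $c$. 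That computation is precisely the paper's proof, and carried out directly it closes the argument: a $\gggg_{{}_{\t}}$-cone is an object $Y$ of $\BBB(\mathsf{a})$ (the component at $(\mathsf{a},\mathsf{1})$, which by effective $\t$-descent of $\BBB$ encodes the whole $i=\mathsf{1}$ part) together with a lift of its restriction to $\mathfrak{I}_{{}_{\t}}\cap\aaa$, while a $\t$-descent cone for $\AAA$ is such a $Y$ together with a lift of its restriction to all of $\aaa$; your key lemma identifies the two.

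Where I would push back is on the proposed packaging of the "bookkeeping." Theorem \ref{Fubini} is stated for product shapes $\aaa^{\op}\times\bbb^{\op}\times\aaa\times\bbb$, and $\overline{\mathfrak{I}_{{}_{\t}}\times\mathsf{2}}$ is not a product (the object $(\mathsf{a},\mathsf{0})$ has been deleted), so the interchange of pseudoends does not literally give your pseudopullback decomposition; you would have to argue the decomposition by hand, which is what the explicit cone description above already does. More seriously, the biadjoint-triangle step is not set up correctly: the "restriction $2$-functor $[\dot{\aaa},\hhh]_{PS}\to[\mathfrak{I}_{{}_{\t}}\times\mathsf{2},\hhh]_{PS}$" is not defined by restriction alone (a single diagram on $\dot{\aaa}$ does not determine a diagram on $\mathfrak{I}_{{}_{\t}}\times\mathsf{2}$ -- you need the pair $(\AAA,\BBB,\alpha)$, i.e.\ a diagram on $\dot{\aaa}\times\mathsf{2}$), and Theorem \ref{biadjointtriangle} requires right biadjoints that are local equivalences together with a pseudonatural equivalence of left biadjoints, hypotheses you have not exhibited for these functors. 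None of this machinery is needed: once you have the key lemma, the direct identification of both weighted bilimits as categories of liftable cones finishes the proof, so I would simply delete the Fubini/biadjoint-triangle detour and write out the cone comparison.
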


\begin{proof}
Without losing generality, we prove it to $\hhh = \CAT $ and get the general result via representable $2$-functors.
We just need to prove that $\Ps\Ran _ \t \AAA\circ \t (\mathsf{a}) $ is equivalent to  
$\Ps\Ran _{{\gggg _ {{}_{\t}} }} \left(\CCCC _ \alpha\circ\left(\i_{{}_{\t}}\times \Id _ {{}_{\mathsf{2} }}\right) \circ \gggg _ {{}_{\t}} \right)(\mathsf{a}, \mathsf{0}) $.

The category of pseudonatural transformations 
$\varrho ' : \dot{\aaa}(\mathsf{a}, \t (-) )\to \AAA \circ\t $ 
is equivalent to the category of
pseudonatural transformations
 $\varrho : \dot{\aaa}(\mathsf{a}, \t (-) )\longrightarrow \BBB \circ\t $ 
 that can be factorized through $\alpha\t $, since $\alpha\t $ is objectwise fully faithful. 
Also, given $\varrho : \dot{\aaa}(\mathsf{a}, \t (-) )\longrightarrow \BBB \circ\t $, 
there exists  $\varrho ' : \dot{\aaa}(\mathsf{a}, \t (-) )\to \AAA \circ\t $ such that 
$\varrho\cong (\alpha\t) \varrho '$ if and only if the image of $(\alpha\t) _ {{}_{b}}$ is 
essentially surjective in the image of $\varrho _ {{}_{b}}$ for every $b$ of $\aaa $. 
Also, if such $\varrho '$ exists, it is unique up to isomorphism: 
it is the pseudopullback of $\varrho$ along $(\alpha\t)$.

Actually, we claim that, for the existence of such $\varrho '$, it is (necessary and) sufficient 
$(\alpha\t) _ {{}_{b}}$ be essentially surjective onto the image of 
$\varrho _ {{}_{b}}$ for every object $b$ of $\mathfrak{I}_{{}_{\t }}$. That is to say, we just need to 
verify the lifting property for the objects in $\mathfrak{I}_{{}_{\t }}$.

Indeed, assume that $\varrho\i_{{}_{\t}}$ can be lifted by $\alpha\t\i_{{}_{\t}}$. Given an object $c$ of $\mathfrak{S}_{{}_{\t }}$ and a morphism $g: \mathsf{a}\to c $, we prove that $\varrho _ {{}_{c}}(g) $ is in the image of $\left(\alpha\t\right) _ {{}_{c}} $ up to isomorphism. Actually, there is a $\t $-irreducible morphism $f:\mathsf{a}\to b $ such that $g'f = f $ for some $g':b\to c $ morphism of $\aaa $, and, by hypothesis, there is an object $u$ of $\AAA (b) $ such that $\left(\alpha\t\right) _ {{}_{b}}(u)\cong \varrho _ {{}_{b}} (f) $, thereby:

$$\varrho _ {{}_{c}}(g) = \varrho _ {{}_{c}}\cdot\left(\dot{\aaa } (\mathsf{a}, \t (g'))\right)(f)\cong \BBB (g') \varrho _ {{}_{b}}(f)\cong \BBB (g') \left(\alpha\t\right) _ {{}_{b}}(u)\cong  \left(\alpha\t\right) _ {{}_{c}}(\AAA (g')(u)) .
$$

This completes the proof that it is enough to test the lifting property for the objects in $\mathfrak{I}_{{}_{\t }}$. Now, one should observe that, since $\BBB $ is of effective $\t $-descent, a pseudonatural transformation
$$\mathfrak{I} _{{}_{\t }}\times\mathsf{2}((\mathsf{a}, \mathsf{0}), \gggg _ {{}_{\t}} - ) \longrightarrow\CCCC _ \alpha\circ\left(\i_{{}_{\t}}\times \Id _ {{}_{\mathsf{2} }}\right)\circ \gggg _ {{}_{\t}} $$
is precisely determined (up to isomorphism) by a pseudonatural transformation $$\varrho : \dot{\aaa}(\mathsf{a}, \t (-) )\longrightarrow \BBB \circ\t .$$ (\textit{i.e.}, an object of $\BBB (\mathsf{a}) $), such that $\varrho\i_{{}_{\t}}$ can be lifted by 
$\alpha\t\i_{{}_{\t}}$. That is to say, as we proved, this is just a pseudonatural transformation 
$$\varrho ' : \dot{\aaa}(\mathsf{a}, \t (-) )\to \AAA \circ\t .$$
\end{proof}

\begin{rem}
Definition \ref{structuregeneral} and Theorem \ref{GALOISI} are part of a 
general perspective over generalizations
of classical theorems of cubes and pullbacks. 
The exhaustive exposition of such is outside the scope of this paper.

\end{rem}

We return to the context of Section \ref{Formal}. Let $\TTT $ be an idempotent pseudomonad on a $2$-category $\hhh $ and $X$ be an object of $\hhh $. We say that $X$ is of \textit{$\TTT $-descent} if the comparison $\eta _ {{}_{X}}: X\to\TTT (X) $ is fully faithful.
It is important to note that, if $\AAA : \dot{\aaa }\to\hhh $ is of $\t $-descent (following Definition \ref{almost descent}), then $\AAA $ is of $\Ps\Ran _ {\t }(-\circ\t ) $-descent.

\begin{cor}\label{strong commutativity}
Let $\TTT $ be an idempotent pseudomonad on  $\hhh $ and 
$\AAA : \dot{\aaa}\to\hhh $ a pseudofunctor such that all the objects in the image of $\AAA \circ \t $ are
$\TTT $-descent objects. Assume that both $\AAA, \TTT\circ\AAA $ are of effective $\t $-descent.
We assume that $\AAA (b) $ can be 
endowed with a $\TTT $-pseudoalgebra structure for every object $b\not\in \mathfrak{S}_{{}_{\t }}$ in $\aaa $. Then $\AAA (\mathsf{a}) $ can be endowed with a $\TTT $-pseudoalgebra structure.
\end{cor}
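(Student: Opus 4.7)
The strategy is to apply Theorem~\ref{GALOISI} twice---once to $\alpha := \eta\AAA : \AAA \longrightarrow \TTT\AAA$ and once to $\alpha' := \Id _ {{}_{\TTT\AAA}}$---and to compare the two effective $\gggg _ {{}_{\t}}$-descent descriptions thus obtained, exploiting the $\TTT$-pseudoalgebra hypothesis on objects of $\aaa$ outside $\mathfrak{S}_{{}_{\t }}$ to identify the two restricted diagrams.

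First I would verify that $\alpha$ is objectwise fully faithful. On objects of $\aaa$ this is precisely the hypothesis that the image of $\AAA\circ\t$ consists of $\TTT$-descent objects: each component $\alpha _ {{}_{\t b}}=\eta _ {{}_{\AAA(\t b)}}$ is fully faithful. At $\mathsf{a}$, the effective $\t$-descent of both $\AAA$ and $\TTT\AAA$ together with Theorems~\ref{pseudoalgebrakanres} and~\ref{pointwise} yield
\[\AAA(\mathsf{a})\simeq\bilim\bigl(\dot{\aaa}(\mathsf{a},\t-),\,\AAA\circ\t\bigr),\qquad \TTT\AAA(\mathsf{a})\simeq\bilim\bigl(\dot{\aaa}(\mathsf{a},\t-),\,\TTT\AAA\circ\t\bigr),\]
so that $\alpha _ {{}_{\mathsf{a}}}=\eta _ {{}_{\AAA(\mathsf{a})}}$ is the morphism on bilimits induced by the objectwise fully faithful pseudonatural transformation $\alpha\ast \Id_\t$. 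A direct inspection of the pseudoend formula shows that fully faithful pseudonatural transformations between $\CAT$-valued pseudofunctors induce fully faithful morphisms on their bilimits (objectwise fullness and faithfulness of $\alpha _ {{}_{b}}$ transfer to the hom-categories of modifications); testing against representable $2$-functors transports this to $\hhh$, so $\alpha _ {{}_{\mathsf{a}}}$ is fully faithful.

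With $\alpha$ objectwise fully faithful and $\TTT\AAA$ of effective $\t$-descent, Theorem~\ref{GALOISI} applied to $\alpha$ yields
\[\AAA(\mathsf{a})\simeq\bilim\Bigl(\overline{\mathfrak{I}_{{}_{\t }}\times\mathsf{2}}\bigl((\mathsf{a},\mathsf{0}),\gggg _ {{}_{\t}}\,-\bigr),\ \CCCC_\alpha\circ(\i_{{}_{\t}}\times\Id _ {{}_{\mathsf{2}}})\circ\gggg _ {{}_{\t}}\Bigr),\]
while the same theorem applied to $\alpha'$, which is trivially objectwise fully faithful and still targets the effective $\t$-descent pseudofunctor $\TTT\AAA$, provides the analogous description of $\TTT\AAA(\mathsf{a})$ with $\CCCC_{\alpha'}$ in place of $\CCCC_\alpha$. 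The decisive observation uses the remaining hypothesis: the two restricted diagrams over $\overline{\mathfrak{I}_{{}_{\t }}\times\mathsf{2}}$ are pseudonaturally equivalent. Indeed, for every $b\in\mathfrak{I}_{{}_{\t }}\setminus\{\mathsf{a}\}$---by definition an object of $\aaa$ not in $\mathfrak{S}_{{}_{\t }}$---Theorem~\ref{algebra} and the pseudoalgebra hypothesis make $\eta _ {{}_{\AAA(b)}}$ an equivalence; at every $(b,\mathsf{1})$ both diagrams already agree on $\TTT\AAA(b)$; and the sole entry where they would genuinely differ, namely $(\mathsf{a},\mathsf{0})$, has been removed by $\gggg _ {{}_{\t}}$. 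Assembling the equivalences $\{\eta _ {{}_{\AAA(b)}}\}_{b\in\mathfrak{I}_{{}_{\t }}\setminus\{\mathsf{a}\}}$ at the $(b,\mathsf{0})$-entries with identities at the $(b,\mathsf{1})$-entries produces the required objectwise equivalence of diagrams, whose pseudonaturality is immediate from that of $\eta\AAA$.

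Transporting this equivalence through the two bilimit formulas yields $\AAA(\mathsf{a})\simeq\TTT\AAA(\mathsf{a})$, and tracking the comparison arrows built into each effective $\gggg _ {{}_{\t}}$-descent identifies the resulting equivalence with $\eta _ {{}_{\AAA(\mathsf{a})}}$ up to isomorphism; Theorem~\ref{algebra} then furnishes the desired $\TTT$-pseudoalgebra structure on $\AAA(\mathsf{a})$. The main obstacle is precisely this last identification: one must confirm that the pseudonatural equivalence between the two restricted diagrams, when pushed through the universal properties of both bilimits, is intertwined by exactly the unit component $\eta _ {{}_{\AAA(\mathsf{a})}}$, which reduces to a naturality check for the $\gggg _ {{}_{\t}}$-descent comparisons together with the coherence of the pseudo-Kan extension pseudofunctor.
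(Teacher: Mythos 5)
Your argument is correct and follows the route the paper intends for this corollary: a double application of Theorem \ref{GALOISI} (once to $\eta\AAA :\AAA\longrightarrow\TTT\AAA$, once to the identity on $\TTT\AAA$), using that the objects of $\mathfrak{I}_{{}_{\t }}$ other than $\mathsf{a}$ are exactly the objects of $\aaa$ outside $\mathfrak{S}_{{}_{\t }}$, so the pseudoalgebra hypothesis makes the two restricted diagrams over $\overline{\mathfrak{I}_{{}_{\t }}\times\mathsf{2}}$ pseudonaturally equivalent. The final identification of the resulting equivalence with $\eta _ {{}_{\AAA(\mathsf{a})}}$, which you flag as the delicate point, can in fact be bypassed: since $\TTT$ is idempotent, any equivalence $\AAA(\mathsf{a})\simeq\TTT(\AAA(\mathsf{a}))$ already forces $\eta _ {{}_{\AAA(\mathsf{a})}}$ to be an equivalence, by Lemma \ref{multidem} and the pseudonaturality of $\eta$.
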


\begin{cor}\label{comutatividadeforte}
Let $\AAA : \dot{\aaa }\to \left[ \dot{\bbb } , \hhh\right] _ {PS}$ be an effective $\t$-descent pseudofunctor such that all the pseudofunctors in the image of $\AAA\circ\t $ are of $\h$-descent. Furthermore, we assume that $\AAA (b) $ is of effective $\h $-descent for every $b\not\in \mathfrak{S}_{{}_{\t }}$ in $\aaa $. Then $\AAA (\mathsf{a}) $ is of effective $\h $-descent.
\end{cor}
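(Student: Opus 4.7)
The plan is to derive Corollary \ref{comutatividadeforte} as a direct application of Corollary \ref{strong commutativity} in the $2$-category $\hhh ':=\left[\dot{\bbb},\hhh\right]_{PS}$ equipped with the idempotent pseudomonad $\TTT:=\left\langle \h\right\rangle = \Ps\Ran_\h(-\circ\h)$. The dictionary is the one provided by Theorem \ref{pseudoalgebrakan} and Lemma \ref{pseudoalgebrakanres}: a pseudofunctor $\DDD:\dot{\bbb}\to\hhh $ is of effective $\h $-descent if and only if $\DDD $ carries a $\left\langle\h\right\rangle $-pseudoalgebra structure, and $\DDD $ is of $\h $-descent if and only if $\DDD $ is a $\left\langle\h\right\rangle $-descent object (\emph{i.e.} $\eta^{\left\langle\h\right\rangle}_{\DDD}$ is fully faithful). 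Under this dictionary, hypotheses (a) and (d) of Corollary \ref{strong commutativity} translate respectively to the $\h $-descent hypothesis on the image of $\AAA\circ\t $ and to the effective $\h $-descent hypothesis on $\AAA(b)$ for $b\not\in\mathfrak{S}_{{}_{\t}}$. Hypothesis (b), effective $\t $-descent of $\AAA $, is the main assumption of the corollary.

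The nontrivial point is hypothesis (c) of Corollary \ref{strong commutativity}, namely that $\TTT\circ\AAA:\dot{\aaa}\to\hhh ' $ is again of effective $\t $-descent. The plan here is to show that the pseudofunctor $\left\langle\h\right\rangle :\hhh '\to\hhh ' $ preserves pointwise (right) pseudo-Kan extensions along $\t $. Indeed, by Corollary \ref{pointwise2}, such extensions are computed by the weighted bilimits $\bilim(\dot{\aaa}(c,\t-),\AAA )$, and weighted bilimits in $\hhh '=[\dot{\bbb},\hhh]_{PS}$ are computed objectwise in $\hhh $ (Lemma \ref{YonedaBILIMIT} and standard computations). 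Since $\left\langle\h\right\rangle $ is itself given at each object $b\in\dot{\bbb} $ by the weighted bilimit $\bilim(\dot{\bbb}(b,\h-),-)$ (again by Corollary \ref{pointwise2}), the preservation reduces to the commutativity of iterated weighted bilimits, which follows from the Fubini theorem for pseudoends (Theorem \ref{Fubini}) together with the fundamental equivalence of Proposition \ref{Q}. Explicitly, for any pseudofunctor $\BBB:\dot{\aaa}\to\hhh ' $ and any $b\in\dot{\bbb} $,
\[
\left\langle\h\right\rangle \bigl(\Ps\Ran_\t \BBB \bigr)(c)(b)\simeq \int_{a\in\aaa}\int_{b'\in\bbb}\CAT\bigl(\dot{\aaa}(c,\t a)\times \dot{\bbb}(b,\h b'),\, \BBB(a)(b')\bigr)\simeq \Ps\Ran_\t\bigl(\left\langle\h\right\rangle\circ\BBB\bigr)(c)(b),
\]
by swapping the order of the two pseudoends. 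Applied to $\BBB=\AAA\circ\t $, combined with the equivalence $\AAA\simeq \Ps\Ran_\t(\AAA\circ\t)$ coming from effective $\t $-descent, this yields an equivalence $\left\langle\h\right\rangle\circ\AAA\simeq \Ps\Ran_\t\bigl(\left\langle\h\right\rangle\circ\AAA\circ\t\bigr)$, i.e. hypothesis (c).

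With all four hypotheses in place, Corollary \ref{strong commutativity} delivers a $\left\langle\h\right\rangle $-pseudoalgebra structure on $\AAA(\mathsf{a})$, which by the dictionary above is exactly the assertion that $\AAA(\mathsf{a})$ is of effective $\h $-descent. The main technical obstacle is the preservation step (c); once the objectwise nature of bilimits in functor $2$-categories is made explicit, it is essentially a bookkeeping exercise with pseudoends, and it is the bicategorical counterpart of the evident fact that precomposition $[\h,\hhh]_{PS}$ and the right biadjoint $\Ps\Ran_\h$ both preserve weighted bilimits, whence so does their composite $\left\langle\h\right\rangle $.
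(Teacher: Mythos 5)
Your proposal is correct and is essentially the paper's intended argument: the corollary is stated as a direct consequence of Corollary \ref{strong commutativity} applied to $\TTT=\left\langle \h\right\rangle$ on $\left[\dot{\bbb},\hhh\right]_{PS}$, and your verification that $\left\langle \h\right\rangle\circ\AAA$ remains of effective $\t$-descent (via preservation of the pointwise pseudo-Kan extension by $\Ps\Ran_\h\circ\left[\h,\hhh\right]_{PS}$ and the Fubini interchange of pseudoends) supplies exactly the hypothesis the paper leaves implicit. Two cosmetic caveats: only the forward implication ``$\h$-descent implies $\left\langle \h\right\rangle$-descent'' is asserted in the paper (and it is the only direction you need), and your displayed pseudoend formula is literally valid only for $\hhh=\CAT$, so for general $\hhh$ one should first reduce via representable $2$-functors as is done in the proof of Theorem \ref{GALOISI}.
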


Recall the following full inclusion of $2$-categories $\h : \bbb\to\dot{\bbb }$ described in Section \ref{Elementary Examples}.
\begin{equation}\tag{$\mathfrak{P}$}\label{ppullback}
\xymatrix{ & e\ar[d]\ar@{}[rd]|-{\mapsto } & \mathsf{b}\ar[d]\ar[r]&e\ar[d] \\
c\ar[r]&o & c\ar[r]&o
}
\end{equation}
As explained there, a diagram $\dot{\bbb }\to \hhh $ is of effective $\h $-descent if and only if it is a pseudopullback.
In this case, the unique object in $\mathfrak{S} _ {{}_{\h}} $ is $o$. Thereby we get:

\begin{cor}
Assume that $\AAA : \dot{\bbb }\to [\dot{\aaa }, \hhh ] _ {PS}$ is a pseudopullback diagram. If $\AAA (c) , \AAA (e): \dot{\aaa } \to \hhh $ are of effective $\t $-descent and $\AAA (o): \dot{\aaa } \to \hhh $ is of $\t $-descent, then $\AAA (\mathsf{b} ) $ is of effective $\t $-descent.
\end{cor}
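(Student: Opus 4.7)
The plan is to recognize this result as a direct specialization of Corollary \ref{comutatividadeforte} to the pullback inclusion $\h : \bbb \to \dot{\bbb}$ pictured in diagram \ref{ppullback}, so all the work is the bookkeeping of identifying $\mathfrak{S}_{{}_{\h}}$ and $\mathfrak{I}_{{}_{\h}}$ for this specific $\h$, and verifying that each hypothesis of Corollary \ref{comutatividadeforte} is met.

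First I would read off the combinatorics of Definition \ref{structuregeneral} for this $\h$. The $2$-category $\dot{\bbb}$ has objects $\mathsf{b}, e, c, o$; the generating $1$-cells out of $\mathsf{b}$ are the two projections $\mathsf{b} \to e$ and $\mathsf{b} \to c$, and the only other $1$-cell out of $\mathsf{b}$ is the composite $\mathsf{b} \to o$ (unique up to the coherent invertible $2$-cell of the pseudopullback cone). Hence the $\h$-irreducible morphisms from $\mathsf{b}$ are exactly the two projections, which shows $e, c \in \mathfrak{I}_{{}_{\h}}$. On the other hand, every morphism $\mathsf{b} \to o$ in $\dot{\bbb}(\mathsf{b}, o)$ factors through one of these projections, so $o$ is an $\h$-pure structure object, giving $\mathfrak{S}_{{}_{\h}} = \{o\}$. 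Consequently, the objects of $\bbb$ that are \emph{not} in $\mathfrak{S}_{{}_{\h}}$ are precisely $e$ and $c$.

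Second, I would verify the three hypotheses of Corollary \ref{comutatividadeforte} for $\AAA : \dot{\bbb} \to [\dot{\aaa}, \hhh]_{PS}$. By the discussion in Section \ref{Elementary Examples}, the assumption that $\AAA$ is a pseudopullback diagram is exactly the statement that $\AAA$ is of effective $\h$-descent. The pseudofunctors in the image of $\AAA \circ \h$ are $\AAA(e), \AAA(c), \AAA(o)$; by hypothesis the first two are of effective $\t$-descent and the last is of $\t$-descent, and since effective $\t$-descent entails $\t$-descent, all three are at least of $\t$-descent. Finally, the requirement that $\AAA(b)$ be of effective $\h$-descent for each $b \in \bbb$ with $b \notin \mathfrak{S}_{{}_{\h}}$ amounts precisely to effective $\t$-descent of $\AAA(e)$ and $\AAA(c)$, which is exactly our assumption. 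Applying Corollary \ref{comutatividadeforte} then yields the effective $\t$-descent of $\AAA(\mathsf{b})$.

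There is no serious obstacle, since the argument is entirely a specialization. The only point requiring a little care is the identification of $\mathfrak{S}_{{}_{\h}}$ and $\mathfrak{I}_{{}_{\h}}$; this depends on tracing which $1$-cells out of $\mathsf{b}$ in $\dot{\bbb}$ are $\h$-irreducible, and on observing that the coherent invertible $2$-cell of the pseudopullback cone does not create any new irreducible morphism to $o$.
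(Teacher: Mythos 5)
Your proof is correct and matches the paper's own (very terse) argument: the paper simply notes that $o$ is the unique object of $\mathfrak{S}_{{}_{\h}}$ and then invokes Corollary \ref{comutatividadeforte} with the roles of $(\aaa,\t,\mathsf{a})$ and $(\bbb,\h,\mathsf{b})$ interchanged, exactly as you do. Your additional bookkeeping — identifying the $\h$-irreducible morphisms, checking that effective $\t$-descent implies $\t$-descent for the image of $\AAA\circ\h$ — is precisely the verification the paper leaves implicit.
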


Taking into account Remark~\ref{inducedbasicproblem} and realizing that pseudopullbacks of functors induce pseudopullback of overcategories, we get Theorem \ref{pseudopullback} as a corollary.

\subsection{Applications}
In this subsection, we finish the paper giving applications of our results and proving the remaining theorems presented in Section \ref{Basic Problem}.
Firstly, considering our inclusion $\j : \Delta\to \dot{\Delta } $, it is important to observe that $\mathsf{1}\not\in\mathfrak{S} _ {{}_{\j }} $, while all the other objects of $\Delta $ belong to $\mathfrak{S} _ {{}_{\j }} $. We start proving Theorem 4.2 of  \cite{StreetJane}, which is presented therein as a generalized Galois Theorem.

\begin{theo}[Galois]\label{GALOISII}
Let $\AAA , \BBB :\dot{\Delta }\to\CAT $ be pseudofunctors and $\alpha : \AAA\longrightarrow\BBB $ be an objectwise fully faithful pseudonatural transformation.
We assume that $\BBB $ is of effective $\j $-descent. The pseudofunctor $\AAA $ is also of effective $\j $-descent if and only if the diagram below is a pseudopullback.
$$\xymatrix{ \AAA(\mathsf{0})\ar[d]_{\alpha _ {{}_{\mathsf{0} }} }\ar[r]^{\AAA (d) }\ar@{}[rd]|-{\xRightarrow{\alpha _ {{}_{d}}} }& \AAA (\mathsf{1})\ar[d]^-{\alpha _ {{}_{\mathsf{1} }}}\\
\BBB (\mathsf{0})\ar[r]_ {\BBB (d) }&\BBB (\mathsf{1}) 
}$$ 
\end{theo}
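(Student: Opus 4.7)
The approach is a direct application of Theorem \ref{GALOISI} to the inclusion $\j : \Delta \to \dot{\Delta}$. Since $\alpha$ is objectwise fully faithful and $\BBB$ is of effective $\j$-descent by hypothesis, Theorem \ref{GALOISI} reduces the question of whether $\AAA$ is of effective $\j$-descent to the question of whether the restricted mate $\CCCC_\alpha \circ (\i_{{}_{\j}} \times \Id _ {{}_{\mathsf{2} }}) : \mathfrak{I}_{{}_{\j}} \times \mathsf{2} \to \CAT$ is of effective $\gggg _{{}_{\j}}$-descent.

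The crucial identification, stated in the paragraph preceding the theorem, is that $\mathsf{1} \notin \mathfrak{S}_{{}_{\j}}$ while every $\mathsf{n}$ with $n \geq 2$ belongs to $\mathfrak{S}_{{}_{\j}}$; the verification amounts to observing that $d : \mathsf{0} \to \mathsf{1}$ is the relevant $\j$-irreducible morphism and that every morphism $\mathsf{0} \to \mathsf{n}$ in $\dot{\Delta}$ factors through $d$. Consequently $\mathfrak{I}_{{}_{\j}}$ is the full sub-$2$-category of $\dot{\Delta}$ on the objects $\{\mathsf{0}, \mathsf{1}\}$, which is just a copy of the ordinal $\mathsf{2}$ with its unique non-identity arrow $d$. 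Therefore $\mathfrak{I}_{{}_{\j}} \times \mathsf{2}$ is the commutative square $\mathsf{2} \times \mathsf{2}$, and $\overline{\mathfrak{I}_{{}_{\j}} \times \mathsf{2}}$ is the cospan obtained by deleting the initial vertex $(\mathsf{0}, \mathsf{0})$.

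By the discussion of pseudopullbacks in Section \ref{Elementary Examples}, a pseudofunctor on the square $\mathsf{2} \times \mathsf{2}$ is of effective $\gggg _{{}_{\j}}$-descent precisely when it is a pseudopullback diagram. Unwinding the construction of the mate $\CCCC _\alpha$, its restriction to $\mathfrak{I}_{{}_{\j}} \times \mathsf{2}$ is exactly the square
$$\xymatrix{ \AAA(\mathsf{0})\ar[d]_{\alpha _ {{}_{\mathsf{0} }} }\ar[r]^{\AAA (d) }\ar@{}[rd]|-{\xRightarrow{\alpha _ {{}_{d}}} }& \AAA (\mathsf{1})\ar[d]^-{\alpha _ {{}_{\mathsf{1} }}}\\ \BBB (\mathsf{0})\ar[r]_ {\BBB (d) }&\BBB (\mathsf{1}) }$$
displayed in the statement, so the conclusion of Theorem \ref{GALOISI} specializes to precisely the equivalence to be proved. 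The whole argument is essentially formal, and no real obstacle arises beyond the small amount of bookkeeping needed to identify $\mathfrak{S}_{{}_{\j}}$ and to match up the restricted mate with the displayed square; all the substantive work has already been done in Theorem \ref{GALOISI}.
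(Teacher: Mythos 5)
Your proposal is correct and follows exactly the paper's own argument: the paper's proof is the one-line observation that $\mathfrak{I}_{{}_{\j}}=\mathsf{2}$ and that $\gggg_{{}_{\j}}$ is precisely the cospan-into-square inclusion of the pseudopullback diagram, so that Theorem \ref{GALOISI} specializes to the stated equivalence. Your version merely spells out the bookkeeping (identifying $\mathfrak{S}_{{}_{\j}}$ and matching the restricted mate with the displayed square) that the paper leaves implicit.
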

\begin{proof}
Since, in this case,  $\mathfrak{I}_{{}_{\j }} = \mathsf{2} $ and the inclusion $\gggg _ {{}_{\j }} :\overline{\mathfrak{I} _{{}_{\j }}\times\mathsf{2} }\to \mathfrak{I} _{{}_{\j }}\times\mathsf{2}  $ is precisely equal to the inclusion described in the diagram \ref{ppullback}, by Theorem \ref{GALOISI}, the proof is complete.
\end{proof}

As a consequence of Theorem \ref{GALOISII},  we get a generalization of Theorem \ref{mergulho}. More precisely, in the context of Section \ref{proofsclassical} and using the definitions presented there, we get:

\begin{cor}
Let $(U, \alpha )$ be a fully faithful morphism between pseudofunctors $\AAA : \CCC ^{\op }\to \hhh $ and  $\BBB:\DDD ^{\op }\to \hhh $, in which $\CCC $ and $\DDD $ are categories with pullbacks. Assume that $U(p)$ is an effective $\BBB $-descent morphism of $\DDD $. Then $p: E\to B$ is of effective $\AAA $-descent if and only if, whenever there are $u\in \BBB (B), v\in \AAA (E) $ such that $\alpha ^{{}^{p}} _{{}_{\mathsf{1} }} (u)\cong \BBB _ {{}_{U(p) }}(d) (v) $, there is $w\in \AAA(B) $ such that $\alpha ^{{}^{p}} _{{}_{\mathsf{0} }}(w)\cong u $. 
\end{cor}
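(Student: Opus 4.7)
The plan is to reduce the corollary to a direct application of the Galois Theorem (Theorem~\ref{GALOISII}) applied to the induced pseudosimplicial objects $\AAA_p$ and $\BBB_{U(p)}$ together with the pseudonatural transformation $\alpha^{p} : \AAA_p \longrightarrow \BBB_{U(p)}$ induced by the morphism $(U,\alpha)$ as recalled in Section~\ref{proofsclassical}.

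First I would observe that the hypotheses of Theorem~\ref{GALOISII} are automatically satisfied in this setting. Since $(U,\alpha)$ is assumed to be fully faithful, $\alpha$ is objectwise fully faithful, and therefore so is $\alpha^{p}$ (its components are obtained by evaluating $\alpha$ at the objects of the standard pullback diagram $\DDDD_p$). Moreover, the assumption that $U(p)$ is of effective $\BBB$-descent means precisely that $\BBB_{U(p)} : \dot{\Delta}\to\hhh$ is of effective $\j$-descent. By definition $p$ is of effective $\AAA$-descent if and only if $\AAA_p$ is of effective $\j$-descent, so by Theorem~\ref{GALOISII} this is equivalent to the square
\[
\xymatrix{ \AAA(B)\ar[d]_{\alpha^{p}_{\mathsf{0}}}\ar[r]^{\AAA(p)}\ar@{}[rd]|-{\xRightarrow{\alpha^{p}_{d}}} & \AAA(E)\ar[d]^{\alpha^{p}_{\mathsf{1}}}\\
\BBB(B)\ar[r]_{\BBB(U(p))} & \BBB(E)
}
\]
being a pseudopullback in $\hhh$.

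Next I would identify this pseudopullback condition with the lifting condition in the statement. Being a pseudopullback amounts to the comparison functor from $\AAA(B)$ into the pseudopullback of $\alpha^{p}_{\mathsf{1}}$ along $\BBB(U(p))$ being an equivalence; concretely, this requires that the comparison be fully faithful and essentially surjective. The fully faithfulness is automatic here: since $(U,\alpha)$ is fully faithful, $\alpha^{p}_{\mathsf{0}}$ is itself fully faithful, which forces the comparison functor (a factorization of a fully faithful functor) to be fully faithful as well. So only essential surjectivity has to be tested. An object of the pseudopullback is a triple $(u, v, \theta)$ with $u\in\BBB(B)$, $v\in\AAA(E)$ and $\theta : \alpha^{p}_{\mathsf{1}}(v) \cong \BBB(U(p))(u)$, and essential surjectivity says that there is $w\in\AAA(B)$ with $\alpha^{p}_{\mathsf{0}}(w)\cong u$ and $\AAA(p)(w)\cong v$ compatibly with $\theta$. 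This is exactly the lifting condition in the corollary, except that the corollary only mentions the existence of $w$ with $\alpha^{p}_{\mathsf{0}}(w)\cong u$; the second isomorphism $\AAA(p)(w)\cong v$ then follows for free from the fully faithfulness of $\alpha^{p}_{\mathsf{1}}$ by computing
\[
\alpha^{p}_{\mathsf{1}}(\AAA(p)(w))\cong \BBB(U(p))(\alpha^{p}_{\mathsf{0}}(w))\cong \BBB(U(p))(u)\cong \alpha^{p}_{\mathsf{1}}(v),
\]
and the compatibility with $\theta$ follows similarly.

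I expect the only subtle point to be the last paragraph: bookkeeping the various isomorphisms and checking that a pseudonatural essentially surjective condition on the comparison functor reduces, under fully faithfulness of $\alpha^{p}_{\mathsf{1}}$, to the apparently weaker one-sided lifting condition stated in the corollary. Everything else is a direct rewriting of Theorem~\ref{GALOISII} in the language of \cite{Facets, Facets2} via the definitions of $\AAA_p$, $\BBB_{U(p)}$ and $\alpha^{p}$.
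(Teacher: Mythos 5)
Your proposal is correct and follows essentially the same route as the paper: both reduce the statement to Theorem \ref{GALOISII} applied to $\alpha^{p}:\AAA_p\longrightarrow\BBB_{U(p)}$ (using that $\AAA_p$ is already known to be of $\j$-descent) and then observe that only the essential-surjectivity part of the pseudopullback condition needs to be tested, which is exactly the stated one-sided lifting condition. The only cosmetic caveat is that full faithfulness of the comparison into the pseudopullback uses (at least faithfulness of) $\alpha^{p}_{\mathsf{1}}$ in addition to full faithfulness of $\alpha^{p}_{\mathsf{0}}$, not merely the fact that it factors a fully faithful functor; this is available from the hypotheses and does not affect the argument.
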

\begin{proof}
Recall the definitions of $\AAA _ {{}_{p}}, \BBB _ {{}_{U(p)}}, \alpha ^{{}^{p}}$. Since we already know that $\AAA _ {{}_{p}}$ is $\j$-descent, the condition described is precisely the condition necessary and sufficient to conclude that the diagram of Theorem \ref{GALOISII} is a pseudopullback.
\end{proof}

Indeed, taking into account Remark \ref{inducedbasicproblem}, we conclude that Theorem \ref{mergulho} is actually a immediate consequence of last corollary.

Given a category with pullbacks $V$, we denote by $\Cat (V) $ the category of internal categories in $V$. If $V$ is a category with products, we denote by $V$-$\Cat $ the category of small categories enriched over $V$. We give a simple application of the Theorem \ref{pseudopullback} below.

\begin{lem}
If $(V, \times , I) $ is an infinitary lextensive category such that 
\begin{eqnarray*}
J: \Set & \to & V\\
A &\mapsto & \sum _{a\in A} I_a
\end{eqnarray*}
is fully faithful, then the pseudopullback of the projection of the object of objects $U_0: \Cat (V)\to V$ along $J$ is the category $V$-$\Cat $. 
\end{lem}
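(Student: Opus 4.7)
The plan is to unpack the pseudopullback $\BB := \Set\times _V \Cat(V) $ concretely and then exhibit mutually inverse functors between $\BB$ and $V$-$\Cat $ using infinitary lextensivity to decompose the data of an internal category whose object of objects is discrete. An object of $\BB$ is a triple $(A, \mathbf{C}, \phi )$ with $A$ a set, $\mathbf{C}$ an internal category in $V$, and $\phi : J(A)\xrightarrow{\sim }\mathbf{C}_0$ an isomorphism in $V$; a morphism is a pair consisting of a function of sets and an internal functor compatible with the chosen isomorphisms (no nontrivial $2$-cells, since $\Set$ and $V$ are $1$-categories).

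First I would construct $\Phi : V$-$\Cat\to \BB $. Given $\mathcal{X}\in V$-$\Cat $ with object set $X$, define $\mathbf{C}_\mathcal{X}$ by $(\mathbf{C}_\mathcal{X})_0 := J(X) = \sum _{x\in X} I_x $ and $(\mathbf{C}_\mathcal{X})_1 := \sum _{(x,y)\in X\times X}\mathcal{X}(x,y) $; the source and target are the evident projections into the $(x,y)$-coordinate, the identity is assembled from the enriched units $I\to \mathcal{X}(x,x) $, and the composition is produced from the enriched composition morphisms $\mathcal{X}(y,z)\times \mathcal{X}(x,y)\to \mathcal{X}(x,z) $ via the universal property of the pullback $(\mathbf{C}_\mathcal{X})_1\times _{(\mathbf{C}_\mathcal{X})_0}(\mathbf{C}_\mathcal{X})_1 $, using that products and pullbacks distribute over sums in an infinitary lextensive category. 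Send $\mathcal{X}$ to $(X, \mathbf{C}_\mathcal{X}, \mathrm{id}_{J(X)} )$ and extend to morphisms in the obvious way.

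Conversely, I would build $\Psi :\BB\to V$-$\Cat $ by reversing the decomposition. Given $(A, \mathbf{C}, \phi )$, we may transport along $\phi$ and assume $\mathbf{C}_0 = J(A) $. The morphism $(s,t):\mathbf{C}_1\to \mathbf{C}_0\times \mathbf{C}_0 \cong J(A\times A) = \sum _{(a,b)}I_{(a,b)}$ is a map into a coproduct of copies of $I$; by infinitary lextensivity, $\mathbf{C}_1$ splits uniquely as $\sum _{(a,b)}\mathcal{X}(a,b) $, where $\mathcal{X}(a,b) $ is the pullback of $\mathbf{C}_1$ along the summand inclusion $I_{(a,b)}\to\sum I_{(a,b)} $. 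The identity $e:J(A)\to\mathbf{C}_1 $ and the composition $\mathbf{C}_1\times _{\mathbf{C}_0}\mathbf{C}_1\to \mathbf{C}_1 $ decompose by the same mechanism (pulling back over the appropriate summand inclusions of $J(A) $, $J(A\times A\times A) $) into enriched units $I\to\mathcal{X}(a,a) $ and enriched compositions $\mathcal{X}(b,c)\times\mathcal{X}(a,b)\to\mathcal{X}(a,c) $; the internal associativity and unit axioms reduce, summand-wise, to those of a $V$-enriched category.

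The main obstacle will be checking that $\Phi $ and $\Psi $ are mutually inverse on the level of morphisms, where the fully faithfulness of $J$ does the critical work. A morphism $(f:A\to A' , F:\mathbf{C}\to \mathbf{C}' )$ of $\BB$ requires $F_0\circ\phi = \phi '\circ J(f) $; since $J$ is fully faithful, no extra data on the object-of-objects component is available beyond $J(f) $, so $F$ is determined on $\mathbf{C}_0$ by $f$ alone. By the lextensive decomposition, $F_1 : \sum _{(a,b)}\mathcal{X}(a,b)\to \sum _{(a',b')}\mathcal{X}'(a',b') $ is forced to arise from morphisms $\mathcal{X}(a,b)\to \mathcal{X}'(f(a), f(b)) $ in $V$ that are compatible with units and compositions, \emph{i.e.}, precisely the data of a $V$-enriched functor. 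Functoriality and the inverse identities are then routine, so the entire argument hinges on the combined bookkeeping permitted by infinitary lextensivity (to commute $\sum $ with $\times $ and with pullbacks) and by full faithfulness of $J$ (to rigidify the behaviour on objects of objects).
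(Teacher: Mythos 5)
Your proof is correct, but it takes a genuinely different route from the paper's. The paper does not unpack the pseudopullback directly: it introduces the bicategory $\textit{Span}_{{}_{\Set}}(V)$ of spans between objects in the image of $J$ and the bicategory $V\textrm{-}\mathrm{Mat}$ of sets and $V$-matrices, and observes that sending a matrix $M: A\times B\to \obj (V)$ to the span with vertex $\sum _{(x,y)\in A\times B} M(x,y)$ (with legs induced by the maps $M(x,y)\to I_x$ and $M(x,y)\to I_y$) is a biequivalence, infinitary lextensivity supplying the inverse decomposition of an arbitrary span over $J(A)$ and $J(B)$. Since internal categories with object of objects $J(A)$ are the monads on $J(A)$ in $\textit{Span}(V)$, while $V$-categories with object set $A$ are the monads on $A$ in $V\textrm{-}\mathrm{Mat}$, the biequivalence identifies the two notions, which is precisely the pseudopullback statement. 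Your argument is the same extensivity-based decomposition carried out by hand at the level of the structure maps $s$, $t$, $e$ and the composition rather than packaged through the monad description. What the paper's formulation buys is brevity: compatibility of the hom-equivalences with span and matrix composition subsumes all of your summand-wise verifications of the unit and associativity axioms at once. What yours buys is self-containedness, and it makes explicit the morphism-level check that the paper leaves entirely implicit. One minor remark: a morphism of the pseudopullback carries the function $f$ as part of its data and the compatibility condition $F_0\circ\phi =\phi '\circ J(f)$ already forces the object part of $F$, so the morphism-level bijection really rests on extensivity (to split $F_1$ over the summands of $J(A\times A)$) rather than on full faithfulness of $J$ as you suggest; this is a misattribution of where the hypothesis acts, not a gap in the argument.
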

\begin{proof}
We denote by $\textit{Span}(V)$ the usual bicategory of objects of $V$ and spans between them and by $V$-$\mathrm{Mat}$ the usual bicategory of sets and $V$-matrices between them. Let  $\textit{Span} _{{}_{\Set }}(V) $ be the full sub-bicategory of $\textit{Span}(V)$ in which the objects are in the image of $\Set $.

Assuming our hypotheses, we have that $\textit{Span} _{{}_{\Set }}(V) $ is biequivalent to $V$-$\mathrm{Mat}$. Indeed, we define ``identity'' on the objects and, if $A, B $ are sets, take a matrix
$M: A\times B\to \obj (V) $ to the obvious span given by the coproduct $\displaystyle\sum _ {(x,y)\in A\times B} M(x,y) $, that is to say,  the morphism $\sum _ {(x,y)\in A\times B} M(x,y)\to A $ is induced by the morphisms $M(x,y)\to I_x $ and the morphism   $\sum _ {(x,y)\in A\times B} M(x,y)\to B $ is analogously defined.

Since $V$ is lextensive, this defines a biequivalence. Thereby this completes our proof.
\end{proof}

Corollary 6.2.5 of \cite{Ivan} says in particular that, for lextensive categories, effective descent morphisms of $\Cat(V)$ are preserved by the projection $U_0: \Cat (V)\to V$ to the objects of objects. Thereby, by Theorem \ref{pseudopullback}, we get:

\begin{theo}\label{vcategories}
If $(V, \times, I) $ is an infinitary lextensive category such that each arrow of $V$ can be factorized as a regular epimorphism followed by a monomorphism and 
\begin{eqnarray*}
J: \Set & \to & V\\
A &\mapsto & \sum _{a\in A} I_a
\end{eqnarray*}
is fully faithful, then $I: V\textrm{-}\Cat\to \Cat (V) $ reflects effective descent morphisms.
\end{theo}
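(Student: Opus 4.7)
The plan is to apply Theorem \ref{pseudopullback} to the pseudopullback square supplied by the preceding lemma:
\[
\xymatrix{ V\textrm{-}\Cat \ar[r]^-{I}\ar[d]_-{U_0'}\ar@{}[rd]|-{\cong } & \Cat (V)\ar[d]^-{U_0}\\
\Set\ar[r]_-{J} & V }
\]
in which $U_0'$ denotes the canonical projection of $V\textrm{-}\Cat$ onto its underlying set of objects.

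Before invoking the theorem, I would verify that all four functors preserve pullbacks. The three functors $I$, $U_0$, $U_0'$ preserve (conical) limits by the standard fact that limits of enriched and of internal categories are computed componentwise on the object of objects and on the hom-objects. The only nontrivial verification concerns $J$, where the infinitary lextensivity of $V$ is decisive: given $f: A \to B$ and $g: C \to B$ in $\Set$, universality of coproducts reduces $J(A)\times _ {J(B)} J(C)$ to a coproduct of pullbacks indexed by the pairs $(a,c)\in A\times C$, and disjointness of the coproduct $J(B)$ annihilates the summands corresponding to pairs with $f(a)\neq g(c)$; what remains is exactly $J(A\times _ B C)$.

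Now, given a morphism $p$ of $V\textrm{-}\Cat$ with $I(p)$ of effective descent in $\Cat(V)$, I will verify the three remaining hypotheses of Theorem \ref{pseudopullback}: (i) $S(p)=I(p)$ is of effective descent --- the hypothesis; (ii) $FS(p) = U_0 I(p)$ is a descent morphism in $V$, for which I invoke Corollary 6.2.5 of \cite{Ivan} (quoted in the text just before the theorem) to conclude that $U_0$ preserves effective descent morphisms, whence $U_0 I(p)$ is of effective descent and a fortiori a descent morphism; (iii) $Z(p) = U_0'(p)$ is of effective descent in $\Set$, which is the most delicate point.

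To establish (iii), the structural isomorphism $J\, U_0' \cong U_0\, I$ of the pseudopullback combined with (ii) shows that $J(U_0'(p))$ is a descent morphism in $V$. Since $J$ is fully faithful and pullback preserving, Theorem \ref{facil1} says that $J$ reflects descent morphisms, so $U_0'(p)$ is a descent morphism in $\Set$. The argument finishes by using that descent morphisms in $\Set$ are precisely the surjections, which are automatically of effective descent. With the three items in hand, Theorem \ref{pseudopullback} yields that $p$ is of effective descent in $V\textrm{-}\Cat$, completing the proof. The main conceptual obstacle is item (iii): Theorem \ref{facil1} alone delivers only descent in $\Set$, and the upgrade to effective descent --- needed to feed Theorem \ref{pseudopullback} --- exploits the peculiarity of $\Set$ and would fail in a more generic ambient.
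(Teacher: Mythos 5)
Your proposal is correct and follows essentially the same route as the paper: feed the pseudopullback square $V\textrm{-}\Cat \simeq \Set\times_V \Cat(V)$ into Theorem \ref{pseudopullback}, get descent of $U_0I(p)$ from Le Creurer's result, reflect it along the fully faithful $J$ to obtain descent of the object-map in $\Set$, and upgrade to effective descent there because descent morphisms of $\Set$ are surjections. The paper's own proof is just a terser version of this (it leaves the final invocation of Theorem \ref{pseudopullback} and the pullback-preservation of $J$ implicit), so no further comparison is needed.
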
 

\begin{proof}
We denote by $U: V\textrm{-}\Cat\to \Set $ the forgetful functor and by $U_0: \Cat (V)\to V$ the projection defined above. We have that $U_0, U, J $ and $I$ are pullback preserving functors.
 
If $p:E\to B $ is a morphism of $V$-$\Cat $ such that $I(p)$ is of effective descent, then  $U_0 I(p) $ is of descent (by Corollary 5.2.1 of \cite{Ivan}). 
Therefore $JU(p)$ is of descent. 

Since $J$ is fully faithful, by Theorem \ref{embdddd}, $U(p) $ is of descent. Therefore, since descent morphisms of $\Set $ are of effective descent, we conclude that $U(p)$ is of effective descent.
This completes the proof.
\end{proof}

For instance, Theorem 6.2.8 of \cite{Ivan} and Proposition \ref{vcategories} can be applied to the cases of $V=\Cat $ or $V = \Top $:

\begin{cor}
A $2$-functor $F$ between $\Cat $-categories is of effective descent in $\Cat $-$\Cat $, if
\begin{itemize}
\renewcommand\labelitemi{--}
\item $F$ is surjective on objects;
\item $F$ is surjective on composable triples of $2$-cells;
\item $F$ induces a functor surjective on composable pairs of $2$-cells between the categories of composable pairs of $1$-cells;
\item $F$ induces a functor surjective on $2$-cells between the categories of composable triples of $1$-cells.
\end{itemize}
\end{cor}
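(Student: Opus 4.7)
The plan is to combine Theorem \ref{vcategories}, specialized to $V = \Cat $, with Le Creurer's characterization of effective descent morphisms in $\Cat (V ) $ (Theorem 6.2.8 of \cite{Ivan}). First I would verify that $V = \Cat $ satisfies the hypotheses of Theorem \ref{vcategories}: $\Cat $ is infinitary lextensive, every functor admits a factorization as a regular epimorphism followed by a monomorphism, and the functor $J: \Set\to \Cat $ sending a set to its discrete category is fully faithful (since functors between discrete categories are precisely functions). Granting this, Theorem \ref{vcategories} asserts that the inclusion $I: \Cat\textrm{-}\Cat\to \Cat (\Cat ) $ reflects effective descent morphisms, so it suffices to show that $I(F)$ is of effective descent in $\Cat (\Cat ) $.

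Next I would unpack the internal category $I(\mathbf{A})$ in $\Cat $ associated to a $2$-category $\mathbf{A}$: the object of objects $A_0 = J(\obj \mathbf{A})$ is a discrete category; the object of morphisms $A_1 = \bigsqcup_{x,y}\mathbf{A}(x,y) $ has the $1$-cells of $\mathbf{A} $ as objects and the $2$-cells of $\mathbf{A}$ as morphisms; the objects $A_2$ and $A_3$ are the iterated fibered products over $A_0$, whose objects are composable pairs and triples of $1$-cells and whose morphisms are composable pairs and triples of $2$-cells (with composition given by vertical $2$-cell composition). Theorem 6.2.8 of \cite{Ivan} furnishes sufficient conditions for effective descent in $\Cat (V ) $ in terms of the components $F_0, F_1, F_2, F_3 $ of the morphism: namely, $F_0 $ and $F_1 $ effective descent in $V $, $F_2 $ of descent, and $F_3 $ of almost descent in $V $. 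Applied here with $V = \Cat $, this reduces the problem to verifying four conditions on the components of $I(F)$.

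Finally, I would invoke the known characterization of descent morphisms in $\Cat $ (effective descent if and only if surjective on composable triples of morphisms; descent if and only if surjective on composable pairs; almost descent if and only if surjective on morphisms) to translate each condition into a surjectivity statement on $F$. Since $A_0$ is discrete, $F_0 $ being effective descent in $\Cat $ is simply $F $ surjective on objects; $F_1$ effective descent unfolds to surjectivity on composable triples of $2$-cells; $F_2$ of descent becomes surjectivity on composable pairs of $2$-cells between categories of composable pairs of $1$-cells; and $F_3$ of almost descent becomes surjectivity on $2$-cells between categories of composable triples of $1$-cells. The main obstacle is purely bookkeeping -- matching each $F_n$-component condition under the embedding $I : \Cat\textrm{-}\Cat\to \Cat(\Cat)$ with the corresponding surjectivity statement on the $2$-functor $F$ and confirming that the four hypotheses of the corollary correspond exactly to these four translated conditions. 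Once this correspondence is verified, Theorem \ref{vcategories} and Theorem 6.2.8 of \cite{Ivan} together complete the argument.
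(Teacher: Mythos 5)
Your proposal is correct and follows exactly the route the paper intends: the paper derives this corollary directly from Theorem \ref{vcategories} applied to $V=\Cat$ together with Le Creurer's Theorem 6.2.8, and your unpacking of the internal category $I(\mathbf{A})$ and the translation of the conditions on $F_0, F_1, F_2, F_3$ into the four surjectivity statements is precisely the bookkeeping the paper leaves implicit. No gaps.
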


\begin{cor}\label{TOP}
A $\Top $-functor $F$ between $\Top $-categories is of effective descent in $\Top $-$\Cat $, if $F$ induces
\begin{itemize}
\renewcommand\labelitemi{--}
\item effective descent morphisms  between the discrete spaces of objects and between the spaces of morphisms in $\Top $;
\item a descent continuous map between the spaces of composable pairs of morphisms in $\Top $;
\item an almost descent continuous map between the spaces of composable triples of morphisms in $\Top $.
\end{itemize}
\end{cor}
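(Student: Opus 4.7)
The plan is to derive Corollary \ref{TOP} as a direct application of Theorem \ref{vcategories} combined with the characterization of effective descent morphisms in $\Cat(V)$ given by Theorem 6.2.8 of \cite{Ivan}, specialized to $V = \Top$. The structure is parallel to the proof outline for the $\Cat$-$\Cat$ case: first verify the hypotheses of Theorem \ref{vcategories} for $V=\Top$, then translate Le Creurer's conditions in $\Cat(\Top)$ through the embedding $I:\Top\textrm{-}\Cat \to \Cat(\Top)$.

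First I would check that $\Top$ satisfies the hypotheses of Theorem \ref{vcategories}: $\Top$ is infinitary lextensive (coproducts are disjoint and universal, and finite limits are preserved by coproduct decomposition), every continuous map admits a (regular epi, mono) factorization (take the image topologized with the quotient topology), and the discrete-space functor $J:\Set\to\Top$ sending $A\mapsto \sum_{a\in A} I_a$ (where $I_a$ is the terminal space) is fully faithful since any map out of a discrete space is automatically continuous. Hence Theorem \ref{vcategories} applies and the full inclusion $I:\Top\textrm{-}\Cat\to\Cat(\Top)$ reflects effective descent morphisms.

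Therefore, to prove $F$ is of effective descent in $\Top\textrm{-}\Cat$, it suffices to show that $I(F)$ is of effective descent in $\Cat(\Top)$. Now I would invoke Theorem 6.2.8 of \cite{Ivan}, which gives sufficient conditions for a morphism in $\Cat(V)$ (for suitable $V$) to be of effective descent, in terms of the induced maps on the spaces of objects, morphisms, composable pairs and composable triples. The hypotheses of Corollary \ref{TOP} are exactly these conditions transported through $I$: since $I(F)$ has as its object-of-objects the discrete space on the set of objects of the source $\Top$-category, the condition on the space of objects becomes effective descent between discrete spaces; the condition on morphisms is likewise effective descent; the condition on composable pairs is descent; and the condition on composable triples is almost descent.

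The main obstacle is simply the bookkeeping of translating the internal-category data for $I(F)$ in $\Cat(\Top)$ back to the enriched data of $F$ in $\Top\textrm{-}\Cat$ and checking that the hypotheses of Le Creurer's theorem are invoked at precisely the right levels (spaces of objects, morphisms, composable $n$-tuples for $n=2,3$). Once this dictionary is in place the result is immediate from the combination of Theorem \ref{vcategories} and Theorem 6.2.8 of \cite{Ivan}; no further bilimit or pseudo-Kan-extension argument is required, since all the formal content was already absorbed into Theorem \ref{pseudopullback} and hence into Theorem \ref{vcategories}.
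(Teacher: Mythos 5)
Your proposal is correct and follows exactly the route the paper intends: Corollary \ref{TOP} is stated there as an immediate instance of Theorem \ref{vcategories} (whose hypotheses you correctly verify for $V=\Top$) combined with Le Creurer's sufficient conditions for effective descent in $\Cat(V)$, transported through the fully faithful inclusion $\Top\textrm{-}\Cat\to\Cat(\Top)$. The paper leaves the bookkeeping implicit, so your write-up is simply a more explicit version of the same argument.
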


Since the characterization of (effective/almost) descent morphisms in $\Top $ is known~\cite{THR, CleJane, reforCleHof}, the result above gives effective descent morphisms of $\Top $-$\Cat $.

\begin{rem}
We can give further formal results on (basic) effective descent morphisms (context of Remark \ref{inducedbasicproblem}). The main technique in this case is to understand our overcategory as a bilimit of other overcategories.

For instance, we study below the categories of morphisms of a given category $\CCC $ with pullbacks. Consider the full inclusion of $2$-categories
$\t : \aaa\to \dot{\aaa} $
$$\xymatrix{\mathsf{0}\ar[d]|-{d}\ar@{}[rd]|{\mapsto }& \mathsf{a}\ar[rd]|-{pro_{{}_{0}} }\ar[rr]|-{pro_{{}_{1}}}&\ar@{}[d]|-{\xRightarrow{\xi} } &\mathsf{0}\ar[ld]|-{d}\\
\mathsf{1}& &\mathsf{1}& 
} $$
Given a morphism of $\CCC $, \textit{i.e.} a functor $F:\mathsf{2}\to\CCC $, we take the overcategory $\textrm{Fun}(\mathsf{2}, \CCC )/F $ and define $\AAA : \dot{\aaa }\to \CAT $ in which 
\begin{equation*}
\begin{aligned}
\AAA(\mathsf{a} ):= &\textrm{Fun}(\mathsf{2}, \CCC )/F
\end{aligned}
\qquad
\begin{aligned}
\AAA(\mathsf{0} ):= & \CCC /F(\mathsf{1})
\end{aligned}
\qquad
\begin{aligned}
\AAA(\mathsf{1} ):= & \CCC /F(\mathsf{0}).
\end{aligned}
\end{equation*}
Finally, $\AAA(pro_{{}_{0}}), \AAA(pro_{{}_{1}}) $ are given by the obvious projections, $\AAA (d) := F(d) ^\ast $ and the component $\AAA (\xi ) $ in a morphism $\varpi : H\to F $ is given by the induced morphism from $H(\mathsf{0} ) $ to the pullback.

Observe that $\AAA $ is of effective $\t $-descent, that is to say, we have that the overcategory $\textrm{Fun}(\mathsf{2}, \CCC )/F $ is a bilimit constructed from overcategories $\CCC /F(\mathsf{0})$ and $\CCC /F(\mathsf{1})$. Also, given a natural transformation $\varpi : F\to G $ between functors $\mathsf{2}\to\CCC $, \textit{i.e.} a morphism of $\textrm{Fun}(\mathsf{2}, \CCC )$, taking Remark \ref{inducedbasicproblem}, we can extend $\AAA $ to a $2$-functor $\overline{\AAA }:\dot{\aaa}\to [\dot{\Delta}, \CAT] $ in which
$\overline{\AAA }(\mathsf{a}) : = (\mbox{ })^\ast _\varpi $, $\overline{\AAA }(\mathsf{0}) : = (\mbox{ })^\ast _{\varpi _\mathsf{1}} $ and $\overline{\AAA }(\mathsf{1}) : = (\mbox{ })^\ast _{\varpi _\mathsf{0}} $.

The $2$-functor $\overline{\AAA }$ is also of effective $\t $-descent. Therefore, by our results, we conclude that, \textit{if the components  $\varpi _\mathsf{1}, \varpi _\mathsf{0} $ are of (basic) effective descent, so is $\varpi $}. Analogously, considering the category of spans in $\CCC $, the morphisms between spans which are objectwise of effective descent are of effective descent. 

\end{rem}

 \end{document}